\title{Unoriented Cobordism Maps on Link Floer Homology}
\author{Haofei Fan}
\date{}
\newtheorem{thm}{Theorem}[section]
\newtheorem{cor}[thm]{Corollary}
\newtheorem{lem}[thm]{Lemma}
\newtheorem{prop}[thm]{Proposition}
\newtheorem{quest}[thm]{Question}
\theoremstyle{definition}
\newtheorem{defn}[thm]{Definition}
\newtheorem{exmp}[thm]{Example}
\theoremstyle{remark}
\newtheorem{rem}[thm]{Remark}
\let\c@equation\c@thm
\numberwithin{equation}{section}
\newcommand{\x}{\mathbf{x}}
\newcommand{\y}{\mathbf{y}}
\newcommand{\cal}{\mathcal}
\newcommand{\bs}{\boldsymbol}
\newcommand{\lk}{\mathbb{L}}
\begin{document}
\maketitle

\begin{abstract}
We study the problem of defining maps on link Floer homology induced
by unoriented link cobordisms. We provide a natural notion of link
cobordism, disoriented link 
cobordism, which tracks the motion of index zero and index three critical
points. Then we construct a map on unoriented link Floer homology
associated to
a disoriented link cobordism. Furthermore, we give a
comparison with
Oszv\'{a}th-Stipsicz-Szab\'{o}'s and Manolescu's constructions of link
cobordism maps for an
unoriented band move.
\end{abstract}

\tableofcontents{}

\section{Introduction}

Heegaard Floer homology is an invariant of three-manifolds, introduced by
Ozsv\'{a}th and Szab\'{o} in \cite{Ozsvath2004c}. 
Knot
Floer homology is a variation of Heegaard Floer homology, which was
discovered by Ozsv\'{a}th and Szab\'{o}\cite{Ozsvath2004a} and independently
by Rassmussen \cite{Rasmussen2003}. Later, it was generalized to  link Floer
homology in \cite{Ozsvath2008b} and proved to be a powerful and successful tool for studying knots and links in
three-manifolds: it detects the Thurston norm of the link complement
\cite{Ozsvath2008a}; it
detects the fiberedness of a knot \cite{Ni2007}; one can extract a tau-invariant from it,
and get a lower bound of four-ball genus of a knot or link \cite{Ozsvath2003c}. There is
also a version called unoriented link Floer homology that is independent of the
link orientation, see \cite{Ozsvath2015} and \cite{Ozsvath2014}.

While searching for applications of link Floer Homology, a natural
question arises: whether an oriented (or unoriented resp.) link
cobordism induces a map on link Floer
homology (or unoriented link Floer homology resp.).

In \cite{Juhasz2006}, Juh\'{a}sz introduced  sutured Floer Homology. Later he
provided a way to construct cobordism map for sutured manifolds, see
\cite{Juhasz2014}. In particular, he built a notion of cobordism, the
decorated link cobordism, which contains not only the link cobordism
surface but also a family of one-manifolds on the surface. These
one-manifolds provide extra data for defining the cobordism 
map. Juh\'{a}sz and Thurston also proved the naturality of link Floer homology in
\cite{Juhasz2012}. In \cite{Juhasz2016}, Juh\'{a}sz showed that a decorated link
cobordism induces a map on sutured Floer homology.
In \cite{Zemke2016}, Zemke generalized the idea in \cite{Sarkar2015} and
established a way to study basepoints moving maps by using
quasi-stabilization. Later, following Juh\'{a}sz's framework, Zemke
constructed link cobordism maps on link Floer homology and showed the
invariance of this construction, see \cite{Zemke2016a}. 

Notice that all of the above works are for oriented link
cobordisms. For unoriented link cobordisms in \cite{Ozsvath2015},
Ozsv\'{a}th, Stipsicz and Szab\'{o} construct maps
on unoriented grid homology, which is the grid diagram version of unoriented
link Floer homology. However, they did not prove the invariance of the
maps. Inspired by these works, we will introduce a
natural link
cobordism notion and construct cobordism maps on
 unoriented link Floer homology.

The link category we study is made of the disoriented links. The
objects of this category, disoriented links, are
constructed as follows. Let $L$ be a link (with no assigned
orientation) in
a closed oriented three-manifold $Y$. Suppose that $\mathbf{p}$ and
$\mathbf{q}$ are two sets of points which appear alternatively on each
component of $L$. The set $L\backslash(\mathbf{p}\cup\mathbf{q})$
consists of $2n$-arcs $\mathbf{l}=\{l_1,\cdots,l_{2n}\}$, which we
orient from $q$ to $p$, where $n$ is the number of $p$ points. 
A \textbf{\textit{disoriented link}}  is the quadruple
$\cal{L}=(L,\mathbf{p},\mathbf{q},\mathbf{l})$. The definition of
disoriented links is inspired by the Morse function compatible with $L$. 
An example of
disoriented link is shown in Figure \ref{fig:intro-01}.

\begin{figure}
  \centering
  \includegraphics[scale=0.8]{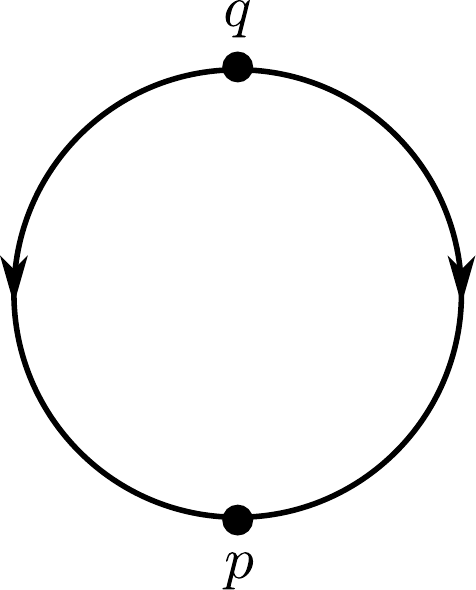}
  \caption{A disoriented link.}
  \label{fig:intro-01}
\end{figure}

Let $H=(\Sigma,\bs{\alpha},\bs{\beta},\mathbf{O})$ be the
pointed Heegaard diagram induced by a Morse function $f$ compatible
with $\cal{L}$, where
$\mathbf{O}$ is the basepoints set $(\Sigma\cap L)$. As in 
\cite{Ozsvath2015}, we can construct a $\delta$-graded, unoriented link Floer chain complex $CFL'(H)$ over the ring $\mathbb{F}_2[U]$. 
 The differential $\partial$ acting on a generator $\x\in\mathbb{T}_\alpha\cap\mathbb{T}_\beta$ is given by:
\[\partial \x
  =\sum_{\y\in\mathbb{T}_\alpha\cap\mathbb{T}_\beta}\sum_{\phi\in\pi_2(x,y),\mu(\phi)=1}\#(\cal{M}(\phi)/\mathbb{R})U^{n_{\mathbf{O}}(\phi)}\y,\]
  where $n_{\mathbf{O}}$ is equal to the sum $\sum_{o_i\in \mathbf{O}}n_{o_i}$. 
 The relative $\delta$-grading
between two generators $\x$ and $\y$ is given by
\[
  \delta(\x,\y)=\mu(\phi)-n_{\mathbf{O}}(\phi).
\]

If the link $L$ is \textit{\textbf{homologically even}}, which
means $[L]=2 a\text{, for some } a\in H_1(Y;\mathbb{Z})$, then the $\delta$-grading is a
$\mathbb{Z}$-grading. By tracking the proof of the naturality of
link Floer homology in \cite{Juhasz2012}, we know that the homology
$HFL'$ of $CFL'$ is an invariant of the
disoriented link $\cal{L}$.

A \textit{\textbf{disoriented link cobordism}} $\mathfrak{W}=(\cal W,\cal{F},\cal{A})$ from  $\cal{L}^0=(L^0,\mathbf{p}^0,\mathbf{q}^0,\mathbf{l}^0)$ in $Y^0$ to
  $\cal{L}^1=(L^1,\mathbf{p}^1,\mathbf{q}^1,\mathbf{l}^1)$ in $Y^1$ contains two groups of data:
\begin{enumerate}[(D1)]
\item The data of the link cobordism, denoted by $(\cal{W},\cal{F})$: The manifold
  $\cal{W}=(W,\partial W)$ is a cobordism from $Y^0$ to $Y^1$. The
  surface $\cal{F}=(F,\partial F)$ is embedded in $(W,\partial W)$ with its
  boundary $\partial W$ identified with the links determined by
  $\cal{L}^0$ in $Y^0$ and $\cal{L}^1$ in $Y^1$.
\item The motion of the $\mathbf{p}$ and $\mathbf{q}$ points 
  $\cal{A}=(A,\partial A)$: This is an
  oriented one-manifold $\cal{A}=(A,\partial A)$ properly embedded in
  $(F,\partial F)$. The boundary $\partial A$ identified with the
  zero manifold
  $\mathbf{q}^0-\mathbf{p}^0+\mathbf{p}^1-\mathbf{q}^1$.
\end{enumerate}
An example of a disoriented link cobordism is shown in
Figure \ref{fig:intro-02}.  A similar
construction also appears in Khovanov homology, for details see Remark \ref{thm:khovanov}.

\begin{figure}
  \centering
  \includegraphics[scale=1]{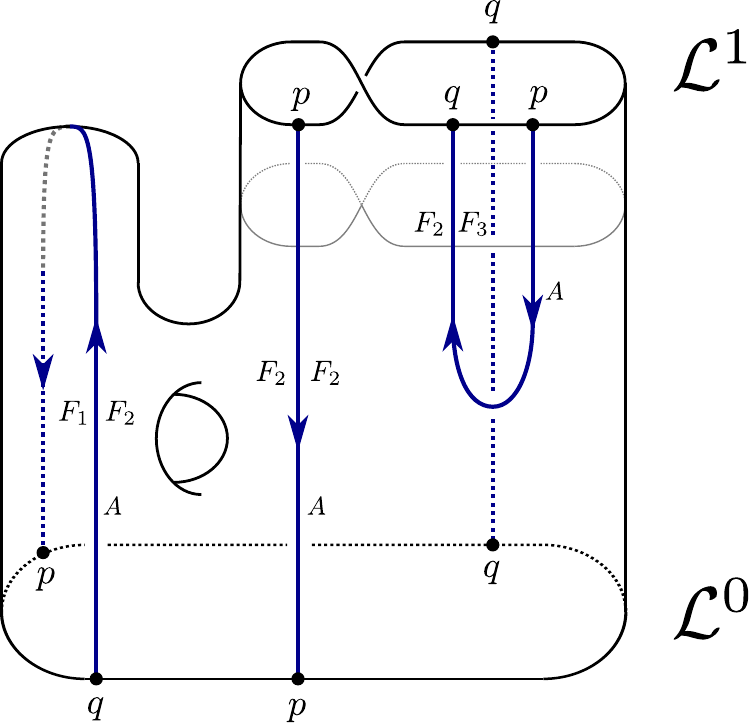}
  \caption{A disoriented link cobordism from $\cal{L}^0$ to $\cal{L}^1$. The motion of $p$ and $q$ is marked as blue
curves.}
  \label{fig:intro-02}

\end{figure}

Similar to what is in \cite{Zemke2016a}, one can find a parametrized
Kirby decomposition of a disoriented link cobordism. However, when
defining maps induced by a four-dimensional two-handle attachment, we can not
establish the correspondence between the $\text{Spin}^c$-structure
for the four-manifold and the equivlance class of triangles
which come from the Heegaard triple subordinate to the
two-handle. For details, see Section \ref{sec:holotri}.

To avoid this issue, we only consider surfaces inside $Y\times I$ and
use the language of ambient isotopy of surfaces in four-manifolds
instead of handle decompositions of the four-manifold. Our main result
is the following: 

\begin{thm}\label{sec:introduction} Suppose that the cobordism
  $\mathcal{W}$ is a product $(Y\times
  I,\partial(Y\times I))$.
Let  $\mathfrak{W}=(\cal
W,\cal{F},\cal{A})$ be a
disoriented link cobordism from $( Y, \cal{L}^0)$ to
$(Y,\cal{L}^1)$, such that the embedding $\mathcal{F}$
induces a trivial map $\mathcal{F}_*: H_2(F,\partial F)\rightarrow
H_2(Y\times I,\partial(Y\times I))$. Then for a torsion
$\textnormal{Spin}^c$-structure $\mathfrak{s}$ of $Y\times I$, we can
define a map:
 \[F_{\mathfrak{W},\mathfrak{s}}:HFL'( Y,
 \cal{L}^0,\mathfrak{s}|_{Y\times\{0\}})\rightarrow HFL'(Y,\cal{L}^1,\mathfrak{s}|_{Y\times\{1\}}), \]
 which is an invariant of $(\mathfrak{W},\mathfrak{s})$. Furthermore, 
 the map $F_{\mathfrak{W},\mathfrak{s}}$ satisfies the composition law,
 i.e. if $\mathfrak{W}=\mathfrak{W}^1\cup\mathfrak{W}^2$, where the
 disoriented link cobordisms $\mathfrak{W}^1$ and $\mathfrak{W}^2$
 satisfy the same conditions as $\mathfrak{W}$, then 
\[F_{\mathfrak{W}^2,\mathfrak{s}|_{\mathfrak{W}^2}}\circ
F_{\mathfrak{W}^1,\mathfrak{s}|_{\mathfrak{W}^1}}=F_{\mathfrak{W},\mathfrak{s}}.\] 
\end{thm}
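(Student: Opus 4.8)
The plan is to build the map $F_{\mathfrak{W},\mathfrak{s}}$ by decomposing the disoriented link cobordism $\mathfrak{W}$ inside $Y\times I$ into elementary pieces, assign a chain map to each piece, and then prove invariance and functoriality by checking relations among these elementary moves. First I would set up a Morse-theoretic normal form: choosing a generic Morse function on $F$ (compatible with the embedding in $Y\times I$ and with the one-manifold $\cal A$ tracking the $\mathbf p,\mathbf q$ points), I obtain a movie presentation of $\mathfrak W$ as a finite sequence of elementary disoriented link cobordisms --- births and deaths of unknots, saddle (band) moves, isotopies of the link, and crossing changes of the $\cal A$-arcs relative to the link, together with the Heegaard moves (stabilizations, isotopies, handleslides) connecting the diagrams. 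To each such elementary piece I associate a map on $CFL'$: births/deaths are quasi-stabilization maps in the sense of Zemke and Sarkar--Szab\'o adapted to the unoriented setting over $\mathbb F_2[U]$; saddles are the natural band maps counting holomorphic triangles in a Heegaard triple subordinate to the band; isotopies and Heegaard moves are the continuation/transition maps furnished by naturality (the proof of which, over $\mathbb F_2[U]$, follows the Juh\'asz--Thurston argument already invoked in the excerpt). The $\text{Spin}^c$-refinement enters only through the triangle maps and the transition maps, where one sums over the subset of homotopy classes realizing the given torsion class $\mathfrak s$ on $Y\times I$; here the hypothesis that $\mathcal F_*\colon H_2(F,\partial F)\to H_2(Y\times I,\partial(Y\times I))$ is trivial is exactly what is needed so that this $\text{Spin}^c$-label is well-behaved under the elementary moves and does not interact with periodic domains coming from the surface.

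Next I would establish that the composite $F_{\mathfrak W,\mathfrak s}$ of these elementary maps is independent of all choices. By Carter--Saito-type movie-move calculus for surfaces in $Y\times I$ (suitably enriched to track the disorientation data $\cal A$), two movie presentations of ambiently isotopic disoriented cobordisms are related by a finite list of local moves. It therefore suffices to verify, at the chain-homotopy level, one relation per movie move: commutation of far-apart elementary maps, the "swallow-follow" and handleslide relations governing births/deaths versus saddles, the relations coming from Reidemeister-type changes of $\cal A$, and compatibility with the Heegaard-move transition maps. Many of these reduce to standard computations already present in the oriented theory (Zemke's work, and the grid-diagram computations of Ozsv\'ath--Stipsicz--Szab\'o), but each must be re-checked over $\mathbb F_2[U]$ with the $\delta$-grading bookkeeping and with the $\cal A$-orientation data, since it is precisely the presence of $\cal A$ that distinguishes disoriented cobordisms from bare unoriented ones. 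Once invariance holds, the composition law $F_{\mathfrak W^2,\mathfrak s|_{\mathfrak W^2}}\circ F_{\mathfrak W^1,\mathfrak s|_{\mathfrak W^1}}=F_{\mathfrak W,\mathfrak s}$ is essentially formal: stacking a movie for $\mathfrak W^1$ on top of one for $\mathfrak W^2$ gives a movie for $\mathfrak W=\mathfrak W^1\cup\mathfrak W^2$, and the $\text{Spin}^c$-structures glue because $\mathfrak s$ restricts compatibly; one only needs that the transition map inserted at the gluing interface is the identity up to the canonical identification of complexes, which again is naturality.

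The main obstacle I expect is the interaction between the disorientation data $\cal A$ and the holomorphic-triangle maps assigned to saddles, together with the $\text{Spin}^c$-refinement. Unlike in the oriented setting, the complex $CFL'$ does not split along $\text{Spin}^c$-structures of the link complement in the usual Alexander-grading sense, so I must show that the band maps are genuinely chain maps for the $\delta$-grading and that the $\mathfrak s$-decorated count is well defined --- i.e., that the relevant moduli spaces of triangles with fixed asymptotics and fixed $n_{\mathbf O}$ are compact and cut out the expected zero-dimensional manifolds, and that changing the band presentation (by the relevant movie moves among saddles and $\cal A$-isotopies) changes the map by a chain homotopy. This is where the triviality of $\mathcal F_*$ is used most seriously: it guarantees that there is no obstruction, coming from the homology class of $F$ in the four-manifold, to pinning down a single $\text{Spin}^c$-structure on $Y\times I$ and to the additivity of this structure under composition --- which is exactly the point at which the excerpt warns that, for general $\cal W$, "we can not establish the correspondence between the $\text{Spin}^c$-structure for the four-manifold and the equivalence class of triangles." Handling this carefully, and organizing the movie-move verifications so that the $\cal A$-data is tracked consistently throughout, is the technical heart of the argument.
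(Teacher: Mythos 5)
Your overall strategy --- decompose the cobordism in $Y\times I$ into elementary pieces via a Morse-theoretic movie presentation, assign a chain map to each, and verify invariance by checking relations under movie moves --- matches the paper's in outline. But there is a genuine gap: you never address how the band maps are pinned down in the unoriented setting. To define the triangle map for a saddle you must select a distinguished top-$\delta$-degree cycle in $HFL'$ of the $\beta\gamma$-diagram, and unlike in the oriented case (where the $\mathbf{w}/\mathbf{z}$-coloring and the gradings $gr_{\mathbf w}, gr_{\mathbf z}$ single one out), here the top-$\delta$-degree subspace of $HFL'(\cal{H}_{\beta\gamma},\mathfrak s_0)$ is two-dimensional (Corollary~\ref{sec:textsp-struct-delta}). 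To break the tie one must first choose a set of basepoints on the link --- i.e.\ lift the disoriented cobordism to what the paper calls a bipartite disoriented cobordism, adding the data (D3) --- and the band must then be declared adjacent to a specific pair $(o_i,o_j)$ (Lemma~\ref{sec:dist-top-grad}, Theorem~\ref{sec:bipart-disor-link}). This auxiliary choice is \emph{not} part of the disoriented cobordism, so a separate independence-of-lifting argument is required; the paper devotes Step 5 of its proof and all of Propositions~\ref{thm:abband} and~\ref{thm:abquasi} (the $\alpha$/$\beta$ band-move and quasi-stabilization relations, involving basepoint-moving and renumbering maps) to exactly this. Your movie-move calculus only compares different Morse presentations of a fixed decorated cobordism and hence never touches this issue; calling the saddle maps ``the natural band maps'' sweeps the whole problem under the rug.

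A smaller misattribution: you assign the triviality of $\mathcal F_*\colon H_2(F,\partial F)\to H_2(Y\times I,\partial(Y\times I))$ the role of pinning down the $\mathrm{Spin}^c$-structure on triangles and making the $\mathrm{Spin}^c$-label behave under composition. In the paper that role is played by the hypothesis $W=Y\times I$: for an arbitrary two-handle cobordism there is no canonical $n$-tuple $\mathbf q$ to define $\mathfrak s_{\mathbf q}$ when $F$ is non-orientable, and the map from equivalence classes of triangles to $\mathrm{Spin}^c(X_{\alpha\beta\gamma})$ may depend on $\mathbf q$ (Remark~\ref{rem:reason}); restricting to a cylinder is what makes the class $[\Delta]$ unique for a given $\mathfrak s$. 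The triviality of $\mathcal F_*$ is instead used to ensure the boundary bipartite links are null-homologous (so $\mathfrak s_\delta$ is well-defined independent of the alternating coloring and the $\delta$-grading is a $\mathbb Z$-grading). Finally, your list of elementary pieces should include quasi-(de)stabilizations (index zero/one critical points of $\pi|_A$), not ``crossing changes of the $\cal A$-arcs'' --- these are the cobordisms adding or removing a $p,q$-pair, and they have their own maps and relations to verify.
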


Note that a similar statement appears in \cite[Theorem
A,B]{Zemke2016a} for arbitrary oriented link cobordisms (not only for
cylinders $Y\times I$). 

In order to construct the cobordism map $F_{\mathfrak{W}}$, we
introduce another group of data (D3) on the surface $(F,\partial F)$,
which allows us to extract Heegaard data. In detail, the data (D3)
tracks the motion of basepoints: this is a one-manifold
$\cal{A}_{\Sigma}=(\cal{A}_{\Sigma},\partial\cal{A}_{\Sigma})$ embeded
in $(F,\partial F)$. The boundary $\partial A _{\Sigma}$ are
basepoints $O^0$ in $Y^0$ and $O^1$ in $Y^1$. Furthermore the
one-manifold
 $\cal{A}_{\Sigma}$ cut the surface $F$ into two parts,
$F_{\alpha}$ and $F_\beta$, each of which is a collection of surfaces
(can be non-orientable) embeded in $Y$. 

The workflow of our construction is shown in Figure \ref{fig:intro-workflow}. 

\begin{enumerate}[\textbf{Step} 1:]
\item we lift the disoriented link cobordism (containing data (D1) and
  (D2)) to a bipartite disoriented link cobordism (containing data
  (D1),(D2) and (D3)). For the definition of bipartite disoriented
  link cobordism see Section \ref{sec:categ-3:-bipart}. This lifting is not unique, see
  Section \ref{sec:relat-betw-three}.
\item we construct cobordism maps for the bipartite
  disoriented link cobordism. In fact, we extract Heegaard data from
  (D1)+(D3). The groups of data (D1)+(D2) help us choose generators
  when defining maps induced by band moves and quasi-stabilizations.  
\item we show that the cobordism maps defined in
  Step 2 are independent of liftings (or the data (D3) in other words),
  at the level of $HFL'$.  
\end{enumerate}

\begin{figure}
  \centering
  \includegraphics[scale=0.6]{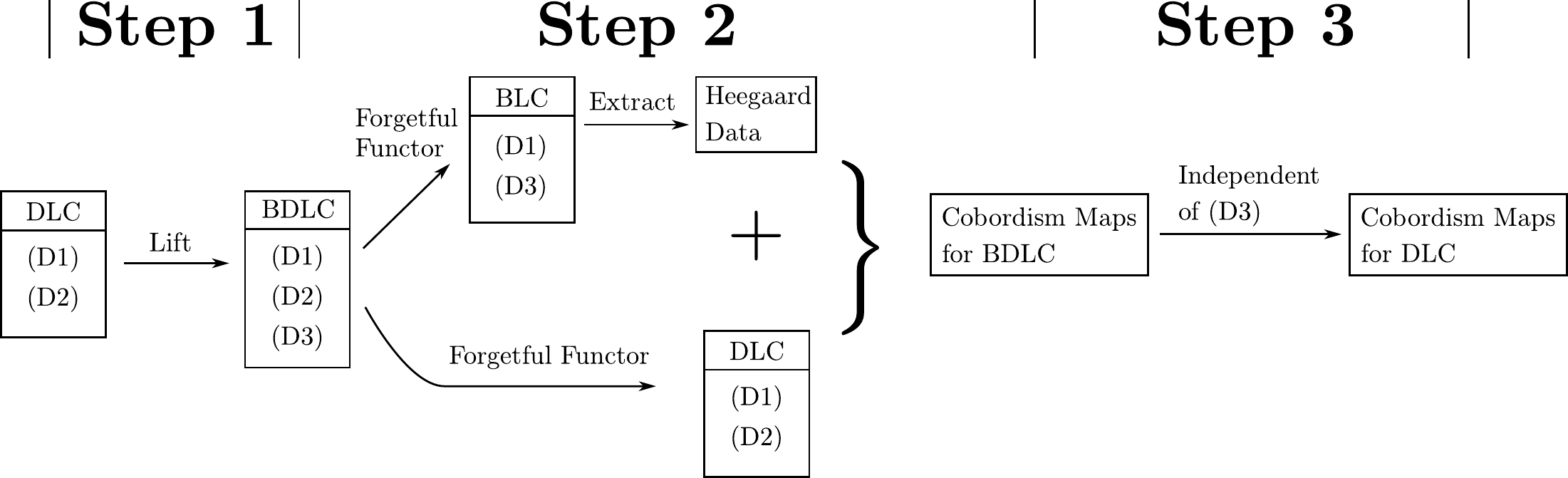}
  \caption{Workflow of the construction: DLC (disoriented link
    cobordism), BLC (bipartite link cobordism), BDLC (bipartite
    disoriented link cobordism); the data (D1) is the link cobordism
    surface; the data (D2) is the motion of index zero/three critical
    points; the data (D3) is the motion of basepoints.}
  \label{fig:intro-workflow}
\end{figure}

\subsection{The difference between unoriented cobordism and oriented
  cobordism}

For an oriented band move, the number of link components will be
changed. On the other hand, there is at least one pair of basepoints for
each link component. Therefore, we deduce that, for a pointed Heegaard
triple subordinate to an oriented band move, we need at least four
basepoints on the Heegaard triple. 

However, when the cobordism surface $(F,\partial F)$ is
non-orientable, a band move may not change the number of link
components. Furthermore, there exists a two-pointed Heegaard
triple subordinate to an unoriented band move between two knots. One
example of such a cobordism is given below.

We consider a band move from the trefoil to the unknot shown in Figure
\ref{fig:intro-03}. This is an example of a band move of type I (see
section \ref{sec:categ-2:-bipart} for the definition of type I and type II band move). For the
Heegaard triple subordinate to a type I band move, the induced
diagram $H_{\beta\gamma}$ no longer represents an unlink, but it is still
a homologically even link in
$\#^n(S^1\times S^2)$. One can compare this fact for type I band moves
with the fact about oriented band moves in \cite[Lemma
6.6]{Zemke2016a}. To deal with the link represented by $H_{\beta\gamma}$, it is necessary to build the unoriented link
Floer homology theory for homologically even links.

\begin{figure}
  \centering
  \includegraphics[scale=0.8]{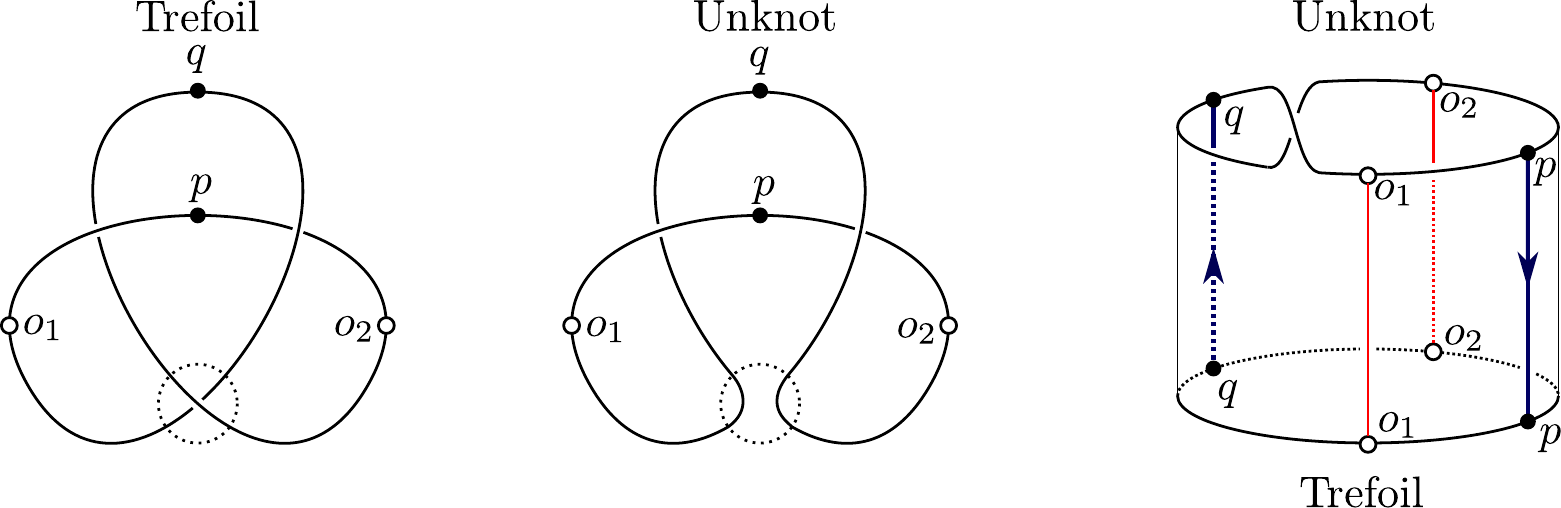}
  \caption{Band move from trefoil to unknot.}
  \label{fig:intro-03}
\end{figure}

\subsection{Organization}

In Section 2, we recall some notions of link Floer homology. In
Section 3, we introduce three link categories: disoriented links,
bipartite links and bipartite disoriented links. We will also discuss
the relation between the three categories. In Section 4, we will
construct the Heegaard triple subordinate to a band move of bipartite
links. In Section
5, we will construct the bipartite link Floer curved chain complex. We
focus our discussions on $\text{Spin}^c$-structure, admissibility and
associativity. In Section 6, we will construct the cobordism map for
band moves of bipartite disoriented links on unoriented link Floer chain complex. Particularly,
we will compare our construction with the band move maps
defined by
Ozsv\'{a}th, Stipsicz and Szab\'{o} in \cite{Ozsvath2015} and the version of cobordism maps
defined by Manolescu in \cite{Manolescu2007}. In Section 7, we define
the bipartite link cobordism maps induced by quasi-stabilizations/destabilizations and
disk-stabilizations/destabilizations. In
Section 8, we will prove the commutation between certain cobordism
maps. We also provide relations between the cobordism maps induced by
band moves ( and quasi-stabilization/destabilizations resp.). 
 In Section 9, we will prove Theorem \ref{sec:introduction}.

\subsection{Further developments and possible applications}

As an application of the  disoriented link cobordism theory,
we will extend the involutive upsilon invariant defined by Hogancamp
and Livingston \cite{Hogancamp2017} from
knots to links. Furthermore, we will study the relation between
involutive upsilon invariant and the unoriented four-ball genus for
 disoriented link cobordism. These discussions will appear in
an upcoming paper \cite{Fan2018}.

Another question one can think about is the following:

\begin{quest}
  Suppose we have a disoriented link cobordism
  $\mathfrak{W}=(\mathcal{W},\mathcal{F},\mathcal{A})$ from
  $(Y^0,\mathcal{L}^0)$ to $(Y^1,\mathcal{L}^1)$. Here we no longer require
   the four-manifold $W$ is cylindrical. Given a torsion
  $\textnormal{Spin}^c$-structure $\mathfrak{s}$, can we still define
  a map $F_{\mathfrak{W},\mathfrak{s}}:
  HFL'(Y^0,\mathcal{L}^0,\mathfrak{s}|_{Y^0})\rightarrow HFL'(Y^1,\mathcal{L}^1,\mathfrak{s}|_{Y^1})$? If so,
  can we get a $\delta$-grading shifts formula for the map $F_{\mathfrak{W},\mathfrak{s}}$?
\end{quest}

\subsection{Acknowledgement}

I am grateful to my advisor, Ciprian Manolescu for the helpful suggestions and
especially for the comments on bipartite disoriented link cobordism
and quasi-stabilization. I am also grateful to Ian Zemke for the discussion on the band moves for unoriented link cobordism.

The author was partially supported by the FRG grant DMS-1563615 from
the NSF. 

\section{Preliminaries}\label{sec:pre}

\subsection{Multi-pointed links}\label{sec:multi}

In this subsection, we recall the Heegaard diagrams for oriented
multi-pointed links. For details, see \cite{Ozsvath2008b}.

\begin{defn}\label{def:multi}
  An \textit{\textbf{oriented multi-pointed link}} is a triple $(Y,L,\mathbf{w},\mathbf{z})$,
  such that:
\begin{itemize}
\item $Y$ is a closed three-manifold, $L$ is an oriented link in $Y$.
\item The set
  $\mathbf{w}=\{w_1,\cdots,w_n\},\mathbf{z}=\{z_1,\cdots,z_n\}$ are
  collection of basepoints on 
  $L$.
\item On each component $L_i$ of $L$, there are basepoints
  $w_{ij},z_{ij}$ appear alternatively on $L_i$, where $j=1,\cdots,n_i$. 
\end{itemize}
\end{defn}

We can construct a
self-indexed Morse function $f$ \textit{\textbf{compatible}} with the triple
$(Y,L,\mathbf{w},\mathbf{z})$, such that, the link $L$ is a union of trajectories
connecting index three and zero critical point of $f$.
A trajectory on $L$
intersect with the surface $\Sigma= f^{-1}(\frac{3}{2})$ at a point $o$. If
the direction of $L$ agrees with the direction of this trajectory, we mark the
intersection $w$, and denote this trajectory a $w$-arc. Otherwise, we 
mark the intersection $z$, and denote this trajectory a
$z$-arc. 

We say that a \textit{\textbf{Heegaard diagram}} $H=(\Sigma,\alpha,\beta, \mathbf{w},\mathbf{z})$ is
\textit{\textbf{compatible}} with a multi-pointed link $(Y,L,\mathbf{w},\mathbf{z})$, if it comes from a
self-indexed Morse function $f$ compatible with this multi-pointed link
$(Y,L,\mathbf{w},\mathbf{z})$. In detail, this Heegaard diagram $H$ should satisfy:
\begin{itemize}
\item The surface $\Sigma$ is identified with $f^{-1}(\frac{3}{2})$.
\item The $\boldsymbol{\alpha}$-curves on $\Sigma$ are the intersections
  of $\Sigma$ with the stable manifolds of all index one critical
  points of $f$. 
\item The $\boldsymbol{\beta}$-curves on $\Sigma$ are the intersections
  of $\Sigma$ and the unstable manifolds of all index two critical
  points of $f$. 
\item The link $L$ as a union of trajectories of $f$ intersects with
  $\Sigma$ at a $w$ basepoint if this trajectory has the same
  direction with $L$, at a $z$ basepoint if it has the opposite
  direction. 
\end{itemize}

Therefore, we implant the oriented link data
$(L,\mathbf{w},\mathbf{z})$
 into a 2n-pointed Heegaard
diagram. Such a diagram $H$ is a surface $\Sigma$ of genus $g$
  equipped with two families of pairwise disjoint simple closed
  curves, 
  $\{\alpha_1,\cdots,\alpha_{g+n-1}\}$ and
  $\{\beta_1,\cdots,\beta_{g+n-1}\}$ and two sets of basepoints
  $\mathbf{w}=\{w_1,\cdots,w_n\}$ and $\mathbf{z}=\{z_1,\cdots,z_n\}$,
  satisfying:

  \begin{itemize}
  \item  The vector spaces
    $\text{Span}(\alpha_1,\cdots,\alpha_{g+n-1})$ and
    $\text{Span}(\beta_1,\cdots,\beta_{g+n-1})$ in $H_1(\Sigma)$ is
    g-dimensional. 

  \item  The space $\Sigma\backslash \bs{\alpha} $ has $n$ connected sets
    $A_1,\cdots,A_n$. Similarly, the space $\Sigma\backslash
    \bs{\beta}$ also has $n$ connected sets $B_1,\cdots,B_n$.
  \item Each component $A_i$ (or $B_j$ resp.) contains exactly
    one pair  of basepoints
    $(w,z)$ (or $(z,w)$ respectively) in
    $(\{\mathbf{w}\},\{\mathbf{z}\})$ (or in
    $(\{\mathbf{z}\},\{\mathbf{w}\})$ resp.).
\end{itemize}

\subsection{Link Floer homology}\label{sec:pre:lf}

In this section we recall the construction of various versions of the link Floer
chain complex $CFL^\circ$ from a $2n$-pointed Heegaard diagram. For
details, see \cite{Ozsvath2008b},\cite{Manolescu2010},\cite{Zemke2016}.

Consider a $2n$-pointed Heegaard diagram $H$ representing the oriented link
$(L,\mathbf{w},\mathbf{z})$. Suppose $L$ is null-homologous in
$Y$. 
The generators of $CFL^\circ$ are the intersections of the two
Lagrangian submanifolds $\mathbb{T}_\alpha$ and $\mathbb{T}_\beta$
inside $\text{Sym}^{g+n-1}(\Sigma)$. We view a generator as a
$(g+n-1)$-tuple of points $\x=(x_1,\cdots,x_{g+n-1})$, where $x_i$'s
are the intersections 
 between $\bs{\alpha}$ and 
$\bs{\beta}$ curves on $\Sigma$.

For each of the generator $\x\in
\mathbb{T}_\alpha\cap \mathbb{T}_\beta$, we can construct a
$\text{Spin}^c$-structure $\mathfrak{s}_{\mathbf{w}}(\x)$ by
picking a non-varnishing vector field on the complement of $Y$ by removing
neighborhoods of flow lines through basepoints $\mathbf{w}$ and the
generator $\x$. Similarly, we can construct $\text{Spin}^c$-structure
map $\mathfrak{s}_{\mathbf{z}}$ from
$\mathbb{T}_\alpha\cap\mathbb{T}_\beta$ to
$\text{Spin}^c(Y)$. 
 Furthermore, the difference 
$\mathfrak{s}_{\mathbf{w}}(\x)-\mathfrak{s}_{\mathbf{z}}(\x)$ is equal
to the Poincar\'{e} dual $\text{PD}[L]$ of the class $[L]\in H_1(Y)$.
As the link $L$ we consider is null-homologous, there is no difference
between the two $\text{Spin}^c$-structure
$\mathfrak{s}_{\mathbf{w}}(\x)$ and $\mathfrak{s}_{\mathbf{z}}(\x)$.
(For details of the construction, see \cite[Section 2.6]{Ozsvath2004c})

Suppose $\mathfrak{s}$ is an torsion $\text{Spin}^c$-structure of
$Y$. We can associate a Maslov $\mathbb{Z}$-grading $gr_{\mathbf{w}}$
for all generators $\x$ with
$\mathfrak{s}_{\mathbf{w}}(x)=\mathfrak{s}$. As $L$ is
null-homologous, we also have a Maslov $\mathbb{Z}$-grading
$gr_{\mathbf{z}}$ for generators in class $\mathfrak{s}$. The
difference $gr_{\mathbf{w}}(\x)-gr_{\mathbf{z}}(\x)$ is equal to twice
of the Alexander grading $\mathbf{A}(\x)$.   

A \textit{\textbf{Heegaard Data}} $\cal{H}$ is a pair $(H,J_t)$, where $J_t$ is a generic one parameter
family of almost complex structure on $\text{Sym}^{g+n-1}(\Sigma)$. 
We assign to each basepoint $w_i$ a variable $U_i$ and
to each basepoint $z_j$ a variable $V_j$. 
Given a
strongly $\mathfrak{s}$-admissible Heegaard data for the link
$(Y,L,\mathbf{w},\mathbf{z})$, we can define the a free
$\mathbb{F}_2[U_{1},\cdots,U_{n},V_{1}\cdots, V_n]$-module $CFL^-(\cal{H},\mathfrak{s})$
with the generators $\x\in \mathbb{T}_\alpha\cap\mathbb{T}_\beta$ with
$\mathfrak{s}_{\mathbf{w}}(\x)$ equal the given
$\text{Spin}^c$-structure $\mathfrak{s}$. The module
$CFL^\infty(\cal{H},\mathfrak{s})$, which contains elements of the
form $\prod_{i,j} U^{k_i}_i V^{l_j}_j\x$, is the localization of
$CFL^-(\cal{H},\mathfrak{s})$. 
We denote by $CFL^+(\cal{H},\mathfrak{s})$
the quotient module
$CFL^\infty(\cal{H},\mathfrak{s})\slash CFL^-(\cal{H},\mathfrak{s})$.

There is an endomorphism 
\[\partial: CFL^\circ(\cal{H},\mathfrak{s})\rightarrow
  CFL^\circ(\cal{H},\mathfrak{s}),\]
which makes $CFL^\circ(Y,\lk,\mathfrak{s})$ into a curved chain
complex (see \cite{Zemke2016} and \cite{Zemke2016a}). 
In detail, the endomorphism acts on a generator $\x$ is given
by:
\begin{equation}\label{eq:1}
  \partial \x = \sum_{\y\in
    \mathbb{T}_\alpha\cap\mathbb{T}_\beta}\sum_{\phi\in
    \pi_2(\x,\y),\mu(\phi)=1}\#(\cal{M}(\phi)/\mathbb{R})\prod_{i,j} U_i^{n_{w_i}(\phi)}V_j^{n_{z_j}(\phi)}\y
\end{equation}

From \cite[Lemma 2.1]{Zemke2016}, we have 

\begin{equation}\label{eq:endocurve}
    \partial^2 = \sum_{i}(U_{i,1}V_{i,2}+V_{i,2}U_{i,2}+\cdots+U_{i,n_i}V_{i,1}+U_{i,1,}V_{i,1}).
\end{equation}

Here $i$ refers the $i$-th component $L_i$ of $L$. We assign to each
basepoint $w_{i,j}$ a variable $U_{i,j}$ and to each basepoint
$z_{i,j}$ a variable $V_{i,j}$. The basepoints
$w_{i,1},z_{i,2},\cdots,w_{i,n_i},z_{i,1}$ appears clockwise on $L_i$.

\begin{rem}
  Usually, the chain complex $CFL^-(\cal{H},\mathfrak{s})$ refers to
  the complex defined by setting all $V_j=1$. When we say a curved
  chain complex 
  $CFL^-(\cal{H,\mathfrak{s}})$, we mean the module together with the
  endomorphism defined in \eqref{eq:1}.
\end{rem}

\subsection{Unoriented link Floer homology}\label{sec:pre:ulfh}

The unoriented link Floer chain complex $CFL'$ was first introduced in 
\cite{Ozsvath2015} as a special case of the $t$-modified link Floer chain complex
$tCFK$ by setting $t=1$. 

We let the complex 
$CFL'(\cal{H},\mathfrak{s})$ be the tensor product of the curved chain
complex $CFL^-(\cal{H},\mathfrak{s})$ with the quotient ring $ R =
\mathbb{F}_2[U_1,\cdots,U_n,V_1,\cdots,V_n,U]/I$, where the ideal 
$I$ is generated by all $U_i-U$ and $V_j-U$. The endomorphism 
\[ \partial \x = \sum_{\y\in
  \mathbb{T}_{\alpha}\cap\mathbb{T}_{\beta}}\sum_{\phi\in
  \pi_2(\x,\y),\mu(\phi)=1}\#(\cal{M}(\phi)/\mathbb{R})U^{n_{\mathbf{w}}(\phi)+n_{\mathbf{z}}(\phi)}\y,
\]
becomes a differential. Here $n_{\mathbf{w}}=\sum_i n_{w_i} \text{,}
  n_{\mathbf{z}}=\sum_i n_{z_i}$.
  
  As before, we assume the link $L$ is null-homologous and the
$\text{Spin}^c$-structure $\mathfrak{s}$ is torsion. Then the two
 $\mathbb{Z}$-grading $gr_{\mathbf{w}}(\x)$ and $gr_{\mathbf{z}}(\x)$ are well-defined.
 The \textbf{$\delta$-grading} of the generator $\x$ in
 $CFL'(\cal{H},\mathfrak{s})$ is defined to be: 
\begin{equation}
\delta(\x)=\frac{1}{2}(gr_{\mathbf{w}}(\x)+gr_{\mathbf{z}}(\x)).
\end{equation}
 We call the $\delta$-graded chain complex
$CFL'(\cal{H},\mathfrak{s})$ the \textbf{\textit{unoriented chain
    complex}}. 
The homology group $H_*(CFL'(\cal{H},\mathfrak{s}))$ is
called the \textit{\textbf{unoriented link Floer homology}}. As we
assign all basepoints $\mathbf{w}$ and $\mathbf{z}$ the same variable
$U$, we lose the infomation of the orientation of $L$. Hence $H_*(CFL'(\cal{H},\mathfrak{s}))$ is an invariant for unoriented
link.   

\subsection{Unoriented grid homology}\label{sec:pre:ugh}  

Grid diagrams provide us with a way to describe the Floer homology of
 oriented links in 
 $S^3$ combinatorially, see 
\cite{Manolescu2007a},\cite{Manolescu2009}. In this subsection, we briefly talk about
how to calculate the unoriented link Floer homology from a grid diagram.
For details, see \cite{Ozsvath2015}.

Suppose $\mathbb{G}$ is a special $2n$-pointed toroidal Heegaard
diagram such that each connected component of $\Sigma - \bs{\alpha}$
or $\Sigma - \bs{\beta}$ is an annulus. Conventionally, we denote the
$w$-basepoint as $X$ and $z$-basepoint as $O$ in the grid diagram.

Given such a grid diagram $\mathbb{G}$, we can define \textit{grid
  chain complex} 
$GC^\circ(\mathbb{G})$ and its \textit{grid homology} $H_*(GC^\circ(\mathbb{G}))$. A generator of grid chain complex
$\mathbb{G}$ is an $n$-tuple $\x=\{x_1,\cdots,x_n\}$, such that each
$\alpha$ and $\beta$-curves contain exactly one of the $x_i$'s.We call these generators \textit{grid states} and denote the
set of all grid states $\mathbf{S}(\mathbf{G})$.

Let Rect$(\x,\y)$ be the set of rectangles from $\x$ to $\y$ and the
weight $\mathbf{W}(r)$ be equal to 
$\#(r \cap (\mathbb{X}\cup\mathbb{O}))$. The 
unoriented grid chain complex $GC'(\mathbb{G})$ is a $\delta$-graded
$\mathbb{F}_2[U]$-module freely generated by grid states $\x$, with
differential
\[\partial \x=\sum_{\y\in\mathbf{S}(\mathbb{G})}\sum_{r\in
    \text{Rect}^0(\x,\y)}U^{\mathbf{W}(r)}\y.\]

Given a grid diagram $\cal G$, the delta grading of a generator $\x$
can be calculated as follows.
Let $P,Q$ be two finite subset of $\mathbb{R}^2$. The function $\cal
I(P,Q)$ count the number of pairs $\mathbf{p}\in P$ and $\mathbf{q}\in
Q$, such that the vector $\mathbf{q}-\mathbf{p}$ lie in the first
quadrant. We define a symmetric function:
\[\mathcal{J}(P,Q)=\frac{\mathcal{I}(P,Q)+\mathcal{I}(Q,P)}{2}\].

Then the $\delta$-grading is given by the formula:
\begin{equation}
  \delta(\x)=\frac{1}{2}(\mathcal{J}(\x-\mathbb{O},\x-\mathbb{O})+\mathcal{J}(\x-\mathbb{X},\x-\mathbb{X}))+\frac{n-l}{2}+1
\end{equation}

We know that 
if $\cal{H}$ is a Heegaard diagram induced from a grid diagram
$\mathbb{G}$, then the chain complex $(CFK^{-}(\cal{H},\partial^-_K)$
is isomorphic to $(GC^-(\mathbb{G}),\partial_{\mathbb{X}}^-)$. In
fact, there is an identification of the bigrading in grid homology and
link Floer homology. Similar result holds for unoriented chain
complex, i.e. there is an isomorphism between the $\delta$-graded
chain complexes $CFL'(\cal{H},\partial)$ and $GC^-(\mathbb{G},\partial_{\mathbb{X}})$.

\section{Link categories}
In this section we introduce three link categories: disoriented links,
bipartite links, and bipartite disoriented links. 

\subsection{Category 1: disoriented links}\label{sec:lc:dl}
The idea of this
category comes from the Morse theory for links. In a disoriented link
cobordism, we keep track of the motion of the index zero and index
three critical points of the disoriented links.

\begin{defn}
  A \textbf{\textit{disoriented link}} is a link $L$ in a closed
  oriented three-manifold $Y$, together with two sets of
points $\mathbf{p}=\{p_1,\cdots,p_n\}$ and $\mathbf{q}=\{q_1,\cdots,q_n\}$ on $L$ such that $p_i$ and
$q_j$ appear alternatively on each component of $L$. These points
cut the link $L$ into $2n$-arcs $\mathbf{l}=\{l_1,\cdots,l_{2n}\}$,
which we orient from $q$ to $p$ such that:
\[\partial \mathbf{l} = \partial l_1+\cdots+\partial l_{2n}= 2
  (p_1+\cdots+p_{n})-2(q_1+\cdots+q_n).\]
We denote a disoriented link by
$\cal{L}=(L,\mathbf{p},\mathbf{q},\mathbf{l})$. We call the points
$\mathbf{p}$ and $\mathbf{q}$ the \textbf{\textit{dividing set}} of
 the disoriented link $\cal{L}$. See Figure \ref{fig:intro-01} for an
 example of disoriented link.

\end{defn}

\begin{rem}
  
The idea of disoriented link comes from the construction of a Morse
function $f$
compatible with a given oriented link $(Y,L)$. We think of $L$ as a
union of trajectories 
$\mathbf{l}$ and forget the $w$ markings and $z$ markings (hence
forget the orientation) of $L$. The points $\mathbf{p}$ play the role of index zero
critical points of $f$. The points $\mathbf{q}$ play the role of index
three critical points of $f$. 

\end{rem}

\begin{defn}
  A \textit{\textbf{surface with divides}} is an embedding: 
\[\cal{A}:(A,\partial A)\hookrightarrow (F,\partial F)\]
such that,
\begin{itemize}
\item The pair $(A,\partial A)$ is a compact, oriented one-manifold. 
\item The pair $(F,\partial F)$ is a compact surface and does not need to be
  orientable. 
\item The components of $F\backslash A=\{F_1,\cdots,F_k\}$ are
  compact oriented surfaces with orientation induced from the
  one-manifold $(A,\partial A)$. 
\end{itemize}
\end{defn}

\begin{rem}
Our definition of surface with divides is a generalization of that is in
\cite{Juhasz2016}. In \cite{Juhasz2016}, for an oriented
link cobordism, the 
orientation of the surface $F$ induces the orientation on each piece
$F_i$. For $F_i$ with $w$ basepoints, the orientation agrees with the
orienation induced by the oriented one manifold $\cal{A}$. For $F_i$
with $z$ basepoints, the orientation agrees with the opposite of the
orienation induced by the oriented one manifold $\cal{A}$.
\end{rem}

\begin{defn} Suppose we have a disoriented link
  $\cal{L}^0=(L^0,\mathbf{p}^0,\mathbf{q}^0,\mathbf{l}^0)$ in a closed oriented three-manifold
  $Y^0$, and a disoriented link
  $\cal{L}^1=(L^1,\mathbf{p}^1,\mathbf{q}^1,\mathbf{l}^1)$ in a closed oriented three-manifold $Y^1$.
  A \textit{\textbf{disoriented link cobordism}} from the disoriented link
  $(Y^0,\cal{L}^0)$ to $(Y^1,\cal{L}^1)$
is a triple $\mathfrak{W}=(\cal{W,F,A})$ such that:
\begin{itemize}
\item The pair $\cal{W}=(W,\partial W)$ is an oriented cobordism from 
  $Y^0$ to $Y^1$.
\item  The map $\cal{F}:(F,\partial F)\rightarrow (W,\partial
W)$ is a smooth embedding of surface $(F,\partial F)$.
\item The embedding $\cal A:(A,\partial A)\hookrightarrow (F,\partial
  F)$ is a surface with divides.
\item The boundary $\partial A$ is the union of points
  $\mathbf{q}^0-\mathbf{p}^0+\mathbf{p}^1-\mathbf{q}^1$. Furthermore, the intersection $(\partial A\cap Y^0)$ is
  $\mathbf{q}^0-\mathbf{p}^0$ and the intersection $(\partial A\cap Y^1)$ is
  $\mathbf{p}^1-\mathbf{q}^1$.  

\item The boundary $\partial (F\backslash A) =\partial
  F_1+\cdots+\partial F_n$ is 
  $-\mathbf{l}^0+\mathbf{l}^1+ 2A $. Furthermore, the intersection $\partial
  (F\backslash A)\cap -Y^0$ is $-\mathbf{l}^0$ and  $\partial
  (F\backslash A)\cap Y^1$ is $\mathbf{l}^1$.
\end{itemize}
\end{defn}

\begin{rem}
  The orientation for the components of $F\backslash A$ is unique and
  determined by the disoriented link $\cal{L}^0$ 
  and $\cal{L}^1$. If $F$ is orientable, then a disoriented
  link cobordism is equivalent to the \textit{decorated link cobordism}
  in \cite{Juhasz2009} and \cite{Zemke2016a}.
\end{rem}

\begin{rem}\label{thm:khovanov}
  In \cite{Clark2009a}, Clark, Morrison and Walker introduced
  disorientation in the link cobordism to make Khovanov homology
  functional with respect to link cobordisms. The disoriented links in our
  definition correpond to a special case of the `disoriented circle'
  with the `disorientation number' 
  equal to zero, see \cite[Lemma 4.4]{Clark2009a}. 
\end{rem}

\begin{exmp}
  In Figure \ref{fig:intro-02}, we show a disoriented link cobordism between two disoriented
  link $L^0$ and $L^1$. The dividing set $A$ (bold blue line) cuts the 
  surface $F$, which is non-orientable in this example, into 
  three components $F_1,F_2,F_3$. Each surface $F_i$ is an
  oriented surface with orientation compatible with the orientation of
  $A$ (marked as the blue arrow). In the three-manifold $Y^1$, the
  orientation of $F_i$ agrees with the orientation of the oriented arcs
  $\mathbf{l}^1$ in $Y^1$. In the three-manifold $Y^0$, the
  orientation of $F_i$ is the opposite of the orientation of
  $\mathbf{l}^0$ in $Y^0$.  
\end{exmp}

\begin{defn}\label{sec:elem-dior-link}
  Suppose the four manifold $W$ is a product $Y\times I$.
  We call a disoriented link cobordism $(\cal{W},\cal{F},\cal{A})$
  from $(Y,\cal{L}^0)$ to $(Y,\cal{L}^1)$
  \textit{\textbf{regular}}, if there exists a projection map
  $\pi:Y\times I\rightarrow  I$ such that:

\begin{itemize}
  \item  The map $\pi|_F$ and $\pi|_{A}$ is a Morse function.
  \item  If $a$ is a regular value for both $\pi|_F$ and $\pi_A$, then
    the triple
    $(\cal{W},\cal{F},\cal{A})\cap \pi^{-1}([0,a])$ is a disoriented
    link cobordism from the disoriented link $(Y,\cal{L}^0)$ to a
    disoriented link $(Y,\cal{L}^a)$.
  \item The index one critical points of $\pi|_F$ do not lie on $A$.
  \item  There is a sequence $\{a_1,\cdots,a_m\}$ of regular values for
  both $\pi|_F$ and 
  $\pi|_A$ such that, there is only one critical points of $\pi|_A$ or
  index one critical point of $\pi|_F$ with its value in
  $(a_i,a_{i+1})$.
\end{itemize}
\end{defn}

\begin{rem}
  The condition (2) in Definition \ref{sec:elem-dior-link} guarantees
  the index two/zero critical points of $\pi|_F$ is included in the
  index one/zero critical points of $\pi|_A$.
\end{rem}

We can decompose a regular cobordism into a composition of four types of
\textit{\textbf{elementary disoriented link cobordisms}}:
\begin{enumerate}
\item \textit{\textbf{Isotopy}} of disoriented links.
\item  \textit{\textbf{Band move}} (saddle move) of disoriented link.
\item \textit{\textbf{Disk-stabilization/destabilization}} of disoriented link.
\item  \textit{\textbf{Quasi-stabilization/destabilization}} of disoriented link.

\end{enumerate}

\begin{rem}
By definition, it is easy to see:
\begin{enumerate}
\item An isotopy contains no critical points of $\pi|_{F}$ or
  $\pi|_A$.
\item A band move contains an
index one critical point of $\pi|_F$.
\item  A
disk stabilization/destabilization contains an index two/zero critical point
of $\pi|_F$.
\item  A quasi-stabilization/destabilization contains an index
one/zero critical points of $\pi|_A$.

\end{enumerate} 
\end{rem}

\begin{figure}
  \centering
  \includegraphics[scale=0.6]{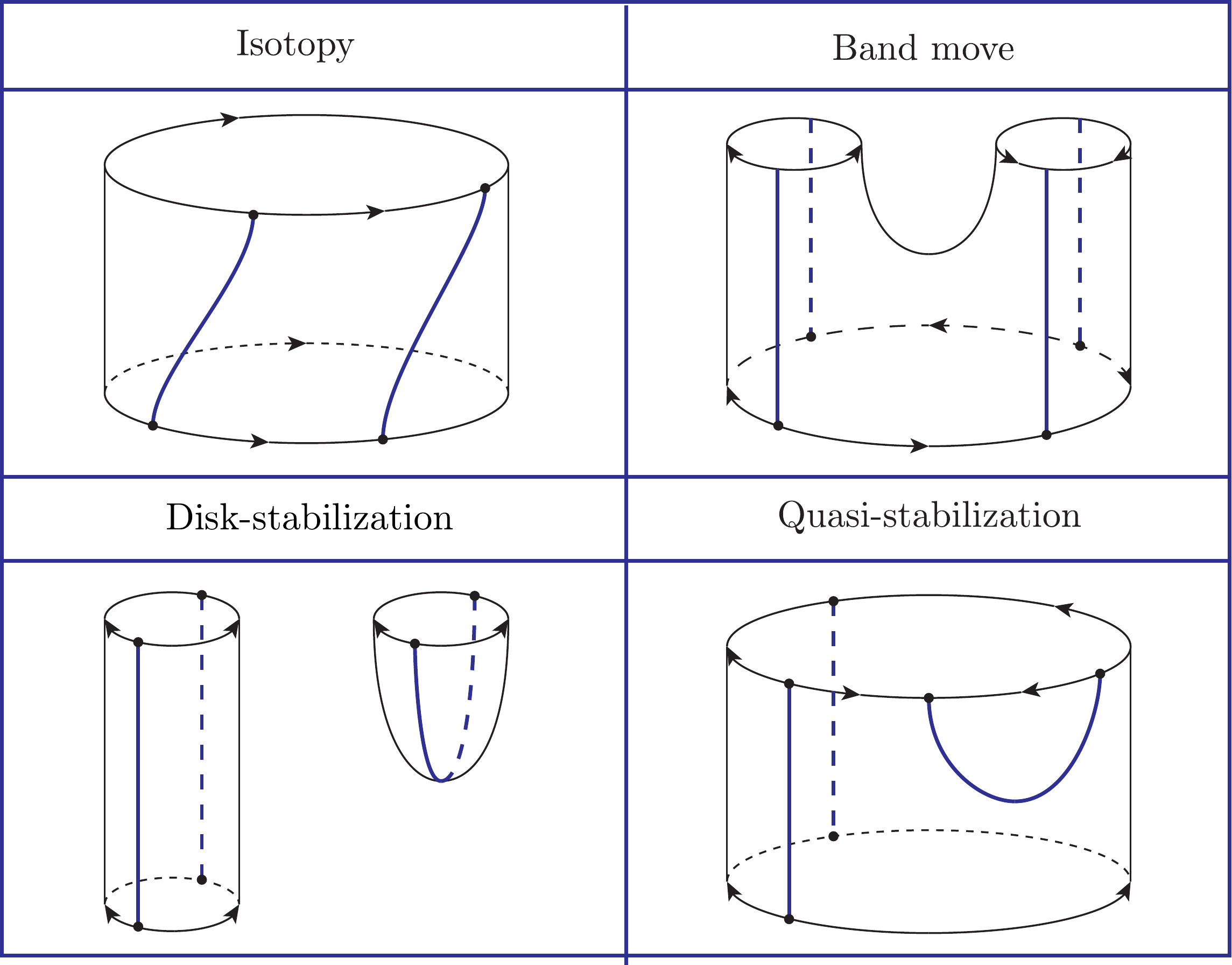}
  \caption{Four types of elementary cobordism.}
  \label{fig:elecob}
\end{figure}

\subsection{Category 2: bipartite links}
\label{sec:categ-2:-bipart}
 In a bipartite link
cobordism, we keep track of the motion of the basepoints of the links.

\begin{defn}
  A \textit{\textbf{bipartite link}} is a link $L$ in a closed
  oriented three-manifold $Y$, together with
  $2n$-basepoints $\mathbf{O}=\{o_1,\cdots,o_{2n}\}$ and two
  $n$-tuples of disjoint embedded arcs
  $L_{\alpha}=\{L_{\alpha,1},\cdots,L_{\alpha,n}\}$ 
  and $L_{\beta}=\{L_{\beta,1},\cdots,L_{\beta,n}\}$ on $L$, such that:
\begin{itemize}
\item The ends $\partial
  L_\alpha=\partial (L_{\alpha,1}\cup\cdots\cup L_{\alpha,n})$ are
  identified with the ends $\partial
  L_\beta=\partial (L_{\beta,1}\cup\cdots\cup
  L_{\beta,n})$. Furthermore, the ends $\partial L_\alpha=\partial
  L_\beta$ are exactly the basepoints $\mathbf{O}$ on $L$. 
\item The union of the two $n$-tuples of arcs $L_\alpha\cup L_\beta$
  is the link $L$.
\end{itemize}
We denote a bipartite link by
$L_{\alpha\beta}=(L,L_\alpha,L_\beta,\mathbf{O})$. See Figure
\ref{fig:bpl} for an example of bipartite link.
\end{defn}

\begin{figure}
  \centering
  \includegraphics[scale=0.8]{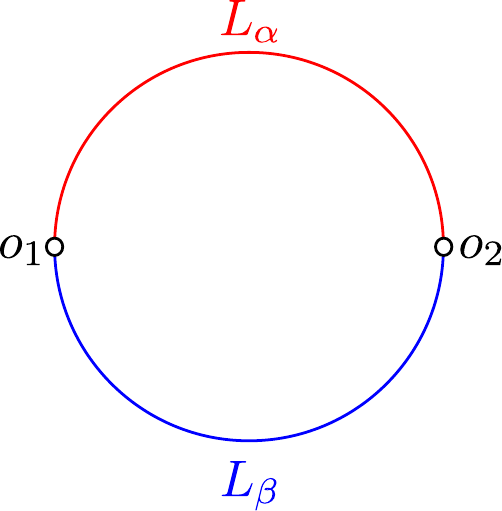}
  \caption{Bipartite link}
  \label{fig:bpl}
\end{figure}

\begin{rem}
  Let $U_\alpha\cup_{\Sigma} U_\beta$ be a Heegaard splitting of a
  three-manifold $Y$. Here $U_\alpha$ and $U_\beta$ are two
  handlebodies, $\Sigma$ is a Heegaard surface. Suppose $L$ is a link
  in $Y$ and intersects $\Sigma$ transversely. Moreover, suppose the
  intersections of $L$ and $U_\alpha$ (or $U_\beta$ resp.) bound compressing
  disks to $\Sigma$. The $2n$-basepoints
  $\mathbf{O}$ play the role of the intersections $L\cap \Sigma$. The $n$-tuple of disjoint embedded arcs $L_\alpha$
  (or $L_\beta$ resp.) play the role of the intersection
  $U_\alpha\cap L$ (or $U_\beta\cap L$ resp.). We do not color the
  basepoints into $w$'s and $z$'s.
\end{rem}

\begin{defn}
  A \textit{\textbf{bipartite link cobordism}} from a bipartite link
  $L_{\alpha\beta}^0$ in $Y^0$ to a bipartite link
  $L_{\alpha\beta}^1$ in $Y^1$ is a quintuple
  $(\cal{W},\cal{F},F_\alpha,F_\beta,\cal{A}_\Sigma)$, such that:
\begin{itemize}
\item The manifold $\cal{W}=(W,\partial W)$ is an oriented cobordism from
  three-manifold $Y^0$ to $Y^1$.
\item The map $\cal{F}:(F,\partial F)\rightarrow (W,\partial W)$ is an
  embedding of a compact surface $(F,\partial F)$ in $(W,\partial W)$.
\item The map $\cal{A}_\Sigma:(A_\Sigma,\partial A_\Sigma)\rightarrow
  (F,\partial F)$ is an embedding of a one-manifold
  $(A_\Sigma,\partial A_\Sigma)$ in $(F,\partial F)$.
\item The surface $F$ is decomposed along $A_\Sigma$ into two
  compact surfaces $F_\alpha$ and $F_\beta$. One side of $A_\Sigma$ on $F$ is
  belong to the interior of $F_\alpha$, the other side is belong to
  the interior of $F_\beta$.
\item The bipartite link $(Y^i,L^i,L^i_\alpha,L^i_\beta,\mathbf{O}^i)$
  is identified with $(Y^i,Y^i\cap\partial F_\alpha,Y^i\cap\partial
  F_\beta,Y^i\cap A_\Sigma)$, where $i=0,1$.
\end{itemize}

\end{defn}

\begin{exmp}
 Figure \ref{fig:bpcob} shows a bipartite link
 cobordism from a bipartite link
 $L_{\alpha\beta}^0$ to a bipartite link
 $L_{\alpha\beta}^1$. The red curves forming
 $A_\Sigma$ cut
 the surface $F$ into four 
 components. Two of the components are belong to $F_\alpha$, the other
 two are belong to $F_\beta$. One of the component of $F_\alpha$ is
 non-orientable. 
\end{exmp}

\begin{figure}
  \centering
  \includegraphics[scale=1]{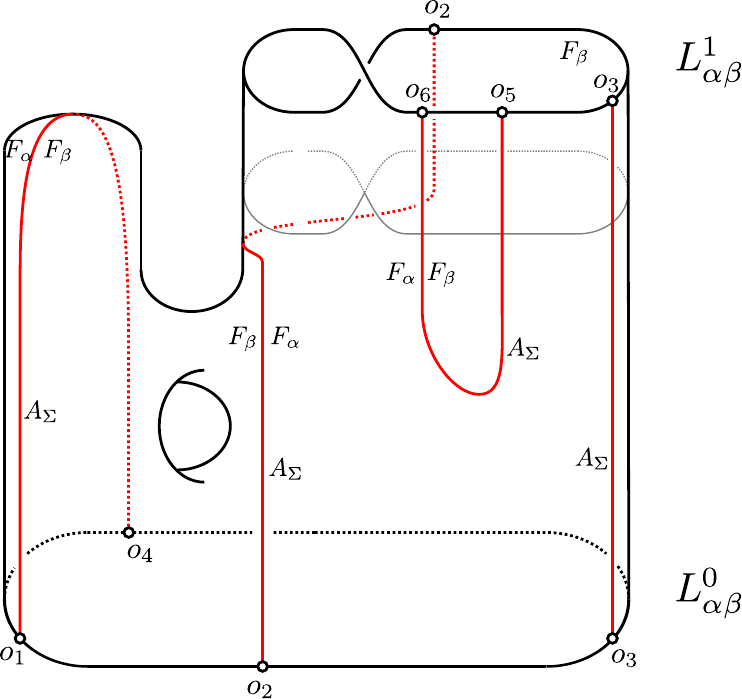}
  \caption{Bipartite link cobordism}
  \label{fig:bpcob}
\end{figure}

We say that a bipartite link cobordism
$(\cal{W},\cal{F},F_\alpha,F_\beta,\cal{A}_\Sigma)$ is
\textit{\textbf{regular}}, if 
it satisfies the same
condition as in Definition \ref{sec:elem-dior-link} with
$\cal{A}_\Sigma$ plays the role of $\cal{A}$ and bipartite links
$L^i_{\alpha\beta}$ play the role of disoriented links
$\cal{L}^i$. Similarly, we can still classify the elementary
bipartite link cobordisms into four types: isotopies, band moves, disk-stabilizations/destabilizations,
quasi-stabilizations/destabilizations.

 Furthermore, we say that
a critical point $p$ of $\pi|_{A_\Sigma}$ or a saddle point of $\pi|_F$ is of \textit{\textbf{$\alpha$-type}}
if $(N_p\backslash p)\cap \pi|_F^{-1}(\pi(p))\subset F_\alpha$,
otherwise; we say that the critical point $p$ is of
\textit{\textbf{$\beta$-type}}. Here $N_p$ is a small neighborhood of
$p$ in $W$.

Suppose $\pi^{-1}[-a,a]$ contains only a saddle critical point
$p$. Without loss of generality, suppose $p$ lies in a component
$F^i_\alpha$ of $F_\alpha$. We call $p$ of \textit{\textbf{Type
    I}}, if $\chi(F^i_\alpha\cap \pi^{-1}[c-\epsilon,c+\epsilon])=0$. If
$\chi(F^{i}_\alpha\cap \pi^{-1}[c-\epsilon,c+\epsilon])=1$, we call $p$ of
\textit{\textbf{Type II}}, as shown in Figure \ref{fig:type12}.

\begin{figure}
  \centering
  \includegraphics[scale=0.5]{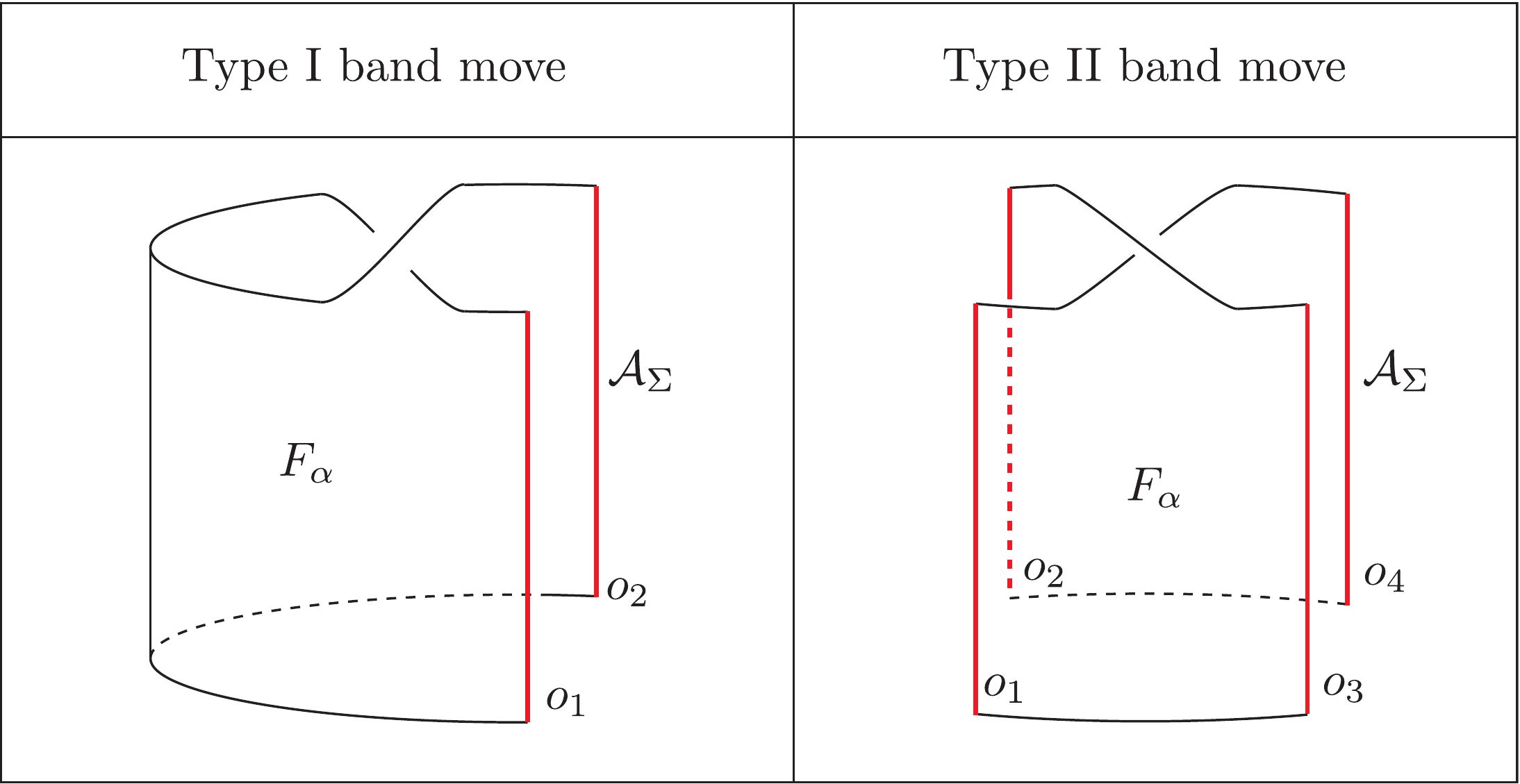}
  \caption{}
  \label{fig:type12}
\end{figure}

\begin{rem}
  Here, the surface $F_\alpha$ and $F_\beta$ may have some orientable
  components and non-orientable components. Consequently, the one-manifold
  $A_\Sigma$ has no canonical orientation. Notice that the bipartite
  link cobordism is different from the 
  decorated link cobordism 
  \cite{Juhasz2009}.
\end{rem}

\subsection{Category 3: bipartite disoriented links}
\label{sec:categ-3:-bipart}
 Bipartite disoriented links combine the data from bipartite links and disoriented links. In a bipartite disoriented link cobordism, we keep track of the motion of index zero/three critical points and the basepoints. 

\begin{defn}\label{sec:categ-point-disor}
  A \textit{\textbf{bipartite disoriented link}} $(\cal{L},\mathbf{O})$ is a disoriented link $\cal{L}=(L,\mathbf{p},\mathbf{q},\mathbf{l})$
together with a set of basepoints $\mathbf{O}$, consisting of a unique basepoint $o_i$ on the interior of each
oriented arc
$l_i\in\mathbf{l}$. See Figure \ref{fig:bdl} for an example of bipartite
disoriented link.
\end{defn}
\begin{figure}
  \centering
  \includegraphics[scale=0.8]{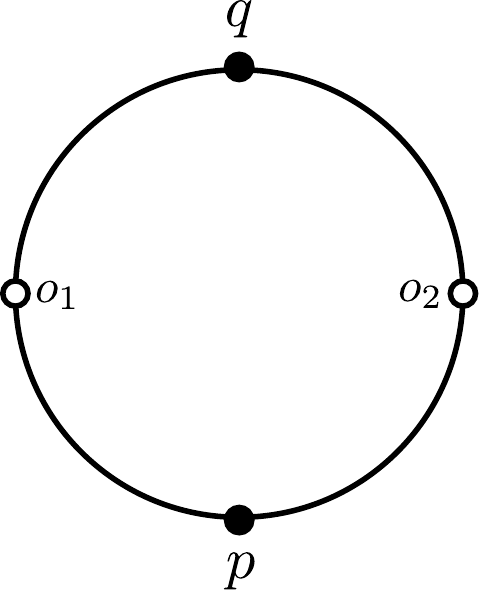}
  \caption{Bipartite disoriented link.}
  \label{fig:bdl}
\end{figure}
\begin{rem}\label{sec:category-3:-pointed-1}
A bipartite disoriented link  $(\cal{L},\mathbf{O})$ determines a
bipartite link as follows. The
basepoints $\mathbf{O}=\{o_1,\cdots,o_{2n}\}$ cut the link $L$ into
$2n$-arcs. Let $L_\alpha$ be the collection of arcs which contain 
$\mathbf{p}$-points and $L_{\beta}$ be the collection of arcs which
contain $\mathbf{q}$-points. Then $L_{\alpha\beta}=(L,L_\alpha,L_\beta,\mathbf{O})$
is a bipartite link. 
\end{rem}

\begin{defn}\label{sec:category-3:-pointed}
  A \textit{\textbf{bipartite disoriented link cobordism}} from
  bipartite disoriented link $(\cal{L}^0,\mathbf{O}^0)$ in $Y^0$ to
  $(\cal{L}^1,\mathbf{O}^1)$ in $Y^1$ is a sextuple
  $\mathbb{W}=(\cal{W},\cal{F},F_\alpha,F_\beta,\cal{A},\cal{A}_\Sigma)$ such that:
\begin{itemize}
\item The triple $(\cal{W},\cal{F},\cal{A})$ is a disoriented link
  cobordism from the disoriented link $(Y^0,\cal{L}^0)$ to $(Y^1,\cal{L}^1)$.
\item The quintuple $(\cal{W},\cal{F},F_\alpha,F_\beta,\cal{A}_\Sigma)$ is
  a bipartite
  link cobordism from bipartite link $(Y^0,L^0_{\alpha\beta})$ to
  $(Y^1,L^1_{\alpha\beta})$. Here $L^i_{\alpha\beta}$ is the bipartite
  link determined by the bipartite disoriented link
  $(\cal{L}^i,\mathbf{O}^i)$, $i=0,1$.
\item The intersection $F_\alpha \cap Y_0$ is a union of arcs
  containing all the $\mathbf{p}$ points. 
\item The intersection $F_\beta \cap Y_0$ is a union of arcs
  containing all the $\mathbf{q}$ points.  
\end{itemize}
Furthermore, if $W=Y\times I$, we call a bipartite disoriented link
cobordism $\mathbb{W}=(\cal{W},\cal{F},F_\alpha,F_\beta,\cal{A},\cal{A}_\Sigma)$ \textit{\textbf{regular}}, if it
satisfies the following condition:
\begin{itemize}
\item The triple $(\cal{W},\cal{F},\cal{A})$ and the quintuple
  $(\cal{W},\cal{F},F_\alpha,F_\beta,\cal{A}_\Sigma)$ are regular.
\item The critical points of $\pi|_{A}$ are exactly the critical points
  of $\pi|_{A_\Sigma}$.
\item The one-manifold $A$ intersect $A_\Sigma$ transversely. The
  intersection points $A\cap A_{\Sigma}$ are exactly the critical
  points of $\pi|_A$
\end{itemize}
 
\end{defn}

\begin{exmp} 
Figure \ref{fig:decdiscob} shows an example of a 
 bipartite disoriented link cobordism $(\cal{W},\cal{F},F_\alpha,F_\beta,\cal{A},\cal{A}_\Sigma)$. The blue curves with arrows are the
  components of 
  oriented one-manifold $A$. The red curves without orientation are
  the components of $A_\Sigma$. Clearly, the triple $(\cal{W},\cal{F},\cal{A})$
  is the disoriented link cobordism shown in Figure
  \ref{fig:intro-02}. The quintuple $(\cal{W},\cal{F},F_\alpha,F_\beta,\cal{A}_\Sigma)$ is the
  bipartite link cobordism shown in Figure \ref{fig:bpcob}.
  
\end{exmp}

\begin{figure}
  \centering
  \includegraphics[scale=1]{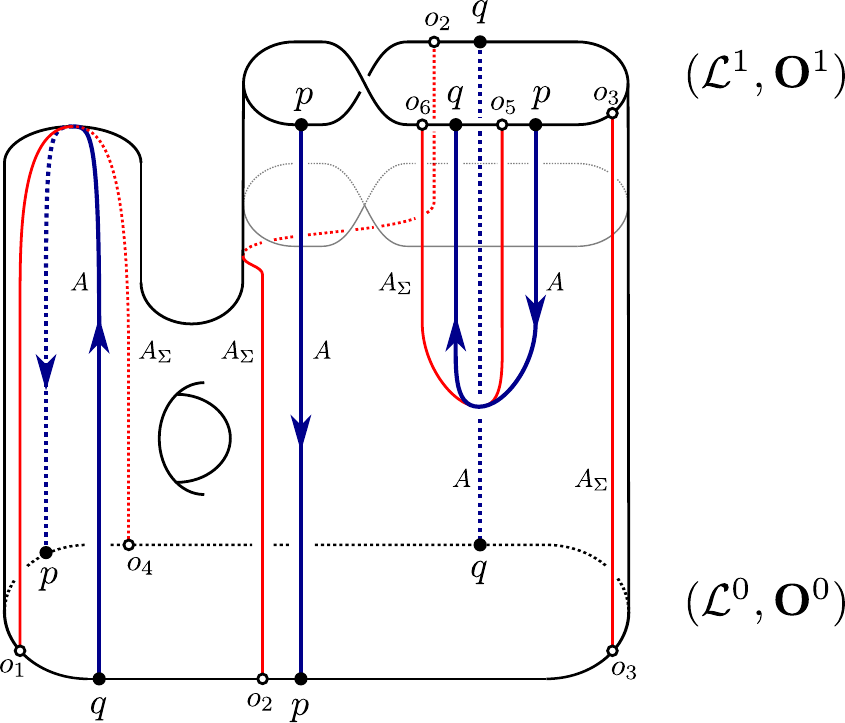}
  \caption{Bipartite disoriented link cobordism.}
  \label{fig:decdiscob}
\end{figure}

If a decorated disoriented link cobordism is regular, we can decompose
it into four types of elementary cobordism as regular disoriented link
cobordisms. Suppose  $(\cal{W},\cal{F},F_\alpha,F_\beta,\cal{A},\cal{A}_\Sigma)$ is
elementary.  
Furthermore, as we have defined the $\alpha$-type and
$\beta$-type of the critical points of $\pi|_A$ or saddle points of
$\pi|_F$, we have the following:

\begin{itemize}
\item If the elementary cobordism contains a saddle point
  $p$ of $\pi|_F$ in $F_\alpha$, we call this elementary cobordism an
\textit{\textbf{$\alpha$-band move}} (or $\alpha$-saddle move),
otherwise we call it
a \textit{\textbf{$\beta$-band move}} (or $\beta$-saddle move). See
Figure \ref{fig:absaddle}. 
\item  If the elementary cobordism
 contains a critical point
  $p$ of $\pi|_F$ and satisfies  $(N_p\backslash p)\cap
  \pi|_F^{-1}(\pi(p))\subset F_\alpha$, we call it an
  \textit{\textbf{$\alpha$-quasi-stabilization/destabilization}}. Here $N_p$ is a small neighborhood of $p$
in $W$. If
the critical point $p$ of $\pi_{A_{\Sigma}}$ satisfying $(N_p\backslash p)\cap \pi|_F^{-1}(\pi(p))\subset F_\beta$, we call
  it a
\textit{\textbf{$\beta$-quasi-stabilization/destabilization}}. See
Figure \ref{fig:abquasi}. 
\end{itemize}

\begin{figure}
  \centering
  \includegraphics[scale=0.6]{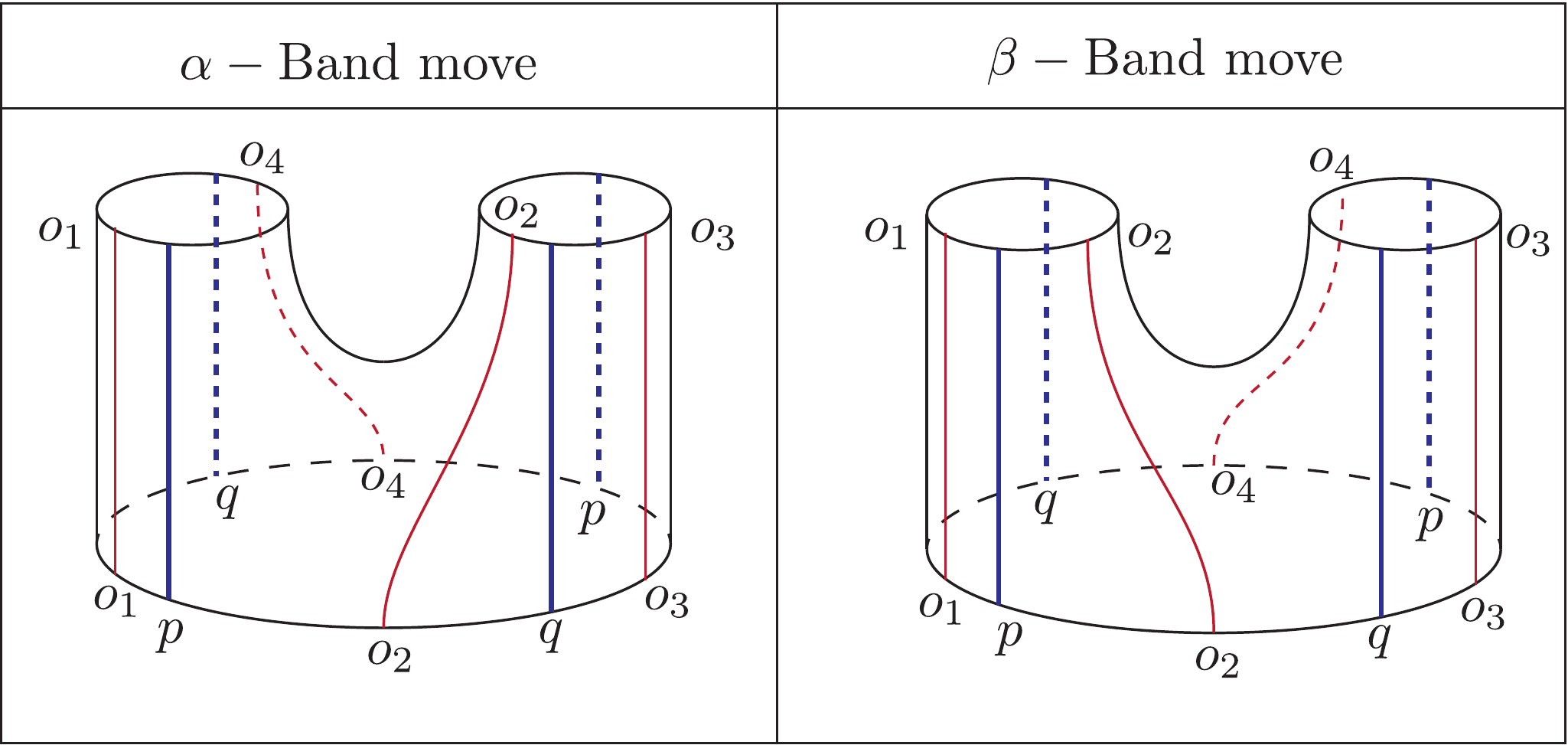}
  \caption{Two types of band move.}
  \label{fig:absaddle}
\end{figure}

\begin{figure}
  \centering
  \includegraphics[scale=0.6]{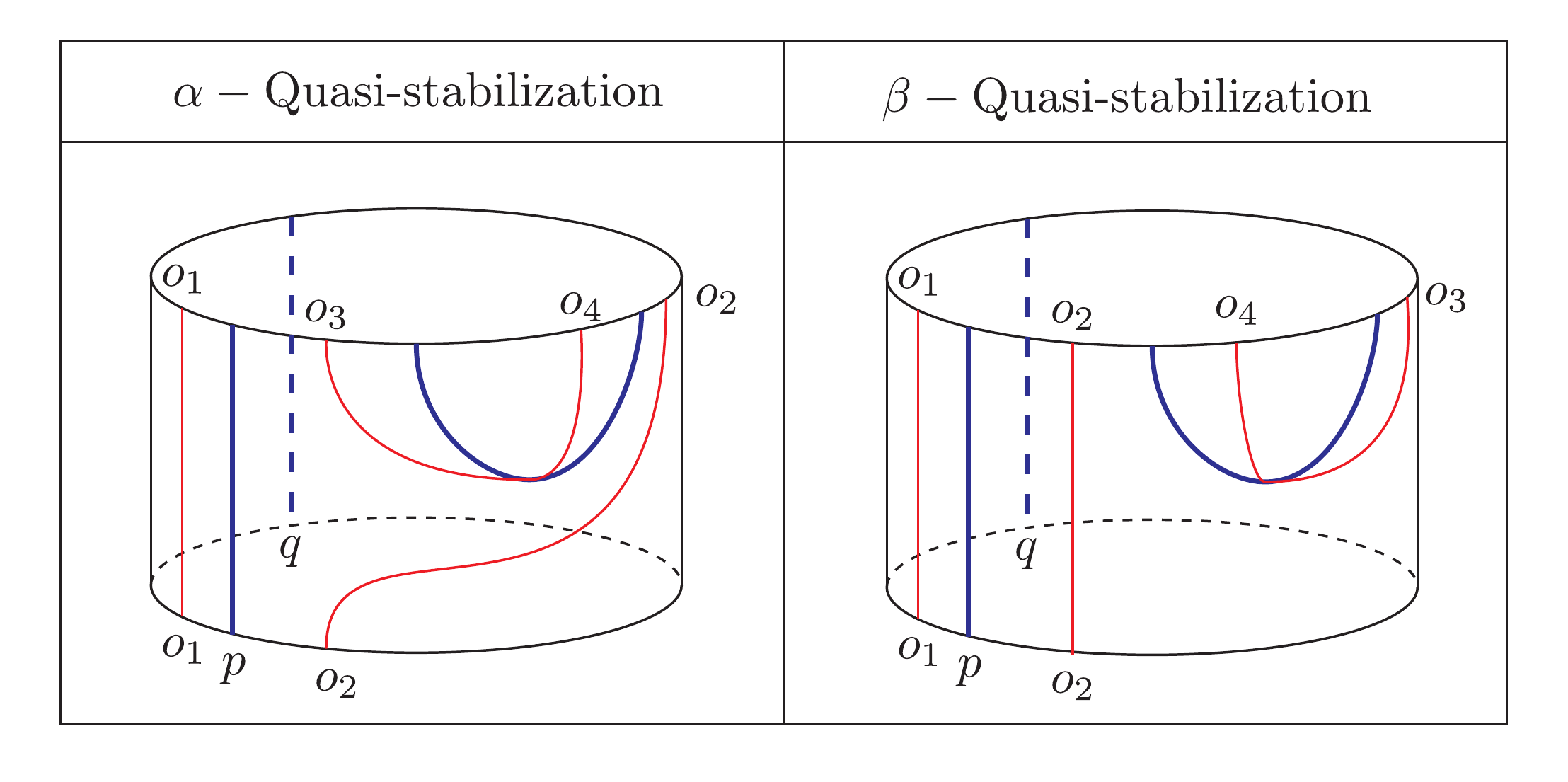}
  \caption{Two types of quasi-stabilization}
  \label{fig:abquasi}
\end{figure}

\begin{rem}
  We can also classify band moves of bipartite disoriented link into
  Type I or Type II.
\end{rem}

\subsection{Coloring and the relations between the three link
  categories}
\label{sec:relat-betw-three}

\begin{defn}
Let $L_{\alpha\beta}$ be a bipartite link. A
\textit{\textbf{coloring}} of basepoints is a map
\[\mathfrak{P}:\mathbf{O}=\{o_1,\cdots,o_{2n}\}\rightarrow \{\pm 1\},\] 
such that the cardinality $|\mathfrak{P}^{-1}(+1)|$ is equal to
$|\mathfrak{P}^{-1}(-1)|$. Furthermore, we say that a coloring is
\textit{\textbf{alternating}}, if and only if, for any pairs of adjacent
basepoints $(o,o')$, the coloring $\mathfrak{P}(o)$ is equal to
$-\mathfrak{P}(o')$. 
\end{defn}

 Let $\mathbf{w}=\{w_1,\cdots,w_n\}$ be the set
$\mathfrak{P}^{-1}(+1)$, $\mathbf{z}=\{z_1,\cdots,z_n\}$ be the set
$\mathfrak{P}^{-1}(-1)$. We denote a bipartite link
together with a
coloring $\mathfrak{P}$ by $(L_{\alpha\beta},\mathfrak{P})$.

\begin{rem}\label{sec:coloring}
  Given a bipartite link $L_{\alpha\beta}$,
  there exists $2^{|L|}$ different alternating colorings,
  where $|L|$ is the number of link components of $L$. Given a bipartite link
  together with a alternating coloring, we can orient the arcs
  $L_{\alpha}$ from $z$ to $w$, and the arcs $L_{\beta}$ from $w$ to
  $z$. This assignment gives rise to a oriented link
  $L_{\mathfrak{P}}$. 

\end{rem}

For convenience, we denote by $\cal{C}_{DL}$ the category of
disoriented links, by $\cal{C}_{BL}$ the category of bipartite links,
and by $\cal{C}_{BDL}$ the category of bipartite disoriented
links. The relations of the three categories are shown below. 
\begin{displaymath}
    \xymatrix{
         & \mathcal{C}_{BDL} \ar[dl]_{F_B} \ar[dr]^{F_D}  &   \\
        \mathcal{C}_{DL} \ar@/_/@{.>}[ur]_{G_B}   &  & \mathcal{C}_{BL} \ar@/^/@{.>}[ul]^{G_D} }
\end{displaymath}

Let $\mathbb{W}$ be a bipartite disoriented link cobordism from
bipartite disoriented link $\mathbb{L}^0$ to $\mathbb{L}^1$. 
\begin{itemize}
\item The forgetful functor $F_B$ removes the basepoints on
  $\mathbb{L}^i$ and the one-manifold $\cal{A}_{\Sigma}$ on $W$. 
\item The forgetful functor $F_D$ removes the $\mathbf{p}$ and
  $\mathbf{q}$ points on $\mathbb{L}^i$ and the oriented one-manifold
  $\cal{A}$ on $W$. By definition \ref{sec:category-3:-pointed}, $F_D$ send $\mathbb{W}$ to a
  bipartite link cobordism
  $(\cal{W},F,F_{\alpha},F_{\beta},\cal{A}_{\Sigma})$ from
  $L_{\alpha\beta}^0=F_D(\mathbb{L}^0)$ to $L_{\alpha\beta}^1=F_D(\mathbb{L}^1)$.
\end{itemize}

The dotted arrow $G_B$ and $G_D$ are not functors, but represent the
processes of lifting objects and morphisms between the repective
category. The processes depend on some choices, as detailed below. 

For a disoriented link, $G_B$ add one basepoint to each of the
oriented arcs $\mathbf{l}={l_1,\cdots,l_{2n}}$.
To lift a disoriented link cobordism $\mathfrak{W}$, we perturb
$\mathfrak{W}$ to regular positoin and decompose
$\mathfrak{W}$ into a composition of elementary cobordisms. Among the
four types of elementary cobordisms, we are particularly interested in
quasi-stabilizatios and band moves. For isotopies and disk-stabilizations/destabilizations, the lifting is unique. 
A band move or quasi-stabilization/destabilization of disoriented
links can be lifted to a bipartite disoriented link cobordism of either $\alpha$-type or $\beta$-type. 
See Figure \ref{fig:ldl} (a) for the lifting of band move, and Figure
\ref{fig:ldl} (b) for
the lifting of quasi-stabilization. 

For a bipartite link, $G_D$ add one $\mathbf{p}$-point to each
component of $L_\alpha$ and one $\mathbf{q}$-point to each component
of $L_{\beta}$.
To lift a bipartite link cobordism, we also perturb
it to a regular positoin decompose it into
elementary cobordisms. Similarly, the lifting for isotopies and
disk-stabilization/destablization is unique. A type-I band move has a unique way to be
lifted, as shown in Figure \ref{fig:lbl} (a); a type-II band move has
two ways to be lifted, as shown in Figure \ref{fig:lbl} (b); a quasi-stabilization/destabilization has two ways to be
lifted, as shown in Figure \ref{fig:lbl} (c).

\begin{figure}
  \centering
  \includegraphics[scale=0.4]{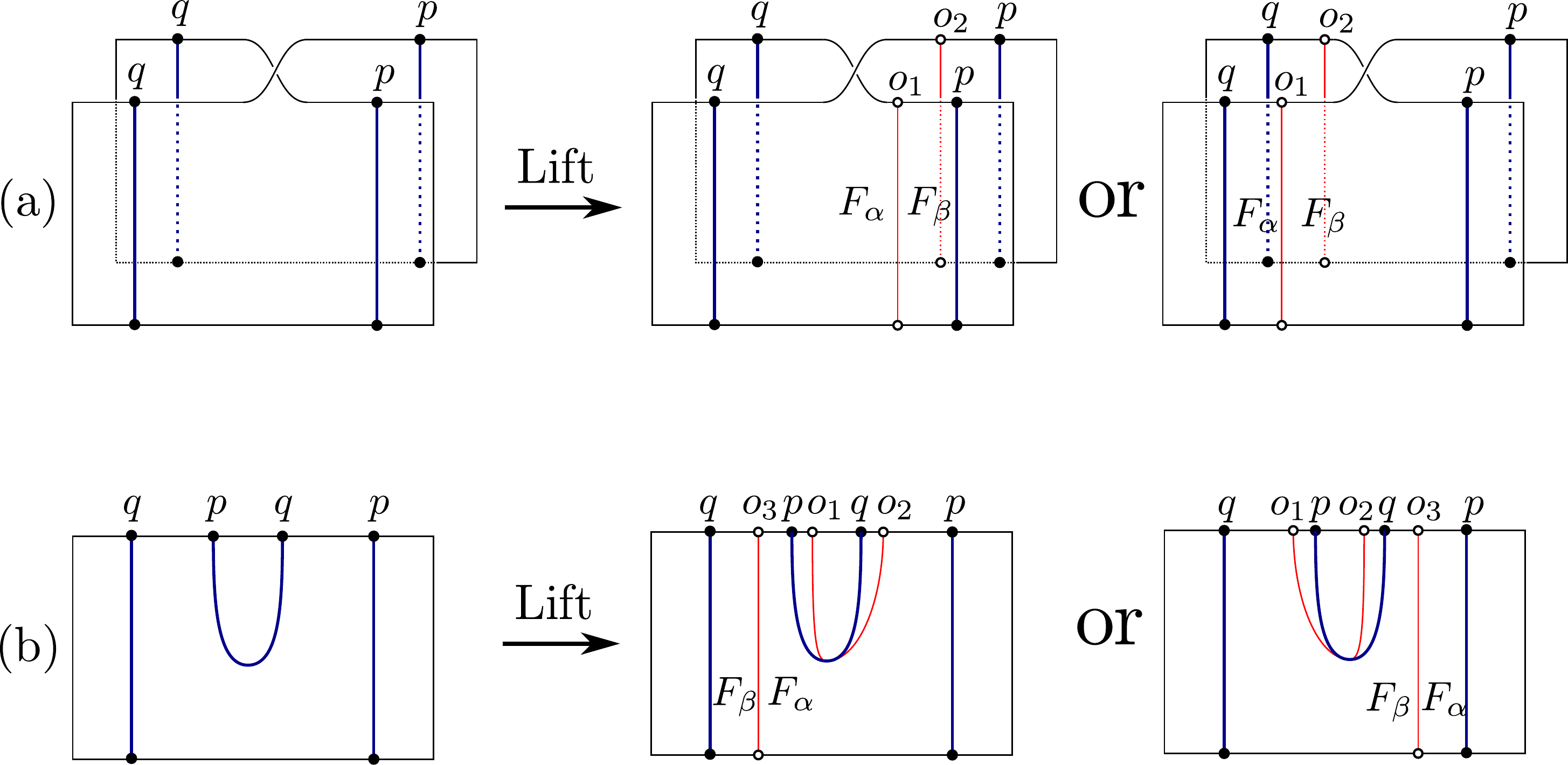}
  \caption{$G_B$-Lift disoriented link cobordism.  }
  \label{fig:ldl}
\end{figure}

\begin{figure}
  \centering
  \includegraphics[scale=0.4]{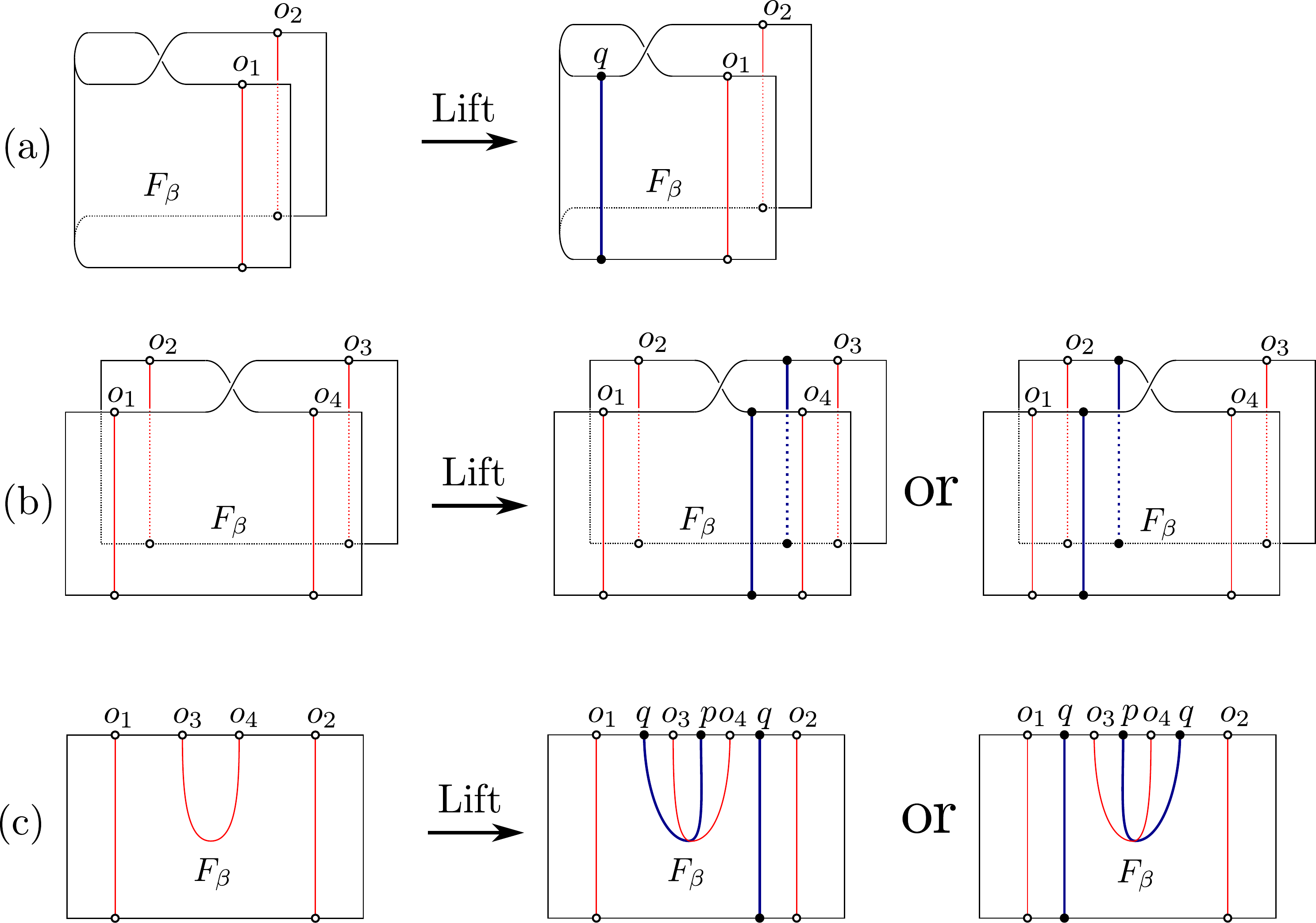}
  \caption{$G_D$-Lift bipartite link cobordism}
  \label{fig:lbl}
\end{figure}

\section{Heegaard diagrams for band moves}
\label{sec:unor-saddle-moves}

In this section, we will relate bipartite link
band moves to Heegaard triples. We focus our discussion on
$\beta$-band moves. Similar results hold for $\alpha$-band moves.

\subsection{Heegaard diagrams for bipartite links}
In this subsection we asssociate a Heegaard diagram to a bipartite link as follows.

Let $L_{\alpha\beta}=(L,L_\alpha,L_\beta,\mathbf{O})$ be a bipartite
link in a closed oriented three-manifold $Y$, where 
$L_\alpha=\{L_{\alpha,1},\cdots,L_{\alpha,n}\}$,
$L_\beta=\{L_{\beta,1},\cdots,L_{\beta,n}\}$,
$\mathbf{O}=\{o_1,\cdots,o_{2n}\}$. We say that a Heegaard Diagram
$H_{\alpha\beta}=(\Sigma,\bs{\alpha},\bs{\beta},\mathbf{O})$ is
\textit{\textbf {compatible}} with $(Y,L_{\alpha\beta})$, if:
\begin{itemize}
\item The surface $\Sigma$ is closed, oriented and embedded in $Y$. The
  genus of $\Sigma$ is $g$. The collection of curves
  $\bs{\alpha}$ is a $(g+n-1)$-tuple $\{\alpha_1,\cdots,\alpha_{g+n-1}\}$. The
  collection of curves $\bs{\beta}$ is a $(g+n-1)$-tuple
  $\{\beta_1,\cdots,\beta_{g+n-1}\}$. 
\item The three-manifold $Y$ is reprensented by
  $(\Sigma,\bs{\alpha},\bs{\beta})$, such that
\begin{align*}
  Y = \Sigma_g &\cup (\bigcup_{i=1}^{g+n-1}D_{\alpha_i})\cup(\bigcup_{k=1}^{n}B_{\alpha,k})\\
             &\cup
               (\bigcup_{j=1}^{g+n-1}D_{\beta_j})\cup(\bigcup_{l=1}^{n}B_{\beta,l}).
\end{align*}
Here $D_{\alpha_i}$ (or $D_{\beta_j}$ resp.) is a closed embedded disk with boundary identified with
$\alpha_i$ (or $\beta_j$ resp.) on $\Sigma$, and $B_{\alpha,k}$ (or
$B_{\beta,l}$ resp.) is a closed three-ball embedded in $Y$.
\item The pair $(L_{\alpha,k},\partial L_{\alpha,k})$ (or $(L_{\beta,l},\partial L_{\beta,l})$ resp.) is unknotted
  embedded in $(B_{\alpha,k},\Sigma\cap \partial B_{\alpha,k})$(or $(B_{\beta,l},\Sigma\cap \partial
  B_{\beta,l})$ resp.) for
  $k=1,\cdots,n$ ( or $l=1,\cdots,n$ resp.). 
\end{itemize}

For convenience, we denote by $U_\alpha$ the handlebody $\Sigma_g\cup
(\bigcup_{i=1}^{g+n-1}D_{\alpha_i})\cup(\bigcup_{k=1}^{n}B_{\alpha,k})$
and by $U_\beta$ the handlebody
$\Sigma_g\cup(\bigcup_{j=1}^{g+n-1}D_{\beta_j})\cup(\bigcup_{l=1}^{n}B_{\beta,l})$. 
We
say that the diagram $(\Sigma,\bs{\alpha},\mathbf{O})$ is \textit{\textbf{compatible}}
with the triple $(U_\alpha,L_\alpha,\mathbf{O})$, and the diagram
$(\Sigma,\bs{\beta},\mathbf{O})$ is \textit{\textbf{compatible}} with the triple $(U_{\beta},L_\beta,\mathbf{O})$.

\subsection{The existence of Heegaard triples subordinate to a band move} 

In \cite[Definition 6.2]{Zemke2016a}, Zemke defines a Heegaard
triple subordinate to an oriented band moves.
Building on his work, in this subsection,
we will construct a standard Heegaard Triple $\cal T$  subordinate to
a band move $B^\beta$ from a bipartite link $(L,L_\alpha,L_\beta,\mathbf{O})$ to
$(L(B^\beta),L_\alpha,L_\beta(B^\beta),\mathbf{O})$. Similar results hold for $\alpha$-band move.

Let $B^\beta:[-\epsilon,+\epsilon]\times [-1,1]\rightarrow Y$ be a
band embedded in $Y$ with two ends $B^\beta([-\epsilon,+\epsilon]\times\{\pm
1\})$ attached on $L_\beta$, such that:
\begin{itemize}
\item The intersection $B^\beta([-\epsilon,+\epsilon]\times[-1,1])\cap
  L$ is the ends of the band
  $B^{\beta}([-\epsilon,+\epsilon]\times\{\pm 1\})$.
\item The union $L_\beta(B^\beta)=(L_\beta\backslash
  B^\beta([-\epsilon,+\epsilon]\times\{\pm 1\}))\cup B^\beta(\{\pm
  \epsilon\}\times[-1,1])$ is still a collection of arcs with boundary
  identified with $\mathbf{O}$.
\end{itemize}
Then the quadruple $(L(B^\beta),L_\alpha,L_\beta(B^\beta),\mathbf{O})$
is still a a bipartite link in $Y$, where $L(B^\beta)$ is the link
$L_\alpha\cup L_\beta(B^\beta).$

\begin{defn}
  We say that a Heegaard triple $\cal
  T=(\Sigma,\boldsymbol{\alpha},\boldsymbol{\beta},\boldsymbol{\gamma},\mathbf{O})$
  is \textit{\textbf{subordinate}} to a $\beta$-band move from a
  bipartite link $L_{\alpha\beta}=(L,L_\alpha,L_\beta,\mathbf{O})$ to $L_{\alpha\gamma}=(L(B^\beta),L_\alpha,L_\beta(B^\beta),\mathbf{O})$, if it satisfies:
\begin{itemize}
\item The Heegaard diagram
  $H_{\alpha\beta}=(\Sigma,\boldsymbol{\alpha},\boldsymbol{\beta},\mathbf{O})$
  is compatible with the  bipartite link $(Y,L_{\alpha\beta})$. 
\item The Heegaard diagram
  $H_{\alpha\gamma}=(\Sigma,\boldsymbol{\alpha},\boldsymbol{\gamma},\mathbf{O})$
  is compatible with the bipartite link $(Y,L_{\alpha\gamma})$.
\end{itemize}
\end{defn}

Suppose the collection of curves $\bs{\alpha}$, (or $\bs{\beta}$,
$\bs{\gamma}$ resp.)  has
$n$-components, we say that the Heegaard Triple subordinate to a
$\beta$-band move is \textit{\textbf{standard}} if it satisfies the
following:
\begin{itemize}

\item The curves $\gamma_1,\cdots,\gamma_{n-1}$ are small Hamiltonian
  isotopies away from any basepoints of $\beta_1,\cdots,\beta_{n-1}$, with geometric intersection
  number $|\beta_i\cap\gamma_j|=2\delta_{ij}$.
\item The curve $\gamma_n$ is a Hamiltonian isotopy of $\beta_n$ with
  intersection number $|\beta_n\cap\gamma_n|=2$.
\item There exists a disk region $D\subset \Sigma$ contains two basepoints $o$ and $o'$
  such that
  $D\cap (\bs{\beta}\cup\bs{\gamma})=D\cap (\beta_n\cup\gamma_n)$ and
  $|D\cap\beta_n\cap\gamma_n|=2$. An example is shown in Figure \ref{fig:st11}.
\end{itemize}

\begin{figure}
  \centering
  \includegraphics[scale=0.6]{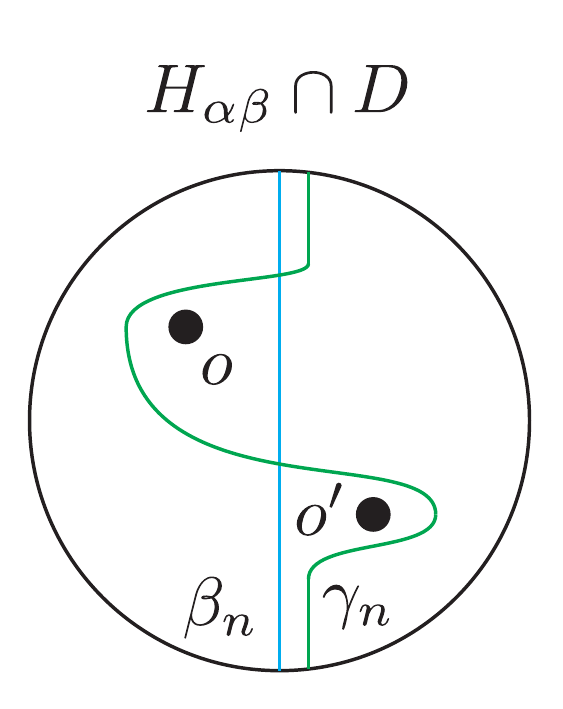}
  \caption{}
  \label{fig:st11}
\end{figure}

\begin{thm}\label{sec:exist-heeg-triple}
Let $(\cal{W},\cal{F},F_\alpha,F_\beta,\cal{A}_\Sigma)$ be a $\beta$-band move
  from a bipartite link $(L,L_\alpha,L_\beta,\mathbf{O})$ to a bipartite link
  $(L(B^\beta),L_\alpha,L_\beta(B^\beta),\mathbf{O})$. Here both
  bipartite link are in same three-manifold $Y$. 
 Starting from a Heegaard diagram $H_{\alpha\beta}$ of
 $(Y,L,L_\alpha,L_\beta,\mathbf{O})$, after a sequence of stabilization/destabilization 
  and handleslides without crossing any basepoints, we
  can construct a Heegaard diagram 
  $H'_{\alpha\beta}=(\Sigma',\bs{\alpha}',\bs{\beta}',\mathbf{O})$
  compatible with $(L,L_\alpha,L_\beta,\mathbf{O})$ together a disk
  region $D$ on 
  $\Sigma'$, such that the local diagram $D\cap H'_{\alpha\beta}$
  is shown in either of the two top figures in  Figure \ref{fig:st-10}.  

Furthermore, after a Hamiltonian perturbation of
  $\beta_n$ across two basepoints $o$ and $o'$, and small
  Hamiltonian perturbation
  of $\beta_1,\cdots,\beta_{n-1}$ without crossing any basepoints, we
  can get a standard Heegaard triple $\cal T$ subordinate to the
  $\beta$-band move, as shown in either of the two bottom figures in  Figure \ref{fig:st-10}. 
\end{thm}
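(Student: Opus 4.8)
The plan is to realize the $\beta$-band move by a sequence of Heegaard moves that are all disjoint from the basepoints, reducing everything to a purely local model near the band. First I would recall that a Heegaard diagram $H_{\alpha\beta}=(\Sigma,\bs{\alpha},\bs{\beta},\mathbf{O})$ compatible with $(Y,L_{\alpha\beta})$ encodes $L_\beta$ as a union of unknotted arcs in the $\beta$-handlebody $U_\beta$, each arc isotopic (rel $\mathbf{O}$) to an arc on $\Sigma$ meeting exactly one $\beta$-curve in the pair of points lying in its $\beta$-region. The band $B^\beta$ has both feet on $L_\beta$, so it can be pushed into $U_\beta$ and made transverse to $\Sigma$; by the usual general-position argument for arcs in a handlebody (compare \cite[Definition 6.2, Lemma 6.6]{Zemke2016a}), I would isotope $B^\beta$, rel its feet, so that its co-core meets $\Sigma$ along a single properly embedded arc in one $\beta$-region, say the region carrying $\beta_n$.

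Next I would produce the standard local picture. After stabilizing/destabilizing $\Sigma$ (away from $\mathbf{O}$) and handlesliding $\beta$-curves over one another — none of which crosses a basepoint — I can arrange that $\beta_n$ is the curve dual to the arc on which both feet of the band lie, and that a neighborhood of the band's image on $\Sigma$ is a single disk region $D$ containing exactly two basepoints $o,o'$ with $D\cap\bs{\beta}=D\cap\beta_n$. This is exactly the claim that $D\cap H'_{\alpha\beta}$ is one of the two top pictures of Figure \ref{fig:st-10}; the two cases correspond to whether the band-creating strand runs ``parallel'' or ``anti-parallel'' to $\beta_n$ inside $D$ — i.e. to the Type I versus Type II distinction of Section \ref{sec:categ-2:-bipart}. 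The verification here is a finite check of the two local models: outside $D$ nothing has changed, inside $D$ the diagram is determined.

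Then I would define $\bs{\gamma}$: set $\gamma_i$ ($i<n$) to be a small Hamiltonian translate of $\beta_i$ with $|\beta_i\cap\gamma_i|=2$ and disjoint from all basepoints, and take $\gamma_n$ to be the Hamiltonian image of $\beta_n$ obtained by fingering it across the two basepoints $o,o'$ in $D$, so that $|\beta_n\cap\gamma_n|=2$ and $|D\cap\beta_n\cap\gamma_n|=2$. I must check two things: first, that $(\Sigma,\bs{\alpha},\bs{\gamma},\mathbf{O})$ is compatible with the bipartite link $(Y,L_{\alpha\gamma})$ — i.e. that sliding the $\beta$-arc over the finger across $o,o'$ realizes precisely the band surgery $L_\beta\rightsquigarrow L_\beta(B^\beta)$ with the same basepoint set $\mathbf{O}$; and second, that each $\gamma$-region still contains exactly one pair of basepoints and that the spans of $\bs{\gamma}$ and $\bs{\alpha}$ are $g$-dimensional, so the diagram is an admissible Heegaard diagram. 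Both are local computations in $D$ together with the standard ``small isotopy'' bookkeeping outside $D$. That gives the standard Heegaard triple, the two bottom pictures of Figure \ref{fig:st-10}.

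The main obstacle is the first part: isotoping the band $B^\beta$ into the $\beta$-handlebody into the standard local form while controlling the interaction with the basepoints $\mathbf{O}$ and with the already-chosen diagram, and simultaneously keeping track of which Heegaard moves are needed (and confirming each avoids $\mathbf{O}$). This is the genuine topological input — once the band sits in the local model $D$, the construction of $\bs{\gamma}$ and the compatibility/admissibility checks are routine. I expect to borrow the handle-decomposition argument of \cite[Section 6]{Zemke2016a} essentially verbatim, the only new feature being that $L_\beta(B^\beta)$ need not be an unlink-type modification when the band is of Type I (as flagged in the introduction), which however does not affect the \emph{existence} of the subordinate triple, only the identification of $H_{\beta\gamma}$ made later.
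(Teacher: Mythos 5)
Your outline captures the high-level shape of the paper's argument (push the band into $U_\beta$, read off its shadow on $\Sigma$, stabilize/handleslide into a local model, then build $\bs{\gamma}$ by a Hamiltonian finger through $o$ and $o'$), but there is a substantive gap and a conceptual misreading.

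\textbf{The framing step is missing.} The paper devotes an entire step (its ``Step 3: Framing of the band'') to comparing the framing induced on the band's core by $-\nabla f$ with the framing induced by the embedding of $B^\beta$, showing these differ by $n\pm\tfrac12$ for some integer $n$, and then modifying the Morse function so that the difference becomes exactly $\pm\tfrac12$ before the local model is constructed. This is not bookkeeping you can skip: if the two framings differ by more than $\pm\tfrac12$ when you reach the local picture, the Heegaard triple you obtain is subordinate to a \emph{different} band with the same core and ends but a different number of half-twists, so $H_{\alpha\gamma}$ will be compatible with $L(B^\beta_{\pm k})$ rather than the required $L(B^\beta)$. Your proposal goes straight from ``the band's image is a single arc in one $\beta$-region'' to ``the local picture is determined,'' and this step would fail: you have controlled the isotopy class of the core rel endpoints but not the framing. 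The paper's fix is to perturb the core (introducing a small kink) and observe that resolving the resulting self-intersection changes the gradient framing by $\pm 1$, so the framing difference can be dialed to $\pm\tfrac12$ by iterating.

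\textbf{The two pictures in Figure~\ref{fig:st-10} are not Type~I vs.\ Type~II.} You attribute the two top figures to the Type~I/Type~II dichotomy of bands, but in the paper's proof they arise because, after the last stabilization, the local model contains two $\alpha\cap\beta$ intersections and either one can be placed inside the distinguished disk $D$. The paper states explicitly that this binary choice corresponds to obtaining a triple subordinate to $B^\beta$ versus $B^\beta_{\pm 1}$ (i.e.\ differing by a full twist in the band's framing), not to whether the band connects two arcs of $L_\beta$ or a single one. The type of a given $B^\beta$ is fixed by the topology of the band; it is not a choice that enters the local picture of $D\cap H'_{\alpha\beta}$.

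One smaller point: the paper does not push the band into $U_\beta$ and isotope it transverse to $\Sigma$; it keeps $B^\beta$ in $U_\beta$, projects the \emph{core} $c_{o,o'}$ along $-\nabla f$ to get a path $l_{o,o'}\subset\Sigma$, and then stabilizes precisely to resolve each self-intersection of $l_{o,o'}$ (Step 2). Your appeal to a ``general-position argument for arcs in a handlebody'' plays the same role, but the mechanism in the paper is concretely the self-intersection-resolving stabilizations, and you would need to spell this out in order to justify that the needed Heegaard moves avoid $\mathbf{O}$.
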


\begin{figure}
  \centering
  \includegraphics[scale=0.6]{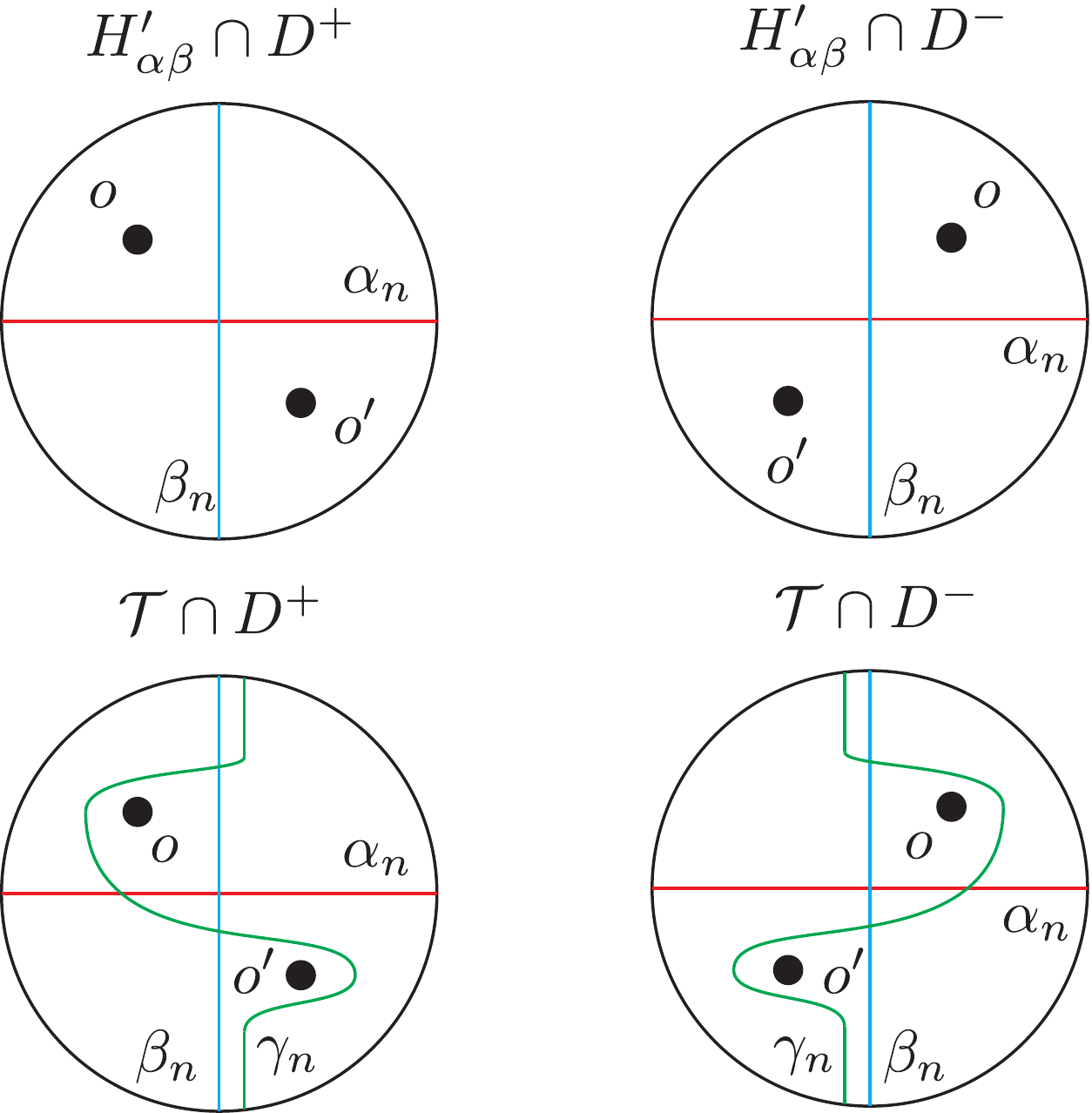}
  \caption{Local picture of the Heegaard triple subordinate to a $\beta$-band move. }
  \label{fig:st-10}
\end{figure}

\begin{proof}

The idea of the proof is to implant the data of the band $B^\beta$
into a Heegaard Diagram compatible with the bipartite link $L_{\alpha\beta}=(L,L_\alpha,L_\beta,\mathbf{O})$. 

 We lift
the bipartite link $(Y,L_{\alpha\beta})$ to a bipartite disoriented link
$(Y,\cal{L},\mathbf{O})$. Let $f$ be a Morse function compatible with  $(Y,\cal{L},\mathbf{O})$, such that its
corresponding Heegaard diagram is $H_{\alpha\beta}$. Then we have a
Heegaard decomposition $U_\alpha\cup_\Sigma U_\beta$ of $Y$ with
respect to $f$.
Without loss of generality, we assume the band 
$B^\beta$ lies in the $\beta$-handlebody $U_\beta$ and the ends of
$B^\beta$ lies in two oriented arc $l\in\mathbf{l}$ and
$l'\in \mathbf{l}$ marked with $o$ and $o'$. Then we lift the
bipartite link cobordism $(\cal{W},\cal{F},F_\alpha,F_\beta,\cal{A}_\Sigma)$ to
a bipartite disoriented link cobordism
$(\cal{W},\cal{F},F_\alpha,F_\beta,\cal{A},\cal{A}_\Sigma)$.

\emph{Step1: Transversality assumption}.
Let $c_{o,o'}$ be the core of the $\beta$-band $B^\beta$.
 After a small perturbation of $f$, we can assume 
  the core  $c_{o,o'}$ of the
  $\beta$-band $B^\beta$ intersects
  the unstable manifolds of $f$ transversely. Particularly,
  $c_{o,o'}$ does not go through the critical points of $f$.

  Now, we project the core $c_{o,o'}$ along the gradient flow of
  $f$ to the Heegaard surface $\Sigma$. The projection image
  $l_{o,o'}$ on $\Sigma$ is a path connecting $o,o'$. By the transversality
  assumption, the path $l_{w,z}$ intersects
  $\mathbf{\alpha,\beta}$-curves transversely on
  $\Sigma$. Furthermore, the path $l_{o,o'}$ only has regular
  self-intersection points, and the interior of $l_{o,o'}$ does not go through any basepoints. See Figure \ref{fig:st1}.

\begin{figure}
  \centering
  \includegraphics[scale=0.4]{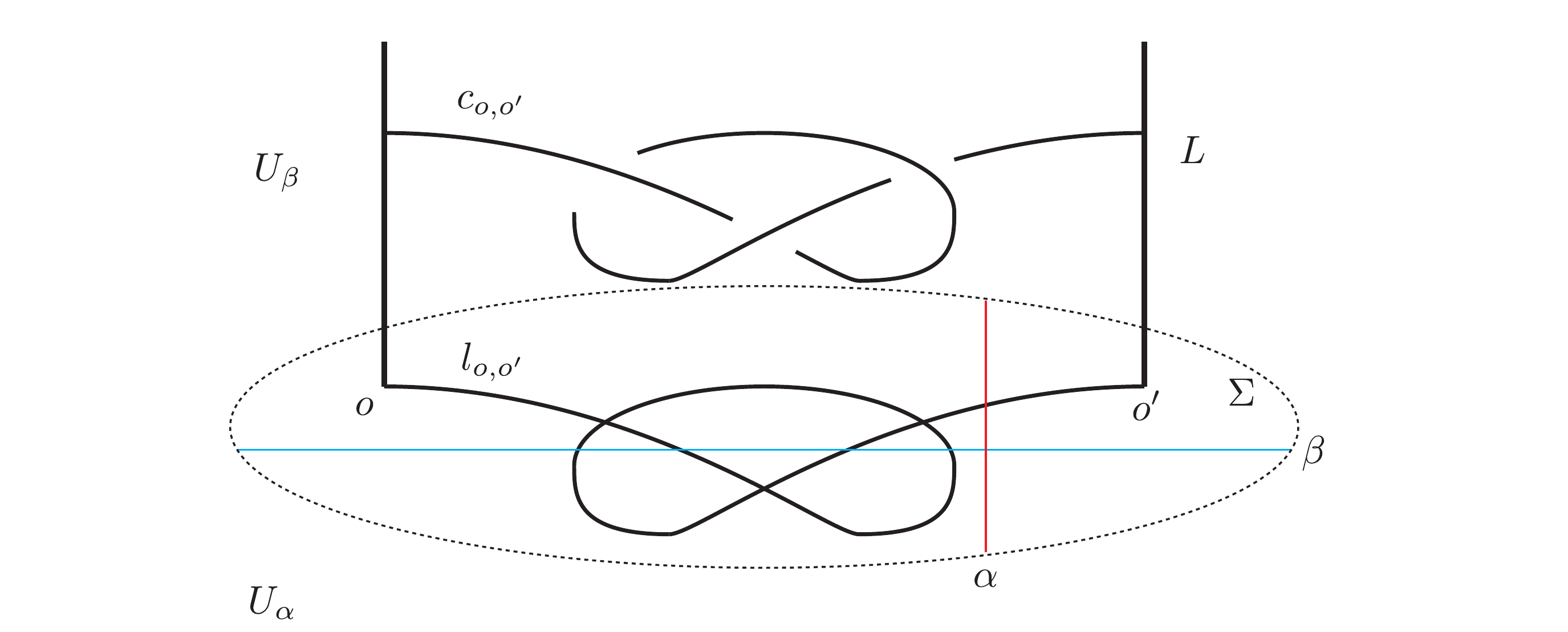}
  \caption{}
  \label{fig:st1}
\end{figure}

\emph{Step 2: Resolving the self-intersection of the path $l_{o,o'}$}.
  Suppose $p$ is a self-intersection of $l_{o,o'}$. We can find a disk
  neighborhood $D_p$ of $p$ such that $D_p\cap( \mathbf{O}\cup
  \bs{\alpha}\cup\bs{\beta})=\emptyset$. Let $\phi_t$ be the
  diffeomorphism induced by the flow $-\nabla f$. There is an
  embedded solid cylinder $C_p= D\times [0,\epsilon]\to U_\beta$
  given by $C_p(d,t)= \phi_t(d)$. By choosing a big enough parameter
  $\epsilon$, we can assume the pair $(C_p, C_p\cap c_{o,o'})$ is shown
  in Figure \ref{fig:st2}.

\begin{figure}
  \centering
  \includegraphics[scale=0.5]{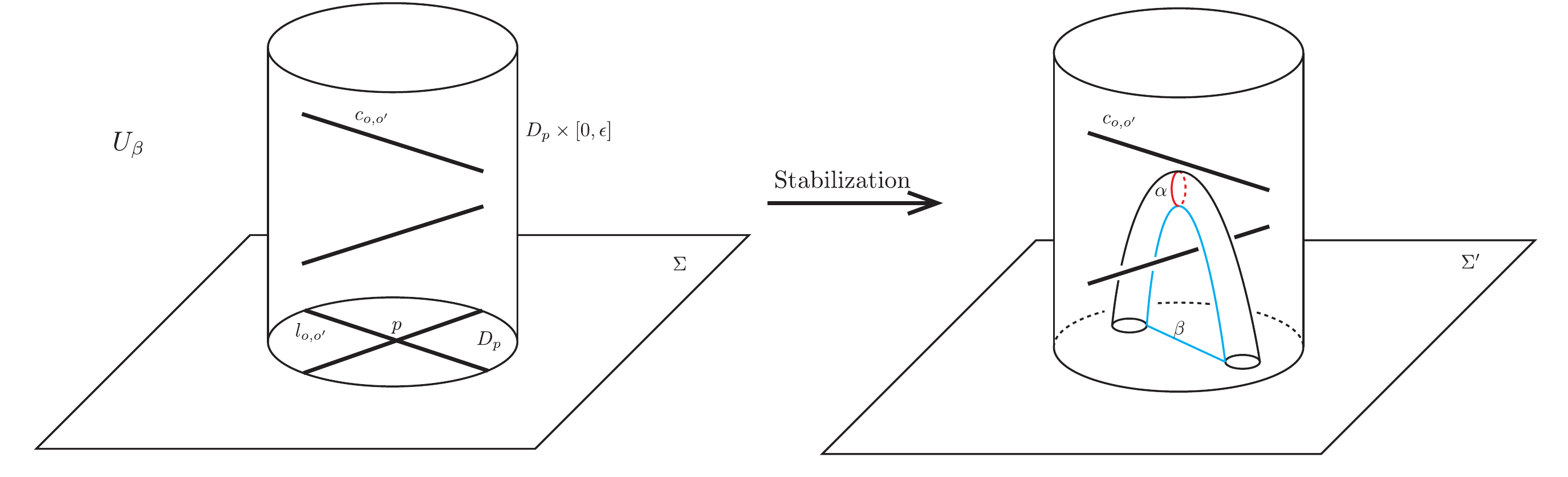}
  \caption{}
  \label{fig:st2}
\end{figure}

  Now we modify the  Morse function $f$ inside the solid cylinder $C_p$ by
  adding a pair of index one and index two critical
  point. In other words, this is doing a stabilization of $H$ inside
  the solid cylinder $C$ shown in Figure \ref{fig:st2}.

  By resolving all self-intersections of $l_{o,o'}$, we will get a new Morse function $f'$ and a
  corresponding balanced Heegaard diagram $H'$. If we
  projects the core $c_{o,o'}$ along the gradient flow of the
  modified function $f'$ on $\Sigma$, the projection image has no
  self-intersections. 

\emph{Step 3: Framing of the band}. By the transversality assumption, the gradient $-\nabla f|_{c_{o,o'}}$
induces a framing of the core $c_{o,o'}$. 
Heegaard Floer homology and integer surgeries on links. On the other hand the
embedding of the $\beta$-band $B^\beta$ also induces a framing of the
core. These two framings differed by $n\pm\frac{1}{2}$. In
other words, consider the solid cylinder neighborhood of $c_{o,o'}$, if
we identify the ends of the two neighborhood by the gradient framing,
then the band will twist along the core of the solid torus by $
(n\pm\frac{1}{2})\times 2\pi$.

Suppose $c'_{o,o'}$ is a small perturbation of $c_{o,o'}$ as shown in
Figure \ref{fig:ft}. The gradient framing of $c'_{o,o'}$ and $c_{o,o'}$ is
differed by $\pm 1$. If we resolve the self-intersection of the
projection image $l'_{o,o'}$ as in Step 2, the gradient framing of
$c'_{o,o'}$ does not change. Therefore, by modifying the Morse function
$f$ as in Step 2, we can modify the gradient framing of $c_{o,o'}$. 
Now we can assume that the difference between the gradient framing and band
framing is 
$\pm\frac{1}{2}$.

\begin{figure}
  \centering
  \includegraphics[scale=0.6]{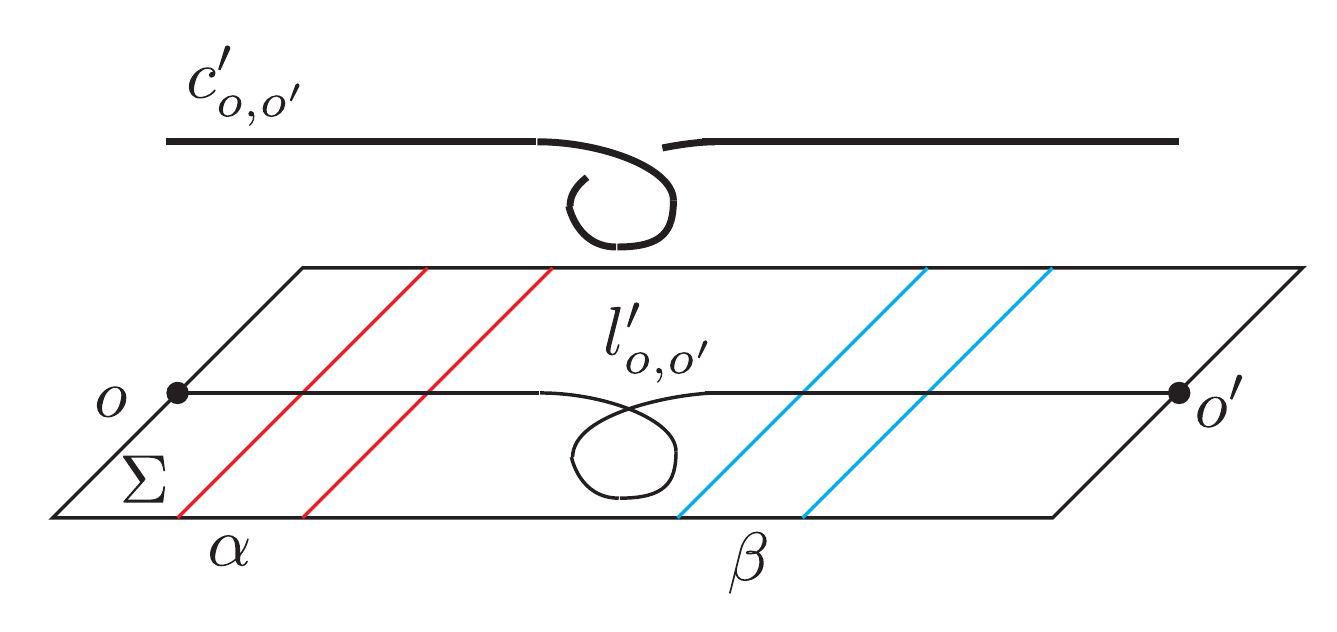}
  \caption{}
  \label{fig:ft}
\end{figure}

\emph{Step 4: Heegaard triple for band moves}.
  After resolving all self-intersections of $l_{o,o'}$, we can find a
  rectangle neighborhood $R:[-\epsilon,\epsilon]\times I \to \Sigma$ of
  $l_{o,o'}$, shown in Figure \ref{fig:st3}  such that:
\begin{itemize}
\item The image $R(0,I)= l_{o,o'}$ and $R(0,0)= o$, $R(0,1)=o'$.
\item The intersection $(\bs{\alpha}\cup\bs{\beta})\cap R =
  [-\epsilon,\epsilon]\times \{t_0,\cdots,t_m\}$, where
  $0<t_0,\cdots,t_m <1$.
\end{itemize}

\begin{figure}
  \centering
  \includegraphics[scale=0.5]{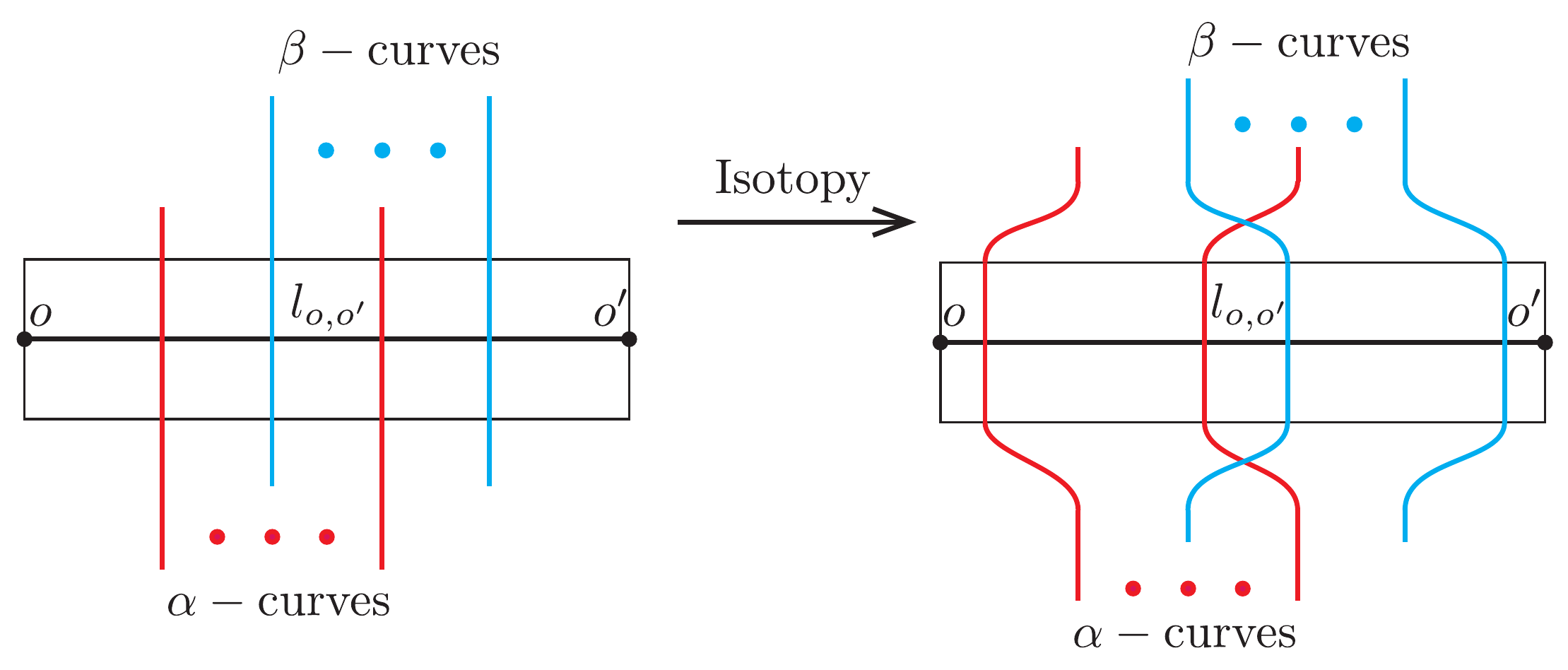}
  \caption{}
  \label{fig:st3}
\end{figure}

  After $\alpha$ or $\beta$- isotopies as shown in
  Figure \ref{fig:st3}, we can assume, the
  intersection $\alpha \cap R = [-\epsilon,\epsilon]\times
  \{t_0,\cdots,t_s\}$ and $\beta \cap R = [-\epsilon,\epsilon]\times
  \{t_{s+1}\cdots, t_m\}$. Then we stabilize the diagram as shown in
  Figure \ref{fig:st-08}. Finally, by an $\alpha$ or $\beta$ isotopy, we get a
  diagram shown in Figure \ref{fig:st-07} . The desired disk region is chosen to be
  the disk contains the two basepoint $o$ and $o'$ and one intersection
  of $\alpha$ and $\beta$. The two intersections determines two disk
  region. 

\begin{figure}
  \centering
  \includegraphics[scale=0.6]{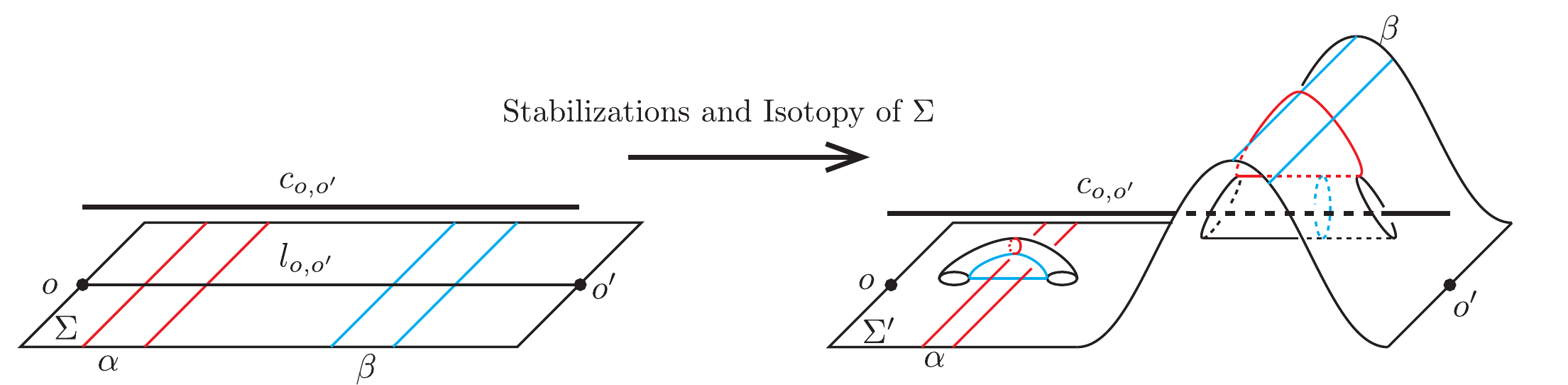}
  \caption{}
  \label{fig:st-08}
\end{figure}

\begin{figure}
  \centering
  \includegraphics[scale=0.7]{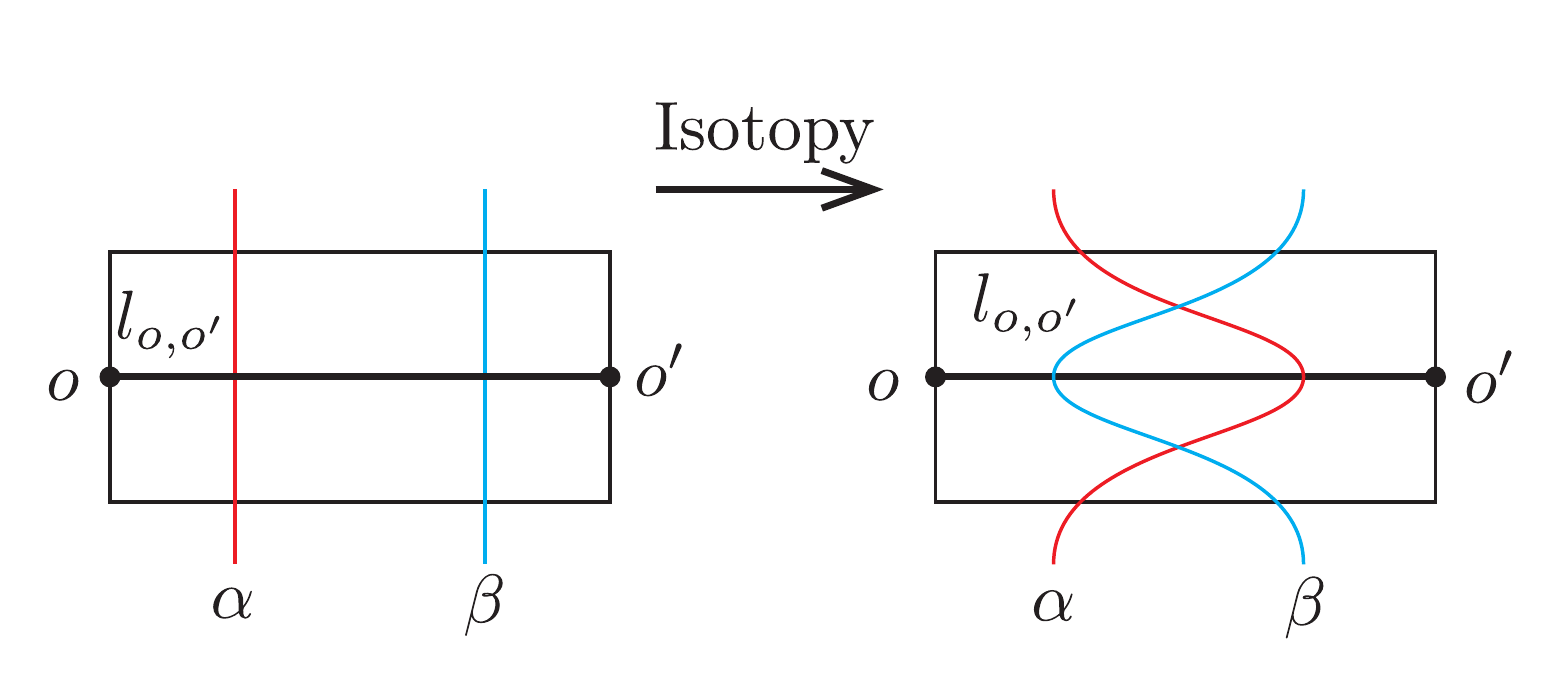}
  \caption{}
  \label{fig:st-07}
\end{figure}

   Now we assume the $\alpha$ and $\beta$ curves in Figure \ref{fig:st-07} are
   $\alpha_n$ and $\beta_n$. Let the curve $\gamma_n$ be a Hamiltonian
   perturbation of $\beta_n$ as shown in Figure \ref{fig:st-10}. We set
   $\gamma_1,\cdots,\gamma_{n-1}$ to be a small Hamiltonian
   perturbation of $\beta_1,\cdots,\beta_{n-1}$ without across any
   basepoints. Therefore, we construct a Heegaard Triple
   $\cal{T}$. Clearly, the choice the disk determines whether the
   result link is $L(B^\beta)$ or $L(B_{\pm 1}^{\beta})$.
   Here $B_{\pm 1}^{\beta}$ refers to the $\beta$-band which has the same
   core as $B^\beta$ but $\pm 1$-framing with respect to the framing of $B^\beta$.

\end{proof}

\subsection{Some topological facts about Heegaard triples subordinate
  to a band move}

\begin{lem}\label{sec:some-topol-facts-1}
  Any two Heegaard triples $\cal T_1$ and $\cal T_2$
  subordinate to a $B^\beta$-band move can be connected by
  the following type of Heegaard moves:

\begin{itemize}
\item Ambient isotopies of $\Sigma$ which fix $L\cup B^\beta$.
\item Isotopies and handle slides amongst
  $\alpha$,$\beta$,$\gamma$-curves without crossing any basepoints.
\item Stabilization or destabilization of a standard Heegaard Triple
  on torus. Here a standard Heegaard triple on torus has only one
  $\alpha$, one $\beta$ and one $\gamma$-curve, with $\alpha$ intersecting $\beta$
  and $\gamma$ once, and $\gamma$ is a small Hamiltonian isotopy of
  $\beta$ with intersection number $|\gamma\cap\beta|=2$. 
\end{itemize}

\end{lem}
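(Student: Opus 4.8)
The plan is to follow the standard "uniqueness up to Heegaard moves" strategy familiar from the definition of Heegaard Floer invariants, adapted to the presence of the band $B^\beta$ and the basepoints $\mathbf{O}$. The central point is that a Heegaard triple subordinate to a $B^\beta$-band move is, by Theorem \ref{sec:exist-heeg-triple}, produced from a Heegaard diagram $H_{\alpha\beta}$ compatible with $(Y,L_{\alpha\beta})$ together with a choice of embedded core path $l_{o,o'}$ (the projection of the band core), so the proof breaks into two layers: first showing that any two compatible diagrams $H_{\alpha\beta}$ are related by the allowed moves, and then showing that the extra data (the path resolution, the stabilizations used to resolve self-intersections, the framing adjustments, and the Hamiltonian perturbation defining $\bs{\gamma}$) contributes nothing beyond the three listed move types.

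First I would invoke the Reidemeister--Singer type theorem for multi-pointed Heegaard diagrams compatible with a bipartite link (the version used implicitly in \cite{Ozsvath2008b} and \cite{Zemke2016a}): any two diagrams $H_{\alpha\beta}$ and $H'_{\alpha\beta}$ compatible with $(Y,L_{\alpha\beta})$ are connected by ambient isotopies of $\Sigma$ fixing $L$, isotopies and handleslides of $\bs{\alpha}$ and $\bs{\beta}$ curves in the complement of $\mathbf{O}$, and index-one/two (de)stabilizations. Since both triples $\cal T_1$ and $\cal T_2$ contain such a subdiagram $H_{\alpha\beta}$, we may first arrange that the two underlying $(\Sigma,\bs{\alpha},\bs{\beta},\mathbf{O})$ diagrams agree; this already accounts for the first two bullet points, noting that all Heegaard moves here can be taken to fix $L\cup B^\beta$ since $B^\beta$ is contained in a prescribed neighborhood disjoint from the compressing disks used.

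Next I would handle the core path. Two choices of projected path $l_{o,o'}$ coming from the same band $B^\beta$ differ by a homotopy rel endpoints in $\Sigma$ together with the gradient-flow framing correction from Step 3 of the proof of Theorem \ref{sec:exist-heeg-triple}; any such homotopy can be realized by a finite sequence of finger moves across $\bs{\alpha}\cup\bs{\beta}$ curves and across self-intersection points. Crossing an $\alpha$- or $\beta$-curve is absorbed by an isotopy of that curve (second bullet); creating or removing a self-intersection, after the resolving stabilization of Step 2, amounts precisely to adding or deleting a standard genus-one Heegaard triple of the type in the third bullet — this is where the "stabilization of a standard Heegaard triple on torus" clause is needed, and it simultaneously absorbs the framing ambiguity $n\pm\tfrac12$ since changing the framing by $\pm 1$ is exactly a finger move producing one such torus summand. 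Finally, two choices of the Hamiltonian perturbation $\bs{\gamma}$ of $\bs{\beta}$ (for fixed disk region $D$) are related by a Hamiltonian isotopy of the $\gamma$-curves avoiding basepoints, i.e. the second bullet again, and any two small perturbations are isotopic through such perturbations.

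The main obstacle I expect is the bookkeeping in the path-homotopy step: one must check that a homotopy of $l_{o,o'}$ rel endpoints can always be decomposed into elementary moves each of which is either (i) a crossing of a single $\bs{\alpha}\cup\bs{\beta}$ arc, or (ii) a birth/death of a self-intersection, \emph{without} the homotopy being forced to pass through a basepoint of $\mathbf{O}$ or through a curve in a way that cannot be realized by an honest curve isotopy. Since the interior of $l_{o,o'}$ is disjoint from $\mathbf{O}$ and the basepoints lie in fixed small disks, genericity of the homotopy together with a standard transversality argument handles this, but it requires care to confirm that the resolving stabilizations of Step 2 of Theorem \ref{sec:exist-heeg-triple} are compatible: when a self-intersection is created and then later destroyed, the two local stabilization choices must be related by a (de)stabilization of a torus triple, which is precisely the content of the third bullet. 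Once this is verified, concatenating the move sequences from the three steps gives the claim.
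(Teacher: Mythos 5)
Your proposal takes a genuinely different route from the paper, and it has a real gap that needs to be filled before it closes.

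The paper's proof is a short Morse-theoretic (Cerf-type) argument: stabilize both triples sufficiently, think of each of the three families of attaching curves as coming from a Morse function and gradient-like flow for the pairs $(U_\alpha,L_\alpha)$, $(U_\beta,L_\beta)$, $(U_\gamma,L_\gamma)$, and then move the critical points of all three Morse functions while keeping the gradient-like flow frozen in a neighborhood of the arcs $L_\alpha,L_\beta,L_\gamma$. Holding the flow fixed near the arcs is what guarantees that the resulting isotopies, handleslides, and (de)stabilizations never cross $\mathbf{O}$. This handles arbitrary subordinate triples directly, with no reference to how they were built.

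Your proposal instead reduces to a Reidemeister--Singer statement for $H_{\alpha\beta}$ and then tracks the explicit data of Theorem \ref{sec:exist-heeg-triple} (projected core path $l_{o,o'}$, resolving stabilizations, framing correction, Hamiltonian perturbation). The gap is in the premise: the lemma is about \emph{any} two Heegaard triples subordinate to the band move, whereas Theorem \ref{sec:exist-heeg-triple} only asserts the \emph{existence} of a standard triple built from a chosen path; it does not say every subordinate triple arises from such data. By definition a subordinate triple is just one for which $H_{\alpha\beta}$ is compatible with $L_{\alpha\beta}$ and $H_{\alpha\gamma}$ with $L_{\alpha\gamma}$; the $\bs\gamma$-curves of a general such triple need not be a Hamiltonian perturbation of $\bs\beta$, nor need they be encoded by any projected core path. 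So before your path-homotopy argument can even start, you must first show that an arbitrary subordinate triple can be connected by the allowed moves to one of the constructed standard triples --- and that step is essentially the whole content of the lemma, which your proposal assumes rather than proves. A secondary issue, which you flag yourself, is the claim that births/deaths of self-intersections of $l_{o,o'}$ correspond exactly to (de)stabilizations of a standard torus triple; this is plausible but would require an explicit local model to be airtight. If you instead run the argument at the level of the three compatible Morse functions as in the paper, both of these issues dissolve, because Cerf theory already supplies the generic path between the two triples and the frozen flow near the link arcs controls the basepoints.
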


\begin{proof}
  We stabilize the two Heegaard diagram sufficiently many times, then
  move the 
  critical points of the three
  Morse functions compatible with the three pairs
  $(U_\alpha,L_\alpha)$,$(U_\beta,L_\beta)$ and $(U_\gamma,L_\gamma)$
  without changing the gradient-like flow in a small neighborhood of $L_\alpha,L_\beta,L_\gamma$. 
\end{proof}

\begin{lem}\label{sec:some-topol-facts}

Suppose $\cal{T}$ is a Heegaard triple subordinate to a
$B^\beta$-band move from bipartite link $L_{\alpha\beta}$ to
$L_{\alpha\gamma}$. Then the induced Heegaard diagram
$H_{\beta\gamma}$ is a diagram for a bipartite link $L_{\beta\gamma}$
in $\#^g(S^1\times S^2)$. In fact, we have 
\[
  (\#^g(S^1\times S^2),L_{\beta\gamma})=
  \begin{cases}
    (\#^g(S^1\times S^2),K)\# (S^3,\mathbb{U}_2^{n-1}), &\text{ if }
    B^\beta \text{ is of Type I.}\\
  (\#^g(S^1\times S^2),\mathbb{U}_4)\# (S^3,\mathbb{U}_2^{n-2}),
   &\text{ if }
    B^\beta \text{ is of Type II}.
  \end{cases}
\]
Here $K$ is a bipartite knot with two basepoint in
  $\#^g(S^1\times S^2)$. The homology $[K]$ of the bipartite knot $K$
  is equal to $(0,\cdots,0,2)$
  in $H_1(\#^g(S^1\times S^2))$. The bipartite link $\mathbb{U}^l_{2k}$ is the bipartite unlink
  with $l$-components and 
  $2k$-basepoints on each of these components.
\end{lem}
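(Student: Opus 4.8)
The plan is to analyze the Heegaard diagram $H_{\beta\gamma}$ directly from the standard form of the Heegaard triple $\mathcal{T}$ produced in Theorem~\ref{sec:exist-heeg-triple}. First I would recall that for $i=1,\dots,n-1$ the curve $\gamma_i$ is a small Hamiltonian pushoff of $\beta_i$ meeting $\beta_i$ in exactly two points and disjoint from all basepoints; hence each pair $(\beta_i,\gamma_i)$ together with the two basepoints lying in the bigon region between them contributes, after passing to $H_{\beta\gamma}$, a standard genus-zero piece representing an unknot component $\mathbb{U}^1_2$ with two basepoints. Summing these pieces off accounts for the $(S^3,\mathbb{U}_2^{n-1})$ (resp.\ $(S^3,\mathbb{U}_2^{n-2})$) connected summand, and reduces the problem to understanding the remaining piece built from the genus-$g$ part of $\Sigma$ together with the curves $\beta_n,\gamma_n$ and the distinguished disk region $D$ containing the two basepoints $o,o'$.

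Next I would identify the three-manifold. Since each $\gamma_i$ ($i<n$) is isotopic to $\beta_i$ and $\gamma_n$ is a Hamiltonian isotopy of $\beta_n$, the diagram $(\Sigma,\boldsymbol\beta,\boldsymbol\gamma)$ is (a stabilization of) a diagram in which $\boldsymbol\gamma$ is isotopic to $\boldsymbol\beta$, so it represents $\#^g(S^1\times S^2)$; this is the standard fact that handleslide/isotopy-equivalent attaching systems give $\#^g(S^1\times S^2)$. The homology class of the resulting link component(s) is computed from how $\beta_n$ and $\gamma_n$ sit relative to the $\alpha$-curves: in the Type I case the core arc of the band wraps once around a nontrivial loop (reflecting $\chi(F^i_\alpha\cap\pi^{-1}[c-\epsilon,c+\epsilon])=0$), producing a single knotted component $K$ whose class is $(0,\dots,0,2)$, while in the Type II case ($\chi=1$) the band splits $\beta_n\cup\gamma_n$ off a trivial disk piece, giving instead the four-basepoint unlink component $\mathbb{U}_4$ in $\#^g(S^1\times S^2)$ together with one fewer stabilization summand. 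I would verify the homology statement by a direct intersection-number computation: the class of the link in $H_1$ is read off from the total intersection of the relevant $\beta\gamma$-component with the $\alpha$-dual basis, and the framing normalization from Step~3 of the proof of Theorem~\ref{sec:exist-heeg-triple} (difference of framings equal to $\pm\tfrac12$) ensures the coefficient is exactly $2$ rather than $0$ or some other even number.

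The main obstacle I anticipate is the Type I versus Type II dichotomy: showing precisely that the local modification of $\beta_n$ across the disk region $D$ yields a \emph{knotted} component $K$ (Type I) rather than splitting off an unknot, and conversely that Type II genuinely produces $\mathbb{U}_4 \# \mathbb{U}_2^{n-2}$. This requires carefully tracking the core arc $c_{o,o'}$ of the band through the resolution of self-intersections (Step~2) and the stabilizations (Step~4) of Theorem~\ref{sec:exist-heeg-triple}, and matching the Euler-characteristic condition defining Type I/II with the isotopy type of the $\beta\gamma$-curve. I expect this to reduce to a local picture computation in the rectangle neighborhood $R$ of $l_{o,o'}$, comparing with the oriented analogue \cite[Lemma 6.6]{Zemke2016a} and adapting it to the unoriented setting where the band need not change the number of components. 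Once the local model is pinned down, the connected-sum decomposition and the homology computation are routine.
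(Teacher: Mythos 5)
Your proposal follows the same route as the paper: reduce to the explicit standard Heegaard triple produced by Theorem~\ref{sec:exist-heeg-triple} and read the three-manifold, the connected-sum structure, and the homology class of $K$ directly off that diagram (the paper's proof is essentially a pointer to Figures~\ref{Fig:knotforsaddle1} and~\ref{Fig:knotforsaddle2}). So the overall strategy matches.

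That said, there is a bookkeeping error in how you attribute the unknot summands. You write that ``each pair $(\beta_i,\gamma_i)$ together with the two basepoints lying in the bigon region between them'' contributes an $(S^3,\mathbb{U}^1_2)$ factor. But the $\gamma_i$ ($i<g+n-1$) are Hamiltonian pushoffs that specifically avoid $\mathbf{O}$, so the two thin bigons between $\beta_i$ and $\gamma_i$ contain \emph{no} basepoints. More importantly, the count does not match: there are $g+n-2$ curve pairs with small pushoffs (here $n$ denotes the number of pairs of basepoints, so there are $g+n-1$ curves in each family), while the statement has only $n-1$ (Type~I) or $n-2$ (Type~II) copies of $\mathbb{U}^1_2$. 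The discrepancy is exactly the $g$ pairs whose curves span $H_1(\Sigma_g)$; these contribute the genus of the diagram, i.e.\ the $\#^g(S^1\times S^2)$ topology, but carry no nearby basepoints and hence do not each split off an $\mathbb{U}^1_2$ factor. The $\mathbb{U}^1_2$ summands are indexed by the basepoint pairs not interacting with the band, not by the curve pairs. Once this is corrected, the remainder of your outline --- identifying $Y_{\beta\gamma}\cong\#^g(S^1\times S^2)$ from $\boldsymbol\gamma$ being isotopic to $\boldsymbol\beta$, computing $[K]=(0,\dots,0,2)$ by intersection numbers, and matching the Euler-characteristic dichotomy of Type~I/II against the local model near the disk region $D$ --- is exactly what the figures in the paper are recording.
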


\begin{proof}
Without loss of generality, we assume that the band $B^\beta$ is next
to a pair of basepoints $(o_1,o_2)$. By Theorem
\ref{sec:exist-heeg-triple},
we get a Heegaard diagram, as shown in Figure \ref{Fig:knotforsaddle1}
(or Figure \ref{Fig:knotforsaddle2} resp.)
if $B^\beta$ is of Type I (or Type II resp.). The result follows
directly from these two Heegaard diagrams. 

\end{proof}

\begin{figure}
  \centering
  \includegraphics[scale=0.5]{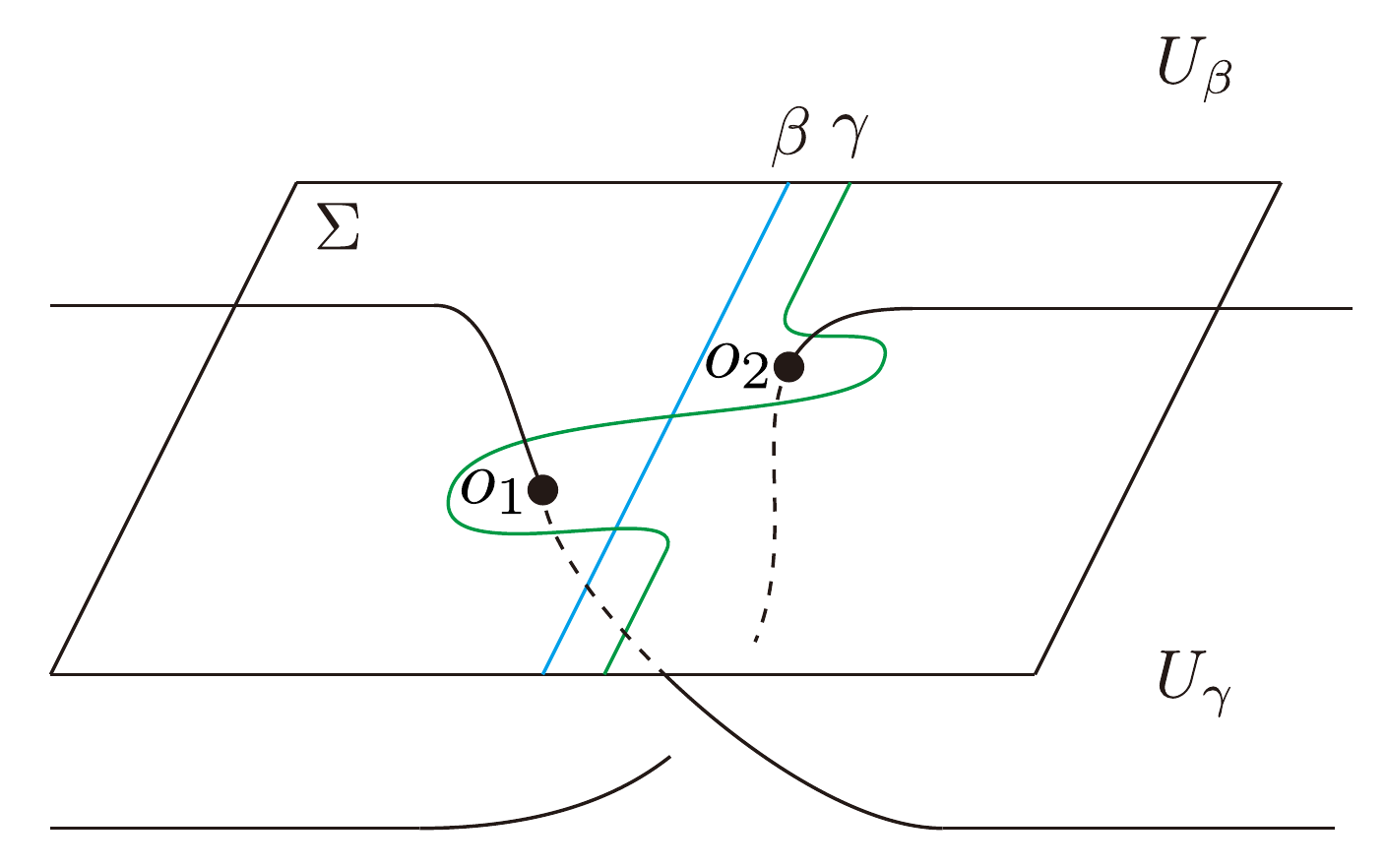}
  \caption{Links in  $\#^n(S^1\times S^2)$ generated by Type I band move }
  \label{Fig:knotforsaddle1}
\end{figure}

\begin{figure}
  \centering
  \includegraphics[scale=0.5]{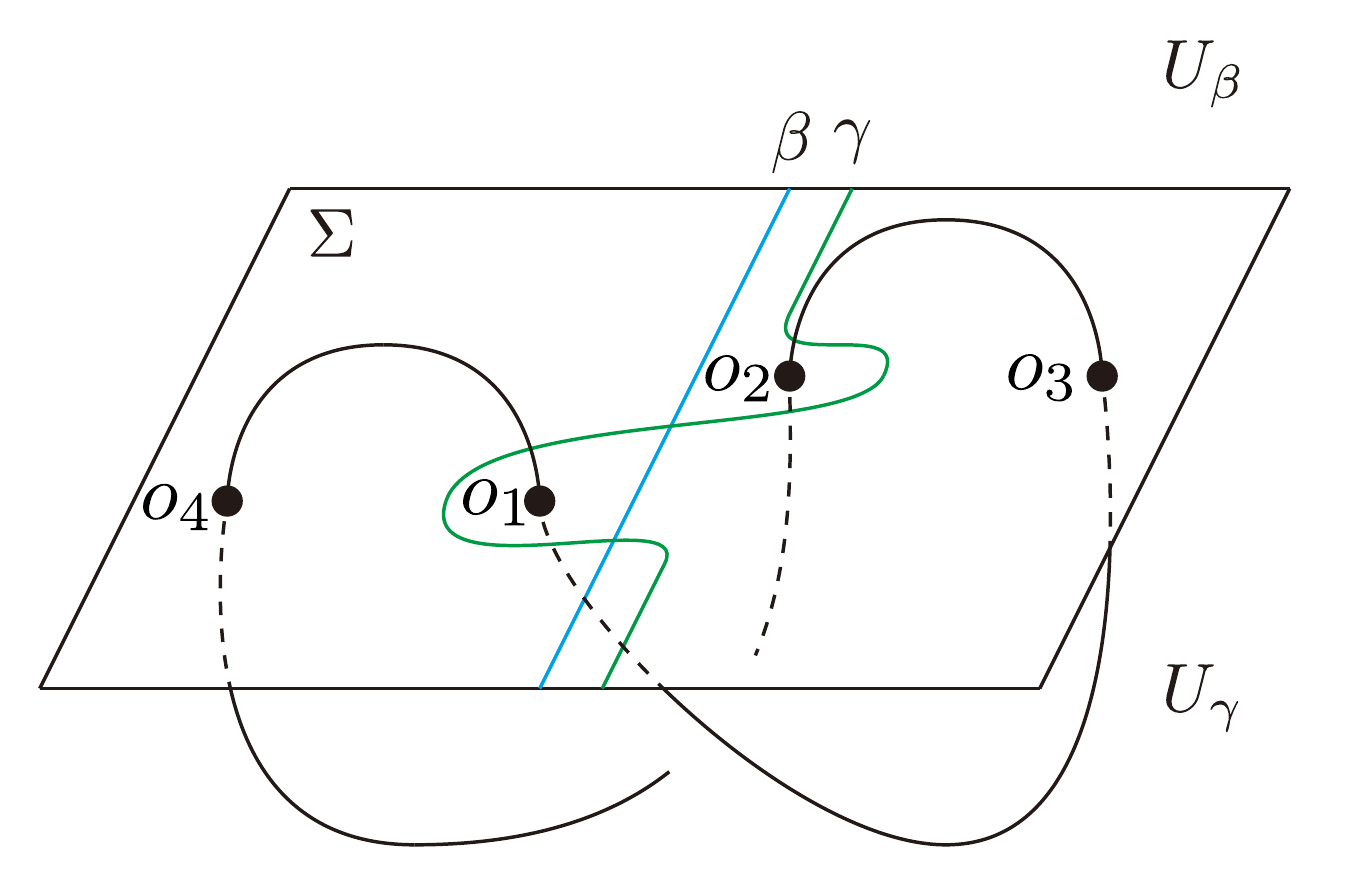}
  \caption{Links in  $\#(S^1\times S^2)$ generated by Type II band move
    }
  \label{Fig:knotforsaddle2}
\end{figure}
\section{Bipartite link Floer homology}
In this section, we construct the bipartite link Floer homology for
homologically even bipartite links. 

\subsection{Bipartite link Floer curved chain complex for
  null-homologous links}

In this subsection, we construct a well-defined $\mathbb{Z}$-graded curved chain complex
for a null homologous biparitite link $L_{\alpha\beta}$ in a closed
oriented three-manifold $Y$.

Suppose $H_{\alpha\beta}$ is a Heegaard diagram for the bipartite link
$L_{\alpha\beta}$. Let $J_t$ be a generic family of almost complex
structures on $\Sigma$. We denote by $\cal{H}_{\alpha\beta}$ the
Heegaard data $(H_{\alpha\beta},J_t)$. Now we can define a
$\mathbb{F}_2[U_1,\cdots,U_{2n}]$-module 
$CFBL^{-}(\cal{H}_{\alpha\beta})$, together with an endomorphism $\partial$ as follows:

\begin{itemize}
\item The module $CFBL^{-}(\cal{H}_{\alpha\beta})$ is freely generated by the
  intersections of the two Lagrangians $\mathbb{T}_\alpha$ and
  $\mathbb{T}_\beta$ in the symplectic manifold $\text{Sym}^{g+n-1}(\Sigma)$.
\item The endomorphism $\partial$ acting on a generator $\x\in\mathbb{T}_\alpha\cap\mathbb{T}_\beta$ is given by:
\[\partial \x
  =\sum_{\y\in\mathbb{T}_\alpha\cap\mathbb{T}_\beta}\sum_{\phi\in\pi_2(x,y),\mu(\phi)=1}\#(\cal{M}(\phi)/\mathbb{R})\prod_{i=1}^{2n}U^{n_{o_i}(\phi)}_i\y.\]
\end{itemize}
Here $g$ is the genus of the Heegaard surface $\Sigma$, $2n$ is the number of basepoints on the bipartite link $L_{\alpha\beta}$.

Given an alternating coloring $\mathfrak{P}$ of
$L_{\alpha\beta}$, we can define a map from the set of generators of
$CFBL^{-}(\cal{H}_{\alpha\beta})$ to the set of Spin$^c$-structure of
$Y$ as in \cite[Section 2.6]{Ozsvath2004c}. In detail, we set our
Spin$^c$-structure map $\mathfrak{s}_{\mathfrak{P}}(\x)$ to be
$\mathfrak{s}_{\mathbf{w}}(\x)$, where $\mathbf{w}$ is the subset
$\mathfrak{P}^{-1}(+1)$ of basepoints.
If $\mathfrak{P}'$ is another alternating coloring of
$L_{\alpha\beta}$, the difference between the two maps
$s_{\mathfrak{P}}$ and $s_{\mathfrak{P}'}$ is:
\begin{equation}\label{eq:2}
  \mathfrak{s}_{\mathfrak{P}}(\x)-\mathfrak{s}_{\mathfrak{P}'}(\x)=\frac{1}{2}(\text{PD}[L_{\mathfrak{P}}]-\text{PD}[L_{\mathfrak{P}'}]). 
\end{equation}

Here $L_{\mathfrak{P}}$  and $L_{\mathfrak{P}'}$ are the oriented links
 determined by the
bipartite link $L_{\alpha\beta}$ 
and the coloring $\mathfrak{P}$ and $\mathfrak{P}'$ respectively in Remark \ref{sec:coloring}. As we assume $L_{\alpha\beta}$ in $Y$
is null-homologous, the difference
$s_{\mathfrak{P}}(\x)-s_{\mathfrak{P}'}(\x)$ is zero. Therefore, we
have a map
$\mathfrak{s}_{\delta}=\mathfrak{s}_{\mathfrak{P}}:\mathbb{T}_{\alpha}\cap\mathbb{T}_\beta\rightarrow
\text{Spin}^c(Y)$, which is independent of the choice of alternating
coloring $\mathfrak{P}$ of $L_{\alpha\beta}$. For the details of
Spin$^c$-structure map, see \cite[Section 2.6]{Ozsvath2004c}
and \cite[Section 3.3]{Ozsvath2008b}.
 The module $CFBL^{-}(\cal{H}_{\alpha\beta})$
now splits into a direct sum:
\[CFBL^-(\cal{H}_{\alpha\beta})=\bigoplus_{\mathfrak{s}\in
    \text{Spin}^c(Y)}CFBL^-(\cal{H}_{\alpha\beta},\mathfrak{s}),\]
where $CFBL^-(\cal{H}_{\alpha\beta},\mathfrak{s})$ consists of generators
$\x$ whose image under $\mathfrak{s}_{\delta}$ is equal to $\mathfrak{s}$.

From now on, we assume $\mathfrak{s}$ is a torsion Spin$^c$-structure of
$Y$. The diagram
$H_{\alpha\beta}$ is weakly $\mathfrak{s}$-admissible, then we can get
finite counts of moduli spaces. We will discuss admissibility in
Section \ref{sec:admissibility}.

Based on the work in \cite{Ozsvath2015}, we have a relative
$\delta$-grading for
$CFBL^-(\cal{H}_{\alpha\beta},\mathfrak{s})$ as follows. Let $\x$ and $\y$ be two
generators in $CFBL^-(\cal{H}_{\alpha\beta},\mathfrak{s})$ with $\pi_2(\x,\y)$ being non-empty. The relative $\delta$-grading
between $\x$ and $\y$ is given by
\begin{equation}\label{eq:3}
  \delta(\x,\y)=\mu(\phi)-n_{\mathbf{O}}(\phi),
\end{equation}
where $n_{\mathbf{O}}$ denotes the sum $\sum^{2n}_{i=1}n_{o_i}(\phi)$,
and $\phi$ is an element in $\pi_2(\x,\y)$. Notice that there is a pair of
basepoints $(o,o')$ on each component of $\Sigma\backslash
\bs{\alpha}$ or $\Sigma\backslash\bs{\beta}$. Using Lipshitz's formula
\cite[Corollary 4.3]{Lipshitz2006},
we know that the Maslov index $\mu(\cal{P})$ of a periodic domain $\cal{P}$ is
 equal to $n_{\mathbf{O}}(\cal{P})$. This 
implies that the relative $\delta$-grading is well-defined.
Now we can set the $\delta$-grading of
the variable $U_i$ to be $-1$ and extend the relative $\delta$-grading to the
submodule $CFBL^-(\cal{H}_{\alpha\beta},\mathfrak{s})$. 
Moreover, as the
Spin$^c$-structure $\mathfrak{s}$ is torsion, the $\delta$-grading is actually
a $\mathbb{Z}$-grading on $CFBL^-(\cal{H}_{\alpha\beta},\mathfrak{s})$.

As a corollary of \cite[Lemma 2.1]{Zemke2016}, we have
\[\partial^2 = \sum_i
  (U_{i,1}U_{i,2}+U_{i,2}U_{i,3}+\cdots+U_{i,k_i}U_{i,1}).\]
Here $i$ refers the $i$-th component $L_i$ of $L$, the variable $U_{i,j}$ is the
variable assigned to the 
basepoint $o_{i,j}$. The basepoints
$o_{i,j}$ appear in order on the link component $L_i$.

By the above discussion, we know that the
$\mathbb{F}_2[U_1,\cdots,U_{2n}]$-module
$CFBL^-(\cal{H}_{\alpha\beta},\mathfrak{s})$ together with the 
endomorphism $\partial$ and the $\delta$-grading is a well-defined
$\mathbb{Z}$-graded curved chain complex.

\begin{rem}
Given an alternating coloring $\mathfrak{P}$ of
$L_{\alpha\beta}$, we can get the curved chain complex
$CFL^-_{UV}(\cal{H_{\alpha\beta}})$ defined by Zemke in \cite{Zemke2016} 
from $CFBL^-(\cal{H_{\alpha\beta}})$ by setting the variable of
$\mathbf{z}$-basepoints to be $V_i$'s. Furthermore, the average
of the two gradings $gr_{\mathbf{w}}$ and $gr_{\mathbf{z}}$
gives the $\delta$-grading on the curved chain complex $CFL^-_{UV}$.  The curved chain
complex $CFBL^-(\cal{H}_{\alpha\beta})$ together with a coloring
$\mathfrak{P}$ has the same amount of data as $CFL^-_{UV}(\cal{H}_{\alpha\beta})$.
\end{rem}

\subsection{Spin$^c$-structures}

In this section, we will discuss the relation between the Spin$^c$-structures of $Y$ and a Spin$^c$-structure map
induced by homologically even bipartite links. 

Consider the Heegaard diagram $H_{\beta\gamma}$ of a bipartite link
$L_{\beta\gamma}$ induced from a standard
Heegaard triple $\cal{T}$ subordinate to a $\beta$-band move of type
I as in Lemma \ref{sec:some-topol-facts} and Figure
\ref{Fig:knotforsaddle1}. Notice that $L_{\beta\gamma}$ is not null-homologous.
Let $\mathfrak{P}$ be an alternating coloring of
$L_{\beta\gamma}$. 
Then all the generators in $\mathbb{T}_{\beta}\cap\mathbb{T}_{\gamma}$
belong to the same equivalence class $\mathfrak{s}$.
We have the following equality for the
Spin$^c$ structure map $\mathfrak{s}_{\mathfrak{P}}$:
\begin{equation}
  \mathfrak{s}_{\mathfrak{P}}(x)-\mathfrak{s}_0 = \mathfrak{s}_0 -
  \mathfrak{s}_{\mathfrak{-P}}(x) = \pm\text{PD}[\beta_n],
\end{equation}
where $\mathfrak{s}_0$ is the only
torsion Spin$^c$-structure on $\#^n(S^1\times S^2)$.
The grading $gr_{\mathfrak{P}}$ (or $gr_{-\mathfrak{P}}$) on the equivalence class $\mathfrak{s}$
is a $\mathbb{Z}_2$-grading, which can not be lifted to a
$\mathbb{Z}$-grading. Therefore, it is necessary to find a sufficient
condition for which the $\delta$-grading on the equivalence class
$\mathfrak{s}$ is a $\mathbb{Z}$-grading.

\begin{defn}
  We say that a bipartite link $L_{\alpha\beta}$ is \textit{\textbf{homologically
  even}}, if the homology class $[L_{\mathfrak{P}}]\in H_1(Y)$ is
  divisible by two, where $\mathfrak{P}$ is
  an alternating coloring of $L_{\beta\gamma}$. 
\end{defn}

\begin{lem}\label{sec:curved-chain-complex}
  Let $L_{\beta\gamma}$ be a homologically even bipartite link in a closed oriented
  three-manifold $Y$.  Then, the map
  $\mathfrak{s}_{\delta}:\mathbb{T}_{\alpha}\cap \mathbb{T}_{\beta}\to
  \textnormal{Spin}^c(Y)$ defined by 
\begin{equation}\label{eq:6}
  \mathfrak{s}_\delta(\x) \triangleq \mathfrak{s}_{\mathfrak{P}}(\x)-\frac{1}{2}\text{PD}[L_{\mathfrak{P}}],
\end{equation}
is independent the choice of $\mathfrak{P}$. Furthermore, if
$\mathbf{x}$ and $\mathbf{y}$ are two generators in
$\mathbb{T}_\alpha\cap\mathbb{T}_\beta$ with $\pi_2(\x,\y)$ being
non-empty, then $\mathfrak{s}_\delta(\x)=\mathfrak{s}_\delta(\y)$. 
Therefore, we have the following decomposition 
\[CFBL^-(\cal{H}_{\alpha\beta})=\bigoplus_{\mathfrak{s}\in
    \textnormal{Spin}^c(Y)}CFBL^-(\cal{H}_{\alpha\beta},\mathfrak{s}),\]
where $CFBL^-(\cal{H}_{\alpha\beta},\mathfrak{s})$ consists of
generators $\x$ with $\mathfrak{s}_\delta(\x)=\mathfrak{s}$.
\end{lem}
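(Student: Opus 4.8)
The plan is to derive all three assertions from facts already available: the coloring-independence from the difference formula \eqref{eq:2}, the invariance of $\mathfrak{s}_\delta$ under Whitney disks from the corresponding classical property of the unpointed $\text{Spin}^c$-structure map $\mathfrak{s}_{\mathbf{w}}$, and the direct-sum decomposition formally from the latter.

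First I would observe that being homologically even is itself independent of the alternating coloring: two colorings $\mathfrak{P},\mathfrak{P}'$ produce oriented links $L_{\mathfrak{P}},L_{\mathfrak{P}'}$ differing only by the reversal of orientation on some sublink, so $[L_{\mathfrak{P}}]-[L_{\mathfrak{P}'}]\in H_1(Y)$ is twice a sum of link components; hence $[L_{\mathfrak{P}}]$ is divisible by two precisely when $[L_{\mathfrak{P}'}]$ is. Granting this, the symbol $\frac{1}{2}\text{PD}[L_{\mathfrak{P}}]$ in \eqref{eq:6} is read as $\text{PD}[a_{\mathfrak{P}}]\in H^2(Y;\mathbb{Z})$ for a choice of $a_{\mathfrak{P}}$ with $[L_{\mathfrak{P}}]=2a_{\mathfrak{P}}$, so that $\mathfrak{s}_\delta(\x)$ lies in $\text{Spin}^c(Y)$. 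For the coloring-independence I would subtract the two defining expressions and invoke \eqref{eq:2}: the terms $\mathfrak{s}_{\mathfrak{P}}(\x)-\mathfrak{s}_{\mathfrak{P}'}(\x)$ and $\frac{1}{2}(\text{PD}[L_{\mathfrak{P}}]-\text{PD}[L_{\mathfrak{P}'}])$ agree, so the difference of the two values of $\mathfrak{s}_\delta(\x)$ vanishes. I would emphasize that \eqref{eq:2} holds with no hypothesis on $[L_{\alpha\beta}]$; it was only the conclusion ``this difference is zero'' in the previous subsection that used null-homology.

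Next, for the invariance along Whitney disks and for the splitting, I would use that the correction term in \eqref{eq:6} does not depend on the generator, whence $\mathfrak{s}_\delta(\x)-\mathfrak{s}_\delta(\y)=\mathfrak{s}_{\mathbf{w}}(\x)-\mathfrak{s}_{\mathbf{w}}(\y)$ with $\mathbf{w}=\mathfrak{P}^{-1}(+1)$. By the standard description of the $\text{Spin}^c$-structure map this difference equals $\text{PD}\,\epsilon(\x,\y)$, where $\epsilon(\x,\y)\in H_1(Y)$ is assembled from paths in $\mathbb{T}_\alpha$ and $\mathbb{T}_\beta$ and does not involve the basepoints; and $\epsilon(\x,\y)=0$ whenever $\pi_2(\x,\y)\neq\emptyset$, since the domain of a Whitney disk bounds the corresponding loop. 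Hence $\mathfrak{s}_\delta(\x)=\mathfrak{s}_\delta(\y)$ as soon as $\pi_2(\x,\y)\neq\emptyset$. Setting $CFBL^-(\cal{H}_{\alpha\beta},\mathfrak{s})$ to be spanned by the generators $\x$ with $\mathfrak{s}_\delta(\x)=\mathfrak{s}$, the module decomposition is then tautological, and it is respected by the endomorphism $\partial$ of $CFBL^-(\cal{H}_{\alpha\beta})$ because every $\y$ occurring in $\partial\x$ admits a class in $\pi_2(\x,\y)$, so $\pi_2(\x,\y)\neq\emptyset$.

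The only step I expect to require care is the meaning of the half-class $\frac{1}{2}\text{PD}[L_{\mathfrak{P}}]$ when $H_1(Y)$ has $2$-torsion, since then $a_{\mathfrak{P}}$ is not unique; everything else is bookkeeping on top of \eqref{eq:2} and the classical properties of $\mathfrak{s}_{\mathbf{w}}$. I would dispose of this either by restricting to $H_1(Y)$ without $2$-torsion --- which suffices for the diagrams $H_{\beta\gamma}$ we care about, as they represent links in $\#^g(S^1\times S^2)$ --- or by phrasing the statement so that only the (well-defined) differences of half-classes occur.
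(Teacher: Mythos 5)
Your argument matches the paper's proof in all essentials: both derive the coloring-independence of $\mathfrak{s}_\delta$ directly from \eqref{eq:2} (the paper writes out the intermediate sublink $L_{\mathfrak{PP}'}$ and the identity $2\,\text{PD}[L_{\mathfrak{PP}'}]=\text{PD}[L_{\mathfrak{P}}]-\text{PD}[L_{\mathfrak{P}'}]$, but the algebra is the same), and both reduce the Whitney-disk invariance and the direct-sum decomposition to the standard fact that $\mathfrak{s}_{\mathbf{w}}(\x)=\mathfrak{s}_{\mathbf{w}}(\y)$ whenever $\pi_2(\x,\y)\neq\emptyset$. Your closing caveat about $2$-torsion in $H_1(Y)$ making $\tfrac{1}{2}\text{PD}[L_{\mathfrak{P}}]$ ambiguous is a legitimate observation that the paper passes over silently, but it does not change the structure of the argument, and, as you note, it is vacuous in the applications to $\#^g(S^1\times S^2)$.
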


\begin{proof}
  Let $\mathfrak{P}'$ be another alternating coloring of
  $L_{\beta\gamma}$. By Equation \ref{eq:2}, the difference between
  the two Spin$^c$-structure
  $\mathfrak{s}_{\mathfrak{P}}(\x)-\mathfrak{s}_{\mathfrak{P}'}(\x)=\text{PD}[L_{\mathfrak{PP}'}]$. Here the link $L_{\mathfrak{PP}'}$ is a sublink of $L_{\mathfrak{P}}$ consists of components $L_i$ of $L$ with $\mathfrak{P}(L_i)=-\mathfrak{P}'(L_i)$. On
  the other hand,  by the construction in Remark \ref{sec:coloring}, we have
  $2\text{PD}[L_{\mathfrak{PP}'}]=\text{PD}[L_{\mathfrak{P}}]-\text{PD}[L_{\mathfrak{P}'}]$. Combining
  this two equalities, we get that the Spin$^c$-structure map
  $\mathfrak{s}_{\mathfrak{P}}(\x)-\frac{1}{2}\text{PD}[L_{\mathfrak{P}}]$ is equal to $\mathfrak{s}_{\mathfrak{P}'}(\x)-\frac{1}{2}\text{PD}[L_{\mathfrak{P}'}]$.

If $\pi_2(\x,\y)$ is non-empty, we get
$\mathfrak{s}_{\mathfrak{P}}(\x)=\mathfrak{s}_{\mathfrak{P}}(\y)$. Therefore
the map
$\mathfrak{s}_\delta$ sends the equivalence classes of generators in
$\mathbb{T}_\beta\cap\mathbb{T}_\gamma$ to the
Spin$^c$-structures of $Y$.
\end{proof}

From now on, we assume all the bipartite link we discuss satisfies the same
condition in Lemma \ref{sec:curved-chain-complex}, i.e. its homology
class is divisible by two.

\subsection{Admissibility}\label{sec:admissibility}

Although one can assign an alternating coloring to a bipartite link and
define the admissibility with respect to the
$\mathbf{w}$-basepoints.  This admissibility with respect to
$\mathbf{w}$-basepoints can not help us define a $\mathbb{Z}$-graded
complex for some null-homologous links in $\#^g(S^1\times S^2)$. In
this subsection, we will introduce the admissibility with respect to
$\textnormal{Spin}^c$-structure map $\mathfrak{s}_{\delta}$.

\begin{defn}
  Suppose $\mathfrak{s}$ is a $\textnormal{Spin}^c$-structure over
  $Y$, we say that a Heegaard diagram
  $H_{\alpha\beta}=(\Sigma,\bs{\alpha},\bs{\beta},\mathbf{O})$ is
  $\mathfrak{s}$-\textit{\textbf{realized}}, if there is a point $\x\in
  \mathbb{T}_\alpha\cap\mathbb{T}_\beta$ and a $n$-tuple
  of points $\mathbf{q}$ on $\Sigma\backslash
  (\bs{\alpha}\cup\bs{\beta})$, such that:
\[\mathfrak{s}_{\delta}(\x)=\mathfrak{s}_{\mathbf{q}}(\x)=\mathfrak{s}.\]
Furthermore, we say that an $\mathfrak{s}$-realized Heegaard diagram $H_{\alpha\beta}$ is \textit{\textbf{weakly \textnormal{$\mathfrak{s}$-}admissible}} (or
\textit{\textbf{strongly \textnormal{$\mathfrak{s}$-}admissible}} resp.), if the diagram
$(\Sigma,\bs{\alpha},\bs{\beta},\mathbf{q})$ is weakly \textnormal{$\mathfrak{s}$-}admissible (or
strongly \textnormal{$\mathfrak{s}$-}admissible resp.).
\end{defn}

\begin{lem}
  Given a bipartite link $L_{\beta\gamma}$ in a closed three-manifold
  $Y$, together with a fixed $\textnormal{Spin}^c$-structure
  $\mathfrak{s}$, we can construct a weakly
  \textnormal{$\mathfrak{s}$-}admissible (or strongly
  \textnormal{$\mathfrak{s}$-}admissible resp.) Heegaard
  diagram $H_{\beta\gamma}$ compatible with the pair $(Y,L_{\beta\gamma})$. 
\end{lem}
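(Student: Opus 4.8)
\emph{Proof idea.} The plan is to start from an arbitrary Heegaard diagram compatible with $(Y,L_{\beta\gamma})$ and to bring it into the desired form by winding a family of attaching curves, as in the argument of Ozsv\'{a}th--Szab\'{o} \cite{Ozsvath2004c} (in the multi-pointed form used in \cite{Ozsvath2008b} and \cite{Zemke2016}). Two points require attention beyond the classical case: the windings must stay away from the link basepoints $\mathbf{O}$, so that compatibility with $(Y,L_{\beta\gamma})$ is preserved; and the $\mathrm{Spin}^c$-structure map relevant here is $\mathfrak{s}_\delta$ from Lemma \ref{sec:curved-chain-complex}, so one must use the homologically even hypothesis to pass between $\mathfrak{s}_\delta$ and an ordinary basepoint map.

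First I would fix a compatible diagram $H_{\beta\gamma}=(\Sigma,\bs{\beta},\bs{\gamma},\mathbf{O})$, which exists by the construction of Section \ref{sec:unor-saddle-moves}. Using \eqref{eq:2}, Remark \ref{sec:coloring}, and the homological evenness of $L_{\beta\gamma}$, I would then choose an $n$-tuple $\mathbf{q}\subset\Sigma\setminus(\bs{\beta}\cup\bs{\gamma})$, one point in each component of $\Sigma\setminus\bs{\beta}$ and one in each component of $\Sigma\setminus\bs{\gamma}$, for which the basepoint map $\mathfrak{s}_{\mathbf{q}}$ agrees with $\mathfrak{s}_\delta$ on every generator: starting from the $\mathbf{w}$-basepoints of an alternating coloring $\mathfrak{P}$, slide half of them along an embedded $1$-chain representing $\tfrac12\mathrm{PD}[L_{\mathfrak{P}}]\in H_1(Y;\Z)$, which is an integral class precisely by homological evenness. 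With this choice of $\mathbf{q}$, the assertion of the lemma reduces to making $(\Sigma,\bs{\beta},\bs{\gamma},\mathbf{q})$ into a pointed Heegaard diagram that is $\mathfrak{s}$-realized and weakly (resp.\ strongly) $\mathfrak{s}$-admissible in the classical sense.

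To do this I would pick finitely many embedded closed curves $c_1,\dots,c_b$ on $\Sigma$, disjoint from $\mathbf{O}\cup\mathbf{q}$, that together with the $\bs{\beta}$-curves generate $H_1(\Sigma;\Z)$, and wind the $\bs{\beta}$-curves many times in disjoint annular neighborhoods of the $c_i$. Since the periodic domains of $(\Sigma,\bs{\beta},\bs{\gamma},\mathbf{q})$ surject onto $H_2(Y;\Z)$ and any nontrivial one crosses some $c_i$, winding sufficiently often forces every nontrivial periodic domain $H$ with $\langle c_1(\mathfrak{s}),H\rangle=0$ to acquire both a positive and a negative local multiplicity in the winding regions (weak admissibility), and, for each integer $k$, forces the only periodic domain $H$ with all multiplicities at most $k$ and $\langle c_1(\mathfrak{s}),H\rangle=2k$ to be $H=0$ (strong admissibility). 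The same windings produce generators whose $\mathfrak{s}_{\mathbf{q}}$-value differs from that of a given generator by arbitrary multiples of the classes $\mathrm{PD}[c_i]$, which lets us arrange that some generator lies in the prescribed $\mathfrak{s}$, so the diagram becomes $\mathfrak{s}$-realized. Because the curves $c_i$ and all the winding take place away from $\mathbf{O}$, the link $L_{\beta\gamma}$ in $Y$ and the unknottedness of its arcs in the associated handlebodies are untouched, so the resulting diagram is still compatible with $(Y,L_{\beta\gamma})$.

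The hard part is reconciling the two demands on the curves $c_i$: they must avoid the finite set $\mathbf{O}\cup\mathbf{q}$, yet together with $\bs{\beta}$ they must still span $H_1(\Sigma;\Z)$ so that winding along them detects, and hence kills, all positive periodic domains. This is handled by the elementary facts that $H_1(\Sigma;\Z)$ is generated by simple closed curves in general position and that a finite point set is avoidable by a small isotopy; the genuine bookkeeping is to check that the periodic domains of $(\Sigma,\bs{\beta},\bs{\gamma},\mathbf{q})$ are governed by $H_2(Y;\Z)$ rather than by the (possibly non-trivial) homology class of $L_{\beta\gamma}$, after which the Ozsv\'{a}th--Szab\'{o} winding estimates apply verbatim.
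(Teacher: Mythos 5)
Your proposal takes essentially the same route as the paper. The paper's own proof is a one-line citation of the winding arguments of Ozsv\'{a}th--Szab\'{o} (Lemmas 5.2 and 5.4 of \cite{Ozsvath2004c}), with the single added remark that the finger moves of the $\bs{\beta}$- and $\bs{\gamma}$-curves must avoid the basepoints $\mathbf{O}$; you have spelled out exactly that winding argument, together with the same basepoint-avoidance caveat, and additionally made explicit the preliminary step of choosing the auxiliary $n$-tuple $\mathbf{q}$ so that $\mathfrak{s}_{\mathbf{q}}$ agrees with $\mathfrak{s}_\delta$, a step the paper leaves implicit.

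\end{document}
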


\begin{proof}
  This follows directly from the proof of \cite[Lemma
  5.2]{Ozsvath2004c} and the proof of \cite[Lemma 5.4]{Ozsvath2004c}. Here, we require
  the finger moves of  $\bs{\beta}$ and $\bs{\gamma}$ curves do not
  across the basepoints $\mathbf{O}$.
\end{proof}

\begin{lem}
  Any two weakly
  \textnormal{$\mathfrak{s}$-}admissible (or strongly
  \textnormal{$\mathfrak{s}$-}admissible resp.) Heegaard
  diagram $H_{\beta\gamma}$ compatible with the pair
  $(Y,L_{\beta\gamma})$ can be connected by a sequence of Heegaard
  moves without crossing basepoints $\mathbf{O}$. Furthermore, each
  intermediate Heegaard diagram is weakly
  \textnormal{$\mathfrak{s}$-}admissible (or strongly
  \textnormal{$\mathfrak{s}$-}admissible resp.).
\end{lem}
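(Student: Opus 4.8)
The plan is to follow the strategy of \cite[Section 5]{Ozsvath2004c}, adapted to the pointed setting and to the $\textnormal{Spin}^c$-admissibility introduced in Section \ref{sec:admissibility}. First I would invoke the pointed Reidemeister--Singer theorem in the form used for link Floer homology (cf. \cite{Ozsvath2008b}, \cite{Juhasz2012}): any two Heegaard diagrams $H^0_{\beta\gamma}$ and $H^1_{\beta\gamma}$ compatible with $(Y,L_{\beta\gamma})$ are connected by a finite sequence of isotopies, handleslides, and index one/two stabilizations and destabilizations, none of which cross the basepoints $\mathbf{O}$; compare the analogous statement for triples in Lemma \ref{sec:some-topol-facts-1}. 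Fix such a sequence $H^0 = H_0, H_1, \ldots, H_N = H^1$.

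Second, I would recall the winding argument of \cite[Lemmas 5.2 and 5.4]{Ozsvath2004c}, already used in the proof of the preceding lemma: performing finger moves of the $\bs{\gamma}$-curves (not crossing $\mathbf{O}$) along curves dual to a basis of $H_1(\Sigma)$ produces a weakly (resp.\ strongly) $\mathfrak{s}$-admissible diagram, and leaves the diagram $\mathfrak{s}$-realized, since the generator and the auxiliary $n$-tuple $\mathbf{q}$ witnessing $\mathfrak{s}$ can be kept far from the winding region. The key observation, exactly as in the closed case, is that a sufficiently wound diagram is strongly $\mathfrak{s}$-admissible, and any small isotopy of a sufficiently wound diagram that avoids $\mathbf{O}$ remains admissible.

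Third, I would interpolate: before and after each move $H_i \rightsquigarrow H_{i+1}$ in the fixed sequence, insert winding isotopies (avoiding $\mathbf{O}$) making the adjacent diagrams strongly $\mathfrak{s}$-admissible. For an isotopy, two sufficiently wound diagrams in the same isotopy class are joined through admissible ones. For a handleslide, perform the slide on an under-wound diagram and then re-wind, or wind so heavily that the handleslide keeps the diagram admissible throughout. For a (de)stabilization, use the standard genus-one model, whose new periodic domains are supported near the new handle and carry multiplicities of both signs, so the admissibility of the rest of the diagram is untouched; one always arranges the destabilization away from $\mathbf{q}$ so that the diagram stays $\mathfrak{s}$-realized. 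Since $L_{\beta\gamma}$ is homologically even, $\mathfrak{s}_\delta$ is well defined at every stage and the canonical identification of $\textnormal{Spin}^c(Y)$ under each Heegaard move shows that the $\mathfrak{s}$-label is preserved. Concatenating these pieces yields the desired sequence of $\mathfrak{s}$-admissible diagrams.

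The main obstacle is the handleslide step: one must arrange the winding so that the moving $\bs{\gamma}$-curve together with its handleslide guide stays in admissible position \emph{throughout} the slide, not merely at the endpoints. This is the same bookkeeping of multiplicities of periodic domains as in \cite[Section 5]{Ozsvath2004c}, now carried out over the (slightly larger) relevant subspace of $H_1$ coming from $L_{\beta\gamma}$ not being null-homologous; it is technical but routine. A secondary point, already noted above, is persistence of the witnessing data $\mathbf{q}$ under destabilization, which is handled by always destabilizing away from $\mathbf{q}$.
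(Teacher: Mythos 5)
Your proposal is correct and follows the same route as the paper, which simply defers to the proof of Lemma~5.6 in \cite{Ozsvath2004c} (together with \cite[Section 3.4]{Ozsvath2008b} and \cite{Juhasz2012}) with the added requirement that the Heegaard moves avoid the basepoints $\mathbf{O}$. You have essentially unpacked the content of that citation — pointed Reidemeister--Singer, the winding trick from Lemmas~5.2 and~5.4, and the interpolation through admissible diagrams — but the underlying argument is the same.
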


\begin{proof}
This follows from the proof of \cite[Lemma 5.6]{Ozsvath2004c}. See also
\cite[Section 3.4]{Ozsvath2008b}, and \cite{Juhasz2012}. Again, we
require the Heegaard moves do not across basepoints $\mathbf{O}$.
\end{proof}

\subsection{The curved chain complex $CFBL^-$ for bipartite links in
  $\#^g(S^1\times S^2)$ }

\begin{lem}\label{sec:curved-chain-complex-1}
Let the Heegaard data $\cal{H}_{\beta\gamma}$ for the bipartite link
$L_{\beta\gamma}$ be weakly $\mathfrak{s}$-admissible. Here $\mathfrak{s}$
is a torsion Spin$^c$-structure of $Y$.  Then the curved chain
complex $CFBL^-(\cal{H}_{\beta\gamma},\mathfrak{s})$ is a well-defined
$\mathbb{Z}$-graded curved chain complex, with grading defined as in
Equation \ref{eq:3}.
\end{lem}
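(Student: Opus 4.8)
The plan is to carry over, almost verbatim, the argument used earlier in this section for the null-homologous bipartite link complex $CFBL^-(\cal{H}_{\alpha\beta})$, with two substitutions: the $\mathrm{Spin}^c$-decomposition is now the one furnished by the map $\mathfrak{s}_\delta$ of Lemma \ref{sec:curved-chain-complex} (this is where the homologically even hypothesis enters), and admissibility is read in the $\mathfrak{s}$-realized sense of Section \ref{sec:admissibility}. Concretely I would check, in order: that $\partial$ is a well-defined endomorphism of $CFBL^-(\cal{H}_{\beta\gamma},\mathfrak{s})$; that the relative $\delta$-grading of Equation~\ref{eq:3} is a well-defined $\mathbb{Z}$-grading on it; and that $\partial^2$ equals the curvature term of \cite[Lemma 2.1]{Zemke2016} while $\partial$ lowers $\delta$ by one.

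For the first point, weak $\mathfrak{s}$-admissibility of $\cal{H}_{\beta\gamma}$ means it is $\mathfrak{s}$-realized and the associated diagram $(\Sigma,\bs{\beta},\bs{\gamma},\mathbf{q})$ is weakly $\mathfrak{s}$-admissible; restricted to the generators $\x$ with $\mathfrak{s}_\delta(\x)=\mathfrak{s}$, this rules out a nonzero periodic domain all of whose local multiplicities are nonnegative. Since Lemma \ref{sec:curved-chain-complex} also gives $\mathfrak{s}_\delta(\x)=\mathfrak{s}_\delta(\y)$ whenever $\pi_2(\x,\y)$ is nonempty, so that $\partial$ preserves the summand $CFBL^-(\cal{H}_{\beta\gamma},\mathfrak{s})$, I would then invoke the standard finiteness argument of \cite[Section 4]{Ozsvath2004c} and \cite[Section 3.4]{Ozsvath2008b}: for each ordered pair $\x,\y$ in the class and each monomial $\prod_i U_i^{k_i}$, only finitely many $\phi\in\pi_2(\x,\y)$ with $\mu(\phi)=1$, $\phi\geq 0$ and $n_{o_i}(\phi)=k_i$ can contribute, so $\partial\x$ indeed lands in $CFBL^-(\cal{H}_{\beta\gamma},\mathfrak{s})$.

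The crucial point, and the one I expect to cost the most work, is that the relative $\delta$-grading is well-defined, i.e.\ that $\mu(\cal{P})=n_{\mathbf{O}}(\cal{P})$ for every periodic domain $\cal{P}$ of $(\Sigma,\bs{\beta},\bs{\gamma},\mathbf{O})$; granting this, $\delta(\x,\y)=\mu(\phi)-n_{\mathbf{O}}(\phi)$ is independent of $\phi$, takes integer values, and extends to the module by declaring $\deg U_i=-1$. As in the null-homologous case, I would obtain this identity from Lipshitz's index formula \cite[Corollary 4.3]{Lipshitz2006} together with the feature of a compatible Heegaard diagram that every component of $\Sigma\setminus\bs{\beta}$ and of $\Sigma\setminus\bs{\gamma}$ carries exactly one pair of basepoints. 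What genuinely must be rechecked, since $L_{\beta\gamma}$ is not null-homologous, is the contribution of the periodic domains that detect the class $[L_{\beta\gamma}]\in H_1(Y)$: after fixing an alternating coloring $\mathfrak{P}$ with $\mathbf{w}=\mathfrak{P}^{-1}(+1)$, $\mathbf{z}=\mathfrak{P}^{-1}(-1)$, I would write $\mu(\cal{P})-n_{\mathbf{O}}(\cal{P})=\tfrac12\bigl((\mu(\cal{P})-2n_{\mathbf{w}}(\cal{P}))+(\mu(\cal{P})-2n_{\mathbf{z}}(\cal{P}))\bigr)$, in which each summand is the Maslov grading shift of $\cal{P}$ relative to $\mathbf{w}$ (resp.\ $\mathbf{z}$) and pairs $c_1(\mathfrak{s}_{\mathbf{w}})$ (resp.\ $c_1(\mathfrak{s}_{\mathbf{z}})$) against the homology class of $\cal{P}$; since $c_1(\mathfrak{s}_{\mathbf{w}})+c_1(\mathfrak{s}_{\mathbf{z}})=2c_1(\mathfrak{s}_\delta)$ by Lemma \ref{sec:curved-chain-complex} and $c_1(\mathfrak{s}_\delta)=c_1(\mathfrak{s})$ is torsion, the average pairs trivially with $H_2(Y)$ and the two shifts cancel. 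For a non-torsion $\mathfrak{s}$ this cancellation fails and one is left only with a relative $\mathbb{Z}/N$-grading, which is the reason for the torsion hypothesis; the homologically even hypothesis is in turn what makes $\mathfrak{s}_\delta$, hence this entire bookkeeping, available. It then remains to pass from the relative grading to an honest $\mathbb{Z}$-grading by fixing a reference generator, using torsion of $\mathfrak{s}$ exactly as in the null-homologous case.

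Finally, the curvature relation $\partial^2=\sum_i(U_{i,1}U_{i,2}+U_{i,2}U_{i,3}+\cdots+U_{i,k_i}U_{i,1})$ is a computation local to the basepoints and is identical to the one for $CFBL^-(\cal{H}_{\alpha\beta})$, following from \cite[Lemma 2.1]{Zemke2016} with no input from the homology of $L_{\beta\gamma}$; and since any $\phi$ counted in $\partial$ has $\mu(\phi)=1$, one gets $\delta\bigl(\prod_i U_i^{n_{o_i}(\phi)}\y\bigr)=\delta(\y)-n_{\mathbf{O}}(\phi)=\delta(\x)-1$, so $\partial$ drops the $\delta$-grading by one. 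Combining the three points shows $CFBL^-(\cal{H}_{\beta\gamma},\mathfrak{s})$ is a well-defined $\mathbb{Z}$-graded curved chain complex.
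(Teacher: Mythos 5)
Your argument is correct and, in substance, matches the paper's: both prove $\mu(\mathcal{P})=n_{\mathbf{O}}(\mathcal{P})$ for periodic classes by pairing $H(\mathcal{P})$ against a Chern class that the hypotheses force to be torsion. Where you differ is in how the grading computation is packaged. The paper routes it through the auxiliary $n$-tuple $\mathbf{q}$: from $\mathfrak{s}_{\mathbf{q}}=\mathfrak{s}_\delta$ torsion it gets $\mu(\psi)=2n_{\mathbf{q}}(\psi)$, then applies \cite[Lemma 2.18]{Ozsvath2004c} to obtain $n_{\mathbf{w}}(\psi)-n_{\mathbf{q}}(\psi)=n_{\mathbf{q}}(\psi)-n_{\mathbf{z}}(\psi)$, hence $\mu(\psi)=n_{\mathbf{O}}(\psi)$; the nontrivial content there is that the same cohomology class $a^*$ governs both differences, which is exactly the half-way property of $\mathbf{q}$ encoded in $\mathfrak{s}_{\mathbf{q}}=\mathfrak{s}_\delta$. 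You instead average $\mu(\mathcal{P})-2n_{\mathbf{w}}(\mathcal{P})=\langle c_1(\mathfrak{s}_{\mathbf{w}}),H(\mathcal{P})\rangle$ with its $\mathbf{z}$-analogue; this bypasses $\mathbf{q}$ in the grading step and makes the division of labor between the two hypotheses (homologically even so that $\mathfrak{s}_\delta$ is defined, torsion so that the resulting pairing vanishes) particularly transparent. One detail the paper states explicitly and you should not gloss over: the admissibility hypothesis is phrased against $\mathbf{q}$, so the finiteness of the sums defining $\partial$ at fixed $U$-multidegree rests on the identity $n_{\mathbf{O}}(\mathcal{P})=2n_{\mathbf{q}}(\mathcal{P})$ for periodic domains, which your computation yields by comparing $\mu(\mathcal{P})=n_{\mathbf{O}}(\mathcal{P})$ with $\mu(\mathcal{P})=2n_{\mathbf{q}}(\mathcal{P})$; this is what lets a nonnegative domain with all $n_{o_i}=0$ be killed by $\mathbf{q}$-admissibility.
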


\begin{proof}
  Let $\x,\y$ be two generators in
  $CFBL^-(\cal{H}_{\beta\gamma},\mathfrak{s})$. We want to show the grading:
  \[\delta(\x,\y)=\mu(\phi)-n_{\mathbf{O}}(\phi)\]
is well-defined. It suffice to show the following equality:
\[\mu(\psi)=n_{\mathbf{O}}(\psi)\] 
holds for every class $\psi\in \pi_2(\x,\x)$ of
$H_{\alpha\beta}$.
Because $\mathfrak{s}$ is torsion, we have a relative
$\mathbb{Z}$-grading:
 \[gr_{\mathbf{q}}(\x,\y)=\mu(\phi)-2n_{\mathbf{q}}(\phi),\]
where $\phi$ is a class in $\pi_2(\x,\y)$.
Its Maslov index of $\psi\in\pi_2(\x,\x)$ is
\begin{equation}\label{eq:4}
\mu(\psi)= 2n_{\mathbf{q}}(\psi). 
\end{equation}

Given an alternating coloring  $\mathfrak{P}$ of
$L_{\beta\gamma}$, we denote by $\mathbf{w}=\{w_1,\cdots,w_n\}$ the
$n$-tuple of basepoints satisfying $\mathfrak{P}(w_i)=1$ and by
$\mathbf{z}=\{z_1,\cdots,z_n\}$ the $n$-tuple of basepoints
satisfying $\mathfrak{P}(z_i)=-1$. By \cite[Lemma 2.18]{Ozsvath2004c},
we have the following equalities:
\begin{equation}\label{eq:5}
  n_{\mathbf{w}}(\psi)-n_{\mathbf{q}}(\psi)=\langle
                                             H(\psi),a^*\rangle =
  n_{\mathbf{q}}(\psi)-n_{\mathbf{z}}(\psi).
\end{equation}
Here, $H(\psi)\in H_2(Y;\mathbb{Z})$ is the homology class belonging to
the periodic class, $a^*$ is a cohomology class in $H^1(Y)$ determined by the
relative position between $\mathbf{w}$ and $\mathbf{q}$. 
Combining the Equation \ref{eq:4} and Equation \ref{eq:5}, we have
Maslov grading
\[
  \mu(\psi)=2n_{\mathbf{q}}(\psi)=
  n_{\mathbf{w}}(\psi)+n_{\mathbf{z}}(\psi)=n_{\mathbf{O}}(\psi).
\]
This implies the relative $\delta$-grading which is defined by
\[\delta(\x,\y)=\mu(\phi)-n_{\mathbf{O}}(\phi)\]
is a $\mathbb{Z}$-grading on $CFBL^-(\cal{H_{\beta\gamma},\mathfrak{s}})$.

Moreover, as $n_{\mathbf{O}}(\psi)=2n_{\mathbf{q}}(\psi)$, the
finiteness of counting follows from the admissibility with respect to $\mathbf{q}$.

\end{proof}

Applying the above results to the bipartite links in $\#^g(S^1\times
S^2)$, we have the following.

\begin{cor}\label{sec:textsp-struct-delta}
  Suppose $\cal{T}$ is a Heegaard triple subordinate to a
  $\beta$-band move $B^\beta$, and $H_{\beta\gamma}$ is the induced
  Heegaard diagram for bipartite link $L_{\beta\gamma}$ in
  $\#^g(S^1\times S^2)$. Let $\mathfrak{s}_0$ be the unique torsion
  Spin$^c$-structure of $\#^g(S^1\times S^2)$.
 Then, the $\delta$-graded curved chain complex
 $CFBL^-(\cal{H_{\alpha\beta}},\mathfrak{s}_0)$ is a $\mathbb{Z}$-graded
 curved chain complex.
Furthermore, the span of the top grading generators in
$HFL'(\cal{H_{\alpha\beta}},\mathfrak{s}_0)$ is a two-dimensional
vector space. 
\end{cor}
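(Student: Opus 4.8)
The plan is to deduce the first assertion from the topological identification of $L_{\beta\gamma}$ in Lemma \ref{sec:some-topol-facts} together with the $\mathbb{Z}$-grading criterion of Lemma \ref{sec:curved-chain-complex-1}, and to obtain the second assertion by reading off the top $\delta$-grading from the explicit standard diagram constructed in Theorem \ref{sec:exist-heeg-triple}.

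\emph{First assertion.} By Lemma \ref{sec:some-topol-facts}, the pair $(\#^g(S^1\times S^2),L_{\beta\gamma})$ is either $(\#^g(S^1\times S^2),K)\#(S^3,\mathbb{U}_2^{n-1})$ with $[K]=(0,\dots,0,2)$ in $H_1(\#^g(S^1\times S^2))$ (Type I), or $(\#^g(S^1\times S^2),\mathbb{U}_4)\#(S^3,\mathbb{U}_2^{n-2})$ (Type II). In the first case $[L_{\beta\gamma}]=[K]$ is divisible by two, and in the second case $[L_{\beta\gamma}]=0$; so in either case $L_{\beta\gamma}$ is homologically even. Since $\mathfrak{s}_0$ is torsion and, by the admissibility lemmas of Section \ref{sec:admissibility}, the pair $(\#^g(S^1\times S^2),L_{\beta\gamma})$ admits a weakly $\mathfrak{s}_0$-admissible compatible Heegaard diagram, Lemma \ref{sec:curved-chain-complex-1} applies: $CFBL^-(\mathcal{H}_{\beta\gamma},\mathfrak{s}_0)$ is a well-defined $\mathbb{Z}$-graded curved chain complex, and hence so is its specialization $CFL'(\mathcal{H}_{\beta\gamma},\mathfrak{s}_0)$ obtained by setting all basepoint variables equal to $U$. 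In particular $HFL'(\mathcal{H}_{\beta\gamma},\mathfrak{s}_0)$ is $\mathbb{Z}$-graded and, being finitely generated over $\mathbb{F}_2[U]$, has a well-defined top grading.

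\emph{Second assertion.} I would compute $HFL'(\mathcal{H}_{\beta\gamma},\mathfrak{s}_0)$ directly from the explicit diagram $H_{\beta\gamma}$ produced by Theorem \ref{sec:exist-heeg-triple} (Figure \ref{Fig:knotforsaddle1} for Type I, Figure \ref{Fig:knotforsaddle2} for Type II). This diagram is a connected sum, along basepoints, of the standard genus-one pieces $(\beta_i,\gamma_i)$ coming from the $\mathbb{U}_2$-unlink factors and one non-trivial piece carrying $K$ (resp.\ $\mathbb{U}_4$). The standard connected-sum/Künneth formalism then expresses $HFL'(\mathcal{H}_{\beta\gamma},\mathfrak{s}_0)$ as the $\mathbb{F}_2[U]$-tensor product of the homologies of the pieces, up to an overall $\delta$-grading shift, so its top graded summand is the tensor product of the top graded summands of the factors. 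On a standard genus-one piece the induced differential vanishes and the two generators $\theta^{\pm}$ occupy two adjacent $\delta$-gradings, contributing a one-dimensional top grading.

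\emph{The non-trivial piece.} In the Type I case this is the slot $(\beta_n,\gamma_n)$: $\beta_n\cap\gamma_n$ consists of exactly two points, giving two generators $\theta^{\pm}$, and the two bigon domains joining them each cover exactly one of the two basepoints $o,o'$ of the disk region $D$ (this is precisely why $\beta_n$ is perturbed across \emph{both} basepoints, cf.\ Figure \ref{fig:st11}); hence each enters the differential with coefficient $U$ and the two contributions cancel over $\mathbb{F}_2$, so the induced differential is zero. For such a bigon $\phi$ one has $\delta(\theta^+,\theta^-)=\mu(\phi)-n_{\mathbf{O}}(\phi)=1-1=0$, so $\theta^+$ and $\theta^-$ lie in the same $\delta$-grading, which is the top grading of this piece; it therefore contributes a two-dimensional top grading. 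The Type II case is handled in the same way from Figure \ref{Fig:knotforsaddle2}. Multiplying the top gradings of all the factors yields a two-dimensional top grading for $HFL'(\mathcal{H}_{\beta\gamma},\mathfrak{s}_0)$.

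\emph{Main obstacle.} The substantive input is already contained in Lemmas \ref{sec:some-topol-facts} and \ref{sec:curved-chain-complex-1}; what remains is bookkeeping. The step I expect to require the most care is setting up the connected-sum decomposition of $CFL'$ at the chain level with the correct $\delta$-grading shifts, and checking that the particular diagram of Figures \ref{Fig:knotforsaddle1} and \ref{Fig:knotforsaddle2} can be arranged to be weakly $\mathfrak{s}_0$-admissible without altering its local structure near $D$. Once that is in place, the finite computation on the non-trivial piece — two generators, equal $\delta$-grading, cancelling bigon differentials — is immediate from the figures.
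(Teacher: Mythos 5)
Your proposal is correct and follows essentially the same route as the paper: the first assertion is deduced exactly as in the paper from Lemma \ref{sec:some-topol-facts} and Lemma \ref{sec:curved-chain-complex-1}, and the second assertion is read off from the explicit standard diagram produced by Theorem \ref{sec:exist-heeg-triple}. Where the paper simply states ``Clearly, there are two top grading generators in the kernel of $\partial$,'' you spell this out (via a K\"unneth-style decomposition, the observation that each of the two bigons between $\theta_n^\pm$ covers exactly one of $o,o'$ so that they share the same $\delta$-grading and the differential cancels over $\mathbb{F}_2$); this is a reasonable and correct elaboration of the paper's terse assertion rather than a genuinely different argument.
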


\begin{proof}  

 By
Lemma \ref{sec:some-topol-facts}, the Heegaard diagram
$H_{\beta\gamma}$ is a diagram for the bipartite link $(\#^g(S^1\times
S^2),K)\#(S^3,\mathbb{U}^{n-1}_2)$ if $B^\beta$ is of type I, and for 
$(\#^g(S^1\times S^2),\mathbb{U}_4)\# (S^3,\mathbb{U}_2^{n-2}))$ if $B^\beta$
is of type II . Here $n$ is the number of
$\alpha$-circles on $\cal{T}$, $K$ is a knot with homology equal to
twice of the dual of $\beta_n$. In either of these two cases, the
homology class $[L_{\mathfrak{P}}]$ for an alternating coloring
$\mathfrak{P}$ of the
bipartite link $L_{\beta\gamma}$ is divisible by two. Applying Lemma
\ref{sec:curved-chain-complex-1}, we get that the $\delta$-grading for
curved chain complex 
 $CFBL^-(\cal{H_{\alpha\beta}},\mathfrak{s}_0)$ is a $\mathbb{Z}$-grading.

Without loss of generality, suppose
$\cal{T}$ is a standard Heegaard triple subordinate to the 
band move $B^\beta$ from $L_{\alpha\beta}$ to $L_{\alpha\gamma}$.
 By Theorem
\ref{sec:exist-heeg-triple}, as $\cal{T}$ is standard, 
the curves $\gamma_{1},\cdots,\gamma_{n-1}$ are a small Hamiltonian
isotopies of $\beta_{1},\cdots,\beta_{n-1}$ without crossing any
basepoints. The curve $\gamma_n$, which is a Hamiltonian isotopy of
$\beta_n$ crossing basepoints,
intersect $\beta$-circles at two points (See Figure
\ref{Fig:knotforsaddle1} and Figure \ref{Fig:knotforsaddle2}).  Therefore, we have $2^n$
generators for the module
$CFBL^-(H_{\beta\gamma},\mathfrak{s}_0)$. Clearly, there are two top
grading generators in the kernel of $\partial$ for
$CFL'(H_{\beta\gamma},\mathfrak{s}_0)$.  
\end{proof}

\subsection{Holomorphic triangles} \label{sec:holotri}

Recall from \cite[Section 8]{Ozsvath2004c} that, given a Heegaard
triple $\cal{T}$, we can classify the homotopy classes of Whitney
triangles as follows. Suppose
$\psi\in\pi_2(\x,\y,\mathbf{v})$ and $\psi'\in
\pi_2(\x',\y',\mathbf{v}')$ are two Whitney triangles. We say that $\psi$ and
$\psi'$ are equivalent, if there exists
classes $\phi_1\in\pi_2(\x,\x')$ and $\phi_2\in\pi_2(\y,\y')$ and
$\phi_3\in \pi_2(\mathbf{v},\mathbf{v}')$, such that:
\[\psi'=\psi+\phi_1+\phi_2+\phi_3.\] 
We denoted by
$S_{\alpha\beta\gamma}(\cal{T})$ the set of equivalence classes of the Whitney
triangles of $\cal{T}$. 

Suppose $\mathbf{q}$ is a $n$-tuple of basepoints on $\cal{T}$, such
that the induced $n$-pointed diagrams
$H_{\alpha\beta},H_{\beta\gamma}$ and $H_{\alpha\gamma}$ are
well-defined pointed Heegaard diagrams for pointed closed oriented three-manifolds
$Y_{\alpha\beta}$, $Y_{\beta\gamma}$ and $Y_{\alpha\gamma}$
. By \cite[Proposition 8.5]{Ozsvath2004c}, we have a one-to-one
map:
\[\mathfrak{s}_{\mathbf{q}}:S_{\alpha\beta\gamma}(\cal{T})\rightarrow \text{Spin}^c(X_{\alpha\beta\gamma}).\]
Here $X_{\alpha\beta\gamma}$ is an oriented four-manifold constructed
from the Heegaard triple $\cal{T}$, whose boundary $\partial X_{\alpha\beta\gamma}$ is the union of
three-manifolds $-Y_{\alpha\beta}\sqcup-Y_{\beta\gamma}\sqcup Y_{\alpha\gamma}$.

Suppose the Heegaard triple $\cal{T}$ is subordinate to a band move
from a bipartite link $(Y,L_{\alpha\beta})$ to
$(Y,L_{\alpha\gamma})$. The three-manifold $Y_{\alpha\beta}$ and $Y_{\alpha\gamma}$ are
diffeomorphic to $Y$, the three-manifold
$Y_{\beta\gamma}$ is diffeomorphic to $\#^n(S^1\times S^2)$ and the
four-manifold $X_{\alpha\beta\gamma}$ is actually $(Y\times I)
\backslash N(U_{\beta}\times\{\frac{1}{2}\})$. 

Notice that, we can always construct an almost complex structure
over $Y\times I$. Choices of almost complex structure estabilish a
one-to-one correspondence between 
the $\text{Spin}^c$-structure of $Y\times I$ and $H^2(Y\times
I;\mathbb{Z})\cong H^2(Y;\mathbb{Z})$. For a $\text{Spin}^c$-structure $\mathfrak{s}$ on $Y\times
I$, its restriction $\mathfrak{s}_{\alpha\beta}$ on $Y_{\alpha\beta}$
is equal to $\mathfrak{s}_{\alpha\gamma}$  on  $Y_{\alpha\gamma}$, and
its restriction $\mathfrak{s}_{\beta\gamma}$ should be
$\mathfrak{s}_0$ on $\#^g(S^1\times S^2)$. Conversely, we claim that  a
$\text{Spin}^c$-structure $\mathfrak{s}$ on $Y_{\alpha\beta}$ and
$Y_{\alpha\gamma}$, together with the unique
$\text{Spin}^c$-structure $\mathfrak{s}_0$ on $\#^g(S^1\times S^2)$,
can be uniquely extended to a $\text{Spin}^c$-structure
$\mathfrak{s}_{\alpha\beta\gamma}$ on
$X_{\alpha\beta\gamma}$. 
Otherwise, we suppose
$\mathfrak{s}^1_{\alpha\beta\gamma}$ and
$\mathfrak{s}^2_{\alpha\beta\gamma}$ are two possible extensions. As
$\mathfrak{s}^i_{\alpha\beta\gamma}$ restricted on $\#^g(S^1\times S^2)$
is $\mathfrak{s}_0$, we can further extend $\mathfrak{s}^i_{\alpha\beta\gamma}$
to a $\text{Spin}^c$-structure
$\tilde{\mathfrak{s}}^i_{\alpha\beta\gamma}$ over $Y\times I$. By
previous disscussion, we know
$\tilde{\mathfrak{s}}^1_{\alpha\beta\gamma}$ and
$\tilde{\mathfrak{s}}^2_{\alpha\beta\gamma}$ have the same restriction
on $Y_{\alpha\beta}$ and $Y_{\alpha\gamma}$. This implies
$\tilde{\mathfrak{s}}^1_{\alpha\beta\gamma}-\tilde{\mathfrak{s}}^2_{\alpha\beta\gamma}$
 is $0\in
H^2(Y\times I)$. Hence the difference
$\mathfrak{s}^1_{\alpha\beta\gamma}-\mathfrak{s}^2_{\alpha\beta\gamma}$
is $0\in H^2(X_{\alpha\beta\gamma})$.

\begin{defn}\label{def:sadmissible}
  We suppose that the Heegaard triple $\mathcal{T}$ is a $2n$-pointed
  Heegaard triple subordinate to a band move from
  $(Y,L_{\alpha\beta})$ to $(Y,L_{\alpha\gamma})$. We also fix a
  $\text{Spin}^c$-structure $\mathfrak{s}_{\alpha\beta\gamma}$ over
  $X_{\alpha\beta\gamma}$ which comes from the restriction of a
  $\text{Spin}^c$-structure over
  $Y\times I$, we say that the Heegaard triple $\mathcal{T}$ is
  $\mathfrak{s}$-\textit{\textbf{realized}}, if 
 there exists
 points $\x\in \mathbb{T}_\alpha\cap\mathbb{T}_\beta$,  $\bs{\theta}\in
 \mathbb{T}_\beta\cap\mathbb{T}_\gamma$ and $\y\in
 \mathbb{T}_\alpha\cap\mathbb{T}_\gamma$ and a $n$-tuple of points
 $\mathbf{q}\in \Sigma \backslash(\bs{\alpha}\cup\bs{\beta})$, such
 that:
\begin{itemize}
\item there is a unique point $q_i$ in each component of
  $\Sigma\backslash\bs{\alpha}$ and $\Sigma\backslash\bs{\beta}$,
\item the map
  $\mathfrak{s}_q(\x)=\mathfrak{s}_{\delta}(\x)=\mathfrak{s}_{\alpha\beta}$,
  where $\mathfrak{s}_{\alpha\beta}$ is the restriction of
  $\mathfrak{s}_{\alpha\beta\gamma}$ on the boundary three-manifold
  $Y_{\alpha\beta}$. 
\item the map
  $\mathfrak{s}_q(\bs{\theta})=\mathfrak{s}_{\delta}(\bs{\theta})=\mathfrak{s}_{\alpha\beta}$, 
  where $\mathfrak{s}_{\beta\gamma}$ is the restriction of
  $\mathfrak{s}_{\alpha\beta\gamma}$ on the boundary three-manifold
  $Y_{\beta\gamma}$. 
\item  the map $\mathfrak{s}_q(\y)=\mathfrak{s}_{\delta}(\y)=\mathfrak{s}_{\beta\gamma}$,
  where $\mathfrak{s}_{\alpha\gamma}$ is the restriction of
  $\mathfrak{s}_{\alpha\beta\gamma}$ on the boundary three-manifold
  $Y_{\alpha\gamma}$. 
\end{itemize}
Furthermore, we say that an $\mathfrak{s}$-realized Heegaard triple $\cal{T}=(\Sigma,\bs{\alpha},\bs{\beta},\bs{\gamma},\mathbf{O})$ is \textit{\textbf{weakly \textnormal{$\mathfrak{s}$-}admissible}} (or
\textit{\textbf{strongly \textnormal{$\mathfrak{s}$-}admissible}}
resp.), if the pointed Heegaard triple
$(\Sigma,\bs{\alpha},\bs{\beta},\bs{\gamma},\mathbf{q})$ is weakly \textnormal{$\mathfrak{s}$-}admissible (or
strongly \textnormal{$\mathfrak{s}$-}admissible resp.).
\end{defn}

\begin{rem}\label{rem:reason}
  If $\mathcal{T}$ is $\mathfrak{s}$-realized, we claim that there is a unique
  class $[\Delta]\in S_{\alpha\beta\gamma}$ of trianlges with
  $\mathfrak{s}_{\bs{q}}([\Delta])=\mathfrak{s}_{\alpha\beta\gamma}$
  for some $n$-tuple of points $\bs{q}$ satisfying the condition in
  Definition \ref{def:sadmissible}. Otherwise, we suppose there is
  another class $[\Delta]'$ in $S_{\alpha\beta\gamma}$ with
  $\mathfrak{s}_{\bs{q}'}([\Delta]')=\mathfrak{s}_{\alpha\beta\gamma}$.
We know that $\mathfrak{s}_{\bs{q}'}([\Delta]')$ and
  $\mathfrak{s}_{\bs{q}}([\Delta]')$ has the same restriction on
  boundary and can be extend to a $\text{Spin}^c$-structure over
  $Y\times I$. Hence we have 
  $\mathfrak{s}_{\bs{q}}([\Delta]')=\mathfrak{s}_{\alpha\beta\gamma}$. As
  the map $\mathfrak{s}_{\bs{q}}$ is one-to-one, we get
  $[\Delta]=[\Delta]'$.

  If the Heegaard triple $\mathcal{T}$ is subordinate to a
  four-dimensional two-handle attachment, one may not find a
  well-defined map. Recall that if $(F,\partial F)$ is orientable, we can
  choose $\mathfrak{s}_{w}$ or $\mathfrak{s}_{z}$ as the one-to-one
  map from $S_{\alpha\beta\gamma}$ to
  $\text{Spin}^c(X_{\alpha\beta\gamma})$. If  $(F,\partial F)$ is
  non-orientable, we have no cannonical choice of $n$-tuple of
  basepoints $\mathbf{q}$ on $\Sigma$. The choice of $\mathbf{q}$
  may affects the map $\mathfrak{s}_{q}$, i.e. in some cases, there exist
  $\mathbf{q}$ and $\mathbf{q}'$, both of which satisfy the condition
  in Definition \ref{def:sadmissible}, but
  $\mathfrak{s}_{\mathbf{q}}\neq\mathfrak{s}_{\mathbf{q}'}$. If this
  happens, we will not know which class of triangle in
  $S_{\alpha\beta\gamma}$ should be associated to certain
  $\text{Spin}^c$-structure $\mathfrak{s}$.  
\end{rem}

 In light of the proof in \cite[Lemma 5.2]{Ozsvath2004c}, by finger moves of
  $\bs{\alpha},\bs{\beta}$ and $\bs{\gamma}$ curves along their dual
  curves on $\Sigma$ without crossing the basepoints $\mathbf{O}$, we
  can isotope a Heegaard triple $\mathcal{T}$ subordinate to a
  band move to a   
 weakly \textnormal{$\mathfrak{s}$-}admissible (or
strongly \textnormal{$\mathfrak{s}$-}admissible resp.) Heegaard
triple, where $\mathfrak{s}$ is a $\text{Spin}^c$-structure for
$Y\times I$. 

\begin{lem} \label{lem:admtri} 

  Let $\mathcal{T}$ be a strongly $\mathfrak{s}$-admissible Heegaard
  triple subordinate to a band move from $(Y,L_{\alpha\beta})$ to
  $(Y,L_{\alpha\gamma})$. If $\mathfrak{s}$ is torsion, we have a
   triangle chain map: 
\[f_{\alpha\beta\gamma}:CFL'(\cal{H}_{\alpha\beta},\mathfrak{s}_{\alpha\beta})\otimes
  CFL'(\cal{H}_{\beta\gamma},\mathfrak{s}_0)\rightarrow CFL'(\cal{H}_{\alpha\gamma},\mathfrak{s}_{\alpha\gamma})\]
whose restriction on generators are defined by:
\begin{equation}\label{eq:7}
  f_{\alpha\beta\gamma}(\x\otimes\bs{\theta};\mathfrak{s})\triangleq
  \sum_{\y\in\mathbb{T}_{\alpha}\cap\mathbb{T}_{\gamma}} \sum_{\{\psi\in
  \pi_{2}(\x,\bs{\theta},\y)|\mu(\psi)=0,\mathfrak{s}_{\mathbf{q}}(\psi)=\mathfrak{s}\}}
(\#\cal{M}(\psi))U^{n_{\mathbf{O}}(\psi)} \y. 
\end{equation} Here $\mathbf{q}$ is $n$-tuple of points being used to
define $\mathfrak{s}$-admissibility in Definition \ref{def:sadmissible}.
\end{lem}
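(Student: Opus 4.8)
The plan is to construct $f_{\alpha\beta\gamma}$ by the same count of holomorphic triangles used in \cite[Section 8]{Ozsvath2004c} and \cite{Zemke2016a}, the only new feature being that both the finiteness of the count and the identification of the target $\text{Spin}^c$-summand are governed by the auxiliary $n$-tuple $\mathbf{q}$ of Definition \ref{def:sadmissible} rather than by a choice of $\mathbf{w}$-basepoints. Since all the $U_i$ and $V_j$ are identified with the single variable $U$, the curvature terms in \eqref{eq:endocurve} become a sum of copies of $U^2$ indexed by an even set, hence vanish over $\mathbb{F}_2$; thus $CFL'(\cal{H}_{\alpha\beta},\mathfrak{s}_{\alpha\beta})$, $CFL'(\cal{H}_{\beta\gamma},\mathfrak{s}_0)$, and $CFL'(\cal{H}_{\alpha\gamma},\mathfrak{s}_{\alpha\gamma})$ are honest $\mathbb{Z}$-graded chain complexes (the $\mathbb{Z}$-gradings being available since $\mathfrak{s}$ and $\mathfrak{s}_0$ are torsion, cf.\ Lemma \ref{sec:curved-chain-complex-1}), and $f_{\alpha\beta\gamma}$ should be an honest chain map of the tensor product.

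First I would verify that the sum in \eqref{eq:7} is finite, so that $f_{\alpha\beta\gamma}$ is well defined. Fix $\x$, $\bs{\theta}$, $\y$, and a power $k$ of $U$. Any two classes $\psi,\psi'\in\pi_2(\x,\bs{\theta},\y)$ with $\mathfrak{s}_{\mathbf{q}}(\psi)=\mathfrak{s}_{\mathbf{q}}(\psi')$ differ by a triply-periodic domain $\cal{P}$ of $\cal{T}$, and, arguing as in the proof of Lemma \ref{sec:curved-chain-complex-1} through Lipshitz's formula, $n_{\mathbf{O}}(\cal{P})=2n_{\mathbf{q}}(\cal{P})$; hence $n_{\mathbf{O}}(\psi)-2n_{\mathbf{q}}(\psi)$ depends only on $\mathfrak{s}_{\mathbf{q}}(\psi)=\mathfrak{s}_{\alpha\beta\gamma}$. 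Consequently, imposing $n_{\mathbf{O}}(\psi)=k$ also fixes $n_{\mathbf{q}}(\psi)$, and strong $\mathfrak{s}$-admissibility with respect to $\mathbf{q}$ (Definition \ref{def:sadmissible}) leaves only finitely many classes $\psi$ with $\mu(\psi)=0$, $\cal{D}(\psi)\geq 0$, and $n_{\mathbf{q}}(\psi)$ equal to the prescribed value; Gromov compactness then supplies finitely many holomorphic representatives for each, and $\mathbb{T}_\alpha\cap\mathbb{T}_\gamma$ is finite. Moreover $f_{\alpha\beta\gamma}$ lands in $CFL'(\cal{H}_{\alpha\gamma},\mathfrak{s}_{\alpha\gamma})$: by the $\mathfrak{s}$-realized hypothesis and additivity of $\text{Spin}^c$-structures under restriction, any $\psi$ with $\mathfrak{s}_{\mathbf{q}}(\psi)=\mathfrak{s}_{\alpha\beta\gamma}$ has $\mathfrak{s}_\delta(\y)=\mathfrak{s}_{\alpha\gamma}$.

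Next I would establish the chain-map identity
\[
\partial_{\alpha\gamma}\circ f_{\alpha\beta\gamma}
+f_{\alpha\beta\gamma}\circ(\partial_{\alpha\beta}\otimes\mathrm{id})
+f_{\alpha\beta\gamma}\circ(\mathrm{id}\otimes\partial_{\beta\gamma})=0.
\]
For a class $\psi\in\pi_2(\x,\bs{\theta},\y)$ with $\mu(\psi)=1$ and $\mathfrak{s}_{\mathbf{q}}(\psi)=\mathfrak{s}$, the moduli space $\cal{M}(\psi)$ is, for a generic path $J_t$, a smooth one-manifold, and by Gromov compactness and the gluing theorem (\cite[Section 8]{Ozsvath2004c}) its ends fall into three families: a Maslov-index-$1$ disk in $H_{\alpha\beta}$ splitting off at the vertex $\x$, a Maslov-index-$1$ disk in $H_{\beta\gamma}$ splitting off at $\bs{\theta}$, and a Maslov-index-$1$ disk in $H_{\alpha\gamma}$ splitting off at $\y$. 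Because $n_{\mathbf{O}}$ and $\mathfrak{s}_{\mathbf{q}}$ are additive under concatenation, each broken configuration is counted, with the correct $U$-power and the correct $\mathfrak{s}$-label on the surviving triangle, by exactly one of the three composite maps above; summing $\#\partial\overline{\cal{M}(\psi)}=0$ over all such $\psi$, weighted by $U^{n_{\mathbf{O}}(\psi)}$, yields the identity. As usual over $\mathbb{F}_2$ there is no sphere bubbling in $\mathrm{Sym}^{g+n-1}(\Sigma)$ to account for. A parallel relative-grading computation, again as in the proof of Lemma \ref{sec:curved-chain-complex-1}, shows that the $\delta$-grading shift of $f_{\alpha\beta\gamma}$ is independent of $\psi$.

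I expect the main obstacle to be the bookkeeping of the finiteness step: one must confirm that the admissibility notion of Definition \ref{def:sadmissible}, phrased in terms of the uncolored diagram $(\Sigma,\bs{\alpha},\bs{\beta},\bs{\gamma},\mathbf{q})$, genuinely bounds the $U^{n_{\mathbf{O}}}$-weighted triangle count, and in particular that the identity $n_{\mathbf{O}}(\cal{P})=2n_{\mathbf{q}}(\cal{P})$ — used in the proof of Lemma \ref{sec:curved-chain-complex-1} only for periodic domains of a single diagram — extends to triply-periodic domains of the triple $\cal{T}$, where the Maslov index of a triply-periodic domain must be computed and compared with both $n_{\mathbf{O}}$ and $2n_{\mathbf{q}}$. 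The remaining holomorphic-curve input — transversality, Gromov compactness, gluing, and the absence of boundary degenerations over $\mathbb{F}_2$ — is standard, and I would simply cite \cite{Ozsvath2004c} and \cite{Zemke2016a}.
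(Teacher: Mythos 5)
Your outline is correct and matches the paper's approach, and you have in fact located precisely the nontrivial step — but you then flag it as an ``expected obstacle'' rather than closing it, and closing it is the whole content of the paper's proof. The paper's proof is essentially three lines: it observes that two classes $\psi,\psi'\in\pi_2(\x,\bs{\theta},\y)$ with the same vertices differ by a sum
\[
\psi-\psi'=\phi_{\alpha\beta}+\phi_{\beta\gamma}+\phi_{\alpha\gamma},\qquad
\phi_{\alpha\beta}\in\pi_2(\x,\x),\ \phi_{\beta\gamma}\in\pi_2(\bs{\theta},\bs{\theta}),\ \phi_{\alpha\gamma}\in\pi_2(\y,\y),
\]
which is built into Ozsv\'ath--Szab\'o's description of the $\mathrm{Spin}^c$-equivalence classes of Whitney triangles (and is equivalent, under the bijection $\mathfrak{s}_{\mathbf{q}}\colon S_{\alpha\beta\gamma}\to\mathrm{Spin}^c(X_{\alpha\beta\gamma})$, to the hypothesis $\mathfrak{s}_{\mathbf{q}}(\psi)=\mathfrak{s}_{\mathbf{q}}(\psi')$). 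Each summand is a (doubly-)periodic class of a single Heegaard diagram, so the identity $n_{\mathbf{O}}=2n_{\mathbf{q}}$ from the proof of Lemma~\ref{sec:curved-chain-complex-1} applies term by term; you never need to address a general triply-periodic domain of the triple. This is the step you hesitated over, and the decomposition is what makes it immediate.

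Beyond that, your proposal is more thorough than the paper's proof: the remark that the curvature of $\partial^2$ vanishes over $\mathbb{F}_2$ once all $U_i,V_j$ are set equal, and the explicit Gromov-compactness/ends-of-moduli-space argument for the chain-map identity, are both correct, but the paper treats these as standard and simply records the finiteness of the triangle count. The one place where your wording drifts is the attribution: the identity $n_{\mathbf{O}}(\psi)=2n_{\mathbf{q}}(\psi)$ in Lemma~\ref{sec:curved-chain-complex-1} comes from the torsion-$\mathrm{Spin}^c$ formula $\mu(\psi)=2n_{\mathbf{q}}(\psi)$ combined with \cite[Lemma~2.18]{Ozsvath2004c}, not from Lipshitz's index formula (which is invoked earlier in that section for a different purpose). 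With the decomposition supplied, your argument and the paper's coincide.
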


\begin{proof}
Let $\psi,\psi'\in \pi_2(\x,\bs{\theta},\y)$, then the difference 
\[\psi-\psi'=\phi_{\alpha\beta}+\phi_{\beta\gamma}+\phi_{\alpha\gamma},\]
where $\phi_{\alpha\beta}$ is a class in $ \pi_2(\x,\x)$,
$\phi_{\beta\gamma}$ is a class in $ \pi_2(\bs{\theta},\bs{\theta})$,
$\phi_{\alpha\gamma}$ is a class in $ \pi_2(\y,\y)$.

As in the proof of Lemma \ref{sec:curved-chain-complex-1}, we have three equalities,
$n_{\mathbf{O}}(\phi_{\alpha\beta})=2n_{\mathbf{q}}(\phi_{\alpha\beta})
$, $n_{\mathbf{O}}(\phi_{\beta\gamma})=2n_{\mathbf{q}}(\phi_{\beta\gamma})$,
and
$n_{\mathbf{O}}(\phi_{\alpha\gamma})=2n_{\mathbf{q}}(\phi_{\alpha\gamma})$,
the finiteness of counting follows from the strongly $\mathfrak{s}$-admissibility for diagram
$\cal{T}_{\mathbf{q}}=(\Sigma,\bs{\alpha},\bs{\beta},\bs{\gamma},\mathbf{q})$.

\end{proof}

Furthermore, by tracking the proof of \cite[Theorem 8.16]{Ozsvath2004c}, we have
$CFL'$ flavor of associativity as described below.

\begin{lem}\label{sec:glob-textsp-assoc-1}

Given a strongly $\mathfrak{S}$-admissible Heegaard quadruple
$(\Sigma,\bs{\alpha},\bs{\beta},\bs{\gamma},\bs{\delta},\mathbf{O})$
with induced Heegaard triples $\cal{T}_{\alpha\beta\gamma}$ and
$\cal{T}_{\alpha\gamma\delta}$ satisfying the same condition in Lemma
\ref{lem:admtri}, then we have the following equality
\begin{align*}
  \sum_{\mathfrak{s}\in\mathfrak{S}}F_{\alpha\gamma\delta}(F_{\alpha\beta\gamma}(\theta_{\alpha\beta}\otimes
  \theta_{\beta\gamma};\mathfrak{s}_{\alpha\beta\gamma})\otimes\theta_{\gamma\delta};\mathfrak{s}_{\alpha\gamma\delta})\\
=\sum_{\mathfrak{s}\in\mathfrak{S}}F_{\alpha\beta\delta}(\theta_{\alpha\beta}\otimes
  F_{\beta\gamma\delta}(\theta_{\beta\gamma}\otimes\theta_{\gamma\delta};\mathfrak{s}_{\beta\gamma\delta});\mathfrak{s}_{\alpha\beta\delta}).
\end{align*}
Here $\theta_{\alpha\beta},\theta_{\beta\gamma}$ and
$\theta_{\gamma\delta}$ lie in
$HFL'(Y_{\alpha\beta}), HFL'(Y_{\beta\gamma})$ and
$HFL'(Y_{\gamma\delta})$ respectively, and $\mathfrak{S}$ is a $\delta
H^1(Y_{\beta\gamma})+\delta H^1(Y_{\alpha\gamma})$ orbit of a fixed
$\textnormal{Spin}^c$-structure over $X_{\alpha\beta\gamma\delta}$.
\end{lem}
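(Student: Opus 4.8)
The plan is to adapt the proof of associativity of holomorphic triangle maps from \cite[Theorem 8.16]{Ozsvath2004c} to the present curved-complex setting over $\mathbb{F}_2[U]$, paying attention only to the two new features: that we count holomorphic squares weighted by $U^{n_{\mathbf{O}}(\sigma)}$ rather than by a finer collection of basepoint variables, and that the $\textnormal{Spin}^c$-bookkeeping now runs through the four-manifold $X_{\alpha\beta\gamma\delta}$ obtained from the Heegaard quadruple as in Section \ref{sec:holotri}. First I would set up the moduli space of holomorphic rectangles: for a strongly $\mathfrak{S}$-admissible Heegaard quadruple one has the standard identification of the boundary of the one-dimensional components of $\mathcal{M}(\sigma)$, $\sigma\in\pi_2(\theta_{\alpha\beta},\theta_{\beta\gamma},\theta_{\gamma\delta},\mathbf{y})$ with $\mu(\sigma)=0$, as a union of two types of degenerations: those that split off a $\mu=0$ triangle on $\{\alpha,\beta,\gamma\}$ followed by a $\mu=0$ triangle on $\{\alpha,\gamma,\delta\}$, and those that split off the corresponding pair on $\{\beta,\gamma,\delta\}$ and $\{\alpha,\beta,\delta\}$; degenerations of holomorphic disks are excluded because $\theta_{\bullet\bullet}$ are cycles of top $\delta$-grading and because the differential contributes a curvature term that I would need to check cancels (see below).

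The second step is the $\textnormal{Spin}^c$ matching. Fix an $n$-tuple $\mathbf{q}$ of points realizing $\mathfrak{s}$-admissibility simultaneously for all the sub-triples, so that $\mathfrak{s}_{\mathbf{q}}$ is the one-to-one map of \cite[Proposition 8.5]{Ozsvath2004c} onto $\textnormal{Spin}^c(X_{\alpha\beta\gamma\delta})$. The additivity of $\mathfrak{s}_{\mathbf{q}}$ under juxtaposition of a triangle and a rectangle shows that, as $\sigma$ ranges over classes with $\mathfrak{s}_{\mathbf{q}}(\sigma)$ lying in the fixed $\delta H^1(Y_{\beta\gamma})+\delta H^1(Y_{\alpha\gamma})$ orbit, the two ways of decomposing $\sigma$ hit exactly the triangle classes contributing to $\sum_{\mathfrak{s}\in\mathfrak{S}}F_{\alpha\gamma\delta}(F_{\alpha\beta\gamma}(\,\cdot\,))$ on one side and to $\sum_{\mathfrak{s}\in\mathfrak{S}}F_{\alpha\beta\delta}(\theta_{\alpha\beta}\otimes F_{\beta\gamma\delta}(\,\cdot\,))$ on the other. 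Here the restriction to the orbit is precisely what makes both sides well-defined: by the argument in Remark \ref{rem:reason}, a $\textnormal{Spin}^c$-structure on $X_{\alpha\beta\gamma\delta}$ restricting to the unique torsion class on the $\#(S^1\times S^2)$ boundary pieces is determined by its restriction to the remaining boundary components, so $\mathfrak{s}_{\mathbf{q}}$ is independent of the admissible choice of $\mathbf{q}$. I would then observe that the $U$-power is additive, $n_{\mathbf{O}}(\sigma)=n_{\mathbf{O}}(\psi_1)+n_{\mathbf{O}}(\psi_2)$ under either splitting, so the counts assemble into the claimed identity of $\mathbb{F}_2[U]$-module maps after summing the boundary points of the $\mathcal{M}(\sigma)/\mathbb{R}$ mod $2$.

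The step I expect to be the main obstacle is controlling the \emph{curvature}: since $CFL'$ is only a curved chain complex unless one passes to homology, the square map $f_{\alpha\beta\gamma\delta}$ is not literally a chain homotopy, and the ends of $\mathcal{M}(\sigma)$ a priori also include strips carrying the $\partial^2$-curvature term $\sum_i(U_{i,1}U_{i,2}+\cdots)$ of Equation \eqref{eq:endocurve}. The resolution is that we are evaluating on the cycles $\theta_{\alpha\beta},\theta_{\beta\gamma},\theta_{\gamma\delta}\in HFL'$ of top $\delta$-grading, for which these curvature contributions cancel in pairs exactly as in \cite[Lemma 2.1]{Zemke2016} and its use in \cite{Zemke2016a}; I would spell out that the degenerations producing a curvature term occur on the two triangle inputs symmetrically and so cancel mod $2$, leaving only the two genuine triangle-composition degenerations. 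Once this is in place, the identity follows by the usual "count the boundary of a compact one-manifold" argument, and the only remaining routine checks are strong admissibility of all induced sub-triples and sub-quadruples (supplied by finger moves away from $\mathbf{O}$ as in the discussion preceding Lemma \ref{lem:admtri}) and the torsion hypothesis on $\mathfrak{s}$, which guarantees finiteness of the relevant moduli counts via the grading argument of Lemma \ref{sec:curved-chain-complex-1}.
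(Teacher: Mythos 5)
Your proposal follows essentially the same route the paper intends: cite and adapt the proof of \cite[Theorem 8.16]{Ozsvath2004c}, count boundary degenerations of the one-dimensional moduli of holomorphic rectangles, track $\textnormal{Spin}^c$-structures through $X_{\alpha\beta\gamma\delta}$ using the one-to-one map $\mathfrak{s}_{\mathbf{q}}$ and the independence argument of Remark \ref{rem:reason}, and record that $n_{\mathbf{O}}$ is additive under gluing so the $U$-powers match. The paper supplies essentially no detail beyond the one-line citation, and your write-up correctly fills in what is elided; the role of the orbit $\mathfrak{S}$ on both sides of the identity is also handled appropriately.

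The one genuine misstep is the ``curvature'' paragraph, which rests on a misreading of the construction. $CFL'$ is \emph{not} a curved complex. After tensoring with the quotient ring that sets all $U_i$ and $V_j$ equal to a single $U$ (Section \ref{sec:pre:ulfh}), each summand in the right-hand side of Equation \eqref{eq:endocurve} becomes $U^2$, and there are $2n_i$ of them per component $L_i$, so the sum vanishes mod $2$; likewise the bipartite curvature $\sum_i(U_{i,1}U_{i,2}+\cdots+U_{i,k_i}U_{i,1})$ from Section 5.1 vanishes since each $k_i$ is even. Hence $\partial^2=0$ on $CFL'$, the square-count $f_{\alpha\beta\gamma\delta}$ is an honest chain homotopy, and the only boundary contributions to handle are the two triangle-pair degenerations plus the disk degenerations giving $\partial f_{\alpha\beta\gamma\delta}+f_{\alpha\beta\gamma\delta}\partial$ together with terms where a differential hits one of the $\theta$'s. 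The latter vanish because the $\theta$'s are cycles and the former vanish on passing to $HFL'$; no extra curvature cancellation is needed, and the mechanism you invoke from \cite[Lemma 2.1]{Zemke2016} (which describes the curvature of $CFL^-_{UV}$, a different complex) does not enter. You should delete the curvature discussion and simply observe that the standard chain-homotopy argument applies verbatim.
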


\section{Band moves and triangle maps}\label{sec:band-moves-triangle}

\subsection{Assumptions}

In this section,
we assume that the bipartite link cobordisms
or bipartite disoriented link cobordisms satisfy the following conditions:
\begin{itemize}
\item The four-manifold $W$ is a product $Y\times I$.
\item The
inclusion $\cal{F}:(F,\partial F)\rightarrow (W,\partial W)$ induces a
trivial map $\cal{F}_*:H_2(F,\partial F)\rightarrow H_1(W,\partial W)$. Consequently, the
bipartite links in the boundary three-manifolds are null-homologous.
\end{itemize}

\begin{lem} Given a $\beta$-band move  $B^\beta$ from bipartite link
  $L_{\alpha\beta}$ in $Y$ to a bipartite link $L_{\alpha\gamma}$ in
  $Y$, there exists a strongly $\mathfrak{s}$-admissible Heegaard
  triple (or standard Heegaard triple) subordinate this band move $B^\beta$. Here
  $\mathfrak{s}$ is a $\textnormal{Spin}^c$-structure over
  $X_{\alpha\beta\gamma}$ such that the restriction
  $\mathfrak{s}_{\alpha\beta}$ on
  $Y_{\alpha\beta}\cong Y$ is equal to its restriction on
  $Y_{\alpha\gamma}\cong Y$.
\end{lem}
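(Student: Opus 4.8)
The plan is to combine the existence result for standard Heegaard triples with the admissibility-construction lemmas already established in the excerpt, noting that the only new issue is to ensure that the admissibility moves can be performed without disturbing the standardness (or, conversely, that a standard triple can be perturbed to an admissible one).

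First I would invoke Theorem \ref{sec:exist-heeg-triple}: starting from any Heegaard diagram $H_{\alpha\beta}$ compatible with $(Y,L_{\alpha\beta})$, after stabilizations/destabilizations and handleslides that avoid the basepoints $\mathbf{O}$, followed by the Hamiltonian perturbations of $\bs{\beta}$, one obtains a standard Heegaard triple $\cal{T}$ subordinate to the $\beta$-band move $B^\beta$. This produces \emph{some} standard triple; the content of the lemma is to upgrade it to a strongly $\mathfrak{s}$-admissible one without losing the subordination data, and to check $\mathfrak{s}$-realizedness. For the latter, since $L_{\alpha\beta}$ is null-homologous (this is part of the standing assumptions in Section \ref{sec:band-moves-triangle}), Lemma \ref{sec:curved-chain-complex} applies and the map $\mathfrak{s}_\delta$ agrees with $\mathfrak{s}_{\bs{q}}$ for a suitable $n$-tuple $\bs{q}$; similarly on the $\alpha\gamma$ side, and on the $\beta\gamma$ side the induced three-manifold is $\#^g(S^1\times S^2)$ with unique torsion $\mathfrak{s}_0$, so by Corollary \ref{sec:textsp-struct-delta} the diagram $H_{\beta\gamma}$ is homologically even and the $\mathfrak{s}_0$-realizedness is automatic once we pick $\bs{q}$ compatibly. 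Thus a choice of $\bs{q}$ — one point in each component of $\Sigma\setminus\bs{\alpha}$ and of $\Sigma\setminus\bs{\beta}$ — realizing $\mathfrak{s}$ exists by these earlier results.

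Next I would run the admissibility argument: by the remark preceding Lemma \ref{lem:admtri}, which follows the proof of \cite[Lemma 5.2]{Ozsvath2004c}, finger moves of the $\bs{\alpha},\bs{\beta},\bs{\gamma}$ curves along their dual curves, performed so as never to cross any basepoint in $\mathbf{O}$, isotope $\cal{T}$ to a weakly (resp.\ strongly) $\mathfrak{s}$-admissible Heegaard triple. Since these finger moves do not cross $\mathbf{O}$, the pointed diagrams $H_{\alpha\beta},H_{\beta\gamma},H_{\alpha\gamma}$ remain compatible with the respective bipartite links, so the resulting triple is still subordinate to $B^\beta$. To get the ``standard'' version as well, I would observe that the finger moves needed for admissibility can be localized away from the distinguished disk region $D$ containing the two basepoints $o,o'$ and the curves $\beta_n,\gamma_n$ used in the definition of standardness — periodic domains are detected by the homological data, and finger moves supported outside $D$ suffice to achieve the required intersection-positivity conditions on periodic domains. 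Hence one can first make the triple standard, then make it strongly $\mathfrak{s}$-admissible by finger moves localized outside $D$, preserving standardness.

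The main obstacle I anticipate is the compatibility of the two requirements — being standard and being strongly $\mathfrak{s}$-admissible simultaneously — since the standard local picture near $D$ is rigidly prescribed (Figure \ref{fig:st-10}) while admissibility is a global condition on periodic domains. The key point to verify carefully is that every periodic domain of $H_{\alpha\beta}$ (and of $H_{\beta\gamma}$, $H_{\alpha\gamma}$) has a representative supported in $\Sigma\setminus D$, or at least that the periodic domains whose positivity one needs to control can be arranged to avoid $D$; this uses that the two new $\beta_n,\gamma_n$-intersection points inside $D$ contribute to periodic domains only through the global homology class, together with the count $n_{\mathbf{O}}(\mathcal{P})=\mu(\mathcal{P})$ from Lipshitz's formula that was exploited in the proof of Lemma \ref{sec:curved-chain-complex-1}. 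Once that localization is in hand, the finger-move algorithm of \cite[Lemma 5.2, Lemma 5.4]{Ozsvath2004c} applies verbatim in $\Sigma\setminus D$, yielding the desired strongly $\mathfrak{s}$-admissible standard Heegaard triple subordinate to $B^\beta$, and the torsion hypothesis on $\mathfrak{s}_{\alpha\beta}$ guarantees the strong form of admissibility is attainable.
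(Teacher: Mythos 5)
Your proposal is correct and takes essentially the same route as the paper: both invoke Theorem \ref{sec:exist-heeg-triple} to produce a standard triple subordinate to $B^\beta$, then achieve strong $\mathfrak{s}$-admissibility by finger moves along dual curves performed away from the distinguished disk $D$ and without crossing $\mathbf{O}$. Your extra discussion of $\mathfrak{s}$-realizedness and of why the periodic-domain positivity can be arranged outside $D$ simply fills in details the paper's one-line proof leaves implicit.
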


\begin{proof}
Recall that in the final step of the construction of standard Heegaard
triple in Theorem \ref{sec:exist-heeg-triple}, we can do finger moves for the
diagram $H'_{\alpha\beta}$ without crossing the basepoints $\mathbf{O}$ and away
from the disk neighborhood $D$, such that the diagram $H_{\alpha\beta}$ is strongly
$\mathfrak{s}$-admissible. Then the result follows.     
\end{proof}

As a corollary of Lemma \ref{sec:some-topol-facts-1} and
\cite[Proposition 7.2]{Ozsvath2004c}, we have the following lemma.

\begin{lem}
  Suppose   $\cal{T}_1$ and $\cal{T}_2$ are two strongly
  $\mathfrak{s}$-admissible Heegaard triples 
 subordinate to the same band move $B^\beta$ from a bipartite link
 $L_{\alpha\beta}$ to a bipartite link $L_{\alpha\gamma}$. Then these two triples
 can be connected by a sequence moves in Lemma
 \ref{sec:some-topol-facts-1} , such that in each intermediate step, Heegaard
 triple is strongly $\mathfrak{s}$-admissible.
\end{lem}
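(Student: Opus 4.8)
The statement is to be read as a consequence of Lemma~\ref{sec:some-topol-facts-1} together with the winding-region argument behind \cite[Proposition 7.2]{Ozsvath2004c} (in the version for Heegaard triples, cf.\ the strong-admissibility discussion of \cite[Section~8]{Ozsvath2004c}), the only new ingredient being that all auxiliary isotopies are kept away from the basepoints $\mathbf{O}$. First I would invoke Lemma~\ref{sec:some-topol-facts-1} to fix a finite chain $\mathcal{T}_1=\mathcal{S}_0,\mathcal{S}_1,\dots,\mathcal{S}_N=\mathcal{T}_2$ of Heegaard triples subordinate to $B^\beta$ with consecutive triples differing by one of three moves: an ambient isotopy of $\Sigma$ fixing $L\cup B^\beta$; an isotopy or handleslide among the $\bs{\alpha},\bs{\beta},\bs{\gamma}$ curves missing every basepoint; or a stabilization/destabilization of a standard torus triple. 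Moves of the first kind are realized by diffeomorphisms and preserve strong $\mathfrak{s}$-admissibility verbatim, so they may be discarded, and it remains to thicken the chain across the other two kinds of move so that every intermediate triple is strongly $\mathfrak{s}$-admissible.

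The mechanism is the one already used before Lemma~\ref{lem:admtri}: finger moves (windings) of the three families of curves along their dual curves on $\Sigma$, chosen to avoid the finite set $\mathbf{O}$, are themselves allowed moves, and by \cite[Lemma 5.2]{Ozsvath2004c} sufficiently much winding renders any triple subordinate to $B^\beta$ strongly $\mathfrak{s}$-admissible; since $\mathfrak{s}$ is torsion the $c_1$-evaluations on periodic and triply-periodic domains are pinned down, so one simultaneously controls the periodic domains of $H_{\alpha\beta},H_{\beta\gamma},H_{\alpha\gamma}$ and the triply-periodic domains of $X_{\alpha\beta\gamma}$, exactly as in \cite[Section~8]{Ozsvath2004c}. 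Given an elementary move $\mathcal{S}_i\rightsquigarrow\mathcal{S}_{i+1}$ supported in a compact region $R\subset\Sigma$ disjoint from $\mathbf{O}$, I would wind the three families of $\mathcal{S}_i$ inside regions disjoint from $R$ until every relevant domain with nonnegative multiplicities and the prescribed $c_1$-evaluation is forced to have a negative coefficient unless it is trivial; because this ``over-wound'' condition is certified by data disjoint from $R$, it is unaffected by the move, which therefore runs through strongly $\mathfrak{s}$-admissible triples and terminates on the analogously over-wound model of $\mathcal{S}_{i+1}$. At the two ends $\mathcal{S}_0=\mathcal{T}_1$ and $\mathcal{S}_N=\mathcal{T}_2$ are already strongly $\mathfrak{s}$-admissible, and any two over-wound models of the same triple are joined by further windings through strongly $\mathfrak{s}$-admissible triples, so the pieces splice. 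A stabilization/destabilization of a standard torus triple only introduces a null-homologous periodic domain supported on the extra handle and hence leaves all admissibility conditions intact; it is inserted after over-winding the rest of the diagram.

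The main difficulty — and the reason weak admissibility would not suffice — is keeping \emph{strong} $\mathfrak{s}$-admissibility alive across a handleslide among the $\bs{\beta}$ or $\bs{\gamma}$ curves: by Lemma~\ref{sec:some-topol-facts} the induced three-manifold $Y_{\beta\gamma}$ is $\#^g(S^1\times S^2)$, which has large $b_1$, so there are many periodic domains with nonzero homology class and the very set of periodic domains changes under such a move. The resolution is precisely the disjoint-support trick: winding a complete collection of dual curves (chosen away from $\mathbf{O}$ and from the support of the handleslide) sufficiently often makes winding and handleslide commute, so the over-wound condition — hence strong $\mathfrak{s}$-admissibility — survives at every intermediate triple. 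Carrying this out for $i=0,\dots,N-1$ and concatenating produces a sequence of moves of the three types of Lemma~\ref{sec:some-topol-facts-1} joining $\mathcal{T}_1$ to $\mathcal{T}_2$ entirely through strongly $\mathfrak{s}$-admissible Heegaard triples, as claimed.
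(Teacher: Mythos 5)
Your proposal is correct and follows the same route the paper takes: the paper states this lemma as a corollary of Lemma~\ref{sec:some-topol-facts-1} together with \cite[Proposition 7.2]{Ozsvath2004c}, and your argument simply unwinds that citation — decompose the connecting sequence into elementary moves, over-wind the curve families along dual curves disjoint from $\mathbf{O}$ and from the support of each move so that strong $\mathfrak{s}$-admissibility persists, and observe that stabilizations/destabilizations of a standard torus triple are harmless. Your observation about periodic domains in $Y_{\beta\gamma}\cong\#^g(S^1\times S^2)$ is a legitimate sanity check and your disjoint-support resolution is the standard and correct way to handle it.
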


Suppose the bipartite disoriented link cobordism
$(\cal{W},\cal{F},F_\alpha,F_\beta,\cal{A},\cal{A}_\Sigma)$ is a
$B^\beta$-band move next to basepoints $(o_i,o_j)$ (cf. Figure \ref{fig:twobandmoves}). 
From the discussion in Section \ref{sec:relat-betw-three}, we know that
it determines a unique bipartite link cobordism
$(\cal{W},\cal{F},F_\alpha,F_\beta,\cal{A}_\Sigma)$. By Theorem
\ref{sec:exist-heeg-triple}, there exists a Heegaard triple $\cal{T}$ subordinate to
this bipartite link cobordism. As the bipartite link $L_{\alpha\beta}$
and $L_{\alpha\gamma}$ in $Y_{\alpha\beta}\cong Y$ and
$Y_{\alpha\gamma}\cong Y$ are null-homologous, we can associate them with two
$\mathbb{Z}$-graded curved chain complex
$CFBL^-(\cal{H}_{\alpha\beta},\mathfrak{s})$ and
$CFBL^-(\cal{H}_{\alpha\gamma},\mathfrak{s})$ respectively, where $\mathfrak{s}$ is
a torsion $\text{Spin}^c$-structure of $ Y$. 
For the bipartite link
$L_{\beta\gamma}$ in $Y_{\beta\gamma}\cong \#^n(S^1\times S^2)$, by
Corollary \ref{sec:textsp-struct-delta}, we have also has a
$\mathbb{Z}$-graded curved chain complex
$CFBL^-(\cal{H}_{\beta\gamma},\mathfrak{s}_0)$.

\subsection{Distinguishing the top grading generators}

Let $B^\beta$ be a band move from bipartite link $L_{\alpha\beta}$ to $L_{\alpha\gamma}$.
 If the surface  $(F,\partial F)$ is orientable, for the diagram
  $(\Sigma,\bs{\beta},\bs{\gamma},\mathbf{w},\mathbf{z})$, the top
  $\delta$-grading generators of $CFL'(\mathcal{H}_{\beta\gamma},\mathfrak{s}_0)$ can be distinguished by using
  $gr_{\mathbf{w}}$ and $gr_{\mathbf{z}}$. Clearly, the choice of
  generators depends on the coloring data. See
  \cite[Section 6]{Zemke2016a} for details. 

\begin{figure}
  \centering
  \includegraphics[scale=1.2]{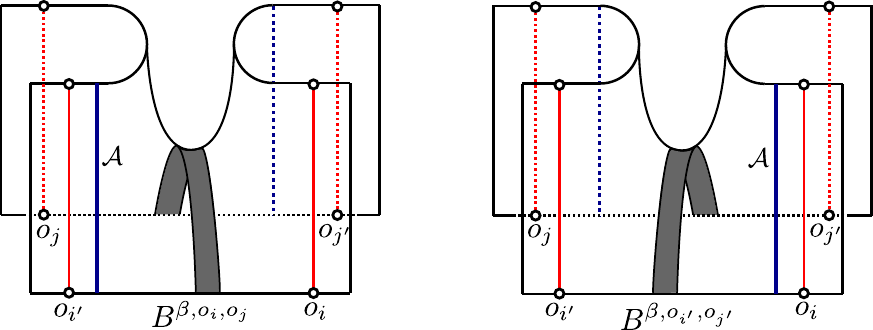}
  \caption{Two band moves near different basepoints}
  \label{fig:twobandmoves}
\end{figure}

Generically, as the surface $(F,\partial F)$ can be non-orientable,
we do not have $gr_w$ or $gr_z$-grading. Therefore we
need extra data to distinguish the top
grading generators in the homology
$HFL'(\cal{H_{\beta\gamma}},\mathfrak{s}_0)$. Actually, the extra  data we need are included in
$\cal{A}$.
In fact, the band $B^\beta$ can either be next to a pair of
basepoints $(o_i,o_j)$ or the other pair $(o_{i'},o_{j'})$ (See Figure \ref{fig:twobandmoves}).
Here, we denoted a $\beta$-band by $B^{\beta,o_i,o_j}$ if it is
next to the pair of basepoints $(o_i,o_j)$.

  Recall that, for the construction of Heegaard triple in Theorem
  \ref{sec:exist-heeg-triple}, we set the band $B^\beta$ being next to a pair of
  basepoints $(o_i,o_j)$ of $L_{\alpha\beta}$. If $B^\beta$ is of type
  II, we can choose
  the other pair $(o_{i'},o_{j'})$ and construct a triple $\cal{T}'$.
  Then both of the triples $\cal{T}$ are subordinate to the band move
  $B^\beta$, and can be connected by Heegaard moves in Lemma
 \ref{sec:some-topol-facts-1}.

\begin{lem}\label{sec:dist-top-grad}
  Given a band move $B^\beta_{o_i,o_j}$ of a bipartite disoriented
  link and a strongly $\mathfrak{s}$-admissible Heegaard triple $\cal{T}$ subordinate to it, there is a chain complex
  $CFL'_{o_i,o_j}(\cal{H}_{\beta\gamma},\mathfrak{s}_0)$, such that
  the top $\delta$-grading $\mathbb{F}_2$-submodule of its homology is one-dimensional..
\end{lem}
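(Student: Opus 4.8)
The plan is to use the extra data carried by $\mathcal{A}$ --- the oriented one-manifold tracking the $\mathbf{p}$ and $\mathbf{q}$ points --- to single out, among the two top $\delta$-grading classes in $HFL'(\mathcal{H}_{\beta\gamma},\mathfrak{s}_0)$ produced by Corollary \ref{sec:textsp-struct-delta}, a one-dimensional subspace. First I would recall the local picture of the standard Heegaard triple subordinate to $B^\beta_{o_i,o_j}$ from Theorem \ref{sec:exist-heeg-triple}: the curve $\gamma_n$ is a Hamiltonian perturbation of $\beta_n$ crossing the two basepoints $o_i,o_j$, so that $\beta_n\cap\gamma_n$ consists of two points, and (after the type-I/type-II splitting of Lemma \ref{sec:some-topol-facts}) the $2^n$ generators of $CFBL^-(\mathcal{H}_{\beta\gamma},\mathfrak{s}_0)$ each pick one of these two intersection points on $\beta_n\cap\gamma_n$ while being forced on the remaining $\beta_i\cap\gamma_i$. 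Exactly two of these generators, call them $\bs\theta^+$ and $\bs\theta^-$, lie in the top $\delta$-grading; the difference $\bs\theta^+ - \bs\theta^-$ is realized by the small bigon (one of the two disk regions containing $o_i$ and $o_j$ in the standard triple), which carries a single basepoint and hence shifts $\delta$ by one, confirming that both survive to homology in the top grading as in Corollary \ref{sec:textsp-struct-delta}.

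Next I would use the data $\mathcal{A}$ to choose between $\bs\theta^+$ and $\bs\theta^-$. The point of lifting to a bipartite \emph{disoriented} link cobordism is precisely that the band $B^\beta_{o_i,o_j}$ comes with a preferred side: the oriented arcs $\mathbf{l}$ (equivalently, the position of the $\mathbf{p}$- and $\mathbf{q}$-points relative to $o_i,o_j$) determine which of the two local disk regions containing $\{o_i,o_j\}$ is ``the'' region for the band attachment, i.e.\ whether the resulting link is $L(B^\beta)$ or $L(B^\beta_{\pm 1})$ in the language of Theorem \ref{sec:exist-heeg-triple}. Fixing that region fixes one of the two intersection points of $\beta_n\cap\gamma_n$ as the distinguished one, hence selects one of $\bs\theta^\pm$. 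I would then \emph{define} the chain complex $CFL'_{o_i,o_j}(\mathcal{H}_{\beta\gamma},\mathfrak{s}_0)$ to be $CFL'(\mathcal{H}_{\beta\gamma},\mathfrak{s}_0)$ equipped with this marked top-grading generator, and observe that the top $\delta$-grading $\mathbb{F}_2$-submodule of its homology, spanned by the image of the marked generator, is one-dimensional. The subtlety is to check this is genuinely well-defined: the triple $\mathcal{T}$ subordinate to $B^\beta_{o_i,o_j}$ is not unique, but by Lemma \ref{sec:some-topol-facts-1} any two are connected by ambient isotopies fixing $L\cup B^\beta$, basepoint-avoiding handleslides, and (de)stabilizations of a standard torus triple; I would verify that each such move carries the marked generator of one triple to the marked generator of the other, so the distinguished one-dimensional subspace of homology is independent of the choice.

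The main obstacle I anticipate is this last well-definedness check, and in particular disentangling the role of $\mathcal{A}$ from the role of a coloring. In the orientable case of \cite[Section 6]{Zemke2016a} one uses $gr_{\mathbf{w}}$ and $gr_{\mathbf{z}}$, which requires a coloring; here the claim is that $\mathcal{A}$ alone (without picking $\mathbf{w}$ versus $\mathbf{z}$) suffices, because $\mathcal{A}$ records the \emph{relative} orientation of the arcs across the band, which is exactly the data needed to orient the bigon region canonically. I would make this precise by showing that the marked intersection point of $\beta_n\cap\gamma_n$ is characterized intrinsically --- for instance, as the endpoint of the sub-arc of $\gamma_n$ that, together with a sub-arc of $\beta_n$, bounds the bigon on the side dictated by the oriented arcs $\mathbf{l}^0,\mathbf{l}^1$ --- a characterization manifestly preserved by the Heegaard moves of Lemma \ref{sec:some-topol-facts-1}, since those moves fix $L\cup B^\beta$ and hence fix this side. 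Once that intrinsic characterization is in hand, the one-dimensionality of the top-grading submodule of the homology of $CFL'_{o_i,o_j}(\mathcal{H}_{\beta\gamma},\mathfrak{s}_0)$ follows immediately from the count of top-grading generators in Corollary \ref{sec:textsp-struct-delta}.
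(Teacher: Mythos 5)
Your proposal misses the key algebraic mechanism that the paper uses, and as written it does not actually produce a chain complex satisfying the conclusion of the lemma.

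The lemma asserts the existence of a \emph{chain complex} $CFL'_{o_i,o_j}(\mathcal{H}_{\beta\gamma},\mathfrak{s}_0)$ whose top-$\delta$-grading homology is one-dimensional over $\mathbb{F}_2$. You propose to take $CFL'(\mathcal{H}_{\beta\gamma},\mathfrak{s}_0)$ itself and merely decorate it with a marked top-grading generator chosen via $\mathcal{A}$. But adding a marking does not change the chain complex or its homology: in $CFL'$, where all basepoint variables are identified with a single $U$, the two top-grading generators $\bs{\theta},\bs{\theta}'$ satisfy $\partial\bs{\theta}=(U+U)\bs{\theta}'=0$ and $\partial\bs{\theta}'=0$, so both survive to homology and the top-grading piece is two-dimensional. ``The span of the marked generator'' being one-dimensional is vacuous and is not what the lemma asserts.

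The paper's device is to change the base ring rather than mark a generator: one defines
\[
CFL'_{o_i,o_j}(\mathcal{H}_{\beta\gamma},\mathfrak{s}_0)
= CFBL(\mathcal{H}_{\beta\gamma},\mathfrak{s}_0)\otimes_{\mathbb{F}_2[U_1,\dots,U_{2n}]}\mathbb{F}_2[U,U_1,\dots,U_{2n}]/I,
\]
where $I=(U-U_k : k\ne i,j)$, i.e.\ one collapses all basepoint variables to $U$ \emph{except} $U_i$ and $U_j$, which remain free. By \cite[Lemma 2.1]{Zemke2016a} this endomorphism is a genuine differential, and in a standard triple one computes $\partial\bs{\theta}=(U_i+U_j)\bs{\theta}'$, which is now nonzero because $U_i\ne U_j$. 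Hence $\bs{\theta}$ is not a cycle, only $\bs{\theta}'$ survives in the top $\delta$-grading, and the top-grading homology is genuinely one-dimensional. The geometric input you correctly identify --- the band being adjacent to $(o_i,o_j)$, a piece of data recorded by $\mathcal{A}$ after lifting --- enters precisely through which pair of variables one keeps distinct, not through a generator marking; the algebraic partial-localization is what makes the differential see the asymmetry. Your instinct about where the information lives is right, but without this algebraic step the conclusion of the lemma does not hold for the object you constructed.
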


\begin{proof}\label{sec:dist-top-grad-1}
  We define the module 
\[ CFL'_{o_i,o_j}(\cal{H}_{\beta\gamma},\mathfrak{s}_0)=
CFBL(\cal{H}_{\beta\gamma},\mathfrak{s}_0)\otimes_{\mathbb{F}_2[U_1,\cdots,U_{2n}]}
\mathbb{F}_2[U,U_1,\cdots,U_{2n}]/I,\]
where $I$ is an ideal generated by $(U-U_k),k\neq i,j$. By \cite[Lemma
2.1]{Zemke2016a}, the endomorphism $\partial$ is a
differential. Without loss of generality, if the triple is standard,
we have 
\begin{align*}
  &\partial \bs{\theta} = (U_i+U_j)\bs{\theta}'\\
&\partial \bs{\theta'} = 0,
\end{align*}
where $\bs{\theta}$ and $\bs{\theta'}$ are two top grading generators of $CFL_{o_i,o_j}'(\cal{H}_{\beta\gamma},\mathfrak{s}_0)$
or $CFBL_{o_i,o_j}'(\cal{H}_{\beta\gamma},\mathfrak{s}_0)$.
\end{proof}

 Let $\bs{\theta}^{o_i,o_j}$ be the generator $\bs{\theta}$ in the proof
 of Lemma \ref{sec:dist-top-grad-1}. We have the following theorem.

\begin{thm} \label{sec:bipart-disor-link}

Given a $B^\beta$-band move next to a pair of basepoints
  $(o_i,o_j)$ of a bipartite disoriented link $L_{\alpha\beta}$ in
  $Y$,  and  a
torsion $\textnormal{Spin}^c$-structure  $\mathfrak{s}$ of $Y$,
we can construct a $\mathbb{Z}$-filtered chain map
 \[\sigma^{o_i,o_j}:CFL'(Y,L_{\alpha\beta},\mathfrak{s})\rightarrow
 CFL'(Y,L_{\alpha\gamma},\mathfrak{s}),\] 
which is well-defined up to $\mathbb{Z}$-filtered chain homotopy. This
construction is independent of the
choices of Heegaard triples. Therefore, it gives rise to a map on homology:
 \[\sigma_*^{o_i,o_j}:HFL'(Y,L_{\alpha\beta},\mathfrak{s})\rightarrow
 HFL'(Y,L_{\alpha\gamma},\mathfrak{s}),\] which is an invariant of
this bipartite disoriented link cobordism. 
   
\end{thm}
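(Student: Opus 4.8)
The strategy is the Ozsv\'ath--Szab\'o change-of-diagram argument, carried out in the $\mathbb{Z}$-filtered setting of $CFL'$ and with the extra bookkeeping of the distinguished $\beta\gamma$-generator $\bs{\theta}^{o_i,o_j}$.

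\textbf{Construction.} Pick a strongly $\mathfrak{s}$-admissible Heegaard triple $\mathcal{T}=(\Sigma,\bs{\alpha},\bs{\beta},\bs{\gamma},\mathbf{O})$ subordinate to $B^\beta_{o_i,o_j}$ which is standard near the band; such a triple exists by Theorem \ref{sec:exist-heeg-triple} and the admissibility results of this section. Let $\mathfrak{s}_{\alpha\beta\gamma}$ be the $\textnormal{Spin}^c$-structure on $X_{\alpha\beta\gamma}$ restricting to $\mathfrak{s}$ on both ends and to $\mathfrak{s}_0$ on $Y_{\beta\gamma}\cong\#^n(S^1\times S^2)$, and let $\bs{\theta}^{o_i,o_j}\in CFL'(\mathcal{H}_{\beta\gamma},\mathfrak{s}_0)$ be the generator singled out in Lemma \ref{sec:dist-top-grad}. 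Define
\[\sigma^{o_i,o_j}_{\mathcal{T}}(\x)\ \triangleq\ f_{\alpha\beta\gamma}(\x\otimes\bs{\theta}^{o_i,o_j};\mathfrak{s}).\]
Since $f_{\alpha\beta\gamma}$ is a chain map on the tensor product (Lemma \ref{lem:admtri}) and $\partial\bs{\theta}^{o_i,o_j}=(U_i+U_j)\bs{\theta}'=2U\bs{\theta}'=0$ in $CFL'$, this is a chain map from $CFL'(Y,L_{\alpha\beta},\mathfrak{s})$ to $CFL'(Y,L_{\alpha\gamma},\mathfrak{s})$; it is $\mathbb{Z}$-filtered since every triangle class $\psi$ contributing to \eqref{eq:7} has $n_{\mathbf{O}}(\psi)\ge 0$ and $\delta(U)=-1$, so the $\delta$-grading of the output is bounded above in terms of that of the input. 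No $\textnormal{Spin}^c$-indeterminacy arises, because $\mathfrak{s}_0$ is the unique torsion structure on $Y_{\beta\gamma}$ and is the class of $\bs{\theta}^{o_i,o_j}$.

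\textbf{Independence of the triple.} By Lemma \ref{sec:some-topol-facts-1} (together with its refinement keeping all intermediate triples strongly $\mathfrak{s}$-admissible), any two strongly $\mathfrak{s}$-admissible triples subordinate to $B^\beta_{o_i,o_j}$ are connected by a sequence of: (i) ambient isotopies of $\Sigma$ fixing $L\cup B^\beta$; (ii) isotopies and handleslides among the $\bs{\alpha},\bs{\beta},\bs{\gamma}$ curves avoiding $\mathbf{O}$; (iii) stabilization/destabilization along a standard genus-one triple. Move (i) leaves $\sigma^{o_i,o_j}_{\mathcal{T}}$ unchanged. For a move of type (ii), say $\bs{\gamma}\rightsquigarrow\bs{\gamma}'$, form the change-of-diagram map $F_{\gamma\gamma'}=f_{\alpha\gamma\gamma'}(-\otimes\Theta_{\gamma\gamma'})$, a $\mathbb{Z}$-filtered chain homotopy equivalence $CFL'(\mathcal{H}_{\alpha\gamma})\to CFL'(\mathcal{H}_{\alpha\gamma'})$ where $\Theta_{\gamma\gamma'}$ is the top generator, and apply the $\mathbb{Z}$-filtered associativity underlying Lemma \ref{sec:glob-textsp-assoc-1} to the quadruple $(\bs{\alpha},\bs{\beta},\bs{\gamma},\bs{\gamma}')$; this yields $\sigma^{o_i,o_j}_{\mathcal{T}'}\simeq F_{\gamma\gamma'}\circ\sigma^{o_i,o_j}_{\mathcal{T}}$ up to filtered chain homotopy, provided $f_{\beta\gamma\gamma'}(\bs{\theta}^{o_i,o_j}\otimes\Theta_{\gamma\gamma'})$ is filtered-chain-homotopic to the distinguished generator of $CFL'(\mathcal{H}_{\beta\gamma'},\mathfrak{s}_0)$. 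This last point holds because the move crosses no basepoints and fixes $B^\beta$, so it preserves both the top $\delta$-grading of $HFL'(\mathcal{H}_{\beta\gamma},\mathfrak{s}_0)$ and the position of $B^\beta$ relative to $(o_i,o_j)$, which is the only data entering the definition of $\bs{\theta}^{o_i,o_j}$ in Lemma \ref{sec:dist-top-grad}; the cases of $\bs{\alpha}$- and $\bs{\beta}$-moves are analogous, acting on the source complex or on both. For a move of type (iii), the associated triangle map is the tensor with a top generator supported on the added handle, which does not change the map up to filtered homotopy. Composing these identifications, $\sigma^{o_i,o_j}$ is independent of $\mathcal{T}$ up to $\mathbb{Z}$-filtered chain homotopy, and $\sigma^{o_i,o_j}_*$ on $HFL'$ is canonical.

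\textbf{Invariance of the cobordism.} Two bipartite disoriented link cobordisms realizing the same band move $B^\beta_{o_i,o_j}$ differ by an ambient isotopy of the band $B^\beta$ rel its ends (keeping it inside $U_\beta$ and fixing the basepoint pattern), and such an isotopy is realized by a sequence of the Heegaard moves in Lemma \ref{sec:some-topol-facts-1}; by the previous step the induced maps coincide up to $\mathbb{Z}$-filtered chain homotopy, so $\sigma^{o_i,o_j}_*$ is an invariant of the bipartite disoriented link cobordism. The main obstacle I anticipate is the independence step: running the $\mathbb{Z}$-filtered version of the Ozsv\'ath--Szab\'o invariance machinery while keeping track of $\bs{\theta}^{o_i,o_j}$ through every Heegaard move --- in particular verifying the stabilization step and checking that each homotopy produced along the way is genuinely $\mathbb{Z}$-filtered rather than an ungraded chain homotopy.
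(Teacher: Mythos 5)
Your proof takes the same route as the paper: define $\sigma^{o_i,o_j}$ as the triangle count against $\bs{\theta}^{o_i,o_j}$ in the $\textnormal{Spin}^c$-class $\mathfrak{s}_{\alpha\beta\gamma}$, and deduce independence of the Heegaard triple from associativity (Lemma \ref{sec:glob-textsp-assoc-1}) by the Ozsv\'ath--Szab\'o/Juh\'asz change-of-diagram method — which is exactly what the paper cites \cite[Theorem 6.9]{Juhasz2016} for. Your proposal is a correct expansion of that terse argument; the point you rightly flag as the crux (that the distinguished generator $\bs{\theta}^{o_i,o_j}$ is carried to the distinguished generator through each Heegaard move) is handled cleanly by observing that the relevant $\beta\gamma\gamma'$-triangle maps are $\mathbb{F}_2[U_1,\dots,U_{2n}]$-linear at the $CFBL^-$ level and so respect the $(U_i+U_j)$-relation of Lemma \ref{sec:dist-top-grad} that singles out $\bs{\theta}^{o_i,o_j}$.
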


\begin{proof}
We define the chain map $\sigma^{o_i,o_j}:
CFL'(\cal{H}_{\alpha\beta},\mathfrak{s})\rightarrow
CFL'(\cal{H}_{\alpha\gamma},\mathfrak{s})$ by 
\[\sigma^{o_i,o_j}=f_{\alpha\beta\gamma}(\x\otimes\bs{\theta}^{o_i,o_j};\mathfrak{s}_{\alpha\beta\gamma}).\]
Here $\bs{\theta}^{o_i,o_j}$ is the generator defined in the proof of Lemma
\ref{sec:dist-top-grad}, and the $\text{Spin}^c$-structure
$\mathfrak{s}_{\alpha\beta\gamma}$ restricts on $Y$ and on
$\#^g(S^1\times S^2)$ are $\mathfrak{s}$ and $\mathfrak{s}_0$ respectively.
 
Similar to the proof of \cite[Theorem 6.9]{Juhasz2016}, using the
associativity which we proved in Lemma
\ref{sec:glob-textsp-assoc-1}, the map $\sigma^{o_i,o_j}$ is
well-defined up to $\delta$-filtered chain homotopy, i.e. it is
independent of the choices of Heegaard triple $\cal{T}$. Thus, it gives
rise to a  homomorphism on homology level.

\end{proof}

\subsection{The relation of generators}

Suppose $B^{\beta,o_i,o_j}$ is a band move from bipartite disoriented
link $(\cal{L}_0,\mathbf{O})$ to bipartite disoriented link
$(\cal{L}_1,\mathbf{O})$. Then there exists an inverse band move $(B^{\beta,o_i,o_j})^{-1}$ from
$(\cal{L}_1,\mathbf{O})$ to $(\cal{L}_0,\mathbf{O})$. See Figure
\ref{fig:rogandcob} for an example of a composition of two type II
band move with surface $(F,\partial F)$ orientable.

\begin{figure}
  \centering
  \includegraphics[scale=0.5]{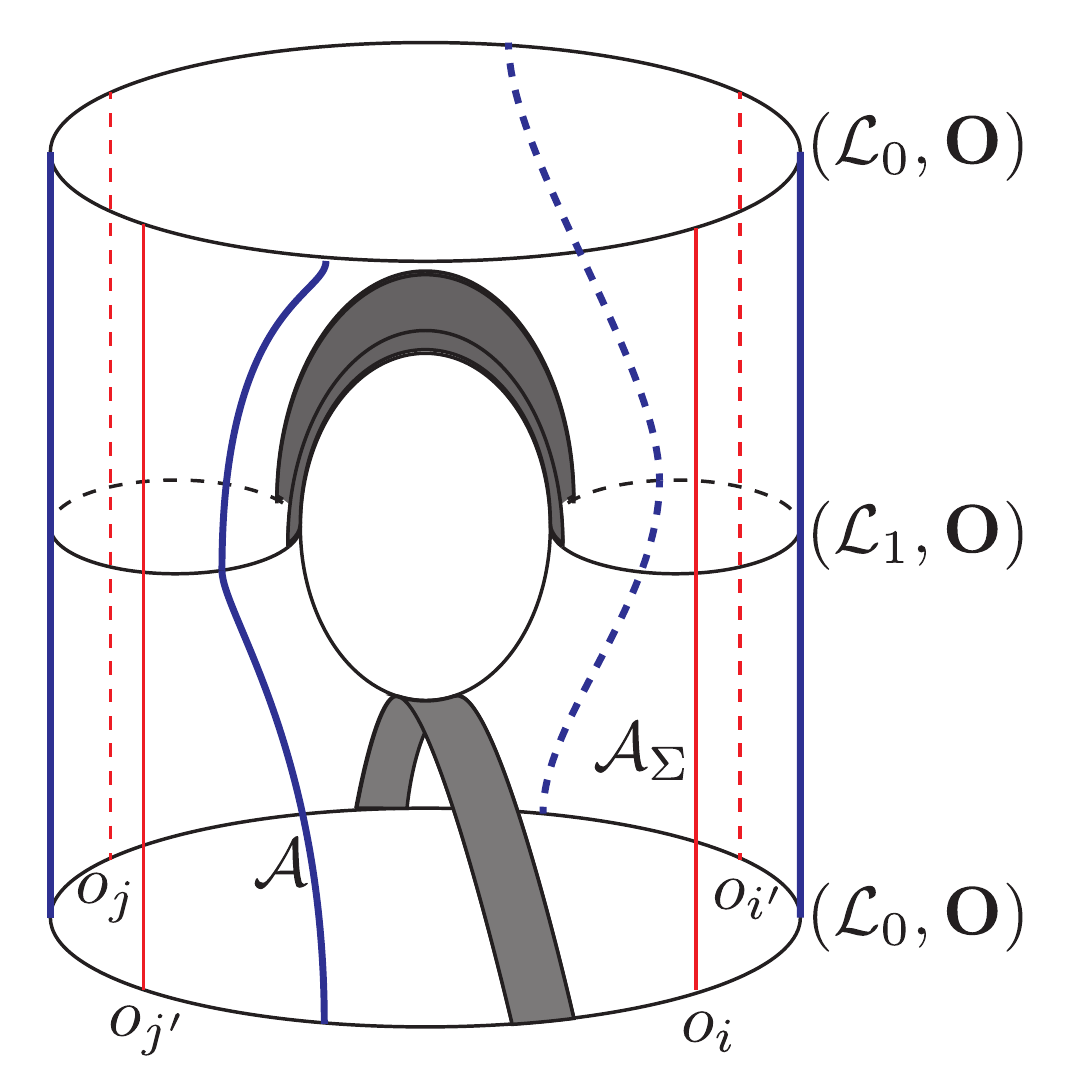}
  \caption{Composition of $B^{\beta,o_i,o_j}$ and its inverse.}
  \label{fig:rogandcob}
\end{figure}

Let $\cal{T}_{\alpha\beta\gamma}=(\Sigma,\bs{\alpha},\bs{\beta},\bs{\gamma},\mathbf{O})$ be a strongly $\mathfrak{s}$-admissible Heegaard triple
subordinate to the band move $B^{\beta,o_i,o_j}$. We set the curves $\bs{\delta}$ to
be  small Hamiltonian isotopies of the curves $\bs{\alpha}$. Then the triple
$\cal{T}_{\alpha\gamma\delta}=(\Sigma,\bs{\alpha},\bs{\gamma},\bs{\delta},\mathbf{O})$ is
subordinate to the inverse band move
$(B^{\beta,o_i,o_j})^{-1}$. We have the following lemma for the induced Heegaard triple
$\cal{T}_{\beta\gamma\delta}=(\Sigma,\bs{\beta},\bs{\gamma},\bs{\delta},\mathbf{O})$.

\begin{lem}\label{sec:relation-generators}
  Suppose
  $(\Sigma,\bs{\alpha},\bs{\beta},\bs{\gamma},\bs{\delta},\mathbf{O})$
  is a strongly $\mathfrak{s}$-admissible Heegaard quadruple, with the
  induced Heegaard triples $\cal{T}_{\alpha\beta\gamma}$ subordinate
  to $B^{\beta,o_i,o_j}$ and $\cal{T}_{\alpha\gamma\delta}$
  subordinate to $(B^{\beta,o_i,o_j})^{-1}$. For the top grading
  generators $\Theta_{\beta\gamma}\in HFL'(H_{\beta\gamma})$,
  $\Theta_{\beta\gamma}\in HFL'(H_{\beta\delta})$ and
  $\Theta_{\gamma\delta}\in HFL'(H_{\gamma\delta})$, we have the
  following relation:
\begin{equation}\label{eq:8}
f_{\beta\gamma\delta}(\Theta_{\beta\gamma}\otimes\Theta_{\gamma\delta})
  = U\cdot \Theta_{\beta\delta}
\end{equation}

\end{lem}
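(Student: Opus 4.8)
The plan is to reduce the statement to an explicit computation in a standard local model. First I would observe that the Heegaard triple $\cal{T}_{\beta\gamma\delta}=(\Sigma,\bs{\beta},\bs{\gamma},\bs{\delta},\mathbf{O})$ induced from the quadruple is itself, after handleslides and (de)stabilizations as in Lemma \ref{sec:some-topol-facts-1}, a connected sum of a genuinely interesting piece supported on the distinguished torus region $D$ (containing the two basepoints $o_i,o_j$ relevant to the band $B^{\beta,o_i,o_j}$) with a collection of trivial pieces of the form $(S^3,\mathbb{U}_2)$. On each trivial piece the $\beta$-, $\gamma$- and $\delta$-curves are mutually small Hamiltonian translates, so the triangle map carries the unique top generator to the unique top generator with no $U$-power, and these factors contribute nothing to the $U$-exponent. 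Thus it suffices to analyze the local torus model: $\bs{\beta},\bs{\gamma},\bs{\delta}$ are three pairwise-transverse curves, each pair intersecting in two points, with two basepoints $o_i,o_j$ in the configuration produced by the $\beta$-band and its inverse (Figure \ref{fig:st-10}).

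Next I would identify the generators. In the local model the homology $HFL'(H_{\beta\gamma})$, $HFL'(H_{\gamma\delta})$, $HFL'(H_{\beta\delta})$ each have a one-dimensional top $\delta$-grading piece by Corollary \ref{sec:textsp-struct-delta} and Lemma \ref{sec:dist-top-grad}; pick the cycle representatives $\Theta_{\beta\gamma},\Theta_{\gamma\delta},\Theta_{\beta\delta}$ accordingly (these are exactly the $\bs{\theta}^{o_i,o_j}$-type generators). Then I would enumerate the homotopy classes of Whitney triangles $\psi\in\pi_2(\Theta_{\beta\gamma},\Theta_{\gamma\delta},\y)$ with $\mu(\psi)=0$ contributing to $f_{\beta\gamma\delta}$: in this small configuration there are only finitely many small triangles, and the one whose output is the top generator $\Theta_{\beta\delta}$ is forced by the $\delta$-grading shift. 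The key point is that the unique such triangle has domain passing over \emph{both} basepoints $o_i$ and $o_j$ exactly once (this is the essential difference from the oriented case, where the analogous triangle misses the $z$-basepoint — cf. the discussion preceding Lemma \ref{sec:some-topol-facts}), so $n_{\mathbf{O}}(\psi)=2$ and in $CFL'$, where every basepoint carries the same variable $U$, the coefficient $U^{n_{\mathbf{O}}(\psi)}=U^{2}$ appears. Wait — I should be careful: I expect the count to give a single triangle over a bigon-plus-triangle configuration covering each of $o_i,o_j$ once, hence a total $U$-power of $\,U^{1+1}$ if the relevant domain is read as two small triangles glued, or $U^{1}$ if it is one; resolving which requires drawing the local picture in the standard model of Theorem \ref{sec:exist-heeg-triple} and counting multiplicities carefully. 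The statement $f_{\beta\gamma\delta}(\Theta_{\beta\gamma}\otimes\Theta_{\gamma\delta})=U\cdot\Theta_{\beta\delta}$ asserts the total exponent is $1$, so the local model must be arranged (choice of the disk $D$, choice of triangle classes) so that exactly one basepoint is covered with multiplicity one and the contribution of the companion bigon cancels or is absent.

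I would finish by assembling: multiply the trivial-piece contributions (each $U^0$ times the top generator) with the local contribution $U^{1}\Theta_{\beta\delta}^{\mathrm{loc}}$, using the connected-sum formula for triangle maps (which follows from the associativity of Lemma \ref{sec:glob-textsp-assoc-1} applied to the decomposition, or directly from the product structure of the moduli spaces on a connected sum of diagrams), to get $f_{\beta\gamma\delta}(\Theta_{\beta\gamma}\otimes\Theta_{\gamma\delta})=U\cdot\Theta_{\beta\delta}$ on homology; the $\mathfrak{s}$-admissibility hypotheses guarantee the counts are finite, and torsion of $\mathfrak{s}$ guarantees the $\delta$-grading is a $\mathbb{Z}$-grading so that "top grading generator" is unambiguous. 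The main obstacle I anticipate is precisely the bookkeeping of the $U$-exponent in the local torus model — showing that the unique $\mu=0$ triangle landing in the top piece covers the basepoints with \emph{total} multiplicity one rather than zero or two — since this is where the unoriented theory genuinely differs from Zemke's oriented computation (\cite[Lemma 6.6]{Zemke2016a}), and it is the reason the factor is $U$ and not $1$. Everything else is a routine transfer of the associativity and connected-sum machinery already set up in Sections 4 and 5.
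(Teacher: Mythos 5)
Your overall strategy---reduce to a standard local Heegaard triple, identify the unique Maslov-index-zero triangle with inputs $\Theta_{\beta\gamma},\Theta_{\gamma\delta}$ and output in the top grading, and read off its $U$-coefficient---is the same route the paper takes. The paper passes (WLOG) to a standard $\cal{T}_{\beta\gamma\delta}$ as in Figure~\ref{fig:bandmap10}, exhibits a single shaded triangle $\Delta\in\pi_2(\Theta_{\beta\gamma},\Theta_{\gamma\delta},\Theta_{\beta\delta})$ with $\mu(\Delta)=0$, uses grading and periodic-domain arguments in the style of \cite[Section 9]{Ozsvath2004c} to rule out any other contributing triangle, and then concludes from the explicit computation $n_{\mathbf{O}}(\Delta)=1$ that the coefficient is exactly $U$.

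The difficulty is that you do not actually carry out the computation on which the lemma rests. Your first claim is that the unique relevant triangle passes over \emph{both} $o_i$ and $o_j$, which would give $n_{\mathbf{O}}(\psi)=2$ and hence a coefficient of $U^{2}$; you then retract, allow that the exponent might be $2$ or $1$, say that resolving it requires drawing the local picture, and finally settle on $1$ on the grounds that this is what the lemma states. That is reasoning backwards from the conclusion rather than proving it. The whole content of the lemma is the multiplicity count $n_{\mathbf{O}}(\Delta)=1$, and the proof consists precisely in exhibiting the domain $\Delta$ in the standard model (this is what Figure~\ref{fig:bandmap10} records) and reading off that it covers the basepoints with total multiplicity one, not two. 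This is also exactly the step you yourself flag as the place where the unoriented theory diverges from Zemke's oriented computation, so leaving it unresolved is a genuine gap, not a routine omission.

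A smaller point of phrasing: the top $\delta$-grading piece of $HFL'(\cal{H}_{\beta\gamma},\mathfrak{s}_0)$ is \emph{two}-dimensional (Corollary~\ref{sec:textsp-struct-delta}); the one-dimensionality in Lemma~\ref{sec:dist-top-grad} refers to the modified complex $CFL'_{o_i,o_j}$, and what that lemma actually provides is a way to single out one of the two top generators using the $(o_i,o_j)$ data. You invoke this correctly when you say the $\Theta$'s are the $\bs{\theta}^{o_i,o_j}$-type generators, but the claim of a one-dimensional top grading in $HFL'$ itself is not quite right and could mislead a reader.
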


\begin{figure}
  \centering
  \includegraphics[scale=0.7]{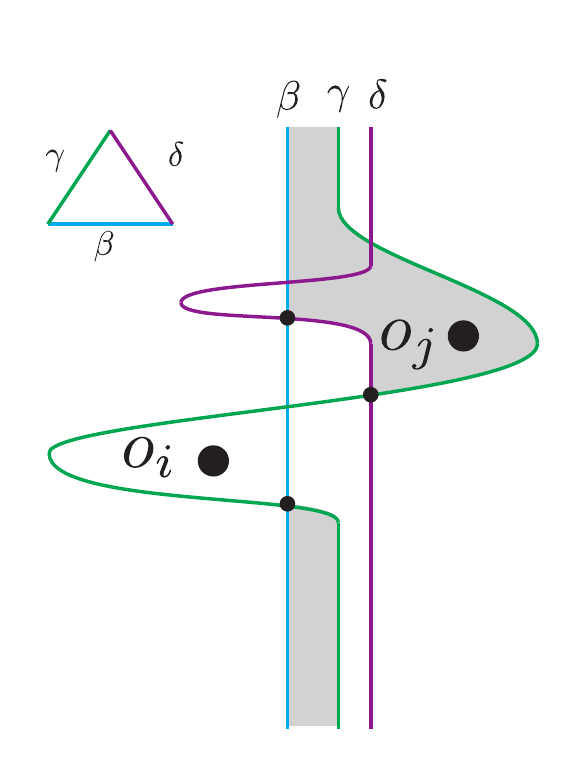}
  \caption{Local diagram for Heegaard triple $\cal{T}_{\beta\gamma\delta}$}
  \label{fig:bandmap10}
\end{figure}

\begin{proof}
  Without loss of generality, we assume the Heegaard triples
  $\cal{T}_{\beta\gamma\delta}$ is a standard Heegaard triple as shown in Figure
  \ref{fig:bandmap10}. The three small black dots are in the
  generators $\Theta_{\beta\gamma}$, $\Theta_{\beta\delta}$ and
  $\Theta_{\gamma\delta}$ respectively. The shaded area in Figure
  \ref{fig:bandmap10} represents a triangle $\Delta$ in
  $\pi_2(\Theta_{\beta\gamma},\Theta_{\gamma\delta},\Theta_{\beta\delta})$
  with Maslov index equal to zero. Similar to the arguments in
  \cite[Section 9]{Ozsvath2004c}, by checking the grading of those
  generators and periodic
  domains, there is no other triangle with two vertices
  $\Theta_{\beta\gamma},\Theta_{\gamma\delta}$ and Maslov index
  zero. As $n_{\mathbf{O}}(\Delta)=1$, we get a $U$ before
  $\Theta_{\beta\delta}$ in Equation \ref{eq:8}. 
\end{proof}

\begin{lem}\label{sec:relation-generators-1} Let $\sigma^{o_i,o_j}$ be the
  chain map induced by $B^{\beta,o_i,o_j}$ defined in Theorem \ref{sec:bipart-disor-link}. Then,
  there exists a  chain map induced by the
  inverse of $B^\beta$,
\[\tau^{o_i,o_j}:CFL'(Y,L_{\alpha\gamma},\mathfrak{s})\rightarrow
 CFL'(Y,L_{\alpha\beta},\mathfrak{s}),\]
such that

\begin{itemize}
\item the composition 
$\tau^{o_i,o_j}\circ\sigma^{o_i,o_j}$ is
chain homotopic to:
\[U:CFL'(Y,L_{\alpha\beta},\mathfrak{s})\rightarrow
  CFL'(Y,L_{\alpha\beta},\mathfrak{s})\] 
\item the composition 
$\sigma^{o_i,o_j}\circ\tau^{o_i,o_j}$ is
chain homotopic to:
\[U:CFL'(Y,L_{\alpha\gamma},\mathfrak{s})\rightarrow
  CFL'(Y,L_{\alpha\gamma},\mathfrak{s})\] 
\end{itemize}

\end{lem}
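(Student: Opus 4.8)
The plan is to build $\tau^{o_i,o_j}$ exactly as $\sigma^{o_i,o_j}$ was built, but for the inverse band move, and then to apply the associativity of the triangle maps (Lemma~\ref{sec:glob-textsp-assoc-1}) together with the computation of the $\beta\gamma\delta$-triangle map in Lemma~\ref{sec:relation-generators}. First I would fix a strongly $\mathfrak{s}$-admissible Heegaard quadruple $(\Sigma,\bs{\alpha},\bs{\beta},\bs{\gamma},\bs{\delta},\mathbf{O})$ whose induced triple $\cal{T}_{\alpha\beta\gamma}$ is subordinate to $B^{\beta,o_i,o_j}$ and whose induced triple $\cal{T}_{\alpha\gamma\delta}$ is subordinate to $(B^{\beta,o_i,o_j})^{-1}$, with $\bs{\delta}$ a small Hamiltonian translate of $\bs{\alpha}$; such a quadruple exists by the admissibility lemmas of Section~\ref{sec:holotri} and Lemma~\ref{sec:some-topol-facts-1}. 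I then define $\tau^{o_i,o_j} := f_{\alpha\gamma\delta}(-\otimes\bs{\theta}^{o_i,o_j}_{\gamma\delta};\mathfrak{s}_{\alpha\gamma\delta})$, where $\bs{\theta}^{o_i,o_j}_{\gamma\delta}$ is the distinguished top-grading generator supplied by Lemma~\ref{sec:dist-top-grad}/its proof for the triple $\cal{T}_{\alpha\gamma\delta}$. By Theorem~\ref{sec:bipart-disor-link} applied to the inverse band move, $\tau^{o_i,o_j}$ is a well-defined $\mathbb{Z}$-filtered chain map (up to filtered chain homotopy), independent of the choice of triple.

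Next I would compute the composite. Associativity (Lemma~\ref{sec:glob-textsp-assoc-1}), applied with the orbit $\mathfrak{S}$ of the $\text{Spin}^c$-structure over $X_{\alpha\beta\gamma\delta}$ determined by $\mathfrak{s}$ on $Y$ and $\mathfrak{s}_0$ on the $\#^g(S^1\times S^2)$ summands, gives
\[
\tau^{o_i,o_j}\circ\sigma^{o_i,o_j}(\x) \;\simeq\; f_{\alpha\beta\delta}\bigl(\x\otimes f_{\beta\gamma\delta}(\bs{\theta}^{o_i,o_j}_{\beta\gamma}\otimes\bs{\theta}^{o_i,o_j}_{\gamma\delta};\mathfrak{s}_{\beta\gamma\delta})\bigr)
\]
up to filtered chain homotopy, where on the left I have pushed the two triangle maps through associativity and discarded the terms whose $\text{Spin}^c$-restriction to $Y_{\beta\gamma}$ or $Y_{\gamma\delta}$ is non-torsion (these contribute nothing since the relevant complexes are supported on $\mathfrak{s}_0$). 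Now Lemma~\ref{sec:relation-generators} identifies the inner map: $f_{\beta\gamma\delta}(\Theta_{\beta\gamma}\otimes\Theta_{\gamma\delta}) = U\cdot\Theta_{\beta\delta}$, where $\Theta_{\beta\delta}$ is the top-grading generator for the triple $\cal{T}_{\beta\delta}$ which (since $\bs{\delta}$ is a small translate of $\bs{\beta}$ through no basepoints) is identified with the canonical top generator of the nearly-symmetric diagram $H_{\beta\delta}$. Substituting, the composite becomes $U\cdot f_{\alpha\beta\delta}(\x\otimes\Theta_{\beta\delta})$, and the triangle map with the canonical nearest-point generator of a small-translate diagram is (chain homotopic to) the identity on $CFL'(Y,L_{\alpha\beta},\mathfrak{s})$ — this is the standard nearest-point/continuation argument as in \cite{Ozsvath2004c}. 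Hence $\tau^{o_i,o_j}\circ\sigma^{o_i,o_j}\simeq U$. The second identity is entirely symmetric: run the same quadruple with the roles of $\sigma$ and $\tau$ interchanged (equivalently, re-order $\bs{\alpha},\bs{\gamma}$), apply associativity, again invoke Lemma~\ref{sec:relation-generators} and the nearest-point argument.

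The main obstacle I anticipate is not the formal manipulation but the bookkeeping of $\text{Spin}^c$-structures and the verification that all intermediate Heegaard triples in the quadruple are strongly $\mathfrak{s}$-admissible, so that Lemma~\ref{sec:glob-textsp-assoc-1} literally applies: one must check that the quadruple $(\Sigma,\bs{\alpha},\bs{\beta},\bs{\gamma},\bs{\delta},\mathbf{O})$ can be taken strongly $\mathfrak{S}$-admissible while keeping $\cal{T}_{\alpha\beta\gamma}$ standard (or at least admissible) and keeping the distinguished generators $\bs{\theta}^{o_i,o_j}$ on the correct side of the band — this is where the non-orientability of $F$, and the fact (Corollary~\ref{sec:textsp-struct-delta}) that $HFL'(\cal{H}_{\beta\gamma},\mathfrak{s}_0)$ has a two-dimensional top grading requiring the extra data in $\cal{A}$ to pin down $\bs{\theta}^{o_i,o_j}$, genuinely enters. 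A secondary subtlety is confirming that the $U$-power bookkeeping is consistent: the $U$ factor appearing in Lemma~\ref{sec:relation-generators} comes from $n_{\mathbf{O}}(\Delta)=1$ for the small triangle in $\cal{T}_{\beta\gamma\delta}$, and one must make sure no additional $U$-powers are introduced or lost when passing from $CFBL'$ to $CFL'$ (setting $U_k=U$ for $k\ne i,j$ and then $U_i,U_j\mapsto U$), so that the final answer is exactly $U^1$ and not $U^0$ or $U^2$.
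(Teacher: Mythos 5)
Your proposal follows the paper's proof essentially verbatim: define $\tau^{o_i,o_j}$ as the triangle map for the inverse band move, apply the associativity of Lemma~\ref{sec:glob-textsp-assoc-1}, invoke Lemma~\ref{sec:relation-generators} to extract the $U\cdot\Theta_{\beta\delta}$ factor, and conclude via the nearest-point/continuation argument. One inconsistency you should clean up (which is inherited from a typo in the paper's text preceding Lemma~\ref{sec:relation-generators}): you first write that $\bs{\delta}$ is a small Hamiltonian translate of $\bs{\alpha}$, but the final step requires $H_{\beta\delta}$ to be a nearly-symmetric diagram, which forces $\bs{\delta}$ to be a small translate of $\bs{\beta}$, as you correctly assume later in the computation and as the paper's own proof states.
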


\begin{proof}

Consider a strongly $\mathfrak{s}$-admissible Heegaard quadruple
constructed from the composition $(B^{\beta,o_i,o_j})^{-1}\circ
B^{\beta,o_i,o_j}$ of cobordisms as in Lemma
\ref{sec:relation-generators}. The composition is cobordism from
$L_{\alpha\gamma}$ to itself.
The map $\sigma^{o_i,o_j}$ is defined to be
$f_{\alpha\beta\gamma}(\cdot\otimes \Theta_{\beta\gamma})$, where
$\Theta_{\beta\gamma}$ is the generator defined in Lemma
\ref{sec:dist-top-grad}. Similarly, we define $\tau^{o_i,o_j}$ to be
$f_{\alpha\gamma\delta}(\cdot\otimes \Theta_{\gamma\delta})$, where
$\Theta_{\beta\delta}$ is also the generator defined in Lemma
\ref{sec:dist-top-grad}.

Then the composition, 

\begin{align*}
  \tau^{o_i,o_j}\circ\sigma^{o_i,o_j}&=f_{\alpha\gamma\delta}(f_{\alpha\beta\gamma}(\cdot\otimes
  \Theta_{\beta\gamma})\otimes \Theta_{\gamma\delta})\\
&=f_{\alpha\beta\delta}(\cdot \otimes
  (f_{\beta\gamma\delta}(\Theta_{\beta\gamma}\otimes\Theta_{\gamma\delta})))\text{ 
  (by Lemma \ref{sec:glob-textsp-assoc-1}})\\ 
&=f_{\alpha\beta\delta}(\cdot \otimes U\cdot \Theta_{\beta\delta}
  )\text{ (by Lemma \ref{sec:relation-generators})}\\
&= Uf_{\alpha\beta\delta}(\cdot\otimes\Theta_{\beta\delta}) 
\end{align*}

From the construction of the quadruple, we know that the curves $\bs{\delta}$ are just small Hamiltonian isotopy of
$\mathbf{\beta}$ without crossing any basepoints.
As the small triangle map $f_{\alpha\beta\delta}(\cdot\otimes\Theta_{\beta\delta})$ is chain homotopic to the nearest point map, the composition $\tau^{o_i,o_j}\circ\sigma^{o_i,o_j}$ is
actually $\mathbb{Z}$-filtered chain homotopic to the map $U$.
\end{proof}

\begin{rem}
  
If the band
$B^{\beta,o_i,o_j}$ is of type II, we can construct the other composition of cobordism $(B^{\beta,o_{i'},o_{j}'})^{-1}\circ
B^{\beta,o_i,o_j}$, where the basepoints $o_i$ and $o_{i'}$ are connected by a
component of $L_\beta$, the basepoints $o_i$ and $o_{i'}$  are connected by another
component of $L_\beta$. See Figure \ref{fig:rogandcob-03} for an
example of the above composition of cobordisms with surface
$(F,\partial F)$ orientable. Then this cobordism induces a map: 
\[
  \tau^{o_{i'},o_{j'}}\circ\sigma^{o_i,o_j}=f_{\alpha\beta\delta}(\cdot\otimes\Theta'_{\beta\delta}).\]
Here the generator $\Theta'_{\beta\delta}$, which has the same
$\delta$-grading as 
$U\Theta_{\beta\delta}$, is shown in Figure \ref{fig:bandmap-12}.

\end{rem}

\begin{figure}
  \centering
  \includegraphics[scale=0.5]{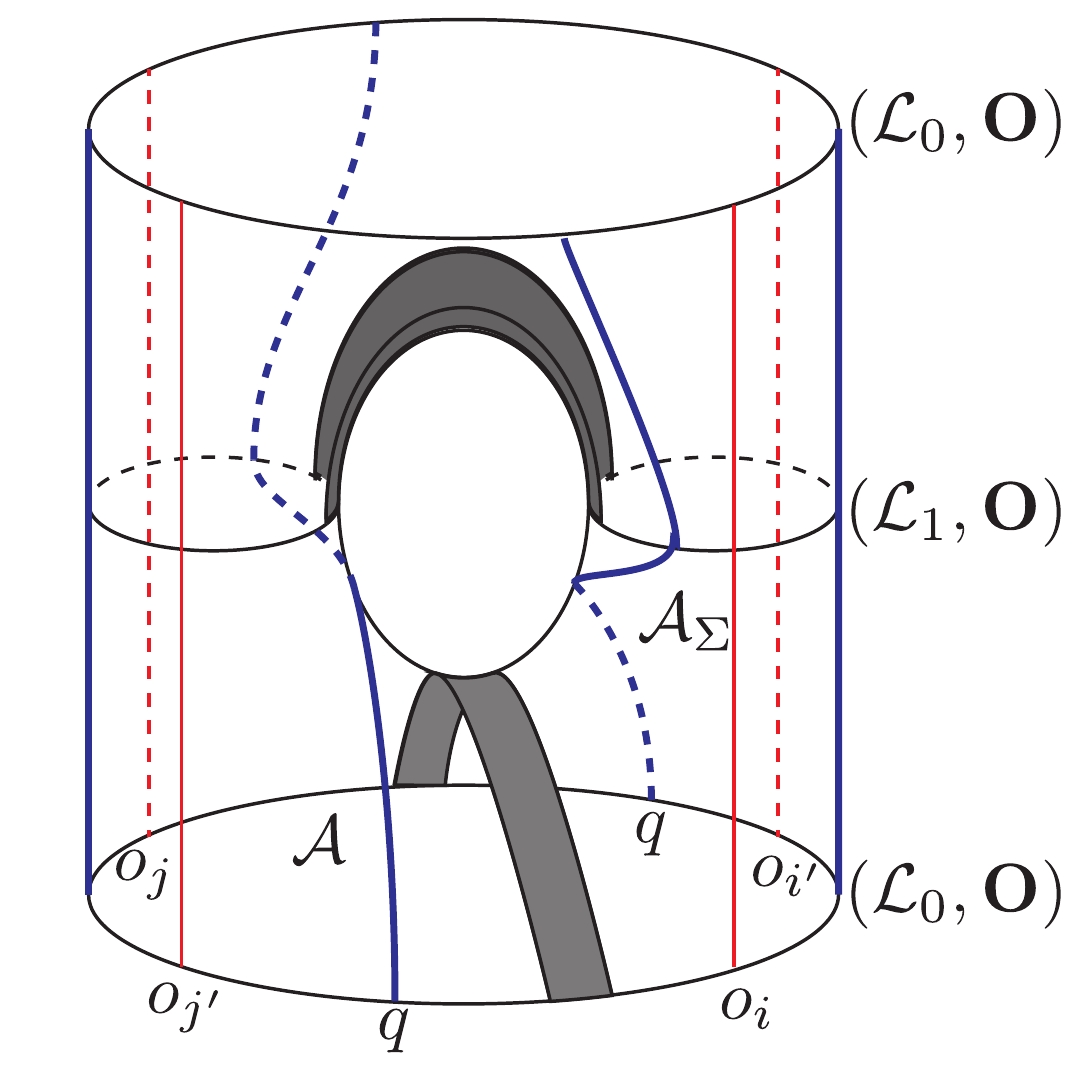}
  \caption{The composition of cobordisms $(B^{\beta,o_{i'},o_{j}'})^{-1}\circ
B^{\beta,o_i,o_j}$  }
  \label{fig:rogandcob-03}

\end{figure}
\begin{figure}
  \centering
  \includegraphics[scale=0.7]{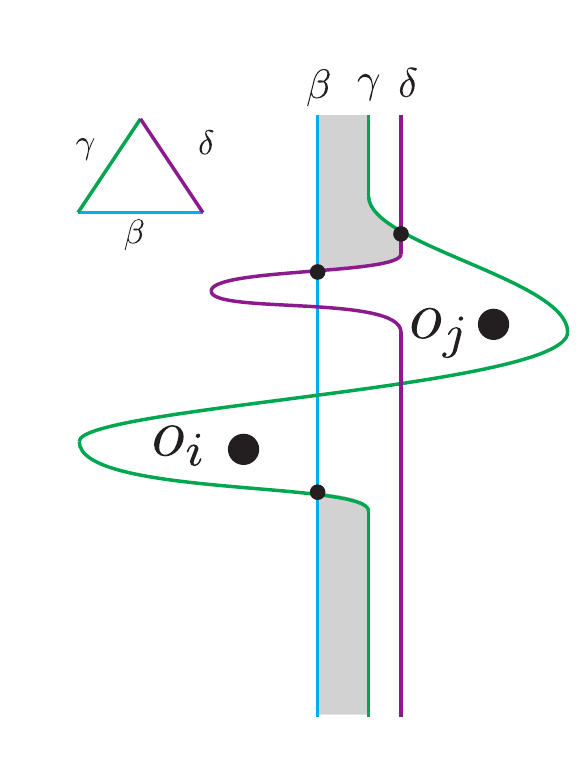}
  \caption{The composition of cobordisms $(B^{\beta,o_{i'},o_{j}'})^{-1}\circ
B^{\beta,o_i,o_j}$  }
  \label{fig:bandmap-12}
\end{figure}

\subsection{A comparison with  Ozsv\'{a}th, Stipsicz
and Szab\'{o}'s definition of band move maps}\label{sec:comp-with-ozsv}

\begin{figure}
  \centering
  \includegraphics[scale=0.7]{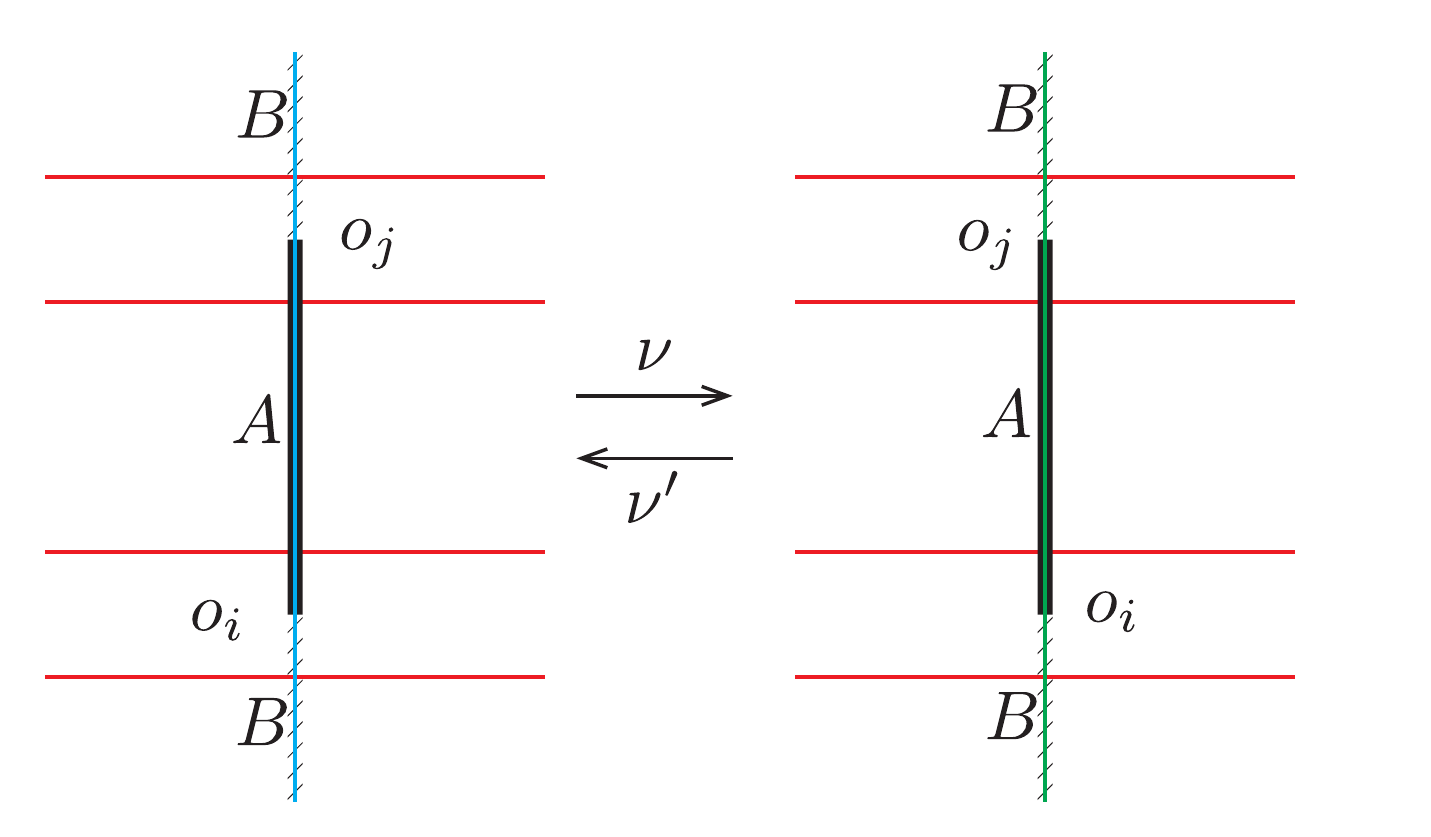}
  \caption{The standard grid move for the band move
    $F_{B^{\beta,o_i,o_j}}$}\label{Fig:gridtriangle1} 
\end{figure}

\begin{figure}
  \centering
  \includegraphics[scale=0.7]{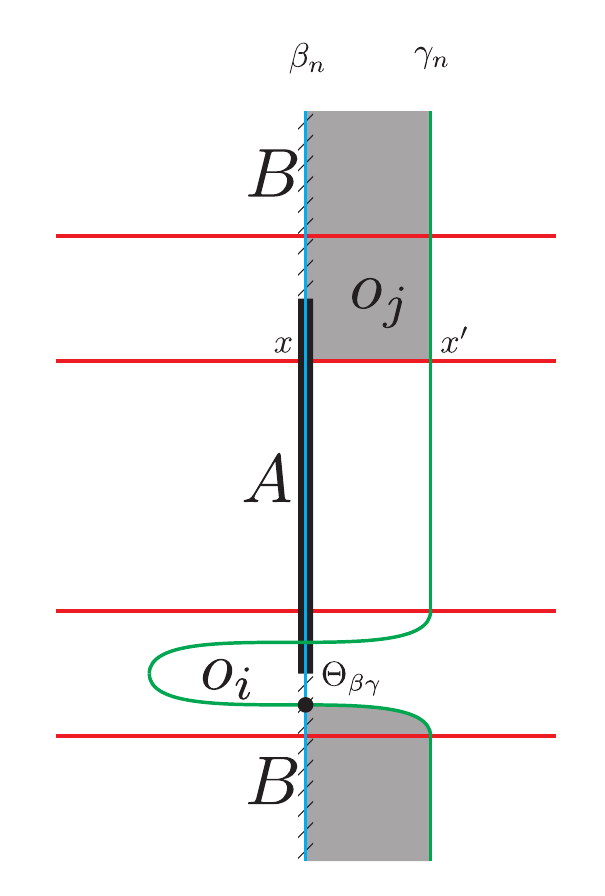}
  \caption{The standard Heegaard triple for the band move
    $F_{B^{\beta,o_i,o_j}}$}\label{Fig:gridtriangle2} 
\end{figure}

In this subsection, we compare our band move maps
$F_{B^{\beta,o_i,o_j}}$ with the band move maps defined by  Ozsv\'{a}th, Stipsicz
and Szab\'{o} in \cite{Ozsvath2015}.

\begin{defn}
  We say that a band $B$ for links (or bipartite links, bipartite
  disoriented links resp.) is an \textit{\textbf{oriented band}}, if the number of link
  components changes after the band move $B$. Otherwise, we say that
  $B$ is an \textit{\textbf{unoriented band}}.
\end{defn}

Recall that in  \cite{Ozsvath2015}, for a band move from a link in
$S^3$ to a link in $S^3$, they construct a standard grid move as
shown in Figure \ref{Fig:gridtriangle1}. The band move is represented by switching the
markings in a grid diagram. Furthermore, we can require that switching
corresponds to the unoriented resolution of a positive crossing.

To establish the relation between the standard grid move above and the
Heegaard triple subordinate to a band move, we need the following
lemma.

\begin{lem}
  Let $B^{\beta,o_i,o_j}$ be a band move from a bipartite disoriented
  link $(\cal{L},\mathbf{O})$ to $(\cal{L}',\mathbf{O})$. After a
  sequence of quasi-stabilizations and Heegaard moves without changing
  or crossing the basepoints, we can find a standard Heegaard triple
  $\cal{T}_{\alpha\beta\gamma}$ such that:
\begin{itemize}
\item the induced Heegaard diagram
  $\cal{H}_{\alpha\beta},\cal{H}_{\alpha\gamma}$ are grid diagrams.
\item the grid moves switching the markings $o_i$ and $o_j$
  corresponds to a resolution of a positive crossing, as shown in
  Figure \ref{Fig:gridtriangle1}.
\end{itemize}
\end{lem}

\begin{proof}
Recall that in Theorem \ref{sec:exist-heeg-triple}, we can require the
moves $\cal{H}_{\alpha\beta}$ to $\cal{H}_{\alpha\gamma}$ be as shown in
the bottom right of Figure \ref{fig:st1}. The remaining part of the proof
follows from Lemma \ref{sec:some-topol-facts-1}.
\end{proof}

We call the grid move in Figure \ref{Fig:gridtriangle1} the \textit{\textbf{standard grid move}}
representing $B^{\beta,o_i,o_j}$.
Recall that  in  \cite{Ozsvath2015}, the band move maps are defined as
follows. Let $A$ and $B$ be the subset of the curve $\beta_n$ as shown
in Figure \ref{Fig:gridtriangle1}. For a grid state $\x\in
\mathbb{T}_{\alpha}\cap\mathbb{T}_{\beta}$, the map $\nu(x)$ is 
\[\nu(x)=\begin{cases}
U\cdot \x &\text{if }\x\cap A\neq \emptyset\\
\x &\text{if }\x\cap A = \emptyset.
\end{cases}\]

For our definition, the band map $F_{B^{\beta,o_i,o_j}}(\cdot)$ is
defined to be the triangle map 
$F_{\alpha\beta\gamma}(\cdot \otimes \Theta_{\beta\gamma})$, where
$\Theta_{\beta\gamma}$ is the top grading generator determined by
$B^{\beta,o_i,o_j}$ as shown in Figure \ref{Fig:gridtriangle2}. For a intersection
$x\in\beta_n\cap\bs{\alpha}$, we let $x'$ be the closet intersection
to $x$ in $\gamma_n\cap\bs{\alpha}$. We have the following observation:
\begin{itemize}
\item if $x\in A$, the small triangle with endpoints
  $x,x',\Theta_{\beta\gamma}$ intersects the basepoint set $\mathbf{O}$
  once;
\item  if $x\notin A$, the small triangle with endpoints
  $x,x',\Theta_{\beta\gamma}$ does not intersect any basepoints.
\end{itemize}
 
Based on these observations, we have the following proposition.

\begin{prop}\label{thm:osseq}
  For a standard grid move representing an unoriented band $B=B^{\beta,o_i,o_j}$, we have
  the following:
\begin{itemize}
\item The map $\nu$ defined in \cite{Ozsvath2015} agrees with the
  triangle map $F_{B}$.
\item  The map $\nu'$ defined in \cite{Ozsvath2015} agrees with the
  triangle map $F_{B^{-1}}$. 
\end{itemize}
\end{prop}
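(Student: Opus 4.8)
The plan is to unwind both definitions at the chain level on a single standard grid triple and check they produce literally the same formula, state by state. The two maps to compare are: (i) $\nu$ from \cite{Ozsvath2015}, which on a grid state $\x$ outputs $U\cdot\x$ when $\x$ meets the arc $A\subset\beta_n$ and $\x$ otherwise; and (ii) the triangle map $F_B=F_{\alpha\beta\gamma}(\,\cdot\,\otimes\Theta_{\beta\gamma})$ specialized to the standard Heegaard triple of Figure~\ref{Fig:gridtriangle2}. First I would fix, via the preceding lemma, a standard Heegaard triple subordinate to $B=B^{\beta,o_i,o_j}$ whose diagrams $\mathcal H_{\alpha\beta}$ and $\mathcal H_{\alpha\gamma}$ are grid diagrams and whose local picture is exactly the standard grid move of Figure~\ref{Fig:gridtriangle1}; since $F_B$ is independent of the choice of subordinate Heegaard triple by Theorem~\ref{sec:bipart-disor-link}, it suffices to verify the identity on this one model.

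Next I would analyze the holomorphic triangles contributing to $F_{\alpha\beta\gamma}(\x\otimes\Theta_{\beta\gamma})$. Because all $\gamma_i$ ($i\le n-1$) are small Hamiltonian translates of $\beta_i$ and $\gamma_n$ is the standard twist of $\beta_n$ across the two basepoints $o_i,o_j$, the index-zero triangle count reduces, as in \cite[Section 9]{Ozsvath2004c} and as already used in the proof of Lemma~\ref{sec:relation-generators}, to ``small'' triangles: for each generator $\x$, the output is the nearest-point image $\x'$ (replace the coordinate of $\x$ on $\beta_n$ by the closest point $x'$ on $\gamma_n$), weighted by $U^{n_{\mathbf O}(\Delta)}$ where $\Delta$ is the unique small triangle with corners $x,x',\Theta_{\beta\gamma}$. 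A grading/area check—counting intersections of periodic domains with $\mathbf O$ exactly as in Lemma~\ref{sec:relation-generators}—rules out larger triangles with the same two corners. Then I invoke the two bulleted observations stated just before Proposition~\ref{thm:osseq}: the small triangle at $x$ meets $\mathbf O$ once iff $x\in A$ and meets no basepoint iff $x\notin A$. Hence $F_{\alpha\beta\gamma}(\x\otimes\Theta_{\beta\gamma})=U\cdot\x'$ if $\x\cap A\neq\emptyset$ and $\x'$ otherwise—which, after the identification of $\mathbb T_\alpha\cap\mathbb T_\beta$ with $\mathbb T_\alpha\cap\mathbb T_\gamma$ via the nearest-point map (the standard grid-to-grid identification used in \cite{Ozsvath2015}), is precisely $\nu(\x)$.

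For the second bullet, I would run the same argument with the roles of $\beta_n$ and $\gamma_n$ interchanged: $B^{-1}$ is subordinate to the triple $(\Sigma,\bs\alpha,\bs\gamma,\bs\delta,\mathbf O)$ with $\bs\delta$ a small translate of $\bs\alpha$ (as set up in Lemma~\ref{sec:relation-generators}), and the generator $\Theta_{\gamma\delta}$ plays the role of $\Theta_{\beta\gamma}$. The arc $A$ for the reverse move is the complementary arc $B\subset\beta_n$ (equivalently $\gamma_n$) of Figure~\ref{Fig:gridtriangle1}, and the same small-triangle count shows $F_{B^{-1}}(\x)=U\cdot\x'$ iff $\x\cap B\neq\emptyset$, i.e.\ it equals $\nu'$. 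The main obstacle is the bookkeeping in the first paragraph: pinning down the local standard triple so that (a) its $\alpha\beta$ and $\alpha\gamma$ diagrams really are grid diagrams with the ``switching = resolution of a positive crossing'' normalization of \cite{Ozsvath2015}, and (b) the identification of generators used implicitly by $\nu$ (which silently uses the same underlying state set for the two grids) matches the nearest-point map built into the triangle count. Once that normalization is fixed, the rest is the routine small-triangle analysis already carried out for Lemmas~\ref{sec:relation-generators} and~\ref{sec:relation-generators-1}.
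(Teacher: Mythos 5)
Your proposal takes a genuinely different route from the paper's proof. You attempt a direct holomorphic-triangle count on the standard grid triple, arguing that only the small triangles contribute to $F_{\alpha\beta\gamma}(\,\cdot\,\otimes\Theta_{\beta\gamma})$ and that the $U$-weight of each small triangle matches the arc condition defining $\nu$. The paper instead factors through Hamiltonian continuation maps: it first identifies $\nu$ with the continuation map induced by the isotopy carrying $\beta_n$ to $\gamma_n$ across $o_i,o_j$ (citing \cite[Theorem 6.6]{Ozsvath2010}, with the $U$-powers now tracking basepoint crossings), and then invokes the general equivalence between continuation maps and triangle maps from \cite[Proposition 11.4]{Lipshitz2006}. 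Both routes ultimately use the two bulleted observations preceding the proposition, so the bookkeeping is shared.

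The gap in your version is the small-triangle uniqueness claim. You assert that ``a grading/area check---counting intersections of periodic domains with $\mathbf O$ exactly as in Lemma~\ref{sec:relation-generators}---rules out larger triangles with the same two corners,'' but Lemma~\ref{sec:relation-generators} treats a single pair of top-grading generators on a standard genus-zero triple, not a generic pair $(\x,\Theta_{\beta\gamma})$ with $\x$ ranging over all grid states. Because $\gamma_n$ is a \emph{large} Hamiltonian translate of $\beta_n$ (it crosses two basepoints and wraps around an annulus in the grid torus), the standard small-triangle argument of \cite[Section 9]{Ozsvath2004c} for small perturbations does not apply verbatim; establishing that no Maslov-index-zero triangle other than the nearest-point one contributes for arbitrary $\x$ requires its own analysis. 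The continuation-map route avoids this by delegating exactly that analysis to the cited literature, which is why the paper prefers it. If you want to keep the direct approach, you would need to either carry out the combinatorial uniqueness argument for grid triangles explicitly or pass through the continuation map as an intermediary.
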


\begin{proof}
  As $\gamma_n$ is a small isotopy of $\beta_n$ crossing basepoints
  $o_i,o_j$, we can identify
the map $\nu$ with the continuation map induced by the Hamiltonian isotopy.
This follows from the proof of  \cite[Theorem 6.6]{Ozsvath2010}. The
only difference is that, in our case, the continuation map may cross
the basepoints set $\mathbf{O}$, and we keep track of that with
variable $U$.
On the other hand, the equivalence between the continuation map and
triangle maps in \cite[Proposition 11.4]{Lipshitz2006} also works in
our case. 

Therefore, based on these two results and the observations above, we
conclude that the map $\nu$ is filtered chain homotopic to the
triangle map $F_{B}$. The equivalence between $\nu'$ and $F_{B^{-1}}$ is similar.  
\end{proof}

\begin{rem}
  In a similar vein, we can also identify the oriented band move maps
  $\sigma$ and $\mu$ in \cite{Ozsvath2015} with $F_{B}$ and
  $F_{B^{-1}}$. Furthermore, the formulas
  $\nu'\circ\nu=U,\nu\circ\nu'=U$ in \cite[Proposition
  5.7]{Ozsvath2015} 
(and similarly, the
formulas $\sigma\circ\mu =U$ and $\sigma\circ\mu =U$ in
\cite[Proposition 5.1]{Ozsvath2015}) agree with the formulas in Lemma 
\ref{sec:relation-generators-1}.
\end{rem}

\subsection{A comparison with Manolescu's definition of unoriented band move maps}
Recall that, some band maps were constructed in the proofs of the unoriented skein
exact triangle in \cite{Manolescu2007a} using special Heegaard
diagrams.
In the
construction, the band move is actually a type-II $\beta$-band
move, and the Heegaard triple $\cal{T}_{\alpha\beta\gamma}$ is
standard. The Heegaard triple shown in \cite[Figure 6]{Manolescu2007a}
matches the Figure \ref{fig:simpletype2}.

Let $\Theta,\Theta'$ be the two top $\delta$-grading generator of
$\mathbb{T}_{\alpha}\cap\mathbb{T}_{\beta}$. In \cite{Manolescu2007a}, the map
$f_0:\widehat{CFL}(L_0)\rightarrow \widehat{CFL}(L_1)$ 
is defined by the triangle map:
\begin{equation}
f_0(\cdot)=f_{\alpha\beta\gamma}(\cdot
\otimes(\Theta+\Theta')).
\end{equation}

 The map $f_0$ induces a map $f_0'$ on the unoriented link Floer chain
complex. In fact, $f_0':CFL'(L_0)\rightarrow CFL'(L_1)$ is given by:
\begin{equation}\label{eq:9}
f_0'(\cdot)=f_{\alpha\beta\gamma}(\cdot
\otimes(\Theta+\Theta')).
\end{equation}

\begin{prop}
Suppose a type II $\beta$-band move $B^\beta$ is next to either
the pair of basepoints $(o_i,o_j)$ or the pair of basepoints
$(o_{i'},o_{j'})$. For the band map defined in \ref{eq:9},  we have:
\begin{equation*}
f_0'= F_{B^{\beta,o_i,o_j}}+F_{B^{\beta,o_{i'},o_{j'}}}.
\end{equation*}Here
$B^{\beta,o_i,o_j}$ and $B^{\beta,o_{i'},o_{j'}}$ are the two band
moves for the bipartite disoriented links lifted from $B^\beta$. 
\end{prop}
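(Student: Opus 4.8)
The plan is to reduce the statement to the $\mathbb{F}_2$-bilinearity of the holomorphic triangle count together with an explicit identification of the two top-grading generators that enter the two definitions. First I would fix a single standard Heegaard triple $\cal{T}_{\alpha\beta\gamma}=(\Sigma,\bs{\alpha},\bs{\beta},\bs{\gamma},\mathbf{O})$ subordinate to the type~II band move $B^\beta$, matching the local picture of Figure~\ref{fig:simpletype2}. The point is that, for a type~II band, the band can be taken next to either pair of basepoints $(o_i,o_j)$ or $(o_{i'},o_{j'})$ without altering the triple (cf.\ the discussion preceding Lemma~\ref{sec:dist-top-grad}), so both lifts $B^{\beta,o_i,o_j}$ and $B^{\beta,o_{i'},o_{j'}}$ of $B^\beta$ may be taken subordinate to this one $\cal{T}_{\alpha\beta\gamma}$; any other admissible choice is connected to it by the Heegaard moves of Lemma~\ref{sec:some-topol-facts-1}, under which the triangle map of Lemma~\ref{lem:admtri}, and hence the band maps of Theorem~\ref{sec:bipart-disor-link}, are unchanged at the level of $HFL'$. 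So it suffices to compare all three maps on this single triple.

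Next I would identify the relevant generators of $\mathbb{T}_\beta\cap\mathbb{T}_\gamma$. Write $\Theta$ and $\Theta'$ for the two top $\delta$-grading generators; by Corollary~\ref{sec:textsp-struct-delta} both lie in $\ker\partial$ on $CFL'(\cal{H}_{\beta\gamma},\mathfrak{s}_0)$, they differ only in the coordinate lying in $\beta_n\cap\gamma_n$, and $\beta_n,\gamma_n$ cobound exactly two bigons, which between them carry the basepoints $o_i,o_j,o_{i'},o_{j'}$, with (after relabelling) $\{o_i,o_j\}$ in one bigon and $\{o_{i'},o_{j'}\}$ in the other. By the proof of Lemma~\ref{sec:dist-top-grad}, the generator $\bs{\theta}^{o_i,o_j}$ is characterised among $\{\Theta,\Theta'\}$ by the property that in $CFL'_{o_i,o_j}(\cal{H}_{\beta\gamma},\mathfrak{s}_0)$ one has $\partial\bs{\theta}^{o_i,o_j}=(U_i+U_j)\,(\bs{\theta}^{o_i,o_j})'$; equivalently, the connecting bigon from $\bs{\theta}^{o_i,o_j}$ to the other top generator passes through exactly the two basepoints $o_i,o_j$. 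Reading this off Figure~\ref{fig:simpletype2} (and the standard local models of Figure~\ref{fig:st-10}) gives $\bs{\theta}^{o_i,o_j}=\Theta$, and the same argument applied to the other bigon gives $\bs{\theta}^{o_{i'},o_{j'}}=\Theta'$; in particular $\{\bs{\theta}^{o_i,o_j},\bs{\theta}^{o_{i'},o_{j'}}\}=\{\Theta,\Theta'\}$, the two being distinct because the two pairs of basepoints sit in distinct bigons.

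Finally I would conclude by linearity. The triangle map $f_{\alpha\beta\gamma}(\,\cdot\,\otimes(-);\mathfrak{s}_{\alpha\beta\gamma})$ of Lemma~\ref{lem:admtri} is $\mathbb{F}_2$-linear in its second argument, so by \eqref{eq:9} and the definition of the band maps in Theorem~\ref{sec:bipart-disor-link},
\[
f_0'(\,\cdot\,)=f_{\alpha\beta\gamma}(\,\cdot\,\otimes(\Theta+\Theta'))
=f_{\alpha\beta\gamma}(\,\cdot\,\otimes\Theta)+f_{\alpha\beta\gamma}(\,\cdot\,\otimes\Theta')
=F_{B^{\beta,o_i,o_j}}+F_{B^{\beta,o_{i'},o_{j'}}}.
\]
The main obstacle is the middle step: one must check carefully that the two lifts of a type~II band really can be realized on one common standard triple, and that the two choices of adjacent basepoint pair single out the two \emph{distinct} top-grading generators via the characterisation of Lemma~\ref{sec:dist-top-grad} (i.e.\ that the two pairs lie in different bigons of $\beta_n\cup\gamma_n$). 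Once this generator bookkeeping is pinned down, the asserted identity is immediate from bilinearity of the triangle count.
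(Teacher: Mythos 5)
Your overall strategy matches the paper's: the paper's proof is simply ``Immediate from the definition,'' and your unpacking via $\mathbb{F}_2$-linearity of the triangle map together with the identification $\{\bs{\theta}^{o_i,o_j},\bs{\theta}^{o_{i'},o_{j'}}\}=\{\Theta,\Theta'\}$ is exactly the content being invoked. However, your explanatory step for that identification is wrong. You assert that the two bigons cobounded by $\beta_n$ and $\gamma_n$ ``between them carry the basepoints $o_i,o_j,o_{i'},o_{j'}$, with $\{o_i,o_j\}$ in one bigon and $\{o_{i'},o_{j'}\}$ in the other.'' This is incompatible with the fact (Corollary \ref{sec:textsp-struct-delta}, which you yourself cite) that $\Theta$ and $\Theta'$ are \emph{both} top $\delta$-grading generators: an index-one bigon $\phi$ from $\Theta$ to $\Theta'$ satisfies $\delta(\Theta,\Theta')=\mu(\phi)-n_{\mathbf{O}}(\phi)=1-n_{\mathbf{O}}(\phi)$, so if $n_{\mathbf{O}}(\phi)=2$ the two generators sit in adjacent $\delta$-gradings rather than the same one. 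Moreover, if one bigon carried two basepoints going $\Theta\to\Theta'$ and the other carried two going $\Theta'\to\Theta$, the two computed relative gradings $\pm1$ would contradict each other. Each bigon therefore carries exactly one basepoint, and the distinguishing mechanism is the one from Lemma \ref{sec:dist-top-grad}: in the modified complex $CFL'_{o_i,o_j}$ the differential becomes nontrivial on precisely one of the two generators, singling it out as $\bs{\theta}^{o_i,o_j}$. The upshot (that the two lifts select the two distinct top-grading generators, so that $\bs{\theta}^{o_i,o_j}+\bs{\theta}^{o_{i'},o_{j'}}=\Theta+\Theta'$) is what the paper takes as given, but your justification of it as written is incorrect and should be replaced by the $\partial$-criterion from Lemma \ref{sec:dist-top-grad} applied to the correct local picture in which each bigon covers a single basepoint.
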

\begin{proof}
Immediate from the definition.
\end{proof}

\begin{rem}
Notice that, the definition of $f_0'$ does not depend on the
one-manifold $\cal{A}$ on the bipartite disoriented link
cobordism. Therefore, the map $f_0$ is a well-defined map for band
moves $B^\beta$ between bipartite links. 
\end{rem}

\subsection{An example: a bipartite link cobordism from trefoil to unknot}

Let $\cal T$ shown in Figure \ref{fig:tretriple} be a Heegaard triple subordinate a type I $\beta$-band move.
The Heegaard diagram $H_{\alpha\beta}$ is compatible with the trefoil $K_0\subset S^3$;
the diagram $H_{\beta\gamma}$ is compatible with a homologically even bipartite link $K_1\subset S^1\times S^2$; the
diagram $H_{\alpha\gamma}$ is compatible with the unknot $\mathbb{U}$ in $S^3$.

Let $a,a',b,b',c,c'$ be the intersections shown in
Figure \ref{fig:trefoilgenerator-09}, which is a local diagram of Figure
\ref{fig:tretriple}. 

\begin{figure}
  \centering
  \includegraphics[scale=0.7]{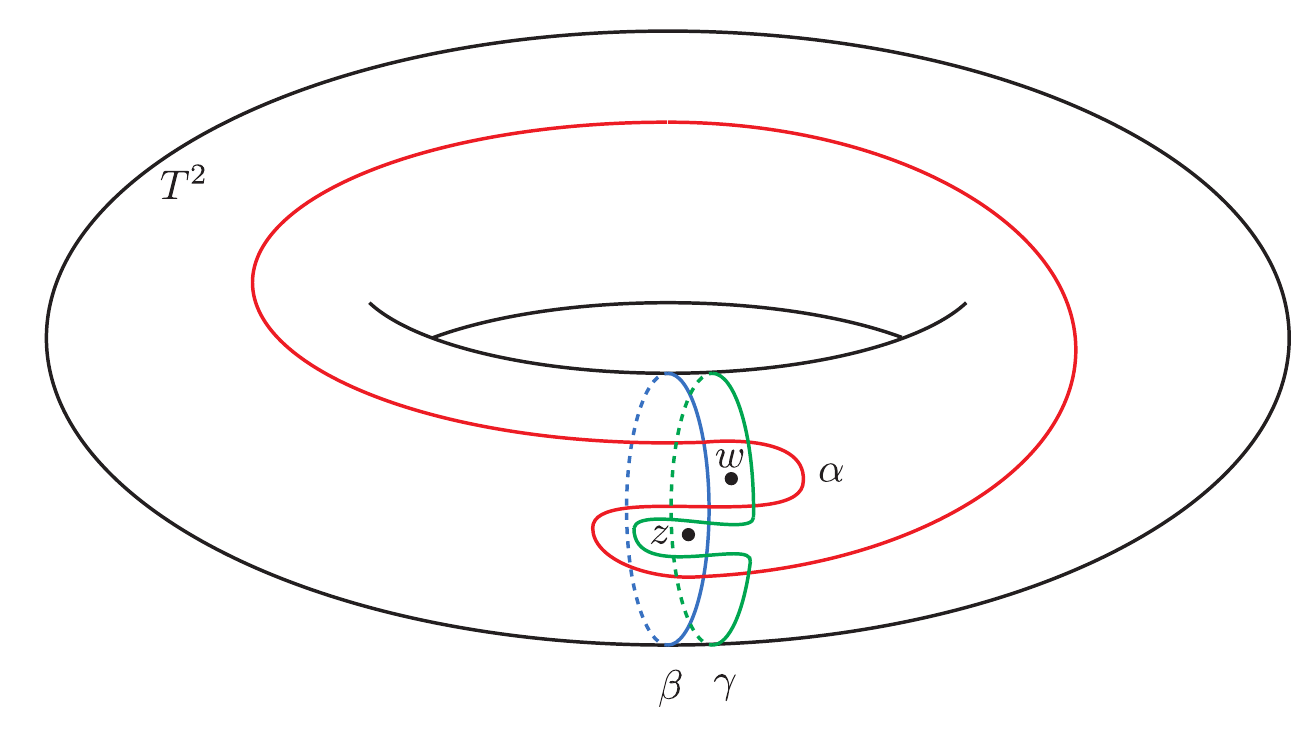}
  \caption{A Heegaard triple subordinate to a saddle from trefoil to unknot }
  \label{fig:tretriple}
\end{figure}

\begin{figure}
  \centering
  \includegraphics[scale=0.7]{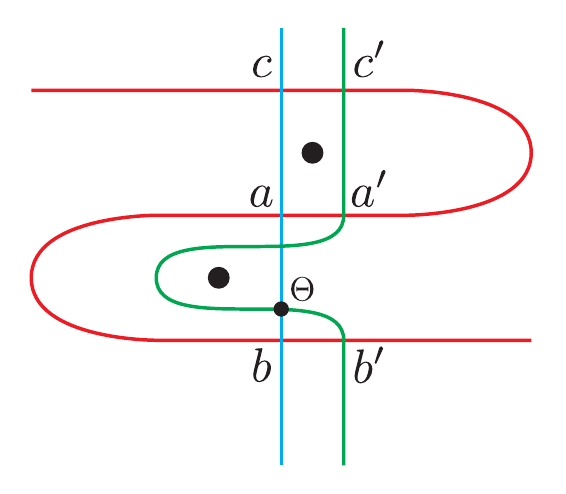}
  \caption{Local diagram for a Heegaard triple}
  \label{fig:trefoilgenerator-09}
\end{figure}

For the chain complex $CFL'(H_{\alpha\beta})$, we have:
\[\partial a = U(b+c)\text{ and } \partial b =\partial c=0.\]
For the chain complex $CFL'(H_{\alpha\gamma})$, we have:
\[\partial a' = (b'+c')\text{ and } \partial b =\partial c=0.\]
It is easy to check that, the cobordism map $\sigma$ acting on $a,b,c$
are given by the following:
\[\sigma(a)=U a',\sigma(b)=b'\text{ and }\sigma(c)=c'.\] Furthermore,
by computation, the map $\tau:CFL'(\mathbb{U}) \to
CFL'(K_0)$ defined in  Lemma \ref{sec:relation-generators-1} sends $a'$ to $a$, $b'$ to
$Ub$ and $c'$ to $Uc$. Therefore, the composition $\sigma\circ\tau$
(or $\tau\circ\sigma$ resp.) is exactly the map $U$.


\section{Quasi-stabilizations and Disk-stabilizations}\label{sec:quasi-stabilization}

In this subsection, we recall some facts about quasi-stabilization from
\cite{Manolescu2010}, \cite{Zemke2016} and \cite{Zemke2016a}.
For convenience, we focus on $\beta$-quasi-stabilizations. Similar
results hold for
$\alpha$-quasi-stabilizations.

\subsection{Topological facts about quasi-stabilizations}\label{sec:topol-facts-about}

Recall that in Section \ref{sec:categ-3:-bipart}, a quasi-stabilization $\mathfrak{W}_{qs}$ from bipartite disoriented link
$(\cal{L}_0,\mathbf{O}_0)$ to a bipartite disoriented link
$(\cal{L}_1,\mathbf{O}_1)$ is an elementary bipartite link cobordism
such that:
\begin{itemize}
\item The bipartite disoriented links $(\cal{L}_0,\mathbf{O}_0)$ and
  $(\cal{L}_1,\mathbf{O}_1)$ determine the same link $L$ in $Y$.
\item The dividing set $(\mathbf{p}_1,\mathbf{q}_1)$ of $\cal{L}_1$ is
  the union $(\mathbf{p}_0\cup p_s,\mathbf{q}\cup q_s)$, where $
  (\mathbf{p}_0,\mathbf{q}_0)$ is the dividing set of $\cal{L}_0$. 
\item The basepoints set $\mathbf{O}_1$ is the union $\mathbf{O}_0\cup \{o,o'\}$.
  
\end{itemize}

We denote a quasi-stabilization by $S_{+}^{\beta,o_i}$ (or in short, by  $S^{\beta,o_i}$), if
the four new points $(q_s,o,p_s,o')$ appear in order on $L$
and are between
the point $(o_i,q_i)$ in $U_\beta$.

Starting from a Morse function $f_0$ which is compatible with
$(\cal{L}_0,\mathbf{O}_0)$, we can modify $f_0$ inside a three ball
$D\subset Y$ to get a Morse function $f_1$ compatible with
$(\cal{L}_1,\mathbf{O}_1)$. 
In fact, we can pick up a three-ball $D$ 
inside $Y\backslash \text{Crit}(f)$, such that:
\begin{itemize}
\item The three ball $D$ contains the arc connecting $(q_s,o')$.
\item The intersection $D\cap \Sigma$ is a disk contained in $\Sigma\backslash (\bs{\alpha}\cup\bs{\beta})$.
\end{itemize}

We add a pair of index zero and three critical points at $p_s$ and
$q_s$ respectively. Then we replace the disk $D\cap \Sigma$ with a
disk $\overline{D}$ which intersects $L$ at the two basepoints
$(o,o')$. We also need to introduce an index two critical point inside
$U_\beta\cap D$ and an index one critical point inside $U_\alpha\cap
D$, such that the gradient flow of the modified Morse function $f_1$
agrees with $f$ on the boundary $\partial D$ of the three ball
$D$. The unstable manifold of the new index two critical points
intersects $\partial \overline{D}$ at two point. The $\alpha$-circle
corresponded to
the new index one critical point is inside the disk
$\overline{D}$. This modification from $f_0$ to $f_1$ is shown in
Figure \ref{fig:qsh-01}. 

\begin{figure}
  \centering
  \includegraphics[scale=0.7]{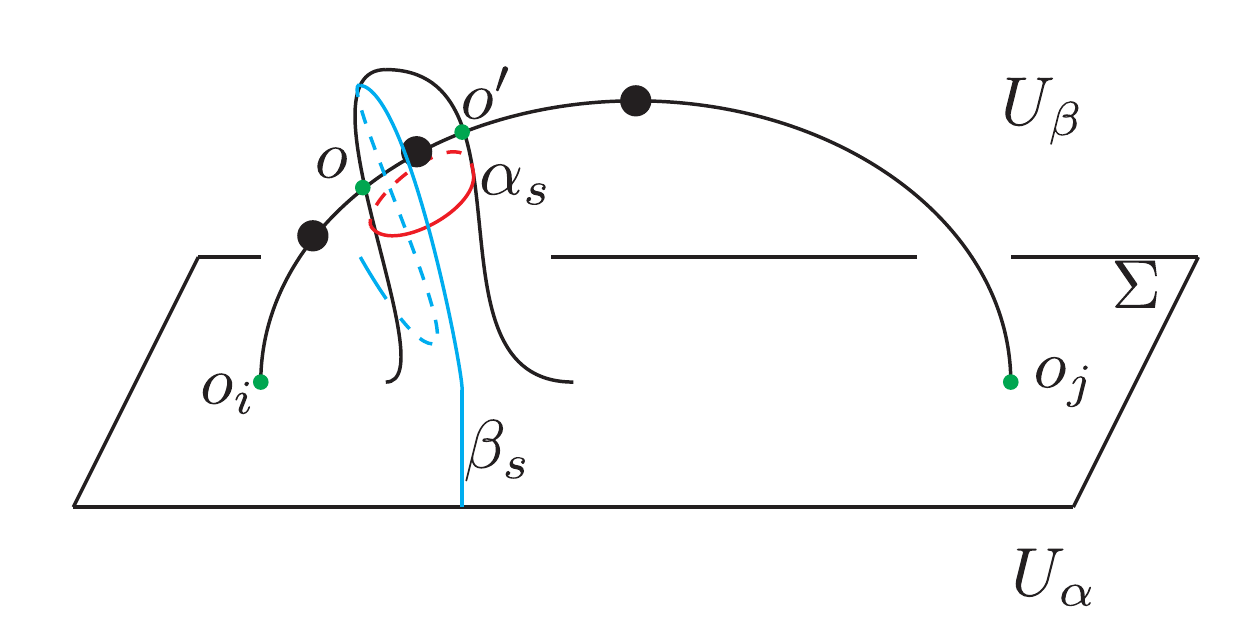}
  \caption{An example of quasi-stabilization $S_{+}^{\beta,o_i}$. The dividing point $q_i$ is in the center of the arc connecting $o_i$ and $o_j$. The four new points $(q_s,o,p_s,o')$ appear in order on the arc between $o_i$ and $q_i$.} 
  \label{fig:qsh-01}
\end{figure}

Now we consider this construction at the level of Heegaard
diagram. Given a Heegaard diagram 
$H=(\Sigma,\bs{\alpha},\bs{\beta},\mathbf{O}_0)$ compatible with $(\cal{L}_0,\mathbf{O}_0)$, we choose a point $p$ on
$\Sigma\backslash (\bs{\alpha}\cup\bs{\beta})$ together with a small
disk neighborhood $D_p$ and a circle $\beta_s$ through $p$ without
intersecting $\bs{\beta}$. We replace $D_p$ by another disk
$\overline{D}$ and introduce two basepoint $(o,o')$ on
$\overline{D}$. Next, we extend the $\beta_s$ into the disk
$\overline{D}$ and get a closed curve separating $(o,o')$ on
$\overline{\Sigma}=(\Sigma\backslash D_p)\cup \overline{D}$. From the Morse function viewpoint,
as we requre the curve $\beta_s$ intersect the projection of $L$ at
one point, this extension of $\beta_s$ over $\overline{D}$ is unique
upto isotopy.  Then we set the new alpha circle
$\alpha_s$ is parallel to $\partial \overline{D}$ and get a 
Heegaard diagram
$\overline{H}=(\overline{\Sigma},\bs{\alpha}\cup\alpha_s,\bs{\beta}\cup\overline{\beta}_s,\mathbf{O}\cup\{o,o'\})$,
where $\overline{\beta}_s$ is the extension of $\beta_s$ over $\overline{\Sigma}$. See Figure \ref{fig:qsh-01} for this construction.

\begin{rem}
  The construction of Heegaard diagram $\overline{H}$ does not depend on the curve
  $\cal{A}$ 
  of $\mathfrak{W}_{qs}$.
\end{rem}

\subsection{Choice of generators}\label{sec:choice-generators}

In this subsection, we will define a $\mathbb{Z}$-filtered curved chain homomorphism:
\[F_{\mathfrak{W}_{qs}}: CFBL(\cal{L}_0,O_0)\to CFBL(\cal{L}_1,O_1).\]

As in Section \ref{sec:topol-facts-about}, suppose that the quasi-stabilization $\mathfrak{W}_{qs}$ is $S_{+}^{\beta,o_i}$. In other words, the four new points $(q_s,o,p_s,o')$ appear in order on $L$
and are between
the point $(o_i,q_i)$ in $U_\beta$. For the Heegaard diagram
$\overline{H}$, the $\alpha_s$ intersects $\beta_{s}$ at two
points. If we close the diagram
$(\overline{D},\alpha_s,\overline{\beta_s}\cap\overline{D},o,o')$, we
will get a diagram
$(S^2,\alpha_s,\overline{\beta_s}\cap\overline{D},o,o')$. We denote by
$x^+$ the intersection with the highest $gr_o$-grading, and by $x^-$
the other intersection.
The generators of $CFBL(\overline{H})$ are of the form $\x\times
x^{\pm}$, where $\x$ is a generator of $CFBL(\overline{H})$

We define $F_{S^{\beta,o_i}}:CFBL(\cal{L}_0,O_0)\to CFBL(\cal{L}_1,O_1)$ by
\begin{equation}
  F_{S^{\beta,o_i}}(\x)= \x\times x^+.
\end{equation}
If we reverse a quasi-stabilization $S_{+}^{\beta,o_i}$,
we will get a quasi-destabilization $S_{-}^{\beta,o_i}$ from
$(\cal{L}_1,\mathbf{O}_1)$ to $(\cal{L}_0,\mathbf{O}_0)$.
Then we define  $F_{S_{-}^{\beta,o_i}}:CFBL(\cal{L}_1,O_1)\to
CFBL(\cal{L}_0,O_0)$ by 
\begin{align}
  F_{S_{-}^{\beta,o_i}}(\x\times x^+)&= 0,\\
  F_{S_{-}^{\beta,o_i}}(\x\times x^-)&= \x.
\end{align}

\begin{rem}
The construction of $F_{S^{\beta,o_i}}$ is similar to Zemke's definition $S^+_{w,z}$
and $T^+_{w,z}$ in \cite{Zemke2016}. Instead of colorings, the curves
$\cal{A}$ of $\mathfrak{W}_{qs}$ will provide us extra data to distinguish
the generators.
\end{rem}

We have the following lemma about the invariance of $F_{S_{+}^{\beta,o_i}}$
and $F_{S_{-}^{\beta,o_i}}$.

\begin{lem}
  The map $F_{S_{+}^{\beta,o_i}}$
and $F_{S_{-}^{\beta,o_i}}$  is well-defined upto $\mathbb{Z}$-filtered
curved chain homotopy and is independent of the choices of Heegaard
diagram $\overline{H}$.
\end{lem}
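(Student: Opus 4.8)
The plan is to follow the now-standard strategy for invariance of (quasi-)stabilization maps, as in \cite{Manolescu2010}, \cite{Zemke2016} and \cite{Juhasz2012}, while keeping track of the $\mathbb{Z}$-filtration at every step. There are three things to check: that $F_{S_{+}^{\beta,o_i}}$ and $F_{S_{-}^{\beta,o_i}}$ are $\mathbb{Z}$-filtered curved chain maps for a fixed admissible $\overline{H}$; that they do not depend on the auxiliary data $(p,D_p,\beta_s)$ used to build $\overline{H}$ from a given diagram $H$; and that they do not depend on the choice of admissible Heegaard data compatible with $(\cal{L}_0,\mathbf{O}_0)$, including the almost complex structure.

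\textbf{Chain map property.} I would analyze the local model near the new region $\overline{D}$, namely the two-pointed sphere $(S^2,\alpha_s,\overline{\beta_s}\cap\overline{D},o,o')$, in which $x^+$ is the cycle of higher $gr_o$-grading and $x^-$ the other intersection point. A neck-stretching argument along $\partial\overline{D}$ (cf. \cite{Zemke2016}) decomposes any class $\phi\in\pi_2(\x\times x^{\epsilon},\y\times x^{\epsilon'})$ into a class on $H$ and a class on $S^2$; a broken flowline realizing the $S^2$-factor is either constant (forcing $\epsilon=\epsilon'$) or passes over $o$ or $o'$, hence carries a positive power of a basepoint variable. Reading off the two pieces gives $\partial(\x\times x^+)=(\partial\x)\times x^+$ up to terms divisible by $U_{o}$ or $U_{o'}$, so $F_{S_{+}^{\beta,o_i}}$ is a curved chain map; the analogous count over $x^-$ shows the same for $F_{S_{-}^{\beta,o_i}}$. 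Since crossing a basepoint strictly drops the $\delta$-grading and hence is filtration non-increasing, both maps are $\mathbb{Z}$-filtered.

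\textbf{Reduction of the choices.} The Morse-theoretic picture in Section \ref{sec:topol-facts-about} pins the point $p$ to the component of $\Sigma\setminus(\bs{\alpha}\cup\bs{\beta})$ meeting the arc joining $q_s$ to $o'$, and it shows that $\beta_s$ is unique up to isotopy supported away from the other curves; two such choices therefore yield diagrams differing by an isotopy of $\overline{H}$ disjoint from $\mathbf{O}_1$. Independence of the complex structure is handled by the usual continuation map. Finally, any two admissible Heegaard data compatible with $(\cal{L}_0,\mathbf{O}_0)$ are joined by a finite chain of elementary Heegaard moves not crossing $\mathbf{O}_0$, with all intermediate diagrams admissible (the content of the admissibility lemmas above, together with \cite{Juhasz2012}), so it suffices to treat one elementary move.

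\textbf{The main obstacle, and the conclusion.} The heart of the proof is the commutation of $F_{S_{\pm}^{\beta,o_i}}$ with the transition map $\Phi$ of each elementary move: one must show $\Phi_{\overline{H}\to\overline{H}'}\circ F_{S,\overline{H}}\simeq F_{S,\overline{H}'}\circ\Phi_{H\to H'}$ up to $\mathbb{Z}$-filtered chain homotopy, where $\overline{H}\to\overline{H}'$ performs the same move away from the quasi-stabilization region and carries $\alpha_s,\beta_s$ along (together with small Hamiltonian translates when needed). I would express $\Phi$ as a triangle map, form the Heegaard quadruple that also contains the new curves, and apply the $CFBL$-analogue of the associativity in Lemma \ref{sec:glob-textsp-assoc-1}. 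Because $\alpha_s$ and $\beta_s$ are small and the picture around $o,o'$ is the standard two-pointed one, the relevant triangle counts factor through the local sphere model, where the only Maslov-index-zero contribution sends $x^+$ to $x^+$; this forces the two compositions to agree up to the required homotopy. The delicate point — and the step I expect to cost the most work — is to verify that the neck-stretching, index, and energy estimates near $\alpha_s\cup\beta_s$ leave no room for unexpected low-filtration triangle or rectangle contributions, so that the homotopies produced are genuinely $\mathbb{Z}$-filtered; this is where one combines the local gluing analysis of \cite{Zemke2016} with the $\delta$-grading bookkeeping of \cite{Ozsvath2015}. The destabilization map $F_{S_{-}^{\beta,o_i}}$ is treated by the parallel argument via its local description $\x\times x^{+}\mapsto 0$, $\x\times x^{-}\mapsto\x$.
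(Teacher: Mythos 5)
Your proposal is correct and follows the same strategy the paper invokes: the paper's proof is a one-line citation to tracking the proof of Zemke's Theorem A, and your outline (neck-stretching at the local two-pointed sphere model, reduction of the auxiliary choices of $(p,D_p,\beta_s)$, and commutation with the triangle maps realizing elementary Heegaard moves via the associativity lemma) is exactly what that tracking amounts to. You have simply written out the content of the citation in more detail, correctly flagging the $\delta$-grading bookkeeping as the place where the unoriented setting requires extra care.
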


\begin{proof}
  One could prove the
following 
invariance by tracking 
the proof of \cite[Theroem A]{Zemke2016}.
\end{proof}

\begin{figure}
  \centering
  \includegraphics[scale=0.8]{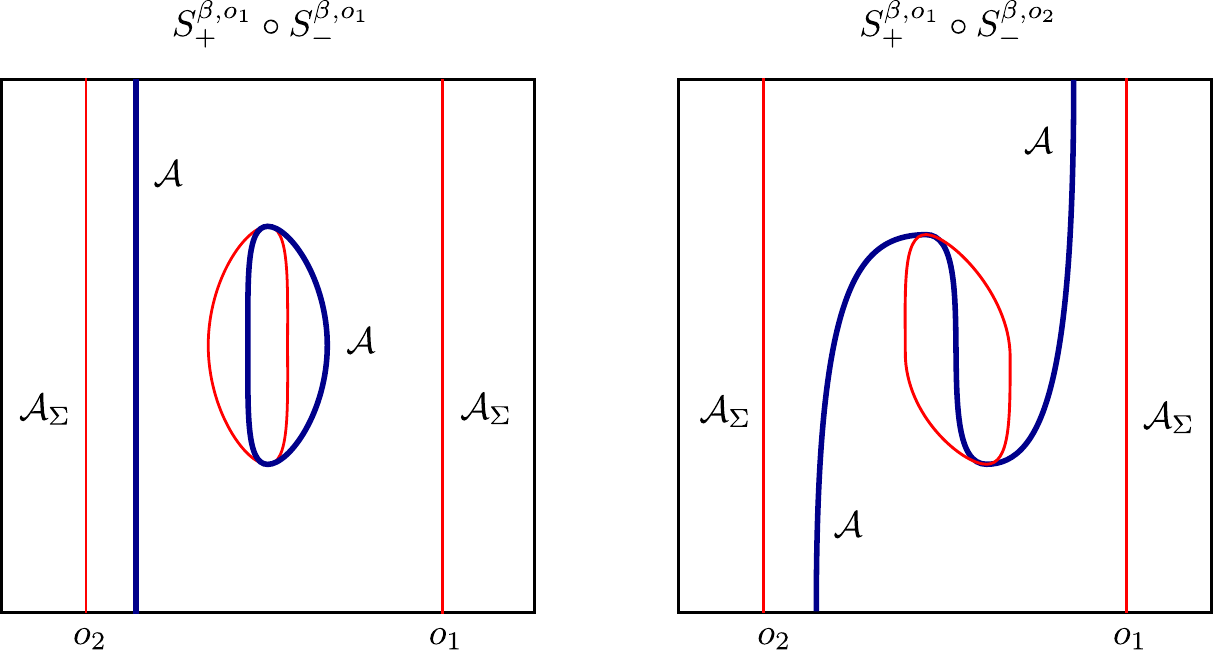}
  \caption{The composition of two quasi-stabilizations} 
  \label{fig:compquasi}
\end{figure}

\begin{prop}
  Suppose that $\mathbb{W}^1=S^{\beta,o_1}\circ S^{\beta,o_1}$ and
  $\mathbb{W}^2=S^{\beta,o_1}\circ S^{\beta,o_2}$ are two bipartite
  disoriented link cobordism from $(\cal{L},\mathbb{O})$ to
  $(\cal{L},\mathbb{O})$, where $o_1$ and $o_2$ are two adjacent
  basepoints. See Figure \ref{fig:compquasi} for the local picture of
  $\mathbb{W}^1$ and $\mathbb{W}^2$. For the two cobordism maps
  $F_{\mathbb{W}^i}: CFL'(\cal{L},\mathbb{O})\to
  CFL'(\cal{L},\mathbb{O})$, we have

\begin{align*}
&F_{\mathbb{W}^1}\cong 0\\
\text{ and }&F_{\mathbb{W}^2}\cong \operatorname{Id}.
\end{align*}

\end{prop}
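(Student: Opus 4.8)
The strategy is to realise both composite cobordisms on a single Heegaard diagram and then read off the induced maps from the explicit formulas for the quasi-(de)stabilization maps in Section \ref{sec:choice-generators}. Concretely, I would fix a weakly $\mathfrak{s}$-admissible Heegaard diagram $H$ compatible with $(\cal{L},\mathbb{O})$, perform the first quasi-stabilization as in Section \ref{sec:topol-facts-about} to obtain $\overline{H}=(\overline{\Sigma},\bs{\alpha}\cup\alpha_s,\bs{\beta}\cup\overline{\beta}_s,\mathbb{O}\cup\{o,o'\})$, and realise the second elementary cobordism --- a quasi-destabilization, since the total cobordism returns to $(\cal{L},\mathbb{O})$ --- on $\overline{H}$. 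Since the local model of Figure \ref{fig:compquasi} is supported in a ball disjoint from all other $\bs{\alpha},\bs{\beta}$ curves, $\overline{H}$ is again weakly $\mathfrak{s}$-admissible, and by the invariance statements of Section \ref{sec:choice-generators} the composite $F_{\mathbb{W}^i}=F_{S_-}\circ F_{S_+}$ is well defined up to $\mathbb{Z}$-filtered curved chain homotopy; so it suffices to compute it on generators in this model.

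For $\mathbb{W}^1$ the quasi-destabilization is the reverse of the quasi-stabilization and acts on exactly the small sphere diagram $(\overline{D},\alpha_s,\overline{\beta}_s\cap\overline{D},o,o')$ it created. Hence, by the defining formulas $F_{S^{\beta,o_1}}(\x)=\x\times x^+$ and $F_{S_-^{\beta,o_1}}(\x\times x^+)=0$, one gets $F_{\mathbb{W}^1}(\x)=F_{S_-^{\beta,o_1}}(\x\times x^+)=0$ for every generator $\x$. Thus $F_{\mathbb{W}^1}=0$ on the nose, and in particular $F_{\mathbb{W}^1}\cong 0$.

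For $\mathbb{W}^2$ the two elementary pieces are based at the adjacent basepoints $o_1$ and $o_2$, so the small diagram used to destabilize sits on the \emph{other} side of the inserted arc $q_s,o,p_s,o'$ from the one produced by the stabilization. The essential local observation, visible in Figure \ref{fig:compquasi}, is that the intersection point carrying the top $gr_o$-grading for the first (stabilizing) small diagram is precisely the point that carries the \emph{bottom} grading --- i.e.\ plays the role of $x^-$ --- for the second (destabilizing) small diagram. Therefore $F_{S_-^{\beta,o_1}}(\x\times x^+)=\x$, giving $F_{\mathbb{W}^2}=\operatorname{Id}$ on generators and hence $F_{\mathbb{W}^2}\cong\operatorname{Id}$. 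Equivalently, one may observe that this composite is supported in a ball and, after a sequence of isotopies and handleslides of $\bs{\alpha},\bs{\beta},\alpha_s,\beta_s$ not crossing any basepoint, is identified with the nearest-point map, which is $\mathbb{Z}$-filtered chain homotopic to the identity --- the same device already used in the proof of Lemma \ref{sec:relation-generators-1}.

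The main obstacle is the $\mathbb{W}^2$ computation: one must check carefully in the local model that moving the quasi-(de)stabilization from $o_2$ to the adjacent $o_1$ genuinely swaps the roles of $x^+$ and $x^-$ --- that the two small spheres are really ``shifted past one basepoint'' --- and that passing to the composite introduces no extra terms (no new holomorphic disks meeting the enlarged region, and validity of the naive composition of the generator-level formulas up to the stated chain homotopy). All of this is local and is encoded in Figure \ref{fig:compquasi}, and it can also be circumvented entirely by the continuation-map argument above.
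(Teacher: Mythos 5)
The paper states this proposition without a proof, so there is no argument of the author's to compare against; I assess yours on its own terms. Your strategy---realise both composites on a single stabilized diagram $\overline{H}$ and read the maps off the generator-level formulas of Section \ref{sec:choice-generators}, using the invariance statements there to promote the computation to a well-defined map on $HFL'$---is the natural calculation and gives the claimed answer. For $\mathbb{W}^1$, the computation $F_{S_-^{\beta,o_1}}(F_{S_+^{\beta,o_1}}(\x)) = F_{S_-^{\beta,o_1}}(\x\times x^+)=0$ is immediate from the definitions, and you get $F_{\mathbb{W}^1}=0$ on the nose. For $\mathbb{W}^2$ you correctly isolate the one substantive step: showing that the intersection named $x^+$ under the stabilizing label is exactly the one named $x^-$ under the destabilizing label.

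One caution on how you phrase that step. The local Heegaard picture is literally the same for both labels---the two-pointed sphere summand $(S^2,\alpha_s,\overline{\beta}_s\cap\overline{D},o,o')$ does not move to an ``other side of the inserted arc.'' What changes is the data (D2), i.e.\ the dividing one-manifold $\mathcal{A}$: whether the new arc of $\mathbf{p},\mathbf{q}$ points is inserted adjacent to $o_1$ or to $o_2$ determines which of the two new markings is designated $o$, hence reverses $gr_o$ and swaps $x^+\leftrightarrow x^-$. Writing ``$F_{S_-^{\beta,o_1}}(\x\times x^+)=\x$'' without specifying whose labeling of $x^+$ is meant reads, at face value, as a contradiction of the defining formula $F_{S_-^{\beta,o_i}}(\x\times x^+)=0$; you should say explicitly that $x^+$ there is taken in the $o_2$-labeling, equal to $x^-$ in the $o_1$-labeling. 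Finally, the appended ``continuation/nearest-point'' alternative is unnecessary once the generator computation gives the identity on the nose, and it is not a consequence of Lemma \ref{sec:relation-generators-1} (which concerns triangle maps for band moves, not quasi-stabilizations), so I would either develop it as a genuinely separate argument or remove it.
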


\subsection{Disk-stabilizations}\label{sec:disk-stab}

Before we provide the proof for Theorem \ref{sec:introduction}, we
recall the definition of the map induced by a
disk-stabilization/destabilization from \cite[Section 7.3]{Zemke2016a}.

Let $(\mathcal{L},\mathbf{O})$ be a bipartite disoriented link in $Y$
and $(\mathcal{U},o_1\cup o_2)$ be a bipartite disoriented unknot with
two basepoints. Suppose $\mathcal{U}$ bounds a disk $D$ such that
$(D\cap L)=\emptyset$. Then we can form a new biparite
disoriented link $(\mathcal{L},\mathbf{O})\cup(\mathcal{U},o_1\cup
o_2)$. We call this process a disk-stabilization and its inverse a
disk-destabilization (its cobordism picture is shown in Figure
\ref{fig:elecob}). For convenience, we denote by $D^+$ a
disk-stabilization and by $D^-$ a disk-destabilization.

\begin{figure}
  \centering
  \includegraphics[scale=0.8]{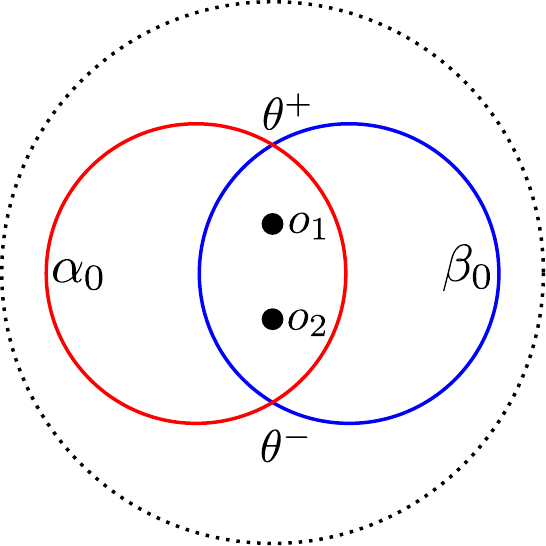}
  \caption{Local picture of Heegaard diagram for disk-stabilization/destabilization}
  \label{fig:disk} 
\end{figure}

We can pick a Heegaard diagram $(\Sigma,\bs{\alpha},\bs{\beta},\mathbf{O})$ for
$(\mathcal{L},\mathbf{O})$ such that
$(D\cap\Sigma)\cap(\bs{\alpha}\cup\bs{\beta})=\emptyset$. By replacing a
small disk neighborhood of $(D\cap\Sigma)$ on $\Sigma$ with the picture
shown in Figure \ref{fig:disk}, we get a Heegaard diagram
$\hat{H}=(\Sigma,\bs{\alpha}\cup\alpha_0,\bs{\beta}\cup\beta_0,\mathbf{O}\cup
o_1\cup o_0)$ for $(\mathcal{L},\mathbf{O})\cup(\mathcal{U},o_1\cup
o_2)$. The generators of $CFL'(\hat{H})$ are of the form $\x\times
\theta^+$ or $\x\times\theta^-$, where $\x\in CFL'(H)$, $\theta^+$ and $\theta^-$ are the
two new intersection of $\alpha_0$ and $\beta_0$. We define the
map $f_{D^+}: CFL'(H)\rightarrow CFL'(\hat{H})$ by setting 
\[ f_{D^+}(\x)=\x\times\theta^+,\]
and the map $f_{D^-}: CFL'(\hat{H})\rightarrow CFL'(H)$ by setting
\[ f_{D^-}(\x\times\theta^+)=0 \textnormal{ and }
f_{D^-}(\x\times\theta^-)=\x.\] 
From the discusstion \cite[Section 7.1]{Zemke2016a}, we know that
$f_{D^+}$ and $f_{D^-}$ induce well-defined maps $F_{D^+}$ and $F_{D^-}$ on unoriented link
Floer homology $HFL'$.

\section{Commutations}\label{sec:commutations}

We divide this section into two parts. In the first part (Subsection
\ref{sec:com:ab},\ref{sec:com:bb} and \ref{sec:comm-betw-band}), we
show that changing the ordering of two critical points of $\pi|_F$ or
$\pi|_A$ does not affect the cobordism mpas at the level of
$CFL'$. The results from this part will be used to show the
well-definedness of the cobordism maps on $CFL'$ constructed from a
given bipartite disoriented link cobordism. 

In the second part (Subsection \ref{sec:com:abb} and
\ref{sec:com:abq}), we provide a relation between the maps induced by $\alpha$-band moves
and $\beta$-band moves, and a relation between the maps induced by
$\alpha$-quasi-stabilizations and $\beta$-quasi-stabilizations. These
two relations will be used to show that the cobordism maps on $CFL'$ constructed from
bipartite dioriented link cobordisms are independent of the motion of
basepoints (the data (D3)). Hence we get well-defined maps on $CFL'$ for
disoriented link cobordism.

\subsection{Commutation between $\alpha$-band moves and
  $\beta$-band moves}\label{sec:com:ab}

Based on Zemke's work in \cite[Section 7.3]{Zemke2016a}, by
constructing a Heegaard quadruple, we will show the commutation
between $\alpha$-band moves and $\beta$-band moves at the level of $CFL'$.

Suppose $B^{\alpha,o_{i'},o_{j'}}$ and $B^{\beta,o_i,o_j}$ are two
disjoint band on bipartite disoriented link
$(\cal{L}_1,\mathbf{O})$. Then we have four elementary bipartite disoriented
link cobordisms:

\begin{itemize}
\item the band move
  $B^{\alpha,o_{i'},o_{j'}}$ from $(\cal{L}_0,\mathbf{O})$ to
  $(\cal{L}_1,\mathbf{O})$.
 \item  the band move
  $B^{\alpha,o_{i'},o_{j'}}$ from $(\cal{L}_2,\mathbf{O})$ to
  $(\cal{L}_3,\mathbf{O})$.
\item  the band move
  $B^{\beta,o_{i},o_{j}}$ from $(\cal{L}_0,\mathbf{O})$ to
  $(\cal{L}_2,\mathbf{O})$.
\item the band move
  $B^{\beta,o_{i},o_{j}}$ from $(\cal{L}_1,\mathbf{O})$ to
  $(\cal{L}_3,\mathbf{O})$.
\end{itemize}

The two composition of bipartite disoriented link cobordisms
$B^{\beta,o_{i},o_{j}}\circ B^{\alpha,o_{i'},o_{j'}}$ and
$B^{\alpha,o_{i'},o_{j'}}\circ B^{\beta,o_{i},o_{j}}$ are
isomorphic. Both of the composition are from the bipartite disoriented
link $(\cal{L}_0,\mathbf{O})$ to
  $(\cal{L}_3,\mathbf{O})$.

\begin{lem}\label{sec:comm-betw-alpha}

  There exists a Heegaard quadruple $(\Sigma,\bs{\alpha'},\bs{\alpha},\bs{\beta},\bs{\beta'},\mathbf{O})$ such that,
\begin{itemize}
\item the induced Heegaard triple $\cal{T}_{\alpha'\alpha\beta}$ is subordinate to the band move
  $B^{\alpha,o_{i'},o_{j'}}$ from $(\cal{L}_0,\mathbf{O})$ to
  $(\cal{L}_1,\mathbf{O})$.
 \item the induced Heegaard triple $\cal{T}_{\alpha'\alpha\beta'}$ is subordinate to the band move
  $B^{\alpha,o_{i'},o_{j'}}$ from $(\cal{L}_2,\mathbf{O})$ to
  $(\cal{L}_3,\mathbf{O})$.
\item the induced Heegaard triple $\cal{T}_{\alpha\beta\beta'}$ is subordinate to the band move
  $B^{\beta,o_{i},o_{j}}$ from $(\cal{L}_0,\mathbf{O})$ to
  $(\cal{L}_2,\mathbf{O})$.
\item the induced Heegaard triple $\cal{T}_{\alpha'\alpha\beta}$ is subordinate to the band move
  $B^{\beta,o_{i},o_{j}}$ from $(\cal{L}_1,\mathbf{O})$ to
  $(\cal{L}_3,\mathbf{O})$.
\end{itemize}

\end{lem}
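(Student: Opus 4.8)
The plan is to build the required quintuple of curves $(\bs{\alpha}',\bs{\alpha},\bs{\beta},\bs{\beta}')$ on a single Heegaard surface $\Sigma$ by a ``Morse-theoretic assembly'' argument, exactly parallel to Zemke's construction in \cite[Section 7.3]{Zemke2016a} but carried out in the bipartite (uncolored, possibly non-orientable) setting. First I would start from a Heegaard diagram $H_{\alpha\beta}=(\Sigma,\bs{\alpha},\bs{\beta},\mathbf{O})$ compatible with the bipartite link $(\cal L_1,\mathbf O)$ (Remark: by the assumptions in Section \ref{sec:band-moves-triangle} this link is null-homologous, so $CFBL^-$ is defined). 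The two bands $B^{\alpha,o_{i'},o_{j'}}$ and $B^{\beta,o_i,o_j}$ are disjoint and, by hypothesis, lie in the $\alpha$-handlebody $U_\alpha$ and the $\beta$-handlebody $U_\beta$ respectively. I would then apply Theorem \ref{sec:exist-heeg-triple} \emph{twice}: once for the $\beta$-band to produce, after stabilizations and handleslides disjoint from $\mathbf O$, a standard $\gamma$-type set of curves realizing $B^{\beta,o_i,o_j}$ (here playing the role of $\bs{\beta}'$, a Hamiltonian perturbation of $\bs{\beta}$ dragged across the pair $(o_i,o_j)$), and once for the $\alpha$-band to produce a standard set of curves $\bs{\alpha}'$ realizing $B^{\alpha,o_{i'},o_{j'}}$ (an $\alpha$-version of the same construction, dragging across $(o_{i'},o_{j'})$). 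Because the two bands are disjoint and lie in complementary handlebodies, the two local modifications of the surface and of the curve systems can be performed in disjoint regions of $\Sigma$, so they do not interfere.

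The key steps, in order, are: (1) realize $(\cal L_1,\mathbf O)$ by a Heegaard diagram and, using Lemma \ref{sec:some-topol-facts-1} together with the disjointness of the supports of the two bands, arrange that the two bands' cores project to disjoint rectangular neighborhoods $R^\alpha, R^\beta$ on $\Sigma$ avoiding each other's basepoints; (2) invoke Theorem \ref{sec:exist-heeg-triple} in the region $R^\beta$ to obtain $\bs{\beta}'$ and the attendant stabilizations, so that $\cal T_{\alpha\beta\beta'}=(\Sigma,\bs{\alpha},\bs{\beta},\bs{\beta}',\mathbf O)$ is a standard Heegaard triple subordinate to $B^{\beta,o_i,o_j}:(\cal L_0,\mathbf O)\to(\cal L_2,\mathbf O)$ (relabel the $\gamma$ of the theorem as $\bs{\beta}'$); (3) likewise, invoke the $\alpha$-analogue of Theorem \ref{sec:exist-heeg-triple} in the disjoint region $R^\alpha$ to obtain $\bs{\alpha}'$, so that $\cal T_{\alpha'\alpha\beta}=(\Sigma,\bs{\alpha}',\bs{\alpha},\bs{\beta},\mathbf O)$ is standard and subordinate to $B^{\alpha,o_{i'},o_{j'}}:(\cal L_0,\mathbf O)\to(\cal L_1,\mathbf O)$; (4) check that the remaining two induced triples come for free: $\cal T_{\alpha'\alpha\beta'}$ is obtained from $\cal T_{\alpha'\alpha\beta}$ by replacing $\bs{\beta}$ with its perturbation $\bs{\beta}'$ in the disjoint region, hence is subordinate to the ``same'' $\alpha$-band move now viewed from $(\cal L_2,\mathbf O)$ to $(\cal L_3,\mathbf O)$; and symmetrically $\cal T_{\alpha\beta\beta'}$ with $\bs{\alpha}$ replaced by $\bs{\alpha}'$ gives $\cal T_{\alpha'\beta\beta'}$ subordinate to $B^{\beta,o_i,o_j}:(\cal L_1,\mathbf O)\to(\cal L_3,\mathbf O)$. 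The ``subordinate'' condition in each case reduces to verifying that the two constituent diagrams $H_{\alpha\beta}, H_{\alpha\beta'}$ (resp.\ $H_{\alpha'\alpha}$, etc.) are compatible with the correct bipartite links, which is immediate from the construction since the only curves touched are those the theorem modifies, and these are supported away from the other band.

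The main obstacle I anticipate is \textbf{step (1)}: making precise, and compatible simultaneously, the two separate applications of Theorem \ref{sec:exist-heeg-triple} on a \emph{single} surface. Concretely, Theorem \ref{sec:exist-heeg-triple} involves an auxiliary sequence of stabilizations/handleslides needed to resolve self-intersections and fix the framing of each band's core; one must argue that these moves for the $\beta$-band can be carried out entirely within $U_\beta$ (equivalently, only introduce new $\beta$-curves and only stabilize in the region $R^\beta$) and those for the $\alpha$-band entirely within $U_\alpha$, so that after both are done the resulting $(\Sigma,\bs{\alpha}',\bs{\alpha},\bs{\beta},\bs{\beta}',\mathbf O)$ is simultaneously a valid Heegaard quintuple with all \emph{four} of the named induced triples standard. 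This is the bipartite analogue of Zemke's ``commuting handles in complementary handlebodies'' argument; once one grants that the supports are genuinely disjoint (which follows from choosing the Morse function so that the two band cores lie in a neighborhood of $L$ where the flow is unchanged, as in Lemma \ref{sec:some-topol-facts-1}), the verification is routine but bookkeeping-heavy, and I would organize it by induction on the number of stabilizations, noting that each elementary move is supported in one of the two disjoint regions and therefore commutes with every move supported in the other.
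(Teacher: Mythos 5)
Your overall strategy matches the paper's: stabilize a Heegaard diagram for the common source, implant the cores of the two bands independently via Theorem \ref{sec:exist-heeg-triple}, and exploit the disjointness of the two modifications. But there is a genuine gap in how you set this up. In your opening you assert the two bands ``by hypothesis'' lie in the $\alpha$- and $\beta$-handlebodies; that is not a hypothesis of the lemma. The labels ``$\alpha$-band'' and ``$\beta$-band'' record whether the saddle critical point of $\pi|_F$ lies in $F_\alpha$ or in $F_\beta$ of the cobordism surface; they say nothing a priori about where the band's core arc sits relative to a chosen Heegaard splitting $U_\alpha\cup_\Sigma U_\beta$ of $Y$. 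For a generic Morse function compatible with $(\cal{L}_0,\mathbf{O})$, the core of the $\alpha$-band will in general intersect $U_\beta$ in some arcs. This is exactly what the paper's Step 1 fixes: for each such arc one tubes the Heegaard surface along it --- replacing the two small disks of $\Sigma$ around the arc's endpoints with a surface-with-two-holes chosen disjoint from $L$ and from both bands, still yielding a Heegaard splitting of $Y$ --- and does the same for the $\beta$-band, so that after this surgery both cores lie in the correct handlebodies. Your substitute phrasing, ``choose the Morse function so that the band cores lie in a neighborhood of $L$ where the flow is unchanged,'' does not obviously achieve that separation, and Lemma \ref{sec:some-topol-facts-1} (which connects two triples subordinate to the \emph{same} band move) is not the mechanism for it either. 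Once that modification of $\Sigma$ is supplied, the rest of your argument is the paper's Step 2 verbatim: the stabilizations and handleslides from Theorem \ref{sec:exist-heeg-triple} needed to implant each band are then supported in disjoint regions (those for the $\beta$-band touching only the $\bs{\beta},\bs{\beta}'$ system and stabilizations inside $U_\beta$, symmetrically for $\alpha$), so after Hamiltonian perturbations the four induced triples of the resulting quadruple $(\Sigma,\bs{\alpha}',\bs{\alpha},\bs{\beta},\bs{\beta}',\mathbf{O})$ are simultaneously subordinate to the four advertised band moves. A smaller slip worth flagging: the base diagram $H_{\alpha\beta}$ should represent $(\cal{L}_0,\mathbf{O})$, not $(\cal{L}_1,\mathbf{O})$; your steps (2) and (3) silently use the former, contradicting your step (1).
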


\begin{proof}
This construction can be done in two steps.

\textbf{Step1:} There exists a Heegaard decomposition such that the
$\alpha$-band lies in $U_\alpha$ and the $\beta$-band lies in
$U_\beta$. 

Suppose we have a Morse function $f$ compatible with
the bipartite disoriented link $(\cal{L}_0,\mathbf{O}$). This induces
a Heegaard decomposition $U_\alpha\cup_{\Sigma}U_\beta$ of the three
manifold $Y$. The core of the
band $B^{\alpha,o_{i'},o_{j'}}$ intersects $U_\beta $ with some
arcs. For each of these arcs, one can replace the
two disk neighborhoods of the two endpoints of the arc with a
surface with two holes which does not intersects $\cal{L}_0$ and any
bands and get a new surface $\overline{\Sigma}$. Furthermore, one can
require $\overline{\Sigma}$ cut the manifold into two handlebodies.  One example of this process is shown in Figure \ref{fig:cob-09}. When we
perform this for both bands, the $\alpha$ band will lie in
$U_\alpha$ and $\beta$ band lies in $U_\beta$.

\begin{figure}
  \centering
  \includegraphics[scale=0.5]{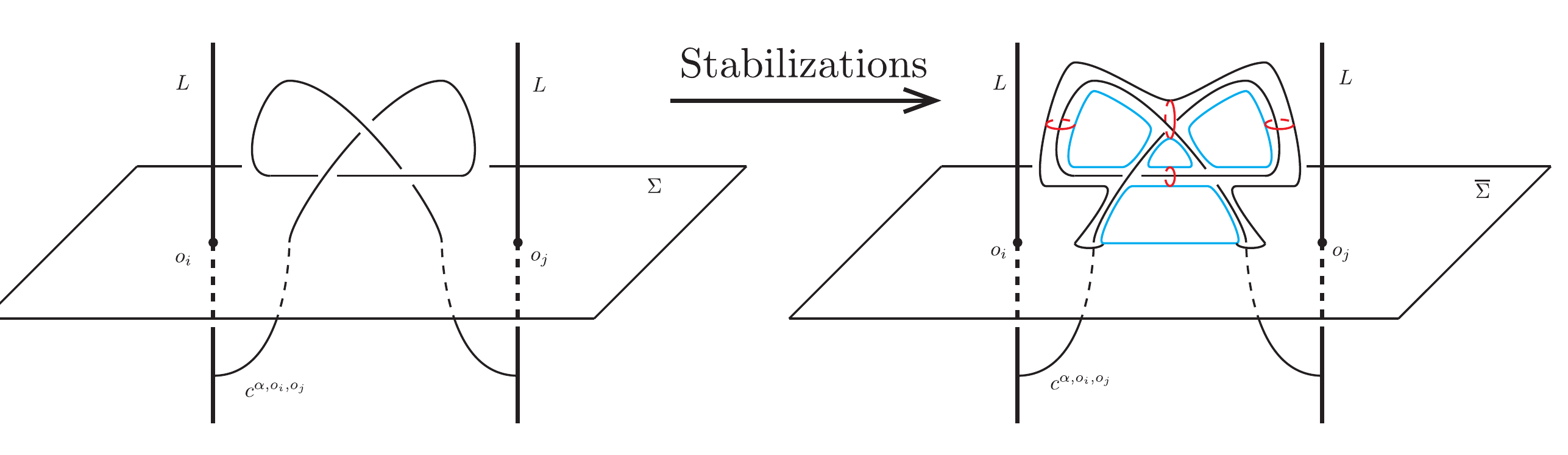}
  \caption{ Commutation between band moves.}
  \label{fig:cob-09}
\end{figure}

\textbf{Step 2:} Using the techniques described in the proof of Theorem \ref{sec:exist-heeg-triple}, we further stabilize the diagram $H_{\alpha\beta}$ of
  $(\cal{L}_0,\mathbf{O})$ to implant the data of the two bands
  $B^{\beta,o_{i},o_{j}}$ and $B^{\alpha,o_{i'},o_{j'}}$ into the
  Heegaard diagram. Notice that, these two implantation
  processes 
  (stabilizations and handleslides) happened in different handlebodies. Therefore, the two process
  are independent to each other. Finally, after Hamiltonian isotopies,
  we will get the desired Heegaard quadruple.

\end{proof}

Similar to the proof of \cite[Proposition 7.7]{Zemke2016a}, we have
the following lemma.

\begin{lem}\label{sec:comm-betw-alpha-1}
Let  
 $B^{\alpha,o_{i'},o_{j'}}$ and $B^{\beta,o_i,o_j}$ be the two
disjoint band on bipartite disoriented link
$(\cal{L}_0,\mathbf{O})$. For the induced $\mathbb{Z}$-filtered chain maps
$F_{B^{\beta,o_i,o_j}}$
and $F_{B^{\alpha,o_{i'},o_{j'}}}$ at the level of $CFL'$, we have the following commutation:
 
\[F_{B^{\beta,o_i,o_j},\mathfrak{s}}\circ F_{B^{\alpha,o_{i'},o_{j'}},\mathfrak{s}}\simeq
F_{B^{\alpha,o_{i'},o_{j'}},\mathfrak{s}}\circ
F_{B^{\beta,o_i,o_j},\mathfrak{s}}\]
\end{lem}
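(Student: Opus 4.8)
The plan is to deduce the commutation from the $CFL'$-associativity of a single Heegaard quadruple, in exact analogy with \cite[Proposition 7.7]{Zemke2016a}. First I would fix the Heegaard quadruple $(\Sigma,\bs{\alpha'},\bs{\alpha},\bs{\beta},\bs{\beta'},\mathbf{O})$ produced in Lemma \ref{sec:comm-betw-alpha}, so that $H_{\alpha\beta}$ is compatible with $(\cal{L}_0,\mathbf{O})$, $H_{\alpha'\beta}$ with $(\cal{L}_1,\mathbf{O})$, $H_{\alpha\beta'}$ with $(\cal{L}_2,\mathbf{O})$ and $H_{\alpha'\beta'}$ with $(\cal{L}_3,\mathbf{O})$, with induced triples $\cal{T}_{\alpha'\alpha\beta}$, $\cal{T}_{\alpha'\alpha\beta'}$ subordinate to $B^{\alpha,o_{i'},o_{j'}}$ (from $\cal{L}_0$ to $\cal{L}_1$ and from $\cal{L}_2$ to $\cal{L}_3$) and $\cal{T}_{\alpha\beta\beta'}$, $\cal{T}_{\alpha'\beta\beta'}$ subordinate to $B^{\beta,o_i,o_j}$ (from $\cal{L}_0$ to $\cal{L}_2$ and from $\cal{L}_1$ to $\cal{L}_3$). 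Then, by finger moves of the four curve systems along their duals away from $\mathbf{O}$ (as in the paragraph preceding Lemma \ref{lem:admtri}, using \cite[Proposition 7.2]{Ozsvath2004c}), I would arrange the quadruple to be strongly $\mathfrak{S}$-admissible with all four induced triples strongly $\mathfrak{s}$-admissible, where $\mathfrak{S}$ is the orbit of a $\textnormal{Spin}^c$-structure over $X_{\alpha'\alpha\beta\beta'}$ restricting to $\mathfrak{s}$ on the boundary copies of $Y$ and to $\mathfrak{s}_0$ on the induced $\#^g(S^1\times S^2)$ factors.

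Next, by Theorem \ref{sec:bipart-disor-link} (cf. Lemma \ref{sec:dist-top-grad}) each of the four band-move maps is the triangle map determined by the distinguished top-$\delta$-grading cycle; writing $\Theta_{\alpha'\alpha}\in HFL'(H_{\alpha'\alpha})$ and $\Theta_{\beta\beta'}\in HFL'(H_{\beta\beta'})$ for these generators, which are intrinsic to the diagrams $H_{\alpha'\alpha}$ and $H_{\beta\beta'}$ of the quadruple, one has
\begin{align*}
F_{B^{\beta,o_i,o_j},\mathfrak{s}}\circ F_{B^{\alpha,o_{i'},o_{j'}},\mathfrak{s}}(x)
 &= f_{\alpha'\beta\beta'}\bigl(f_{\alpha'\alpha\beta}(\Theta_{\alpha'\alpha}\otimes x)\otimes\Theta_{\beta\beta'}\bigr),\\
F_{B^{\alpha,o_{i'},o_{j'}},\mathfrak{s}}\circ F_{B^{\beta,o_i,o_j},\mathfrak{s}}(x)
 &= f_{\alpha'\alpha\beta'}\bigl(\Theta_{\alpha'\alpha}\otimes f_{\alpha\beta\beta'}(x\otimes\Theta_{\beta\beta'})\bigr).
\end{align*}
These are precisely the two sides of the $CFL'$-associativity relation of Lemma \ref{sec:glob-textsp-assoc-1} for the quadruple $(\bs{\alpha'},\bs{\alpha},\bs{\beta},\bs{\beta'})$ evaluated on the inputs $\Theta_{\alpha'\alpha}$, $x$, $\Theta_{\beta\beta'}$, taken at the chain level (i.e. with the $\mathbb{Z}$-filtered chain homotopy realizing it, obtained by the same count of Maslov-index $-1$ squares while tracking $n_{\mathbf{O}}$ and the $\delta$-filtration). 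Thus it only remains to see that the sum over $\mathfrak{S}$ collapses to the single desired term: since $\cal{F}_*=0$ the four boundary links are null-homologous and the complexes $CFL'(\cal{L}_i,\mathfrak{s})$ are concentrated in the torsion class $\mathfrak{s}$, while $H_{\alpha'\alpha}$ and $H_{\beta\beta'}$ represent $\#^g(S^1\times S^2)$ (Lemma \ref{sec:some-topol-facts}) and so carry only $\mathfrak{s}_0$; hence in the associativity sum only the contribution of the unique $\textnormal{Spin}^c$-structure on $X_{\alpha'\alpha\beta\beta'}$ with these restrictions is nonzero, which yields the asserted $\mathbb{Z}$-filtered chain homotopy.

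The main obstacle is the two-layered bookkeeping behind this clean picture. First, one must check that the quadruple of Lemma \ref{sec:comm-betw-alpha} can be taken \emph{simultaneously} standard near both bands and admissible for all four triples; this is exactly where the disjointness of the bands — realized in Step 1 of that lemma by pushing $B^{\alpha,o_{i'},o_{j'}}$ into $U_\alpha$ and $B^{\beta,o_i,o_j}$ into $U_\beta$ — is essential, since it lets the two local modifications of Theorem \ref{sec:exist-heeg-triple} be performed independently, so that the corner diagrams $H_{\alpha'\alpha}$ and $H_{\beta\beta'}$ are the standard local pictures and the generators $\Theta_{\alpha'\alpha},\Theta_{\beta\beta'}$ are unambiguous. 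Second, because $CFL'$ is only $\mathbb{Z}$-filtered, these distinguished generators are pinned down only up to lower-$\delta$-order terms, so I would need to verify — as in \cite[Proposition 7.7]{Zemke2016a}, compare Lemma \ref{sec:relation-generators} — that the lower-order contributions and the $U$-powers produced by the relevant small triangles do not disturb the identification up to $\mathbb{Z}$-filtered chain homotopy. Once these points are settled the proof is the verbatim analogue of Zemke's.
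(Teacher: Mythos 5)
Your proposal is correct and follows essentially the same path as the paper's proof: use the Heegaard quadruple of Lemma~\ref{sec:comm-betw-alpha}, write each composite as a composition of triangle maps against the distinguished top-grading cycles $\Theta_{\alpha'\alpha}$ and $\Theta_{\beta\beta'}$, and invoke the $CFL'$ associativity of Lemma~\ref{sec:glob-textsp-assoc-1}. The paper's proof is just a terse version of this; your extra bookkeeping about simultaneous admissibility, the collapse of the $\textnormal{Spin}^c$ sum to a single term, and the $\mathbb{Z}$-filtered chain-homotopy level of the associativity is accurate and fills in details the paper leaves implicit.
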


\begin{proof}
 Let's consider the 
 Heegaard quadruple constructed in Lemma \ref{sec:comm-betw-alpha},.
 By the associativity we proved in Lemma \ref{sec:glob-textsp-assoc-1}, we have the
 following equality: 
\begin{align*}
F_{B^{\beta,o_i,o_j}}\circ F_{B^{\alpha,o_{i'},o_{j'}}}&=
  f_{\alpha'\beta\beta'}(f_{\alpha'\alpha\beta}(\Theta_{\alpha\alpha'}\otimes -)\otimes
  \Theta_{\beta\beta'})\\
&\simeq f_{\alpha'\alpha\beta'}(\Theta_{\alpha\alpha'}\otimes
  f_{\alpha\beta\beta'}(-\otimes \Theta_{\beta\beta'}))\\
&=F_{B^{\alpha,o_{i'},o_{j'}}}\circ
F_{B^{\beta,o_i,o_j}}.
\end{align*}
\end{proof}

\subsection{Commutation between $\beta$-band moves}
\label{sec:com:bb}

In this subsection we will generalize the results in \cite[Section
7.5]{Zemke2016a} and show the commutation of the triangle maps induced
by two distinct $\beta$-band move. As we introduce type I band moves,
the proof will be slightly different from the proofs in \cite[Section
7.5]{Zemke2016a}. See Figure \ref{fig:com01} for an example of the
commutation between a type I $\beta$-band move and type II
$\beta$-band move. 

\begin{figure}
  \centering
  \includegraphics[scale=0.5]{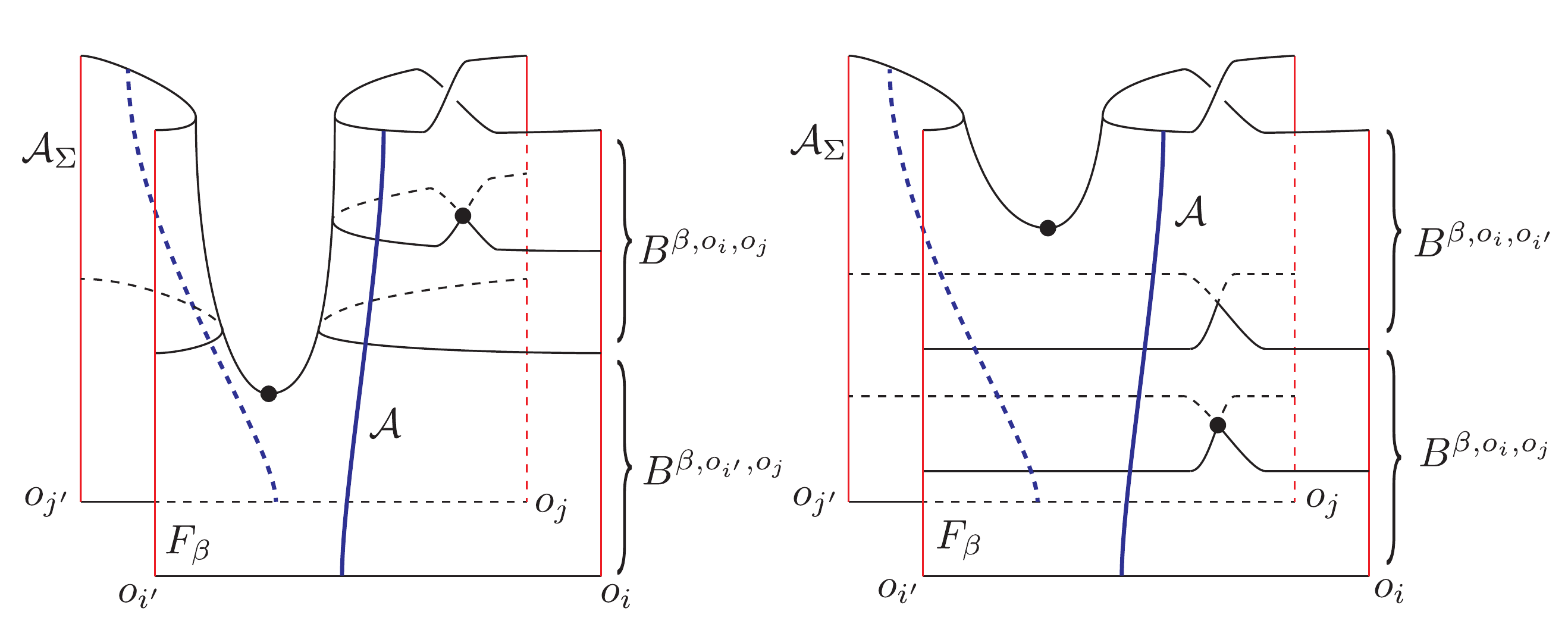}
  \caption{ Commutation between two $\beta$-band moves.}
  \label{fig:com01}
\end{figure}

Suppose $B_1^\beta$ is a type II $\beta$-band on bipartite link
$L_{\alpha\beta}$. Recall from Section \ref{sec:categ-2:-bipart} that,
the set $L_\beta={L_{\beta,1},\cdots,L_{\beta,n}}$ is a $n$-tuple of
arcs lie in $\beta$-handlebody. The endpoints of these arcs are exactly
all the basepoints $\mathbf{O}$ of $L_{\alpha\beta}$. As the band $B_1^\beta$ is of
type II, the two ends of the bands should lie in two components
$L_{\beta,i},L_{\beta,j}$ of
$L_\beta$. We call the ends of $L_{\beta,i},L_{\beta,j}$ the four
nearest basepoints adjacent to
$B_1^{\beta}$. 
Following the proof of Theorem \ref{sec:exist-heeg-triple}, we have a
lemma below.

\begin{lem}\label{sec:comm-betw-beta}
Suppose $B^\beta$ is a type II $\beta$-band on bipartite link
$L_{\alpha\beta}$. Let $\cal{T}$ be a standard Heegaard triple
subordinate to $B^\beta$. We can do $\beta$ and $\gamma$ handleslides
of the triple
without crossing any basepoints $\mathbf{O}$ to get a simplified
Heegaard triple $\cal{T}'$, with a disk neighborhood
$D$ on the surface $\Sigma$ such that $D\cap H_{\beta\gamma}'$ is shown in
Figure \ref{fig:simpletype2}. Here, the four basepoints $o_1,o_2,o_3,o_4\in \mathbf{O}$
are the four nearest basepoints adjacent to the band $B^\beta$ the
diagram $H_{\beta\gamma}'$ is the Heegaard diagram induced from
$\cal{T}'$ .  
\end{lem}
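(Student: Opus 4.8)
The plan is to begin with any standard Heegaard triple $\mathcal{T}=(\Sigma,\bs{\alpha},\bs{\beta},\bs{\gamma},\mathbf{O})$ subordinate to $B^\beta$ — one exists by Theorem \ref{sec:exist-heeg-triple} — and to rearrange it, using only $\beta$- and $\gamma$-handleslides disjoint from $\mathbf{O}$, into a normal form in which the part of the diagram carrying the topology created by the band is concentrated inside a single disk $D$ around the four nearest basepoints. The conceptual input is the connected-sum decomposition from Lemma \ref{sec:some-topol-facts}: since $B^\beta$ is of Type II,
\[
(\#^g(S^1\times S^2),L_{\beta\gamma})=(\#^g(S^1\times S^2),\mathbb{U}_4)\,\#\,(S^3,\mathbb{U}_2^{n-2}),
\]
and the four basepoints on the $\mathbb{U}_4$-summand are exactly the endpoints $o_1,o_2,o_3,o_4$ of the two arcs $L_{\beta,i},L_{\beta,j}\subset L_\beta$ to which the band is attached. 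Because $\mathcal{T}$ is standard, $\gamma_1,\dots,\gamma_{n-1}$ are small Hamiltonian translates of $\beta_1,\dots,\beta_{n-1}$ away from $\mathbf{O}$, and $\gamma_n$ is a Hamiltonian translate of $\beta_n$ crossing two basepoints; so it is enough to put $(\Sigma,\bs{\beta},\mathbf{O})$ into normal form and let the $\gamma$-curves follow each $\beta$-handleslide.

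Concretely, I would carry out the $\beta$-handleslides in the spirit of the construction in the proof of Theorem \ref{sec:exist-heeg-triple} (and the oriented analogue in \cite[Section 7.5]{Zemke2016a}): slide the $\beta$-curves encircling the arcs of $L_\beta$ \emph{not} adjacent to the band into a standard split region accounting for $\mathbb{U}_2^{n-2}$; move the $S^1\times S^2$ one-handles of $\#^g(S^1\times S^2)$ into standard position; and merge the two $\beta$-tubes around $L_{\beta,i}$ and $L_{\beta,j}$ so that, in a disk neighborhood $D$ of $\{o_1,o_2,o_3,o_4\}$, the diagram agrees with the model of Figure \ref{fig:simpletype2}. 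Performing the corresponding handleslide or isotopy on each $\gamma$-curve keeps $\gamma_k$ ($k<n$) a small translate of $\beta_k$ and $\gamma_n$ a translate of $\beta_n$ crossing the same two basepoints, so $\mathcal{T}'$ is again standard; and since none of these moves cross $\mathbf{O}$, the compatibility of $H'_{\alpha\beta}$ and $H'_{\alpha\gamma}$ with $L_{\alpha\beta}$ and $L_{\alpha\gamma}$ is preserved (cf.\ Lemma \ref{sec:some-topol-facts-1}), i.e.\ $\mathcal{T}'$ is still subordinate to $B^\beta$.

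The step I expect to be the main obstacle is the bookkeeping in the $\beta$-handleslides: one has to verify that every slide — in particular the merging of the two tubes attached to $L_{\beta,i}$ and $L_{\beta,j}$, which is the feature distinguishing the Type II case from the Type I case handled separately — can be realized in the complement of all $2n$ basepoints, and that the final configuration is \emph{exactly} the local picture of Figure \ref{fig:simpletype2} and not merely one diffeomorphic to it. All of this is a direct diagram chase once the decomposition above and the normal-form machinery of Lemma \ref{sec:some-topol-facts-1} are in hand.
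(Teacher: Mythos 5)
The paper offers no proof beyond the word ``Straightforward,'' so there is no official argument to compare against. Your sketch does supply the missing content and I believe it is essentially what the author had in mind: use the Type~II connected-sum decomposition from Lemma~\ref{sec:some-topol-facts} to identify the $\mathbb{U}_4$-summand as the part of $H_{\beta\gamma}$ that sees the band, note that standardness ties each $\gamma_k$ to $\beta_k$, and then drag the diagram into normal form by $\beta$-handleslides with $\gamma$ following, keeping all moves disjoint from $\mathbf{O}$.

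Two small points to be careful about. First, after the merging move on the two $\beta$-tubes attached to $L_{\beta,i}$ and $L_{\beta,j}$, the triple $\mathcal{T}'$ need not remain ``standard'' in the precise sense of the definition preceding Theorem~\ref{sec:exist-heeg-triple} (which asks for a disk containing only \emph{two} basepoints and one $\beta_n\cap\gamma_n$ intersection); the lemma only asserts the local picture of $D\cap H'_{\beta\gamma}$, not preservation of standardness, so you should not claim the latter. Second, the concern you flag about hitting ``exactly'' the picture of Figure~\ref{fig:simpletype2} is softened by the remark immediately after the lemma, which allows the top-right basepoint in the figure to be any of the four nearest ones; this freedom is exactly what makes the final diagram chase go through without extra case analysis. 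With those adjustments your argument is a correct fleshing-out of the paper's ``straightforward'' claim.
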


\begin{proof}
Straightforward.
\end{proof}

\begin{figure}
  \centering
  \includegraphics[scale=0.5]{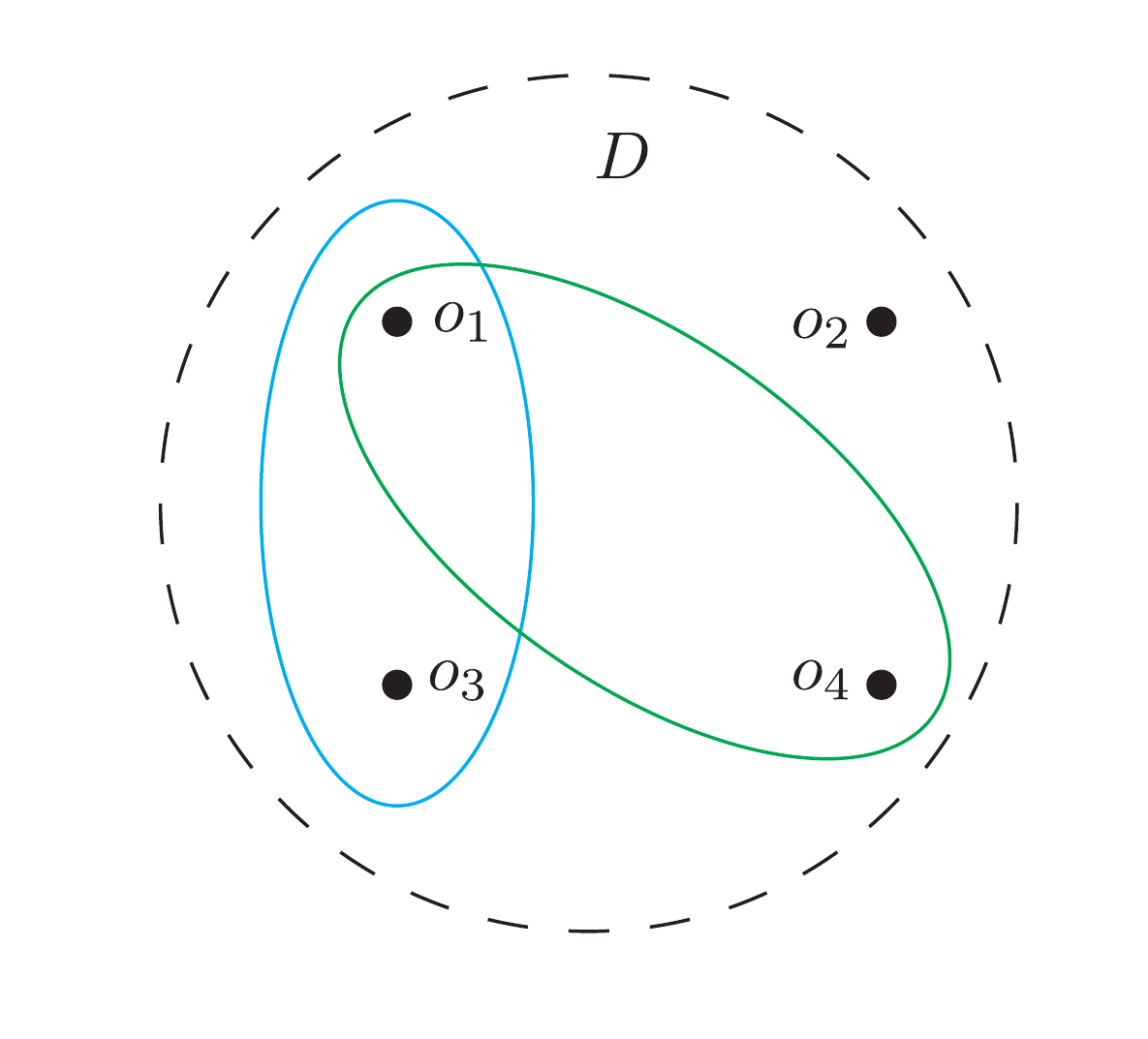}
  \caption{ A simplified Heegaard diagram $H_{\beta\gamma}$ for type II $\beta$-band move.}
  \label{fig:simpletype2}
\end{figure}

\begin{rem}
Notice that the top right basepoint in Figure \ref{fig:simpletype2} can be any of the four nearest
basepoints adjacent to band $B^\beta$.
\end{rem}

We say that two type II $\beta$-band move $B_1^{\beta}$ and $B_2^{\beta}$ are
\textit{\textbf{away}} from each other if the two ends of $B_1^{\beta}$ and the two ends
of $B_2^{\beta}$ lie in four different components of $L_{\beta}$. 

\begin{lem}\label{sec:comm-betw-beta-1}
  Suppose $B_1=B_1^{\beta,o_i,o_j}$ and $B_2=B_2^{\beta,o_{i'},o_{j'}}$ are two distinct bands on
  $L_{\alpha\beta}$, such that the composition of cobordisms $B_2\circ
  B_1$ is isomorphic to  $B_1\circ
  B_2$.
  There exists a Heegaard quintuple $(\Sigma,\alpha,\beta,\gamma,\gamma',\delta)$ such that,
\begin{itemize}
\item The induced Heegaard triple $\cal{T}_{\alpha\beta\gamma}$ is subordinate to the band move
  $B_1$ from $(\cal{L}_0,\mathbf{O})$ to
  $(\cal{L}_1,\mathbf{O})$.
 \item The induced Heegaard triple $\cal{T}_{\alpha\beta\gamma'}$ is subordinate to the band move
  $B_2$ from $(\cal{L}_2,\mathbf{O})$ to
  $(\cal{L}_3,\mathbf{O})$.
\item The induced Heegaard triple $\cal{T}_{\alpha\gamma\delta}$ is subordinate to the band move
  $B_2$ from $(\cal{L}_0,\mathbf{O})$ to
  $(\cal{L}_2,\mathbf{O})$.
\item  The induced Heegaard triple $\cal{T}_{\alpha\gamma'\delta}$ is subordinate to the band move
  $B_1$ from $(\cal{L}_1,\mathbf{O})$ to
  $(\cal{L}_3,\mathbf{O})$.
\end{itemize}
Furthermore, if $B_1$ and  $B_2$ are of type II and away
from each other, then there exists two distinct disk neighborhoods $D_1$ and
$D_2$ on $\Sigma$, such that the four local diagram $D_1\cap
\cal{T}_{\beta\gamma\delta}$, $D_2\cap
\cal{T}_{\beta\gamma\delta}$, $D_1\cap
\cal{T}_{\beta\gamma'\delta}$, $D_2\cap
\cal{T}_{\beta\gamma'\delta}$ are the four diagrams shown in Figure
\ref{fig:cog06} and Figure
\ref{fig:cog07}.
\end{lem}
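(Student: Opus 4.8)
The plan is to adapt the construction in the proof of Theorem~\ref{sec:exist-heeg-triple}, implanting the data of \emph{both} bands $B_1$ and $B_2$ into a single Heegaard diagram and then producing $\bs{\gamma}$, $\bs{\gamma}'$ and $\bs{\delta}$ by Hamiltonian perturbations supported near the two bands. What makes this work is that $B_1$ and $B_2$ are disjoint (which we may assume, since $B_2\circ B_1\cong B_1\circ B_2$), so every move needed to implant one band can be carried out in a region of $Y$ disjoint from the other; this is the same mechanism as in the proof of Lemma~\ref{sec:comm-betw-alpha}, the only difference being that here both bands live in $U_\beta$ rather than one in $U_\alpha$ and one in $U_\beta$. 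Concretely, I would fix a Morse function $f$ compatible with the bipartite disoriented link lifting $(\cal{L}_0,\mathbf{O})$, giving $Y=U_\alpha\cup_\Sigma U_\beta$ with diagram $H_{\alpha\beta}$; after the surface modification of the first step of the proof of Lemma~\ref{sec:comm-betw-alpha} (replacing disk neighborhoods of the intersections of the band cores with $\Sigma$ by surfaces with two holes) and an isotopy of $f$, I may assume the cores $c_1,c_2$ of $B_1,B_2$ both lie in $U_\beta$, are disjoint, are transverse to the unstable manifolds of $f$, and have gradient-flow projections $l_1,l_2$ on $\Sigma$ that are disjoint and whose interiors miss all basepoints.

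Working in disjoint solid cylinders, disjoint also from the basepoints and from the $\bs{\alpha},\bs{\beta}$-curves, I then resolve the self-intersections of each $l_k$ by stabilizations, correct each band framing to differ from the gradient framing by $\pm\frac{1}{2}$, and pass to a rectangle neighborhood of $l_k$, exactly as in Steps~2--4 of the proof of Theorem~\ref{sec:exist-heeg-triple}. This yields a diagram $H_{\alpha\beta}$ compatible with $(\cal{L}_0,\mathbf{O})$ together with two disjoint disk regions $D_1,D_2$, each containing a pair of adjacent basepoints, with $D_k\cap H_{\alpha\beta}$ the standard local picture of Figure~\ref{fig:st-10}; let $\beta_n$ and $\beta_{n-1}$ be the distinguished $\bs{\beta}$-circles meeting $D_1$ and $D_2$. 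Now set $\bs{\gamma}$ to be $\bs{\beta}$ with $\beta_n$ Hamiltonian-perturbed across the two basepoints of $D_1$ and all other circles perturbed slightly without crossing any basepoint; set $\bs{\gamma}'$ analogously, perturbing $\beta_{n-1}$ across $D_2$ instead; and set $\bs{\delta}$ to be $\bs{\beta}$ with $\beta_n$ perturbed across $D_1$ and $\beta_{n-1}$ perturbed across $D_2$, the rest perturbed slightly. By Theorem~\ref{sec:exist-heeg-triple} the triples $\cal{T}_{\alpha\beta\gamma}$ and $\cal{T}_{\alpha\beta\gamma'}$ are standard and subordinate to $B_1$ and $B_2$ respectively; and since $\bs{\gamma}\to\bs{\delta}$ (resp.\ $\bs{\gamma}'\to\bs{\delta}$) is again a perturbation of a single $\beta$-circle across the basepoints of $D_2$ (resp.\ $D_1$), with the complementary disk region unchanged up to a small isotopy, $\cal{T}_{\alpha\gamma\delta}$ and $\cal{T}_{\alpha\gamma'\delta}$ are standard and subordinate to $B_2$ and $B_1$. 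Hence the four induced triples are subordinate to the four edges of the commuting square of band moves. (Whether a triple is subordinate to a band move depends only on the diagrams $H_{\alpha\beta}$ and $H_{\alpha\gamma}$, so the type~I/type~II dichotomy is irrelevant for this part of the statement.)

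For the last assertion, suppose in addition that $B_1$ and $B_2$ are of type~II and away from each other. Then the four basepoints adjacent to $B_1$ are disjoint from the four adjacent to $B_2$, so $D_1$ and $D_2$ may be chosen disjoint, and Lemma~\ref{sec:comm-betw-beta} applies near each of them: a sequence of handleslides among the $\bs{\beta},\bs{\gamma},\bs{\gamma}',\bs{\delta}$ curves not crossing $\mathbf{O}$ brings the induced Heegaard diagrams near $D_1$ into the simplified form of Figure~\ref{fig:simpletype2}, and independently near $D_2$. Comparing, near each $D_k$, which of $\bs{\gamma},\bs{\gamma}',\bs{\delta}$ is perturbed across the basepoints of $D_k$ and which is only perturbed slightly, one identifies the four local diagrams $D_1\cap\cal{T}_{\beta\gamma\delta}$, $D_2\cap\cal{T}_{\beta\gamma\delta}$, $D_1\cap\cal{T}_{\beta\gamma'\delta}$, $D_2\cap\cal{T}_{\beta\gamma'\delta}$ with the diagrams of Figures~\ref{fig:cog06} and~\ref{fig:cog07}.

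The step I expect to be the main obstacle is making precise that the two implantations are genuinely independent: one must check that the surface modification and all the stabilizations resolving the self-intersections of $l_1$ can be localized in a neighborhood of $c_1$ disjoint from $c_2$, from all basepoints, and from the $\bs{\alpha},\bs{\beta}$-curves outside $D_1$, and symmetrically for $l_2$, so that the final diagram $H_{\alpha\beta}$ simultaneously carries both standard local pictures of Figure~\ref{fig:st-10}. This follows from the disjointness of the bands together with general position; granting it, the rest of the argument is bookkeeping on top of Theorem~\ref{sec:exist-heeg-triple} and Lemma~\ref{sec:comm-betw-beta}.
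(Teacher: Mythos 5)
Your proposal follows the same route as the paper: implant the two disjoint $\beta$-bands independently (as in the proof of Theorem~\ref{sec:exist-heeg-triple}) into a single Heegaard diagram, then produce $\bs{\gamma},\bs{\gamma}',\bs{\delta}$ by Hamiltonian perturbations of the relevant $\bs{\beta}$-circles across the two disjoint disk regions $D_1,D_2$, and finally invoke Lemma~\ref{sec:comm-betw-beta} near each $D_k$ to put the $\cal{T}_{\beta\gamma\delta}$ and $\cal{T}_{\beta\gamma'\delta}$ local diagrams into the forms of Figures~\ref{fig:cog06} and~\ref{fig:cog07} in the type~II, away-from-each-other case. Your version spells out the construction of $\bs{\gamma},\bs{\gamma}',\bs{\delta}$ more explicitly than the paper does; one small remark is that since both $B_1,B_2$ are $\beta$-bands they already sit in $U_\beta$, so the surface modification borrowed from Step~1 of Lemma~\ref{sec:comm-betw-alpha} is not really needed (that step was designed to separate an $\alpha$-band from a $\beta$-band), though invoking it does no harm.
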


\begin{figure}
  \centering
  \includegraphics[scale=0.5]{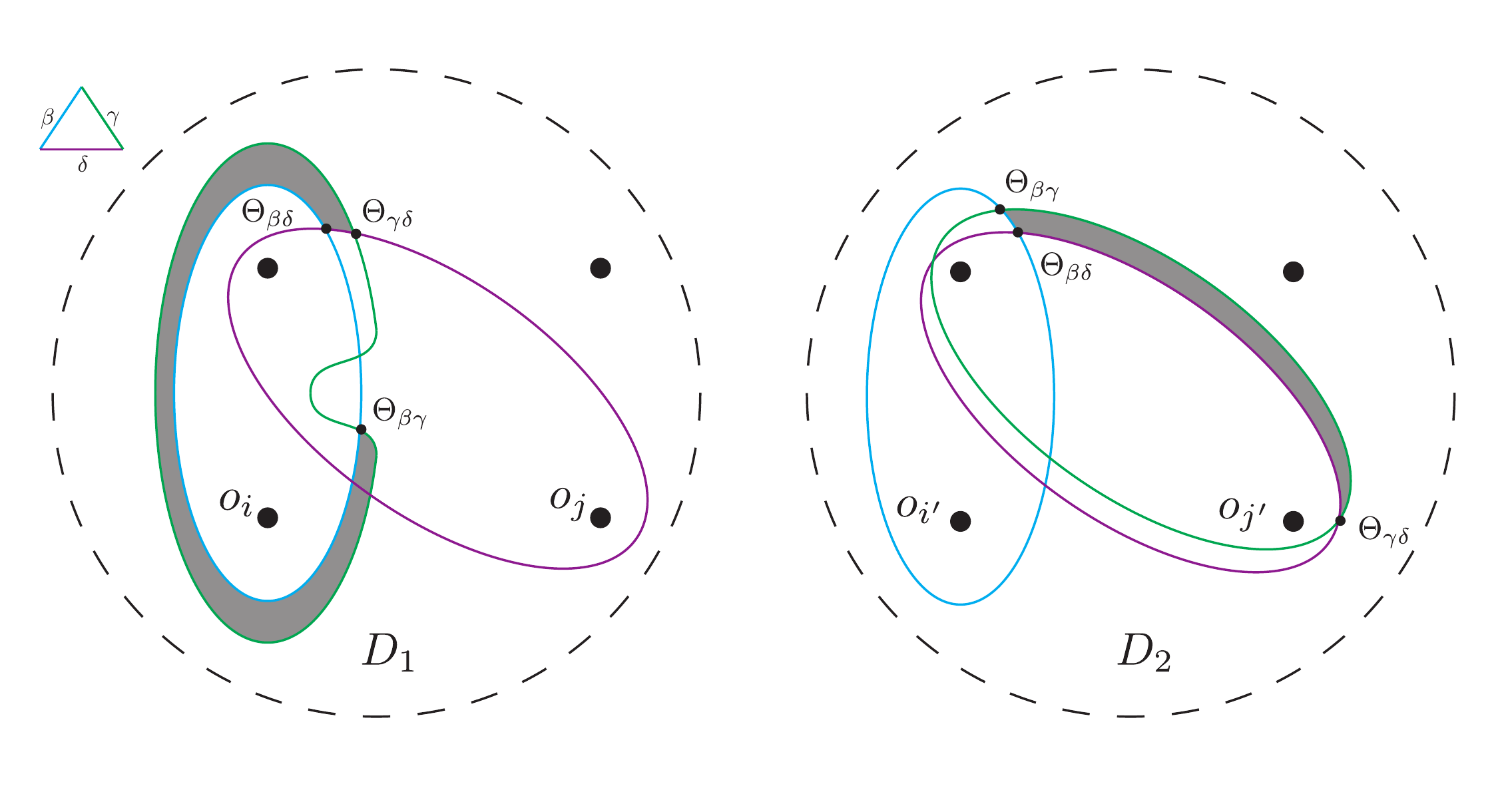}
  \caption{ Local picture of $\cal{T}_{\beta\gamma\delta}$.}
  \label{fig:cog06}
\end{figure}

\begin{figure}
  \centering
  \includegraphics[scale=0.5]{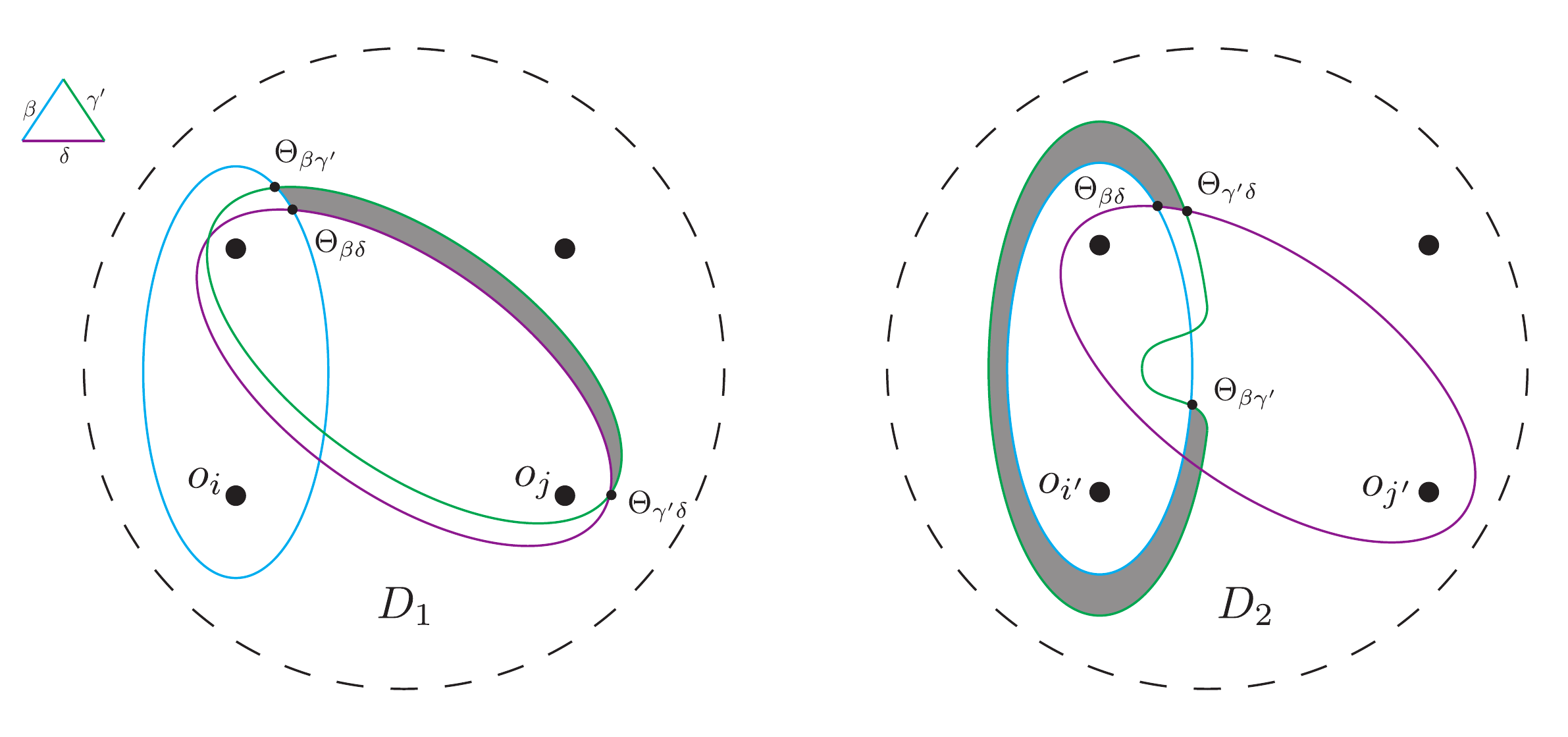}
  \caption{ Local picture of $\cal{T}_{\beta\gamma'\delta}$.}
  \label{fig:cog07}
\end{figure}

\begin{proof}

Recall in the proof of Theorem \ref{sec:exist-heeg-triple}, we start from the Heegaard diagram
of the bipartite link $L_{\alpha\beta}$ determined by
$(\cal{L}_0,\mathbf{O})$ and implant the data of the core of the bands by
stabilize the Heegaard diagram. As the two $\beta$-band are distinct
bands, the stabilizations and Dehn twists for implanting the data
$B_1$ is independent to the stabilizations and Dehn twists
for implanting the data of $B_2$. Then we
can get a desired Heegaard quintuple. One example of this process is shown in Figure
\ref{fig:bandcom}. 

\begin{figure}
  \centering
  \includegraphics[scale=0.6]{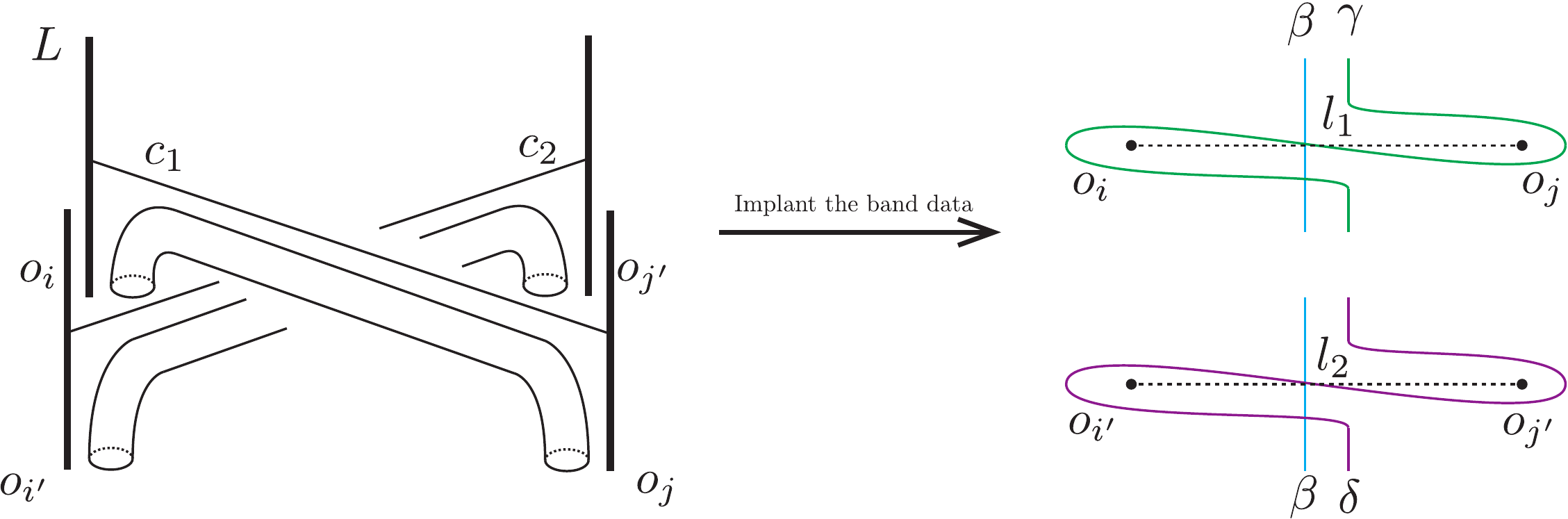}
  \caption{Construction of Heegaard quintuple.   Here, $c_1$ and $c_2$ are the cores of the band $B_1$ and $B_2$. The
  dashed line $l_1$ and $l_2$ are the projection
  image on $\Sigma$ of $c_1$ and $c_2$. For convenience, we didn't draw
  the curves $\bs{\gamma}'$ and some other curves.}
  \label{fig:bandcom}
\end{figure}

Furthermore, if the two bands are of type II and away from each other,
then we can start from the Heegaard triple subordinate to $B_1$
shown in Lemma \ref{sec:comm-betw-beta}, we have a disk region $D_1$
shown in Figure \ref{fig:simpletype2}. 

Now we can require that the stabilization
and Dehn twist for $B_2$ happens away from the region $D_1$. After handleslide, we will get the desired
Heegaard quintuple with the two disk region $D_1$ and $D_2$ shown in
Figure \ref{fig:cog06} and Figure \ref{fig:cog07}. 

\end{proof}

The following lemma is a result from the triangle computations of the bipartite
disoriented links in $\#^n(S^1\times S^2)$ induced by the above
Heegaard quintuple.

\begin{lem}\label{sec:comm-betw-beta-3}
  Suppose $B_1=B_1^{\beta,o_i,o_j}$ and $B_2=B_2^{\beta,o_{i'},o_{j'}}$ are
  two type II $\beta$-band away from each other. Then the induced map
  $F_{B_1}$ and $F_{B_2}$ (defined in Theorem
  \ref{sec:bipart-disor-link}) commutes with each other. 
\end{lem}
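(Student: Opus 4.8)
The plan is to mimic the proof of Lemma \ref{sec:comm-betw-alpha-1}: feed the Heegaard quintuple produced in Lemma \ref{sec:comm-betw-beta-1} into the associativity relation of Lemma \ref{sec:glob-textsp-assoc-1}, thereby reducing the desired commutation to a single equality of triangle maps on the diagram $\cal{H}_{\beta\delta}$. Concretely, by Theorem \ref{sec:bipart-disor-link} and the way the quintuple is arranged, on chains one has $F_{B_1} = f_{\alpha\beta\gamma}(-\otimes\Theta_{\beta\gamma})$ and $F_{B_2} = f_{\alpha\gamma\delta}(-\otimes\Theta_{\gamma\delta})$ in one order, and $F_{B_2} = f_{\alpha\beta\gamma'}(-\otimes\Theta_{\beta\gamma'})$, $F_{B_1} = f_{\alpha\gamma'\delta}(-\otimes\Theta_{\gamma'\delta})$ in the other, where $\Theta_{\beta\gamma},\Theta_{\gamma\delta},\Theta_{\beta\gamma'},\Theta_{\gamma'\delta}$ are the top $\delta$-grading generators singled out by the data (D2) of $B_1$ and $B_2$. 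Applying Lemma \ref{sec:glob-textsp-assoc-1} to $(\Sigma,\bs{\alpha},\bs{\beta},\bs{\gamma},\bs{\delta},\mathbf{O})$ and to $(\Sigma,\bs{\alpha},\bs{\beta},\bs{\gamma}',\bs{\delta},\mathbf{O})$ gives
\begin{align*}
F_{B_2}\circ F_{B_1} &\simeq f_{\alpha\beta\delta}\bigl(-\otimes f_{\beta\gamma\delta}(\Theta_{\beta\gamma}\otimes\Theta_{\gamma\delta})\bigr), \\
F_{B_1}\circ F_{B_2} &\simeq f_{\alpha\beta\delta}\bigl(-\otimes f_{\beta\gamma'\delta}(\Theta_{\beta\gamma'}\otimes\Theta_{\gamma'\delta})\bigr),
\end{align*}
so it suffices to show that $f_{\beta\gamma\delta}(\Theta_{\beta\gamma}\otimes\Theta_{\gamma\delta})$ and $f_{\beta\gamma'\delta}(\Theta_{\beta\gamma'}\otimes\Theta_{\gamma'\delta})$ represent the same class in $HFL'(\cal{H}_{\beta\delta})$.

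I would prove this last equality by a local triangle count, in the spirit of Lemma \ref{sec:relation-generators}. Since $B_1$ and $B_2$ are of type II and away from each other, the final assertion of Lemma \ref{sec:comm-betw-beta-1} says that $\cal{T}_{\beta\gamma\delta}$ and $\cal{T}_{\beta\gamma'\delta}$ agree outside two disjoint disk regions $D_1,D_2$, where they look like the standard pictures of Figures \ref{fig:cog06} and \ref{fig:cog07}, while outside $D_1\cup D_2$ the curves $\bs{\gamma},\bs{\gamma}',\bs{\delta}$ are small Hamiltonian translates of $\bs{\beta}$. Hence each of these triples splits as a connected sum of the two local triangle diagrams together with a trivial collection of small triangles on the remaining tori, and the generators $\Theta_{\beta\gamma}\otimes\Theta_{\gamma\delta}$, $\Theta_{\beta\gamma'}\otimes\Theta_{\gamma'\delta}$ are the corresponding products of local top generators. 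In each disk region the unique Maslov index zero triangle on the prescribed two vertices can be read off from the figure exactly as in Lemma \ref{sec:relation-generators}, and a grading and periodic-domain argument in $\#^k(S^1\times S^2)$ in the style of \cite[Section 9]{Ozsvath2004c} rules out all further contributions; both triples then produce the same class in $HFL'(\cal{H}_{\beta\delta})$ --- the same generator multiplied by the same power of $U$, determined by the total number of basepoints swept, which is independent of the order since $B_1$ and $B_2$ do not interact. Tracing back through the two associativity identities then gives $F_{B_1}\circ F_{B_2}\simeq F_{B_2}\circ F_{B_1}$.

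The step I expect to be the main obstacle is this triangle count: checking that in $\#^k(S^1\times S^2)$ no holomorphic triangle other than the small ones visible in Figures \ref{fig:cog06} and \ref{fig:cog07} contributes, and keeping track of the $U$-powers. The hypothesis that $B_1$ and $B_2$ are away from each other is exactly what makes this manageable, since it forces the two bands' data --- and hence the relevant generators and all the low-index triangle domains --- to localize in disjoint regions, so the computation factors and collapses to the single-band situation already handled in Lemma \ref{sec:relation-generators}.
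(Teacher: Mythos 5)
Your proposal follows the same route as the paper's proof: feed the Heegaard quintuple of Lemma \ref{sec:comm-betw-beta-1} into the associativity relation of Lemma \ref{sec:glob-textsp-assoc-1}, and reduce to checking that $f_{\beta\gamma\delta}(\Theta_{\beta\gamma}\otimes\Theta_{\gamma\delta})$ and $f_{\beta\gamma'\delta}(\Theta_{\beta\gamma'}\otimes\Theta_{\gamma'\delta})$ coincide, which you establish by the local triangle count in the disjoint disk regions of Figures \ref{fig:cog06} and \ref{fig:cog07}. The only minor difference is that the paper computes the output explicitly to be $\Theta_{\beta\delta}$ with no $U$ factor, whereas you only argue that the $U$-powers must match --- which suffices for the commutation.
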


\begin{proof}
We construct a Heegaard quintuple by Lemma
\ref{sec:comm-betw-beta-1}. We denote by $\Theta_{\beta\gamma}$ (and
$\Theta_{\gamma'\delta}$ resp.)  the
generator determined by $B_1$ and by $\Theta_{\beta\gamma'}$ (and
$\Theta_{\gamma\delta}$ resp.) the
generator determined by $B_2$. 

By the triangle calculation shown in Figure \ref{fig:cog06}, we have:
\[f_{\beta\gamma\delta}(\Theta_{\beta\gamma}\otimes\Theta_{\gamma\delta})=\Theta_{\beta\delta}.\] Here
$\Theta_{\beta\delta}$ is a top grading generator of $CFL'(H_{\beta\delta},\mathfrak{s})$. 
Similarly, by the triangle calculation shown in Figure \ref{fig:cog07}, we have:
\[f_{\beta\gamma'\delta}(\Theta_{\beta\gamma'}\otimes\Theta_{\gamma'\delta})=\Theta_{\beta\delta}.\]
Notice that the generator $\Theta_{\beta\delta}$ in Figure \ref{fig:cog06} is
exactly the one shown in Figure \ref{fig:cog07}.

By associativity, we have:

\begin{align*} 
F_{B_2}\circ F_{B_1} &=
                       f_{\alpha\gamma\delta}(f_{\alpha\beta\gamma}(-\otimes 
                       \Theta_{\beta\gamma})\otimes
                       \Theta_{\gamma\delta})\\
&=
  f_{\alpha\beta\delta}(-\otimes f_{\beta\gamma\delta}(\Theta_{\beta\delta}\otimes\Theta_{\gamma\delta})) 
\\
&=f_{\alpha\beta\delta}(-\otimes \Theta_{\beta\delta})
\\
&= f_{\alpha\gamma'\delta}(f_{\alpha\beta\gamma'}(-\otimes 
                       \Theta_{\beta\gamma'})\otimes \Theta_{\gamma'\delta})
\\
 &=f_{\alpha\gamma'\delta}(f_{\alpha\beta\gamma'}(-\otimes 
                       \Theta_{\beta\gamma'})\otimes \Theta_{\gamma'\delta})\\
  &=F_{B_1}\circ F_{B_2}
\end{align*}

\end{proof}

\begin{lem}\label{sec:comm-betw-beta-2}
  Suppose $B_1$ and $B_2$ are
  two distinct $\beta$-bands on bipartite disoriented link
  $(\cal{L},\mathbf{O})$. If the bipartite disoriented link cobordism $B_2\circ B_1$  is
  isomorphic to $B_1\circ B_2$, then the two maps $F_{B_1}$ and
  $F_{B_2}$ commute with each other.
\end{lem}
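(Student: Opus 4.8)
The plan is to imitate the proof of Lemma~\ref{sec:comm-betw-beta-3}, but with the special ``away'' configuration replaced by the general Heegaard quintuple of Lemma~\ref{sec:comm-betw-beta-1}, thereby reducing the commutation to a triangle computation in the middle diagram $H_{\beta\gamma\delta}$ which is then carried out case by case according to the types of $B_1$ and $B_2$.

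First I would fix a strongly $\mathfrak{s}$-admissible Heegaard quintuple $(\Sigma,\bs\alpha,\bs\beta,\bs\gamma,\bs\gamma',\bs\delta,\mathbf{O})$ as in Lemma~\ref{sec:comm-betw-beta-1}, whose induced triples $\mathcal{T}_{\alpha\beta\gamma}$, $\mathcal{T}_{\alpha\beta\gamma'}$, $\mathcal{T}_{\alpha\gamma\delta}$, $\mathcal{T}_{\alpha\gamma'\delta}$ are subordinate to $B_1$, $B_2$, $B_2$, $B_1$ respectively; one should first note that, as in the construction preceding Lemma~\ref{lem:admtri}, finger moves of the five families of curves missing the basepoints $\mathbf{O}$ make this quintuple strongly $\mathfrak{s}$-admissible for a $\text{Spin}^c$-structure on $Y\times I$. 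Write $\Theta_{\beta\gamma},\Theta_{\gamma'\delta}$ for the distinguished top-grading generators associated with $B_1$ and $\Theta_{\beta\gamma'},\Theta_{\gamma\delta}$ for those associated with $B_2$, as in the proof of Lemma~\ref{sec:dist-top-grad}. Exactly as in the displayed chain of equalities in the proof of Lemma~\ref{sec:comm-betw-beta-3}, the associativity of Lemma~\ref{sec:glob-textsp-assoc-1} gives
\[
F_{B_2}\circ F_{B_1}\simeq f_{\alpha\beta\delta}\bigl(-\otimes\Theta_{\beta\delta}\bigr),
\qquad
F_{B_1}\circ F_{B_2}\simeq f_{\alpha\beta\delta}\bigl(-\otimes\Theta'_{\beta\delta}\bigr),
\]
where $\Theta_{\beta\delta}=f_{\beta\gamma\delta}(\Theta_{\beta\gamma}\otimes\Theta_{\gamma\delta})$ and $\Theta'_{\beta\delta}=f_{\beta\gamma'\delta}(\Theta_{\beta\gamma'}\otimes\Theta_{\gamma'\delta})$. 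Since homologous chain-level inputs yield chain-homotopic triangle maps, it suffices to show that $\Theta_{\beta\delta}$ and $\Theta'_{\beta\delta}$ agree in $H_*\bigl(CFL'(H_{\beta\delta},\mathfrak{s}_0)\bigr)$.

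The remaining content is this triangle computation, organized by the types of $B_1,B_2$. If both are of Type~II and away from each other this is precisely Lemma~\ref{sec:comm-betw-beta-3}. In the remaining cases I would, as in Lemma~\ref{sec:comm-betw-beta}, perform $\beta$-, $\gamma$-, $\gamma'$-, $\delta$-handleslides missing all basepoints to bring the quintuple into a standard form in which $H_{\beta\gamma\delta}$ and $H_{\beta\gamma'\delta}$ become connected sums of standard genus-one pieces (one recording $\gamma_n$ versus $\beta_n$, one recording $\delta_n$ versus $\gamma_n$, one pair of basepoints together with a standard handle for each remaining $\alpha$-circle), together with the extra handle(s) recording a Type~I band; because $B_1$ and $B_2$ are disjoint bands, these local pictures overlap only in the inert region $\Sigma\smallsetminus(\bs\alpha\cup\bs\beta\cup\bs\gamma\cup\bs\gamma'\cup\bs\delta)$. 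On each genus-one piece the small Maslov-index-zero triangle with the two prescribed top-grading vertices is unique, its multiplicity $n_{\mathbf{O}}$ is read off directly, and the product is a third top-grading generator $\Theta_{\beta\delta}$; the identical analysis of $H_{\beta\gamma'\delta}$ produces the same vertex. When the two Type~II bands share a component of $L_\beta$ (so they are not ``away''), or when one of the bands is of Type~I, the diagram $H_{\beta\delta}$ represents a homologically nontrivial link in $\#^g(S^1\times S^2)$; in that situation one invokes Corollary~\ref{sec:textsp-struct-delta} together with the reduction of Lemma~\ref{sec:dist-top-grad} to conclude that the top $\delta$-grading submodule of $HFL'(H_{\beta\delta},\mathfrak{s}_0)$ is one-dimensional, which again forces $\Theta_{\beta\delta}=\Theta'_{\beta\delta}$ there.

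I expect the main obstacle to be the bookkeeping in these local triangle counts when the two bands interact --- when they share adjacent basepoints, or when one is of Type~I --- since then the connected-sum decomposition of $H_{\beta\gamma\delta}$ is less transparent and one must verify both that a unique Maslov-index-zero small triangle survives and that it carries the same $n_{\mathbf{O}}$ on the two sides, so that the same power of $U$ appears (cf.\ the $U$ in Lemma~\ref{sec:relation-generators}); the equality of these $U$-powers ultimately reflects the hypothesis $B_2\circ B_1\cong B_1\circ B_2$, hence the equality of the Euler characteristics of the two composite surfaces. Once the standard local models are fixed, each of the finitely many configurations is routine, but enumerating them carefully is the bulk of the work.
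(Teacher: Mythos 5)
Your proposal takes a genuinely different route from the paper, and it also has a gap. The paper does not attack the triangle computation in $H_{\beta\gamma\delta}$ at all in the general case: instead it converts one of the two $\beta$-bands, say $B_2$, into an $\alpha$-band $B^{\alpha}_2$ via Proposition~\ref{thm:abband} (composing with basepoint-moving maps $\sigma_1,\sigma_2$), and then the commutation follows immediately from the already-established $\alpha$/$\beta$ commutation Lemma~\ref{sec:comm-betw-alpha-1}. That reduction is the whole content of the paper's proof; it never needs to decide whether $B_1$ or $B_2$ is of Type~I or II, whether they share a component of $L_\beta$, or what $H_{\beta\delta}$ looks like. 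Your plan --- build the quintuple of Lemma~\ref{sec:comm-betw-beta-1}, reduce by associativity to comparing $\Theta_{\beta\delta}$ with $\Theta'_{\beta\delta}$, and then argue in $H_{\beta\delta}$ --- is the strategy the paper used only in the special ``away Type~II'' case (Lemma~\ref{sec:comm-betw-beta-3}), precisely because there the local pictures of $H_{\beta\gamma\delta}$ and $H_{\beta\gamma'\delta}$ are transparent.

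Where your proposal has a real gap is the final step. You invoke Corollary~\ref{sec:textsp-struct-delta} and Lemma~\ref{sec:dist-top-grad} to conclude that the top $\delta$-grading of $HFL'(H_{\beta\delta},\mathfrak{s}_0)$ is one-dimensional, but both of those results are stated for $H_{\beta\gamma}$ arising from a \emph{single} band move and its unique distinguished pair of basepoints $(o_i,o_j)$; $H_{\beta\delta}$ here arises from a composition of two distinct band moves, carries two distinguished pairs of basepoints, and represents a different link in $\#^g(S^1\times S^2)$ whose top-grading submodule need not be one-dimensional after the reduction you cite. Similarly, the claimed connected-sum decomposition of $H_{\beta\gamma\delta}$ into standard genus-one pieces only holds when $B_1$ and $B_2$ do not interact; when they share a component of $L_\beta$ or one of them is Type~I, the local diagrams overlap and you would have to produce (and count holomorphic triangles in) the actual merged picture, which you have not done. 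You correctly flag this as ``the bulk of the work'', but as written it is a gap, not bookkeeping: without a replacement for the dimension count or an explicit local model in the interacting cases, the argument does not close. The paper's $\alpha$-band trick exists exactly to avoid this.
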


\begin{proof}
  We postpone the proof to the end of the Section \ref{sec:com:abb}.
\end{proof}

\subsection{Commutation between band moves and quasi-stabilizations}\label{sec:comm-betw-band}
In this subsection, we will show the  commutation between the maps on
$CFL'$ associated to band moves and
quasi-stabilizations. The discussions about commutations are based
 on Manolescu's work in \cite[Section 5]{Manolescu2010} and
Zemke's work in \cite{Zemke2016} and \cite{Zemke2016a}. 

The bipartite link cobordism for the commutation between a
band move $B^\beta$ and quasi-stabilization $S^\beta$ contains two
critical points. One is the saddle critical point of the link
cobordism surface $F$, and the other one is the critical point for
one-manifold $\cal{A}_\Sigma$. We classify the cobordism of the
commutation into two cases:

\textbf{Case 1} (band move type changes): as shown in Figure \ref{Fig:cb-02}, the
critical point of $\cal{A}_\Sigma$ and the saddle critical point of
$F$ lie in the same component of $F_\beta$. Furthermore, the type of
band move changes from type I to type II after
quasi-stabilization. Notice that, this case can never happen for an
oriented link cobordism. For convenience, we call this a
\textbf{\textit{special commutation}} between $B^\beta$ and $S^\beta$.

\textbf{Case 2} (band move type does not change): we further classify
the cobordism into the following three subcases.

\begin{figure}
  \centering
  \includegraphics[scale=0.5]{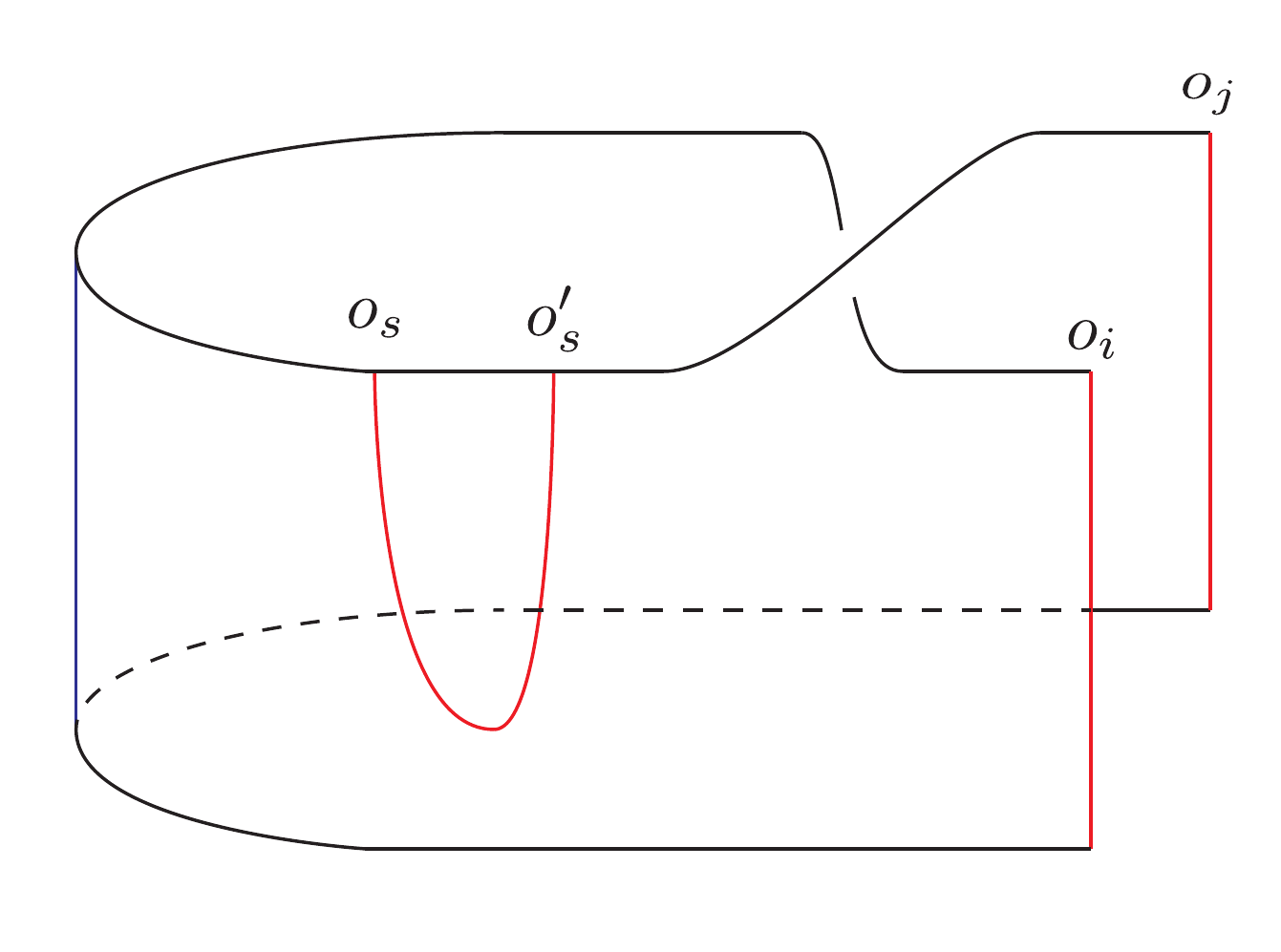}
  \caption{Case 1: Band move type changes.}
  \label{Fig:cb-02}
\end{figure}

\begin{figure}
  \centering
  \includegraphics[scale=0.5]{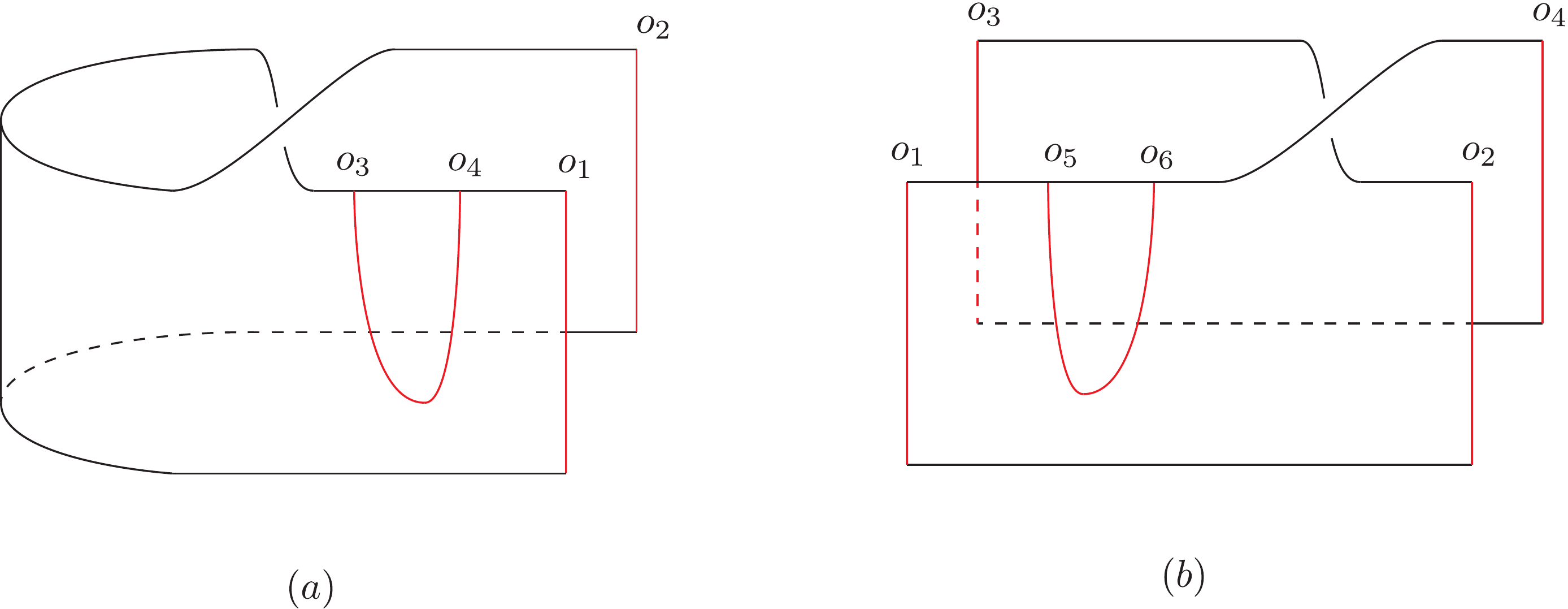}
  \caption{Case 2: Band move type does not change.}
  \label{Fig:cb-01}
\end{figure}

\begin{enumerate}[(a)]
\item  As shown in Figure \ref{Fig:cb-01} (a), the
critical point of $\cal{A}_\Sigma$ and the saddle critical point of
$F$ lie in the same component of $F_\beta$. The band move $B^\beta$ is
of type I.
\item  As shown in Figure \ref{Fig:cb-01} (b), the
critical point of $\cal{A}_\Sigma$ and the saddle critical point of
$F$ lie in the same component of $F_\beta$. The band move $B^\beta$ is
of type II.
\item  The
critical point of $\cal{A}_\Sigma$ and the saddle critical point of
$F$ lie in different components of $F_\beta$. 
\end{enumerate}

Next, we translate the cobordism data into Heegaard diagrams data.

\begin{defn}
  Suppose we have a bipartite link cobordism made of a band move $B^\beta$ and a
  quasi-stabilization $S^\beta$, such that $B^\beta\circ S^\beta\cong
  S^\beta\circ B^\beta$.
  We say that 
  $\cal{T}=(\Sigma,\bs{\alpha}\cup\{\alpha_s\},\bs{\beta})\cup
  \{\beta_s\},\bs{\gamma}\cup \{\gamma_s\},\mathbf{O}\cup\{o_s,o_s'\})$ is a
  \textbf{\textit{stabilized Heegaard triple subordinate to the commutation}}
 , if it satisfies
  the following diagram:
\[\xymatrix{
\overline{H}_{\alpha\beta}\ar[d]_{S^\beta} \ar[r]^{B^\beta} &\overline{H}_{\alpha\gamma}\ar[d]^{S^\beta} \\
H_{\alpha\beta}\ar[r]^{B^\beta} &H_{\alpha\gamma}}.\]
Here, the Heegaard diagrams above are described below:
\begin{itemize}
\item the triple
  $\overline{\cal{T}}=(\Sigma,\bs{\alpha},\bs{\beta},\bs{\gamma},\mathbf{O})$
  (or the $\cal{T}$ resp.)
  is subordinate to the band move $B^\beta$.
\item the stabilized Heegaard diagram $H_{\alpha\gamma}$ (or
  $H_{\alpha\beta}$ resp.) is
  subordinate to $S^\beta$.
\end{itemize}
\end{defn}

Next, we will construct a stabilized Heegaard triple for the
commutation between $B^\beta$ and $S^\beta$. This construction depends on
the type of $\beta$-band moves and the relative position between
$B^\beta$ and $S^\beta$. 

\begin{lem}\label{sec:comm-betw-band-2}
  Suppose $\cal{W}$ is a bipartite link cobordism for band move
  $B^\beta$ and quasi-stabilization $S^\beta$. 
  If the band move type
  changes after quasi-stabilization, then we can construct a stabilized
  Heegaard 
  triple subordinate to $\cal{W}$ as shown in Figure \ref{Fig:cbqs-07}.
If the band move type
  does not change, then we can construct a free stabilized Heegaard
  triple subordinate to $\cal{W}$ as shown in Figure \ref{Fig:cb-003} .  

\end{lem}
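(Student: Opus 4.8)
The plan is to combine the construction of a Heegaard triple subordinate to a band move from Theorem \ref{sec:exist-heeg-triple} with the local model realizing a quasi-stabilization from Section \ref{sec:topol-facts-about}, carrying out the two implantations in (mostly) disjoint regions of a common Heegaard surface. Throughout we use that $\cal{W}$ is an elementary commutation, so $B^\beta\circ S^\beta\cong S^\beta\circ B^\beta$, and that the goal is to fill in the commuting square of diagrams in the definition of a stabilized Heegaard triple subordinate to the commutation.

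First I would start with a Morse function $f$ compatible with the bipartite disoriented link at the bottom of $\cal{W}$, producing a Heegaard decomposition $U_\alpha\cup_\Sigma U_\beta$ and a compatible diagram $H_{\alpha\beta}$ in which the band of $B^\beta$ lies in $U_\beta$. Applying Steps 1--4 of the proof of Theorem \ref{sec:exist-heeg-triple} --- transversality of the core, resolution of self-intersections by stabilization, framing correction, and a rectangle neighborhood of the projected core --- I would reach the standard local picture of Figure \ref{fig:st-07} and then a Hamiltonian perturbation $\gamma_n$ of $\beta_n$ across the two basepoints adjacent to the band, giving a standard Heegaard triple $\overline{\cal{T}}=(\Sigma,\bs{\alpha},\bs{\beta},\bs{\gamma},\mathbf{O})$ subordinate to $B^\beta$. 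This is the top row $\overline{H}_{\alpha\beta}\to\overline{H}_{\alpha\gamma}$ of the square.

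Next I would realize $S^\beta$ on $\overline{\cal{T}}$ as in Section \ref{sec:topol-facts-about}: choose a point $p$ on $\Sigma\setminus(\bs{\alpha}\cup\bs{\beta}\cup\bs{\gamma})$ with a small disk neighborhood $D_p$ and a curve $\beta_s$ through $p$ missing $\bs{\beta}$; replace $D_p$ by $\overline{D}$ carrying the two new basepoints $(o_s,o_s')$, extend $\beta_s$ over $\overline{D}$ so that it separates $(o_s,o_s')$, add $\alpha_s$ parallel to $\partial\overline{D}$, and take $\gamma_s$ a small Hamiltonian perturbation of $\beta_s$. The position of $p$ is dictated by the relative position of $B^\beta$ and $S^\beta$ in $\cal{W}$. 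In Case 2 (the subcases (a), (b), (c) of Figure \ref{Fig:cb-01}) the disk $\overline{D}$ can be taken disjoint from the region where the band data was implanted, so the two modifications are independent; reading off the local picture gives the free stabilized Heegaard triple of Figure \ref{Fig:cb-003}, and the square commutes because handleslides and isotopies supported near $\overline{D}$ do not interact with the band region, and vice versa. In Case 1 (the special commutation of Figure \ref{Fig:cb-02}) the quasi-stabilization occurs on the component of $F_\beta$ carrying the saddle, so $p$ must be chosen on the arc of the projection of $L_\beta$ passing through the band region; after extending $\beta_s$ and adding $\alpha_s$ the local picture becomes Figure \ref{Fig:cbqs-07}, in which the configuration of $H_{\beta\gamma}$ is converted from the Type I model to the Type II model of Lemma \ref{sec:some-topol-facts}.

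The main obstacle is the type-changing case: one must check that there is a single consistent placement of $p$, $\beta_s$, $\alpha_s$, $\gamma_s$ relative to the band core for which all four diagrams $\overline{H}_{\alpha\beta}$, $\overline{H}_{\alpha\gamma}$, $H_{\alpha\beta}$, $H_{\alpha\gamma}$ are simultaneously correct --- i.e. $H_{\alpha\beta}$ (resp. $H_{\alpha\gamma}$) is obtained from $\overline{H}_{\alpha\beta}$ (resp. $\overline{H}_{\alpha\gamma}$) by $S^\beta$, and the triple stays subordinate to $B^\beta$ after the stabilization on both the $\bs{\beta}$ and the $\bs{\gamma}$ sides. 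Since Lemma \ref{sec:some-topol-facts-1} already says any two Heegaard triples subordinate to the same $B^\beta$-band move are connected by the allowed moves, exhibiting one valid local model suffices; the remaining content is the finite local verification that the pictures of Figures \ref{Fig:cbqs-07} and \ref{Fig:cb-003} are genuine Heegaard triples subordinate to the relevant band moves and quasi-stabilizations. I expect the bulk of the work to be bookkeeping of the basepoints $(o_s,o_s')$ and of which of the new intersection points carry the top $\delta$-grading in the stabilized three-term region.
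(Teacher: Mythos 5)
Your proposal is correct and follows essentially the same plan as the paper: implant the band data as in Theorem \ref{sec:exist-heeg-triple} to get a standard triple $\overline{\cal{T}}$, then realize the quasi-stabilization by the local surgery of Section \ref{sec:topol-facts-about}, with the placement of the new disk dictated by the cobordism. The one place where you diverge is the order of the two cases and how Case~2 is handled. The paper does Case~1 first by the very concrete choice $\beta_s =$ a parallel copy of $\beta_n$ (with $\gamma_s$ a small Hamiltonian pushoff not crossing $\mathbf{O}$), and then obtains Case~2 by handlesliding that $\beta_s$ over $\beta_n$ to make it contractible, so that the free stabilized triple of Figure~\ref{Fig:cb-003} literally falls out of the Case~1 picture. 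You instead build Case~2 directly by placing $\overline{D}$ away from the band region, which is equally valid but requires you to independently check that the resulting local picture matches Figure~\ref{Fig:cb-003} for each of (a), (b), (c); the paper's route spares you that check at the cost of a handleslide. For Case~1, you are slightly vague about which circle $\beta_s$ comes out to be; it is worth stating explicitly that choosing $p$ on the projected arc forces $\beta_s$ to be parallel to $\beta_n$, which is exactly what makes the $H_{\beta\gamma}$ piece switch from the Type~I to the Type~II model. With those two remarks filled in, your argument and the paper's are substantively the same.
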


\begin{figure}
  \centering
  \includegraphics[scale=0.45]{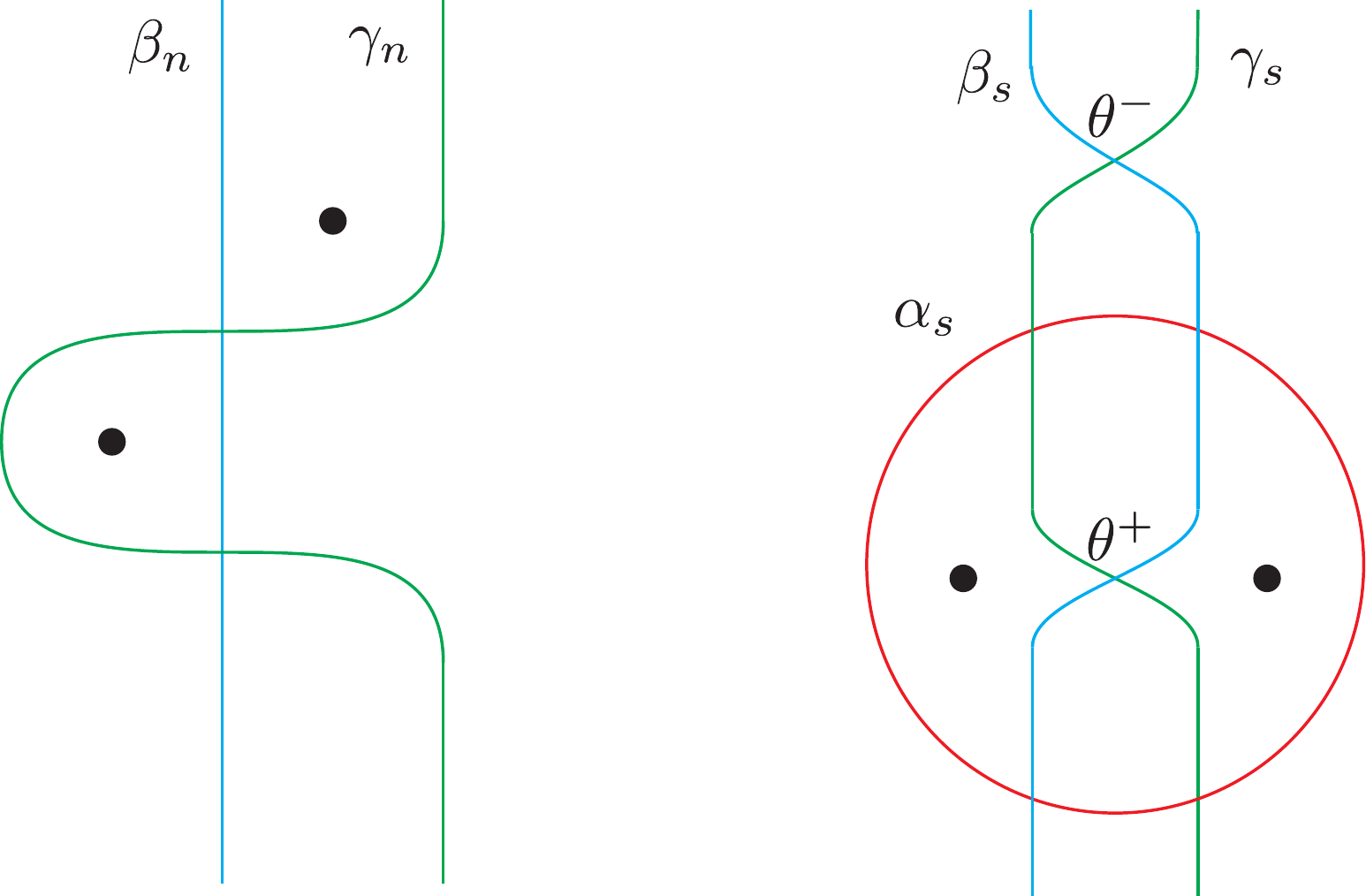}
  \caption{Stabilized Heegaard triple for Case 1.}
  \label{Fig:cbqs-07}
\end{figure}

\begin{figure}
  \centering  
  \includegraphics[scale=0.32]{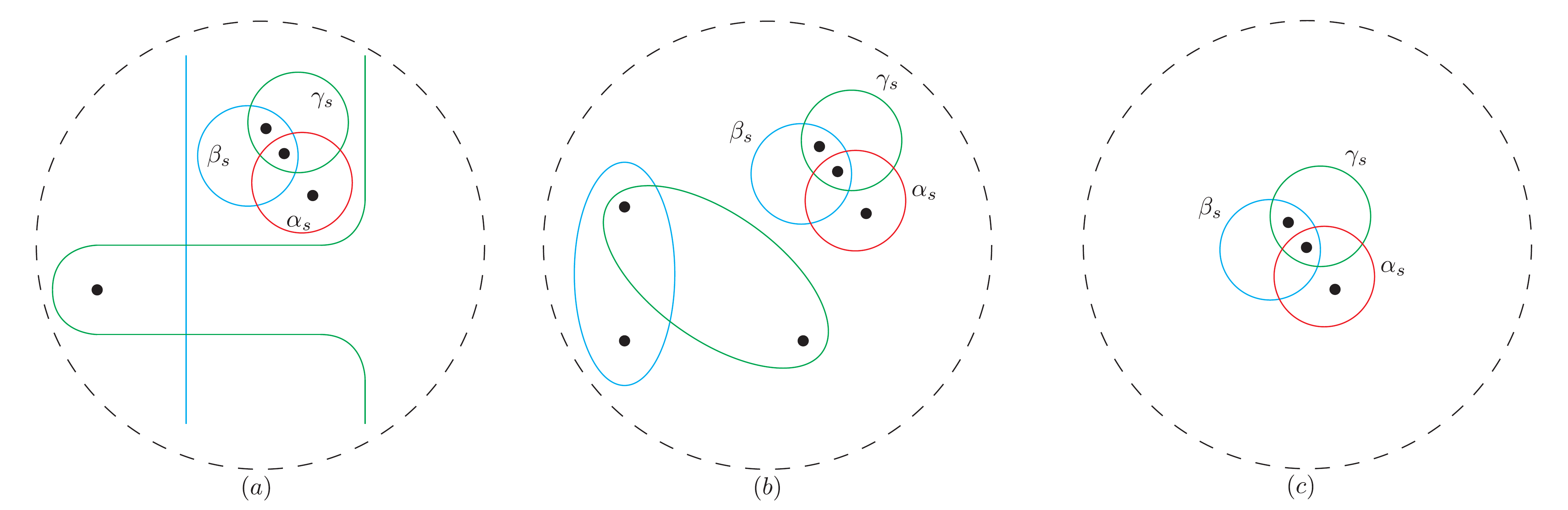}
  \caption{Free stabilized Heegaard triple for Case 2 (a)(b)(c). Note that, for simplicity we don't draw other alpha curves on the
picture.}
  \label{Fig:cb-003}

\end{figure}

\begin{proof}

The construction of the stabilized Heegaard triple relies on the
bipartite link cobordism. 

For case 1: Let $\overline{\cal{T}}$ be
a standard Heegaard triple subordinate to the band move
$B^\beta$. Suppose that $\gamma_n$ is a small Hamilitonian isotopy
crossing the basepoints $(o_i,o_j)$. As shown in Figure \ref{Fig:cb-05}, we
choose a parallel copy $\beta_s$ of $\beta_n$, and we set $\gamma_s$
be a small Hamiltonian isotopy of $\beta_s$ without crossing any
basepoints $\mathbf{O}$.

For case 2: Based on the construction for case 1, we can handleslide
$\beta_s$ such that $\beta_s$ is contractible on the Heegaard surface
$\Sigma$. See Figure \ref{Fig:cb-06} for the construction of case 2b. We show the
stabilized Heegaard triple for case 2a,2b,2c in Figure \ref{Fig:cb-003}.
 
\end{proof}

\begin{figure}
  \centering
  \includegraphics[scale=0.5]{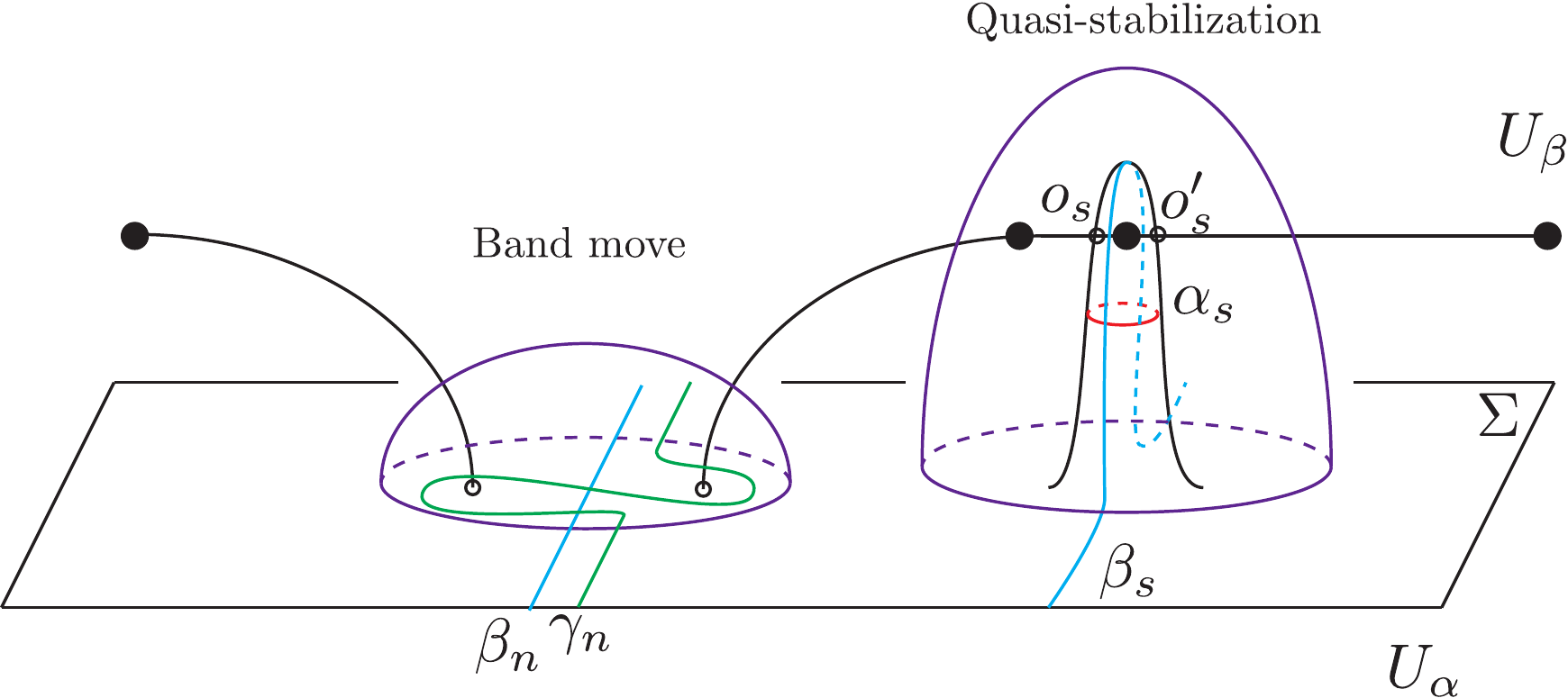}
  \caption{Type I band move and quasi-stabilization.}
  \label{Fig:cb-05}
\end{figure}

\begin{figure}
  \centering
  \includegraphics[scale=0.5]{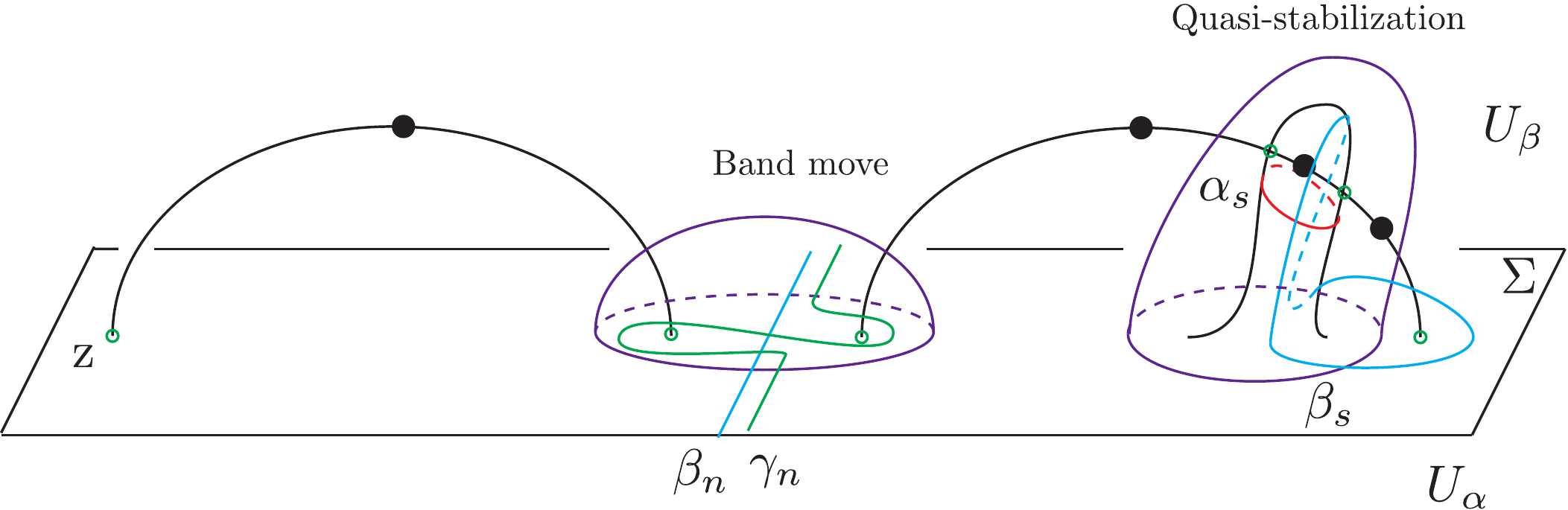}
  \caption{Type II band move and quasi-stabilization.}
  \label{Fig:cb-06}
\end{figure}

\begin{rem}\label{rem:stronglypositive}
By \cite[Lemma 6.3]{Zemke2016},
the Heegaard diagram
$(\Sigma,\bs{\beta}\cup\{\beta_s\},\bs{\gamma}\cup\{\gamma_s\})$,
which comes from the Heegaard triples we
constructed in Lemma \ref{sec:comm-betw-band-2}, are 
strongly positive. See \cite[Definition 6.2]{Zemke2016} for the
definition of strongly positive.
\end{rem}

Now we lift the bipartite link cobordism for the commutation between
the band move $B^\beta$ and quasi-stabilization $S^\beta$ to a bipartite
disoriented link cobordism. Consequently, the band move $B^\beta$ and
quasi-stabilization $S^\beta$ will also be lifted (they should be
adjacent to certain basepoints with respect to different lifting). 
Let's focus our discussion on the lifting for the commutation of case
1.   

For the commutation of case 1, we have two possible lifting: 
\begin{itemize}
\item the commutation $S^{\beta,o_i}\circ B^{\beta,o_i,o_j} \cong
  B^{\beta,o_j}\circ S^{\beta,o_i,o_j}$, as shown in the  Figure
  \ref{Fig:cb-04} (a);
\item the commutation $S^{\beta,o_j}\circ B^{\beta,o_i,o_j} \cong
  B^{\beta,o_i}\circ S^{\beta,o_i,o_j}$, as shown in the Figure
  \ref{Fig:cb-04} (b).
\end{itemize}

\begin{figure}
  \centering
  \includegraphics[scale=0.5]{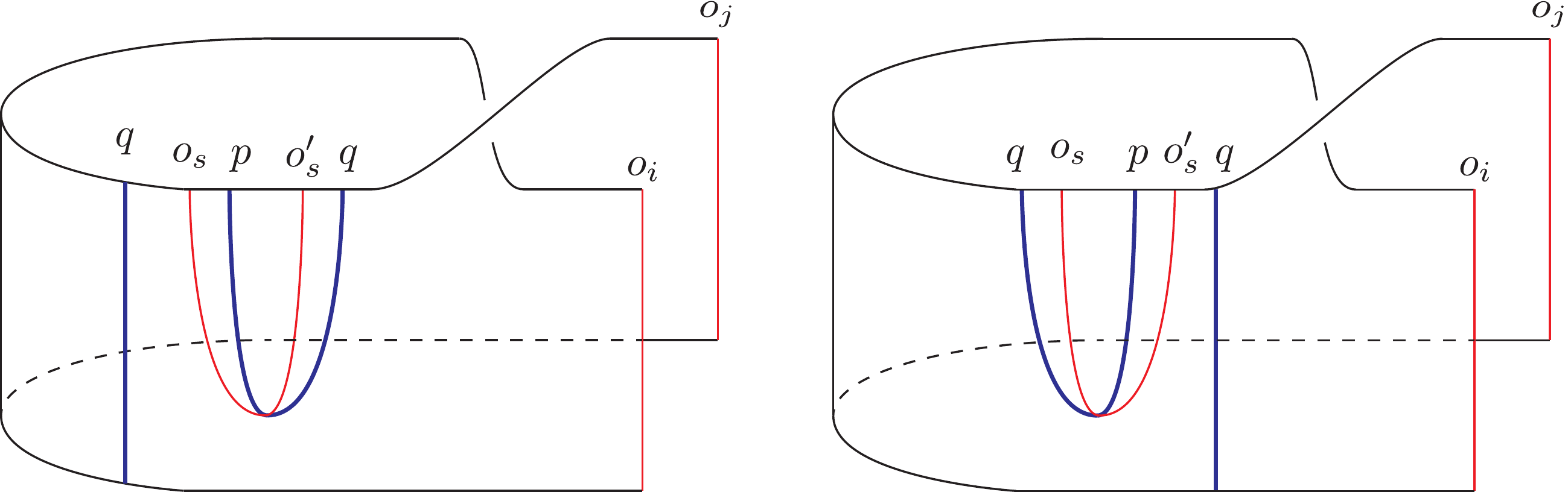}
  \caption{Two possible liftings to bipartite disoriented link cobordisms.}
  \label{Fig:cb-04}
\end{figure}

\begin{lem}\label{sec:comm-betw-band-1}
For the commutation of bipartite disoriented link cobordism $S^{\beta,o_i}\circ B^{\beta,o_i,o_j} \cong
  B^{\beta,o_i,o_j}\circ B^{\beta,o_j} $ in case 1, we have the following
  commutation on the unoriented link Floer chain complex up to
  chain homotopy:
\[F_{S^{\beta,o_i}}\circ F_{B^{\beta,o_i,o_j}} \simeq
 F_{B^{\beta,o_i,o_j}}\circ F_{S^{\beta,o_j}}.\]
A similar result holds for the commutation  $S^{\beta,o_j}\circ B^{\beta,o_i,o_j} \simeq
  B^{\beta,o_i,o_j}\circ S^{\beta,o_i}$.
\end{lem}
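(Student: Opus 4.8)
The plan is to compute both compositions on one model and reduce to a local holomorphic‑triangle count. Since $F_{B^{\beta,o_i,o_j}}$ is defined up to $\mathbb{Z}$-filtered chain homotopy (Theorem \ref{sec:bipart-disor-link}) and the quasi-stabilization maps $F_{S^{\beta,o}}$ are defined up to $\mathbb{Z}$-filtered curved chain homotopy, I may use whatever diagrams are convenient. I would take the stabilized Heegaard triple $\cal{T}=(\Sigma,\bs{\alpha}\cup\{\alpha_s\},\bs{\beta}\cup\{\beta_s\},\bs{\gamma}\cup\{\gamma_s\},\mathbf{O}\cup\{o_s,o_s'\})$ subordinate to the commutation furnished by Lemma \ref{sec:comm-betw-band-2} in the Case 1 picture (Figure \ref{Fig:cbqs-07}), and, after finger moves of the $\bs{\alpha},\bs{\beta},\bs{\gamma}$ curves supported away from the stabilizing disk and from all basepoints, assume $\cal{T}$ and all its sub-triples are strongly $\mathfrak{s}$-admissible in the sense of Definition \ref{def:sadmissible}. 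Let $\overline{\cal{T}}=(\Sigma,\bs{\alpha},\bs{\beta},\bs{\gamma},\mathbf{O})$ be the sub-triple subordinate to $B^{\beta,o_i,o_j}$ with two-diagrams $\overline{H}_{\alpha\beta},\overline{H}_{\alpha\gamma}$, so that the relevant quasi-stabilizations are $\overline{H}_{\alpha\beta}\to H_{\alpha\beta}$ and $\overline{H}_{\alpha\gamma}\to H_{\alpha\gamma}$ (adjoining $\alpha_s$ and $\beta_s$ resp.\ $\gamma_s$, together with $o_s,o_s'$), exactly as in Section \ref{sec:choice-generators}.

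Next I would spell out the two sides. The composition $F_{B^{\beta,o_i,o_j}}\circ F_{S^{\beta,o_j}}$ first sends $\x\mapsto\x\times x^+$, where $x^+\in\alpha_s\cap\beta_s$ has top $gr_{o_s}$-grading, and then applies the band-move triangle map $f_{\cal{T}}(\,\cdot\otimes\Theta_{\cal{T}})$, where $\Theta_{\cal{T}}$ is the distinguished generator of the stabilized $\beta\gamma$-diagram supplied by Lemma \ref{sec:dist-top-grad}; reading off Figure \ref{Fig:cbqs-07} one checks this generator is a product $\overline{\Theta}_{\beta\gamma}\times\theta_s$ with $\theta_s\in\beta_s\cap\gamma_s$ and $\overline{\Theta}_{\beta\gamma}$ the distinguished generator of $\overline{\cal{T}}$. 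The composition $F_{S^{\beta,o_i}}\circ F_{B^{\beta,o_i,o_j}}$ instead applies $f_{\overline{\cal{T}}}(\,\cdot\otimes\overline{\Theta}_{\beta\gamma})$ and then quasi-stabilizes the $\gamma$-side, giving $\x\mapsto f_{\overline{\cal{T}}}(\x\otimes\overline{\Theta}_{\beta\gamma})\times y^+$ with $y^+\in\alpha_s\cap\gamma_s$ of top $gr_{o_s}$-grading. The lemma therefore reduces to the single identity
\[f_{\cal{T}}\bigl((\x\times x^+)\otimes(\overline{\Theta}_{\beta\gamma}\times\theta_s)\bigr)\ \simeq\ f_{\overline{\cal{T}}}(\x\otimes\overline{\Theta}_{\beta\gamma})\times y^+.\]

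To prove this identity I would stretch the neck of $\Sigma$ along a separating circle cutting off the stabilizing region of Figure \ref{Fig:cbqs-07} — the piece carrying $\alpha_s,\beta_s,\gamma_s,o_s,o_s'$ and the two feet of the band — from the rest, and use Gromov compactness: every rigid triangle counted by $f_{\cal{T}}$ degenerates into a rigid triangle on $\overline{\cal{T}}$ glued to a rigid triangle supported in the stabilizing region with incoming vertices $x^+$ and $\theta_s$. Invoking the fact that $(\Sigma,\bs{\beta}\cup\{\beta_s\},\bs{\gamma}\cup\{\gamma_s\})$ is strongly positive (Remark \ref{rem:stronglypositive}) together with the explicit local picture, the unique such rigid triangle has outgoing vertex $y^+$ and empty intersection with $\{o_s,o_s'\}$, and there is no rigid triangle from $(x^+,\theta_s)$ to any other point of $\alpha_s\cap\gamma_s$; hence the two counts match. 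The step I expect to be the main obstacle is genuinely new to Case 1: because here the stabilizing region is entangled with the band ($\beta_s$ is a parallel copy of $\beta_n$), one must also rule out rigid triangles of positive multiplicity at $o_s$ or $o_s'$ that run across the neck into the band region, and I would exclude these by the usual Maslov-index bookkeeping — each unit of basepoint multiplicity drops the index by one — reading the intersection data off Figure \ref{Fig:cbqs-07}, just as in the other cases of this commutation. Finally, interchanging the roles of $o_i$ and $o_j$ (Figure \ref{Fig:cb-04}(b) in place of (a)) yields $F_{S^{\beta,o_j}}\circ F_{B^{\beta,o_i,o_j}}\simeq F_{B^{\beta,o_i,o_j}}\circ F_{S^{\beta,o_i}}$.
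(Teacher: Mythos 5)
Your outline follows the same overall strategy as the paper---reduce the commutation to a single triangle identity on the stabilized Heegaard triple of Lemma \ref{sec:comm-betw-band-2}, then argue by degeneration along a separating circle---and you correctly flag that the entanglement of $\beta_s$ with $\beta_n$ is the source of all the difficulty in Case~1. However, there is a genuine gap at exactly that flagged step, and your description of what the degeneration looks like is not correct.

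First, after stretching the neck, the big piece is not $\overline{\cal T}$ but the triple $\cal{T}^1=(\Sigma^1,\bs{\alpha},\bs{\beta}\cup\beta_s,\bs{\gamma}\cup\gamma_s)$: because $\beta_s$ and $\gamma_s$ each cross the separating circle (they are parallel copies of $\beta_n$ and its perturbation), they survive as closed curves on the capped-off big piece as well as on the $S^2$ piece. Your claim that ``every rigid triangle counted by $f_{\cal{T}}$ degenerates into a rigid triangle on $\overline{\cal{T}}$ glued to a rigid triangle supported in the stabilizing region'' is therefore false as stated. In fact the $S^2$-factor $\phi_2$ need not be rigid: once the multiplicity $m=m_1=m_2=a=a'$ at the neck is positive, $\mu(\phi_2)=m_1+m_2+m_3+m_4=2m$, and the correct statement is an identification of $\cal{M}(\phi)$ with a fiber product $\cal{M}(\phi^1)\times_{\operatorname{Sym}^m\Delta}\cal{M}(\phi^2)$ as in \cite[Proposition 6.15]{Manolescu2010}, not a product of zero-dimensional moduli spaces.

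Second, the phrase ``I would exclude these by the usual Maslov-index bookkeeping---each unit of basepoint multiplicity drops the index by one'' does not describe the computation that actually has to be done. What is needed is Sarkar's Maslov index formula for triangles in a connected sum, which in the present local picture produces the relation $\mu(\phi)=\mu(\phi^1)+\mu(P_\beta)+\mu(P_\gamma)-\tfrac{1}{2}(m_1+m_2+a+a')+\mu(\phi_2)$, together with the observation that $\theta^-$ does not appear in any generator, forcing $m_1+m_2=a+a'$, and the explicit formula $\mu(\phi_2)=m_1+m_2+m_3+m_4$ for the $S^2$ piece. Combining these with $\mu(\phi)=0$ and positivity of domains is what forces $\mu(P_\beta)=\mu(P_\gamma)=\mu(\phi^1)=m_3=m_4=0$; these are the content of Lemmas \ref{sec:comm-betw-band-3} and \ref{sec:comm-betw-band-4}, and your sketch has no substitute for them. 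Because Case~1 is precisely the case where the stabilization is \emph{not} free, you cannot defer to ``the other cases of this commutation'' (those do reduce to \cite[Proposition 6.2]{Manolescu2010}); the two index lemmas are the new ingredient, and they are missing from your argument.
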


By Lemma \ref{sec:comm-betw-band-2}, we have a Heegaard triple (see
Figure \ref{Fig:cbqs-07})
subordinate to this commutation. We have to show the following
commutation diagram:

\[\xymatrix{
CFL'(\overline{H}_{\alpha\beta})\ar[d]_{S^{\beta,o_i}} \ar[r]^{B^{\beta,o_i,o_j}} &
CFL'(\overline{H}_{\alpha\gamma}) \ar[d]^{S^{\beta,o_j}} \\
CFL'(H_{\alpha\beta})\ar[r]^{B^{\beta,o_i,o_j}} & CFL'(H_{\alpha\gamma})}.\]
Here, we suppose that:
\begin{itemize}
\item the map $F_{S^{\beta,o_i}}$ send the generator $\x$ to
  $\x\times x^+$, where $x^+$ is an intersection of $\alpha_s$ and
  $\beta_s$ determined by $S^{\beta,o_i}$;
\item the map $F_{S^{\beta,o_j}}$ send the generator $\mathbf{z}$ to
$\mathbf{z}\times z^+$, where $z^+$ is an intersection of $\alpha_s$ and
  $\gamma_s$ determined by $S^{\beta,o_j}$;
\item the map $F_{B^{\beta,o_i,o_j}}$ send the generator $\x$ to
  $F_{\overline{\cal{T}}}(\x\otimes \Theta_{\beta\gamma})$, where the generator
  $\Theta_{\beta\gamma}$ is determined by $B^{\beta,o_i,o_j}$.
\item  the map $F_{B^{\beta,o_i,o_j}}$ send a generator of the form $\x\times x^+$ to
  $F_{\cal{T}}((\x\times x^+)\otimes (\Theta_{\beta\gamma}\times
  \theta^+))$, where $\theta^+$ is the intersection of $\beta_s$ and
  $\gamma_s$ determined by $B^{\beta,o_i,o_j}$.
\end{itemize}

Now we consider the triple $\cal{T}$ as the connected sum of the
triple $\cal{T}^1=(\Sigma^1,\bs{\alpha},\bs{\beta}\cup
\beta_s,\bs{\gamma}\cup\gamma_s)$ and the triple
$\cal{T}^2=(S^2,\alpha_s,\beta_s,\gamma_s)$ as shown in Figure \ref{Fig:cb-09}. 
Notice that, if we remove the $\beta_s$ and $\gamma_s$ curve form
$\cal{T}^1$ we will get the triple $\overline{\cal{T}}$.

\begin{figure}
  \centering
  \includegraphics[scale=0.5]{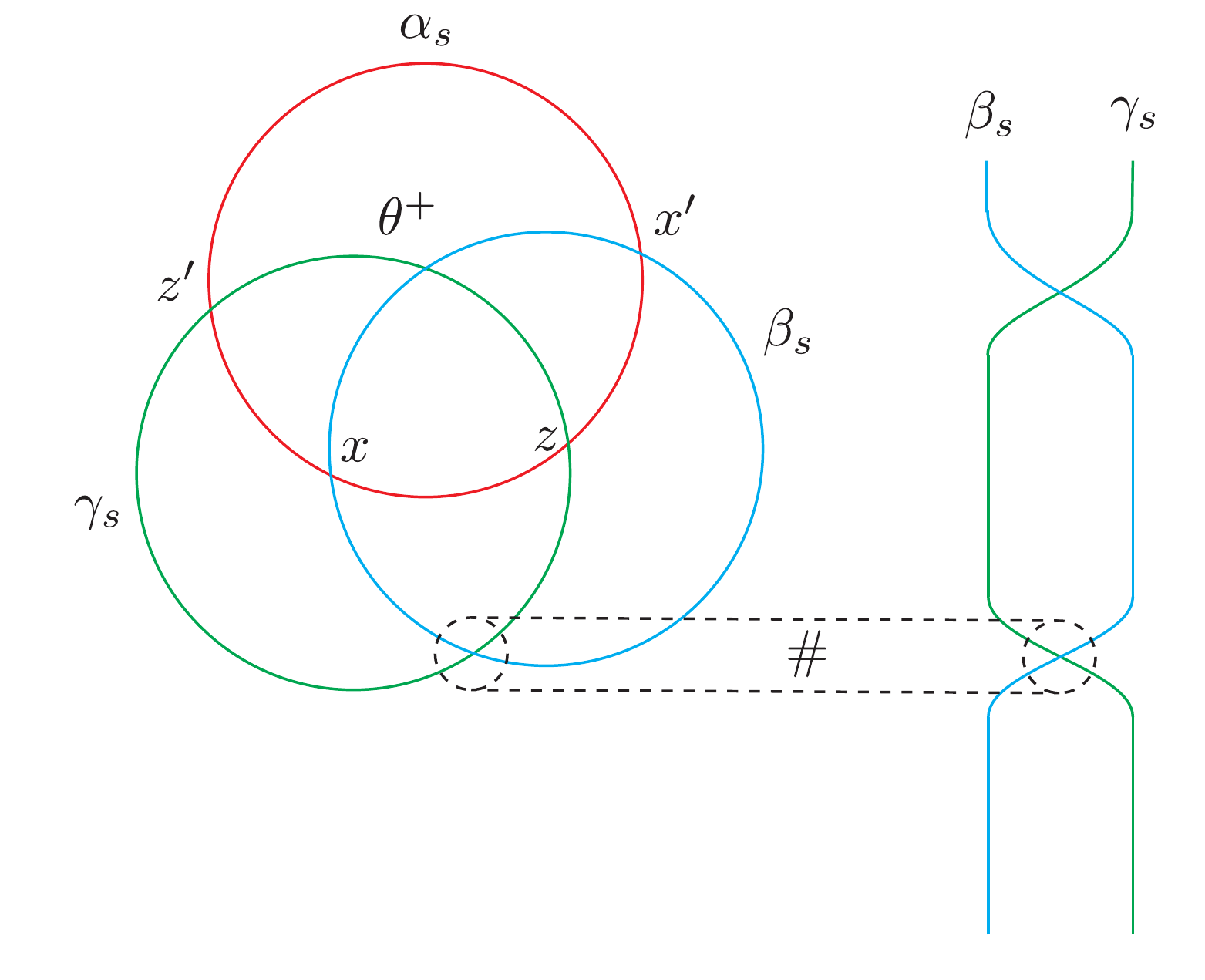}
  \caption{Connected sum of two Heegaard triples.}
  \label{Fig:cb-09}
\end{figure}

 Notice that
the intersection $\x\in
\mathbb{T}_{\bs{\alpha}\cup\{\alpha_s\}}\cap\mathbb{T}_{\bs{\beta}\cup\{\beta_s\}}$
for $H_{\alpha\beta}$,
can be uniquely written as $\x=\x^1\times \x^2$. Here the intersection $\x^1$ belongs
to $
\mathbb{T}_{\bs{\alpha}}\cap \mathbb{T}_{\bs{\beta}}$ for
$\overline{H}_{\alpha\beta}$, the intersection $\x^2$ belongs to $
\mathbb{T}_{\alpha_s}\cap \mathbb{T}_{\beta_s}$ for $(S^2,\alpha_s,\beta_s,\gamma_s)$.  

Similar to \cite[Lemma 6.9]{Manolescu2010}, we have a restriction map

\begin{lem}
  For the stabilized Heegaard triple $\cal{T}$ (as shown in Figure \ref{Fig:cbqs-07}) subordinate to a special
commutation, there is a well-defined restriction map:
\[\sigma:\pi_2(\mathbf{x},\mathbf{y},\mathbf{z})\rightarrow \pi_2(\mathbf{x}^2,\mathbf{y}^2,\mathbf{z}^2)\]
\end{lem}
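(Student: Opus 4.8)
The plan is to mimic the construction of the restriction map for stabilized Heegaard triples in \cite[Lemma 6.9]{Manolescu2010}, adapted to our setting where the stabilization piece $\cal{T}^2=(S^2,\alpha_s,\beta_s,\gamma_s)$ is glued to $\cal{T}^1$ along a connected sum tube in the region containing the new basepoints $\{o_s,o_s'\}$. First I would recall the connected-sum decomposition $\cal{T}=\cal{T}^1\#\cal{T}^2$ already set up above (Figure \ref{Fig:cb-09}), so that every generator factors as $\x=\x^1\times\x^2$, and likewise $\y=\y^1\times\y^2$, $\mathbf{z}=\mathbf{z}^1\times\mathbf{z}^2$. The connected-sum point $p$ sits in a disk that meets none of the $\bs{\alpha},\bs{\beta},\bs{\gamma}$ curves of $\cal{T}^1$, so any Whitney triangle $\psi\in\pi_2(\x,\y,\mathbf{z})$ has a well-defined local multiplicity $n_p(\psi)$ at $p$. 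The idea is: the domain of $\psi$, restricted to $S^2\setminus D_p$ (the $\cal{T}^2$ side) and capped off at $p$ with multiplicity $n_p(\psi)$, defines a domain on $\text{Sym}(S^2)$ connecting $\x^2,\y^2,\mathbf{z}^2$; this domain has the correct corners and is therefore a class in $\pi_2(\x^2,\y^2,\mathbf{z}^2)$, and we declare this class to be $\sigma(\psi)$.

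The key steps, in order, are as follows. (1) Verify that the domain obtained by restriction-and-capping is genuinely a two-chain on $\text{Sym}^1(S^2)=S^2$ with boundary lying on $\alpha_s\cup\beta_s\cup\gamma_s$ and with the prescribed corners at $\x^2,\y^2,\mathbf{z}^2$ — this is a bookkeeping check on local multiplicities across the $\alpha_s,\beta_s,\gamma_s$ arcs near the tube, using that the tube is attached on the $\beta_s$–$\gamma_s$ side of the picture in Figure \ref{Fig:cbqs-07}. (2) Check well-definedness on homotopy classes: since $\pi_2(\x,\y,\mathbf{z})$ differs from a fixed class by adding $\phi_1+\phi_2+\phi_3$ with $\phi_i$ doubly-periodic on each factor, and each such $\phi_i$ restricts to a doubly-periodic domain on the $\cal{T}^2$ side, the induced map on $\pi_2$ descends to homotopy classes and is affine over the relevant periodic-domain groups. (3) Confirm that $\sigma$ is compatible with the Maslov-index and basepoint-count gradings, i.e. $n_{\{o_s,o_s'\}}(\psi)=n_{\{o_s,o_s'\}}(\sigma(\psi))$ and the Maslov contribution of $\psi$ splits as $\mu(\psi)=\mu(\psi\!\restriction_{\cal{T}^1})+\mu(\sigma(\psi))$ by a standard additivity-under-connected-sum argument (à la \cite[Section 8]{Ozsvath2004c}); this is what makes $\sigma$ useful for comparing the two composite maps.

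The main obstacle I expect is Step (1): making sure that the naive restriction actually lands in $\pi_2(\x^2,\y^2,\mathbf{z}^2)$ rather than in some relative $\pi_2$ with boundary also on the tube circle. One has to choose the connected-sum region carefully — inside the disk $D$ bounded by $\beta_s\cup\gamma_s$ containing $\{o_s,o_s'\}$ — and observe that near the tube all three of $\alpha_s,\beta_s,\gamma_s$ are in "standard position," so that the local multiplicities on the two sides of the tube automatically agree and the capping with $n_p(\psi)$ closes the domain up correctly. This is exactly the feature exploited in \cite[Lemma 6.9]{Manolescu2010}, and the novelty here is only that $\cal{T}^2$ is a triple on $S^2$ (with three curves, $\gamma_s$ a Hamiltonian translate of $\beta_s$) rather than a stabilization region for a pair; the same local-multiplicity argument goes through verbatim once the connected-sum point is placed away from all curves of $\cal{T}^1$ and inside the disk $D$. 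The remaining parts are then routine.
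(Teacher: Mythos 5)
Your proposal is correct and follows the same route as the paper: the paper itself gives no independent proof of this lemma, merely introducing it with ``Similar to [Lemma 6.9, Manolescu]'' after setting up the connected-sum decomposition $\cal{T}=\cal{T}^1\#\cal{T}^2$ and the factorization $\x=\x^1\times\x^2$. Your expansion — restrict the domain to the $S^2$ piece, cap off at the connected-sum point with multiplicity $n_p(\psi)$, and check that the resulting domain has the correct corners and boundary so that it lives in $\pi_2(\x^2,\y^2,\mathbf{z}^2)$ — is exactly the argument the paper is invoking.
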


Now we define an equivalence class for the triple
$(\phi^1,P_\beta,P_\gamma)$,  as follows.
We say that the triple $(\phi^1,P_\beta,P_\gamma)$ is equivalent to
$(\phi^{1'},P'_\beta,P'_\gamma)$, if we have the equality, 
\[\phi^1+P_\beta+P_\gamma=\phi^{1'}+P'_\beta+P'_\gamma.\]
Here, $\phi^1$ is in $\pi_2(\x^1,\y^1,\mathbf{z}^1)$, $P_\beta$ is in $
H_2(\Sigma^1,\bs{\beta}\cup\{\beta_s\})$, and $P_\gamma$ is in $
H_2(\Sigma^1,\bs{\gamma}\cup\{\gamma_s\})$.

\begin{lem}\label{sec:comm-betw-band-3}
  Suppose $\phi_2\in\pi_2(\x^2,\theta^+,\mathbf{z}^2)$ the restriction of
  $\phi\in \pi_2(\x,\Theta\times \theta^+,\mathbf{z})$ on
  $(S^2,\alpha_s,\beta_s,\gamma_s)$. Let $(\phi^1_1,
  P_\beta,P_\gamma)$ be the equivalence
  class determined by $\phi$. We have the following 
  Maslov index formula:
\[
\mu(\phi)=\mu(\phi^1)+\mu(P_\beta)+\mu(P_\gamma)-m_1(\phi)-m_2(\phi) +
\mu(\phi_2).
\]
 Here, the $m_1(\phi)$, $m_2(\phi)$, $a$ and $a'$are the multiplicities of the
regions of $\phi$ shown in Figure \ref{Fig:cb-08}.
\end{lem}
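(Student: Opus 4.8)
**Proof proposal for Lemma 5.37 (the Maslov index formula for a stabilized Heegaard triple subordinate to a special commutation).**

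The plan is to reduce the computation to the additivity of the Maslov index under the connected-sum decomposition $\mathcal{T}=\mathcal{T}^1\#\mathcal{T}^2$, and then to account carefully for the correction terms coming from the fact that the curves $\beta_s$ and $\gamma_s$ appear both in $\mathcal{T}^1$ (where they are extra, non-essential curves) and in $\mathcal{T}^2$. First I would recall the general formula of Ozsv\'ath--Szab\'o for the Maslov index of a triangle class in a connected-sum diagram (the analogue of the Sarkar/Lipshitz gluing formula): for $\phi=\phi^1\#\phi_2$ with $\phi^1\in\pi_2(\x^1,\y^1,\mathbf{z}^1)$ on $\mathcal{T}^1$ and $\phi_2\in\pi_2(\x^2,\theta^+,\mathbf{z}^2)$ on $\mathcal{T}^2=(S^2,\alpha_s,\beta_s,\gamma_s)$, one has $\mu(\phi)=\mu(\phi^1)+\mu(\phi_2)$, provided one interprets $\mu(\phi^1)$ as the Maslov index on the diagram $\mathcal{T}^1$ \emph{with} the curves $\beta_s,\gamma_s$ included. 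Since $\beta_s$ is obtained from a handleslide/parallel copy and $\gamma_s$ from a small Hamiltonian isotopy (by Lemma \ref{sec:comm-betw-band-2} and Remark \ref{rem:stronglypositive} the diagram $(\Sigma^1,\bs{\beta}\cup\{\beta_s\},\bs{\gamma}\cup\{\gamma_s\})$ is strongly positive), the class $\phi^1$ on $\mathcal{T}^1$ can be written as $\phi^1_{\overline{\mathcal T}}+P_\beta+P_\gamma$ after adding suitable periodic domains, and this is precisely the equivalence-class decomposition $(\phi^1,P_\beta,P_\gamma)$ introduced just before the statement.

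Next I would apply Lipshitz's formula for the Maslov index of a triangle class (\cite[Corollary 4.3]{Lipshitz2006} in the relevant generality, as already used in Section 5), which is additive over the pieces $\phi^1$, $P_\beta$, $P_\gamma$ up to intersection-number corrections; the cross terms $n_{x_i}+n_{y_i}$ between $\phi^1_{\overline{\mathcal T}}$ and the periodic domains $P_\beta,P_\gamma$ localized near the genus-one stabilization region are exactly the multiplicities $m_1(\phi)$ and $m_2(\phi)$ of the two distinguished regions of $\phi$ marked in Figure \ref{Fig:cb-08}. So the term $\mu(\phi^1)$ (on $\mathcal{T}^1$) equals $\mu(\phi^1_{\overline{\mathcal T}})+\mu(P_\beta)+\mu(P_\gamma)-m_1(\phi)-m_2(\phi)$; writing $\mu(\phi^1)$ for $\mu(\phi^1_{\overline{\mathcal T}})$ (abuse of notation consistent with the statement) and substituting into the connected-sum formula gives
\[
\mu(\phi)=\mu(\phi^1)+\mu(P_\beta)+\mu(P_\gamma)-m_1(\phi)-m_2(\phi)+\mu(\phi_2),
\]
which is the asserted identity.

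The main obstacle, and the step requiring the most care, is the bookkeeping of the correction terms: identifying precisely which regions near the connected-sum annulus contribute to the intersection-number corrections in Lipshitz's formula, verifying that they coincide with the multiplicities $m_1(\phi),m_2(\phi)$ of the regions drawn in Figure \ref{Fig:cb-08} (and that the regions labelled $a,a'$ there do \emph{not} contribute, or cancel), and checking that no further correction arises from the strong positivity of $\beta_s,\gamma_s$ or from the choice of the Whitney triangle $\phi_2$ in $\pi_2(\x^2,\theta^+,\mathbf{z}^2)$. I expect this to follow by a direct local computation in the neighborhood shown in Figures \ref{Fig:cbqs-07} and \ref{Fig:cb-08}, entirely parallel to \cite[Lemma 6.9, Lemma 6.11]{Manolescu2010}, so I would carry it out by tracking each region's multiplicity and Euler measure in that local picture and invoking the additivity of Euler measure and point measure to assemble the formula.
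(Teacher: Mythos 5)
Your general strategy (decompose the triangle class across the stabilization region and track local multiplicities) is in the right spirit, but there is a genuine gap that goes to the heart of the argument.

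The paper's proof is a one-line application of Sarkar's combinatorial Maslov index formula for holomorphic triangles from \cite{Sarkar2006}, which produces the correction term $-\tfrac{1}{2}\bigl(m_1(\phi)+m_2(\phi)+a+a'\bigr)$ — note that the four region multiplicities each enter with coefficient $-\tfrac{1}{2}$, not the $-m_1-m_2$ you are aiming for directly. The crux of the proof, and the step you have skipped over, is the identity $m_1+m_2=a+a'$. This is not something that can be waved away by saying that $a,a'$ ``do not contribute, or cancel'': it is a concrete local observation that follows because the intersection point $\theta^-$ (visible in Figure \ref{Fig:cbqs-07}) is \emph{not} a corner of the generator $\Theta\times\theta^+$, so the regions meeting at $\theta^-$ must be balanced, forcing $m_1+m_2=a+a'$. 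Substituting this into Sarkar's formula is what converts $-\tfrac{1}{2}(m_1+m_2+a+a')$ into the stated $-m_1-m_2$.

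Your proposed route also contains an intermediate claim that is false as stated: additivity $\mu(\phi)=\mu(\phi^1)+\mu(\phi_2)$ for a connected-sum triangle class does not hold without neck corrections — those corrections are precisely the $m_i,a,a'$ terms, and they appear already at the level of the gluing, not as subsequent ``cross terms'' between $\phi^1$ and periodic domains. Attributing $-m_1-m_2$ to an $n_{x_i}+n_{y_i}$ interaction with $P_\beta,P_\gamma$ gives the right answer only by coincidence of notation; it does not produce the $a,a'$ terms at all, and therefore never encounters the identity $m_1+m_2=a+a'$ that you actually need. To repair the argument you should replace your two-step decomposition with a direct invocation of Sarkar's Maslov index formula on the diagram in Figure \ref{Fig:cb-08}, and then supply the local argument about $\theta^-$.
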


\begin{figure}
  \centering
  \includegraphics[scale=0.6]{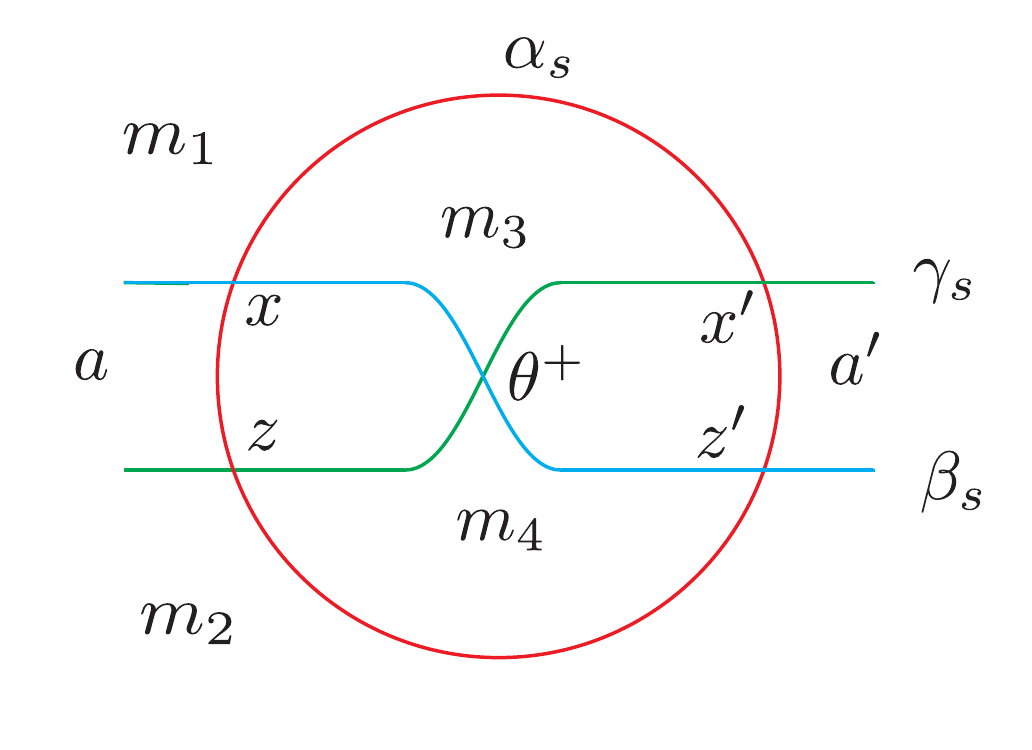}
  \caption{Local picture of Heegaard triple for Maslov index calculations.}
  \label{Fig:cb-08}
\end{figure}

\begin{proof}
By the Sarkar's Maslov index formula for holomorphic triangles
in \cite{Sarkar2006}, we have:
\[\mu(\phi)=\mu(\phi^1)+\mu(P_\beta)+\mu(P_\gamma)-\frac{1}{2}(m_1(\phi)+m_2(\phi)+a+a') +
\mu(\phi_2).\] On the other hand, as the intersection $\theta^-$
(shown in Figure \ref{Fig:cbqs-07}) does
not belong to the generator, so we have $m_1+m_2=a+a'$. Hence, we get
the Maslov index formula.
\end{proof}

\begin{lem}\label{sec:comm-betw-band-4}
  For a class $\phi_2\in \pi_2(\x^2,\theta^+,\mathbf{z}^2)$, where $\theta^+ $ is the
  intersection shown in Figure, we have the following Maslov index
  formula:
\[\mu(\phi_2)=m_1+m_2+m_3+m_4\]
\end{lem}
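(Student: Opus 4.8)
The plan is to reduce Lemma~\ref{sec:comm-betw-band-4} to a purely local Maslov-index bookkeeping on the genus-zero triple diagram $(S^2,\alpha_s,\beta_s,\gamma_s)$, exactly the combinatorial situation pictured in Figure~\ref{Fig:cb-08}. First I would fix the combinatorial model: $\alpha_s$, $\beta_s$ and $\gamma_s$ are pairwise transverse, each pair meets in exactly two points, there are no triple intersections, and the resulting region decomposition of $S^2$ is the one shown in the figure. I then record that $\phi_2$, being the restriction to the sphere summand of a holomorphic triangle in the stabilized triple, is a \emph{positive} domain whose three corners are the prescribed intersection points $\x^2\in\alpha_s\cap\beta_s$, $\theta^+\in\beta_s\cap\gamma_s$ and $\mathbf{z}^2\in\alpha_s\cap\gamma_s$. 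Tracing the curves, the multiplicity of $\phi_2$ is forced to vanish on every region of $S^2$ other than the four regions whose multiplicities are labelled $m_1,m_2,m_3,m_4$ in the figure: the regions abutting the basepoints $o_s,o_s'$ and the ``outer'' regions are ruled out by the corner conditions together with positivity and by the choice of $\theta^+$ as the distinguished generator. Hence $\phi_2=m_1R_1+m_2R_2+m_3R_3+m_4R_4$ with $R_1,\dots,R_4$ the four labelled regions.

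Next I would apply Sarkar's Maslov-index formula for Whitney triangles \cite{Sarkar2006} to $\phi_2$, expressing $\mu(\phi_2)$ as the Euler measure $e(\phi_2)$ plus the three vertex point measures at $\x^2,\theta^+,\mathbf{z}^2$, plus the normalization constant appropriate to a triangle on a genus-zero one-curve triple (the constant being pinned down by demanding $\mu=0$ for the standard small triangle). Expanding $e(\phi_2)=\sum_i m_i\,e(R_i)$ and writing each vertex point measure as $\tfrac14$ times the sum of the multiplicities of the four regions incident to that vertex --- which by the previous step is a sub-sum of $m_1,\dots,m_4$ --- turns the right-hand side into an explicit rational-linear combination of $m_1,m_2,m_3,m_4$. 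Reading the polygon types of $R_1,\dots,R_4$ and the region incidences at the three corners directly off Figure~\ref{Fig:cb-08}, the Euler-measure fractions and the $\tfrac14$'s combine so that each $m_i$ survives with total coefficient $1$ and the additive constant cancels, giving exactly $\mu(\phi_2)=m_1+m_2+m_3+m_4$.

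The argument is elementary once the picture is fixed, so the only real point of care is reading Figure~\ref{Fig:cb-08} correctly: which four regions meet at each corner (and whether each lies ``inside'' or ``outside'' the triangle $\x^2\theta^+\mathbf{z}^2$), the precise polygon type of each $R_i$, and the claim that $\phi_2$ is supported on exactly these four regions. To guard against a normalization slip in Sarkar's triangle formula I would cross-check the conclusion in two independent ways. First, test it on the standard small triangle, where three of the $m_i$ vanish and the remaining one equals $0$, so the formula correctly predicts $\mu=0$. Second, observe that any two classes in $\pi_2(\x^2,\theta^+,\mathbf{z}^2)$ differ by a sum of periodic domains of $H_{\alpha_s\beta_s}$, $H_{\beta_s\gamma_s}$ and $H_{\alpha_s\gamma_s}$, whose Maslov index equals their total basepoint multiplicity by Lipshitz's formula \cite[Corollary~4.3]{Lipshitz2006}; since on $S^2$ both $\mu$ and the quantity $m_1+m_2+m_3+m_4$ change by that same amount when one adds a periodic domain, it suffices to verify the identity on a single representative, which reduces again to the small-triangle check. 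This reduction is what I would ultimately rely on should the direct Sarkar bookkeeping turn out to be delicate.
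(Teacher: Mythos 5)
The paper's own proof of this lemma consists of the single word ``Straightforward,'' so there is no detailed argument to compare against; what you have written is certainly more explicit than what appears in the paper. Your overall strategy --- reduce to a local Maslov-index computation on $(S^2,\alpha_s,\beta_s,\gamma_s)$, verify on a single representative, and use the fact that the identity is affine-linear --- is the right one, and the periodic-domain/small-triangle reduction at the end is really the cleanest way to nail this down.

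That said, two points deserve flagging. First, the assertion that ``the multiplicity of $\phi_2$ is forced to vanish on every region of $S^2$ other than the four labelled regions'' is both too strong and unnecessary. The lemma is stated for an arbitrary class $\phi_2\in\pi_2(\x^2,\theta^+,\mathbf{z}^2)$, not just a positive domain, and even for a positive domain the multiplicities on the unlabelled regions need not vanish in general (indeed $m_1,\dots,m_4$ appear as unknowns precisely because they are not forced); the support claim plays no role once you pass to the affine-linearity argument, so it should simply be dropped. Second, the periodic-domain cross-check is the genuine proof, but it relies on the claim that adding a periodic domain $\mathcal P$ changes $m_1+m_2+m_3+m_4$ by exactly $\mu(\mathcal P)=n_{\mathbf O}(\mathcal P)$. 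This is not automatic from Lipshitz's formula alone: you must check from Figure~\ref{Fig:cb-08} that the sum of the multiplicities of $\mathcal P$ on the four labelled regions equals its total multiplicity at $o_s$ and $o_s'$, which is a nontrivial linear relation forced by the corner conditions of periodic domains and the specific placement of the labelled regions and basepoints. Your proposal defers this to ``reading the figure correctly,'' which is fair, but it is precisely this identity --- not the Sarkar bookkeeping --- that carries the weight of the argument and should be verified explicitly.
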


\begin{proof}
Straightforward. 
\end{proof}
\begin{proof}[Proof of Lemma \ref{sec:comm-betw-band-1}] 
We use Lipshitz's cylindrical formulation to count the holomorphic
triangles, see \cite{Lipshitz2006}.
  Let $\phi\in\pi_2(\x,\y,\mathbf{z})$ with Maslov index zero. By
  Lemma $\ref{sec:comm-betw-band-3}$ and Lemma \ref{sec:comm-betw-band-4}, we have:
\begin{align*}
\mu(\phi) &=\mu(\phi^1)+\mu(P_\beta)+\mu(P_\gamma)-m_1(\phi)-m_2(\phi) +
\mu(\phi_2)\\
&= \mu(\phi^1)+m_3+m_4+\mu(P_\beta)+\mu(P_\gamma) =0.
\end{align*}
Hence, we have $\mu(P_\beta)=\mu(P_\gamma)=0$, the multiplicity
$m_3=m_4=\mu(\phi^1)=0$ and $m_1=m_2=a=a'=m$.
 Therefore we have $\phi^1\in \pi^2(x,\theta,y)$ or
$\phi_1\in \pi_2(x',\theta,y')$. This is determined by the bipartite
disoriented link cobordism, shown in Figure \ref{Fig:cb-04}.

On the other hand, we consider the connected sum 
\[\Sigma(T)=(\Sigma^1-D^1)\#([-T-1,T+1]\times S^1)\#(S^2-D^2).\] Here, $D^1$ is
a small disk neighborhood of $p^1$ on $\Sigma^1$, and $D^2$ is a small disk
neighborhood of $p^2$ on $S^2$, see Figure \ref{Fig:cb-08}. We pick up an almost complex structure $J_1\in
\Sigma\times \Delta$ and $J_2$ on $\Sigma^2\times \Delta$.
 We
construct an almost complex structure $J(T)$ on $\Sigma(T)\times
\Delta$. 
By Remark \ref{rem:stronglypositive}, we know that, when $T\rightarrow \infty$,
the sequence of holomorphic
triangles will have a subsequence converges to some holomorphic objects
on $\Sigma^1$ and on $S^2$. On $S^2$, the objects are some broken
triangles; on $\Sigma$, the objects are some annoying $\beta$ degenerations, annoying $\gamma$ degenerations and broken triangles (see \cite[Lemma
6.3]{Manolescu2010} for the definition of annoying curves).
 As we have $m_3=m_4=0$,
the objects should be of the form $(\phi^1,0,0)$. Hence, for
sufficient large necklength, no boundary point of $S$
can be mapped to $p^2$ under the projection $\pi_\Delta\circ u$. Here
$u:S\rightarrow \Sigma$ is a holomorphic representative of $\phi$.
As in \cite[Proposition 6.15]{Manolescu2010}, we can identify
the moduli spaces $\cal{M}(\phi)$ with some fiber product
$\cal{M}(\phi^1)\times_{\text{Sym}^m\Delta}\cal{M}(\phi^2)$. The
counting of holomorphic triangle argument is similar to that in
the proof of \cite[Proposition 6.2]{Manolescu2010}. 
\end{proof}

\begin{prop}\label{sec:comm-betw-band-5}
 Suppose we have a bipartite disoriented link cobordism made of a band move $B^\beta$ and a
  quasi-stabilization $S^\beta$, such that $B^\beta\circ S^\beta\cong
  S^\beta\circ B^\beta$. Then the maps associated to  $B^\beta\circ S^\beta
 $ and $ S^\beta\circ B^\beta$ on unoriented link
 Floer chain complex $CFL'$ are 
 chain homotopic to each other.
\end{prop}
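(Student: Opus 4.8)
The plan is to reduce the general statement to the cases already analyzed and then dispose of the remaining cases one at a time. First I would observe that the composition $B^\beta\circ S^\beta$ (or its reverse) factors, after lifting to bipartite disoriented link cobordisms, into a commutation of one of the explicit types enumerated just before Lemma \ref{sec:comm-betw-band-2}: either the \emph{special commutation} of Case 1 (where the band move type changes from type I to type II), or one of the three subcases of Case 2 (type does not change, with the saddle and the $\mathcal{A}_\Sigma$-critical point either in the same component of $F_\beta$ with $B^\beta$ of type I, or of type II, or in different components of $F_\beta$). In every case Lemma \ref{sec:comm-betw-band-2} supplies a (free) stabilized Heegaard triple $\mathcal{T}$ subordinate to the commutation, fitting into the square
\[\xymatrix{
CFL'(\overline{H}_{\alpha\beta})\ar[d]_{F_{S^\beta}} \ar[r]^{F_{B^\beta}} & CFL'(\overline{H}_{\alpha\gamma})\ar[d]^{F_{S^\beta}} \\
CFL'(H_{\alpha\beta})\ar[r]^{F_{B^\beta}} & CFL'(H_{\alpha\gamma})}\]
and I would prove commutativity of this square up to chain homotopy in each case.

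For Case 1 the work is already essentially done: Lemma \ref{sec:comm-betw-band-1} (whose proof I would invoke verbatim, together with the Maslov-index bookkeeping of Lemmas \ref{sec:comm-betw-band-3} and \ref{sec:comm-betw-band-4} and the strong positivity recorded in Remark \ref{rem:stronglypositive}) establishes exactly the needed chain homotopy for both liftings $S^{\beta,o_i}\circ B^{\beta,o_i,o_j}\simeq B^{\beta,o_i,o_j}\circ S^{\beta,o_j}$ and its companion. For Case 2, the key simplification is that the stabilized triple is \emph{free}: the extra curve $\beta_s$ is contractible on $\Sigma$ (obtained from the Case 1 construction by a handleslide, as in the proof of Lemma \ref{sec:comm-betw-band-2}). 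Hence the triangle $\mathcal{T}$ splits as a connected sum $\mathcal{T}^1\#(S^2,\alpha_s,\beta_s,\gamma_s)$ exactly as in Figure \ref{Fig:cb-09}, and the restriction/neck-stretching analysis of \cite[Proposition 6.2, Proposition 6.15]{Manolescu2010} identifies, for a triangle class of Maslov index $0$, the moduli space $\mathcal{M}(\phi)$ with a fiber product $\mathcal{M}(\phi^1)\times_{\mathrm{Sym}^m\Delta}\mathcal{M}(\phi^2)$ over the $S^2$-factor; the $S^2$-factor contributes only the obvious small triangle through $\theta^+$, so the triangle map on $CFL'$ through $\Theta_{\beta\gamma}\times\theta^+$ is chain homotopic to $F_{S^\beta}$ applied to the triangle map through $\Theta_{\beta\gamma}$ on $\overline{\mathcal{T}}$. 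Running the same argument with the roles of $B^\beta$ and $S^\beta$ interchanged (using that $\beta_s,\gamma_s$ are away from all basepoints relevant to the other critical point) gives the commutation. Subcases (a), (b), (c) of Case 2 differ only in the ambient local picture of $\mathcal{T}^1$ (Figure \ref{Fig:cb-003}) and not in the connected-sum structure, so the same proof covers all three.

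The main obstacle is Case 1, the special commutation: because a type I band move yields a Heegaard diagram $H_{\beta\gamma}$ that does \emph{not} represent an unlink (Lemma \ref{sec:some-topol-facts}), the stabilized triple there is genuinely \emph{not} free—the curve $\beta_s$ is a parallel copy of $\beta_n$, not a contractible curve—so the clean connected-sum degeneration of Case 2 is unavailable. One must instead carry out the delicate Maslov-index accounting of Lemma \ref{sec:comm-betw-band-3} (Sarkar's formula, together with the identity $m_1+m_2=a+a'$ coming from $\theta^-$ not lying in the generator) and Lemma \ref{sec:comm-betw-band-4} to force, for a Maslov-index-zero triangle, the vanishing $\mu(P_\beta)=\mu(P_\gamma)=m_3=m_4=\mu(\phi^1)=0$ and $m_1=m_2=a=a'$, and only then push the neck to infinity using strong positivity (Remark \ref{rem:stronglypositive}) to kill the annoying $\beta$- and $\gamma$-degenerations. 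But since all of this is exactly what Lemma \ref{sec:comm-betw-band-1} already asserts, I would simply cite it, and the remaining content of the present proposition is the routine Case 2 verification together with the remark that every bipartite disoriented link cobordism with $B^\beta\circ S^\beta\cong S^\beta\circ B^\beta$ is isotopic to one of these enumerated models.
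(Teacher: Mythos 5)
Your proof follows the same case division and the same two key ingredients as the paper: citing Lemma~\ref{sec:comm-betw-band-1} for the special commutation (Case 1), and appealing to the free stabilization structure together with \cite[Proposition 6.2]{Manolescu2010} for the remaining cases. The extra exposition you give—unpacking the Maslov-index bookkeeping for Case 1 and the connected-sum/neck-stretching argument for Case 2—amounts to spelling out the content of the cited lemmas, and is consistent with the paper's proof.
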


\begin{proof}
For special commutation, see Lemma \ref{sec:comm-betw-band-1}. For
the remaining cases, as the triples shown in Figure \ref{Fig:cb-003}
are free stabilizations, we can apply \cite[Proposition
6.2]{Manolescu2010} to show the commutation at the level of $CFL'$.  
\end{proof}

\subsection{The relation between $\alpha$ and $\beta$-band moves}

\label{sec:com:abb}

Let $B^{\beta,o_i,o_j}$ be a $\beta$-band move from a bipartite
disoriented link
$(\cal{L}^0,\mathbf{O})$ to a bipartite disoriented link
$(\cal{L}^1,\mathbf{O})$. We consider a
gradient flow $\phi_{t}$ induced by a Morse function
$f$ compatible with $(\cal{L}^0,\mathbf{O})$. There
exist a $t_0$ such that the band $\phi_{t_0}(B^{\beta,o_i,o_j})$ lies
in the $\alpha$-handlebody with respect to the Heegaard decomposition
induced by $f$. For convenience, we denote by $B^{\alpha,o_i,o_j}$ the
$\alpha$-band $\phi_{t_0}(B^{\beta,o_i,o_j})$. The band move
$B^{\alpha,o_i,o_j}$ changes the bipartite disoriented link
$(\cal{L}^0,\mathbf{O})$ to a bipartite disoriented link
$(\cal{L}^2,\mathbf{O})$.
 By an isotopy $\sigma$ of $(\cal{L}^2,\mathbf{O})$ supported only
on a three-ball $D$, we get a biparite disoriented link
 $(\sigma(\cal{L}^2),\sigma(\mathbf{O}))$ with the following properties:
 \begin{itemize}
 \item the disoriented link $\sigma(\cal{L}^2)$ agrees with
   $\cal{L}^1$,
 \item the (ordered) set of basepoints $\sigma(\mathbf{O})$ and
   $\mathbf{O}$ differ by switching $o_i$ and $o_j$. 
 \end{itemize}

\begin{figure}
  \centering
  \includegraphics[scale=0.7]{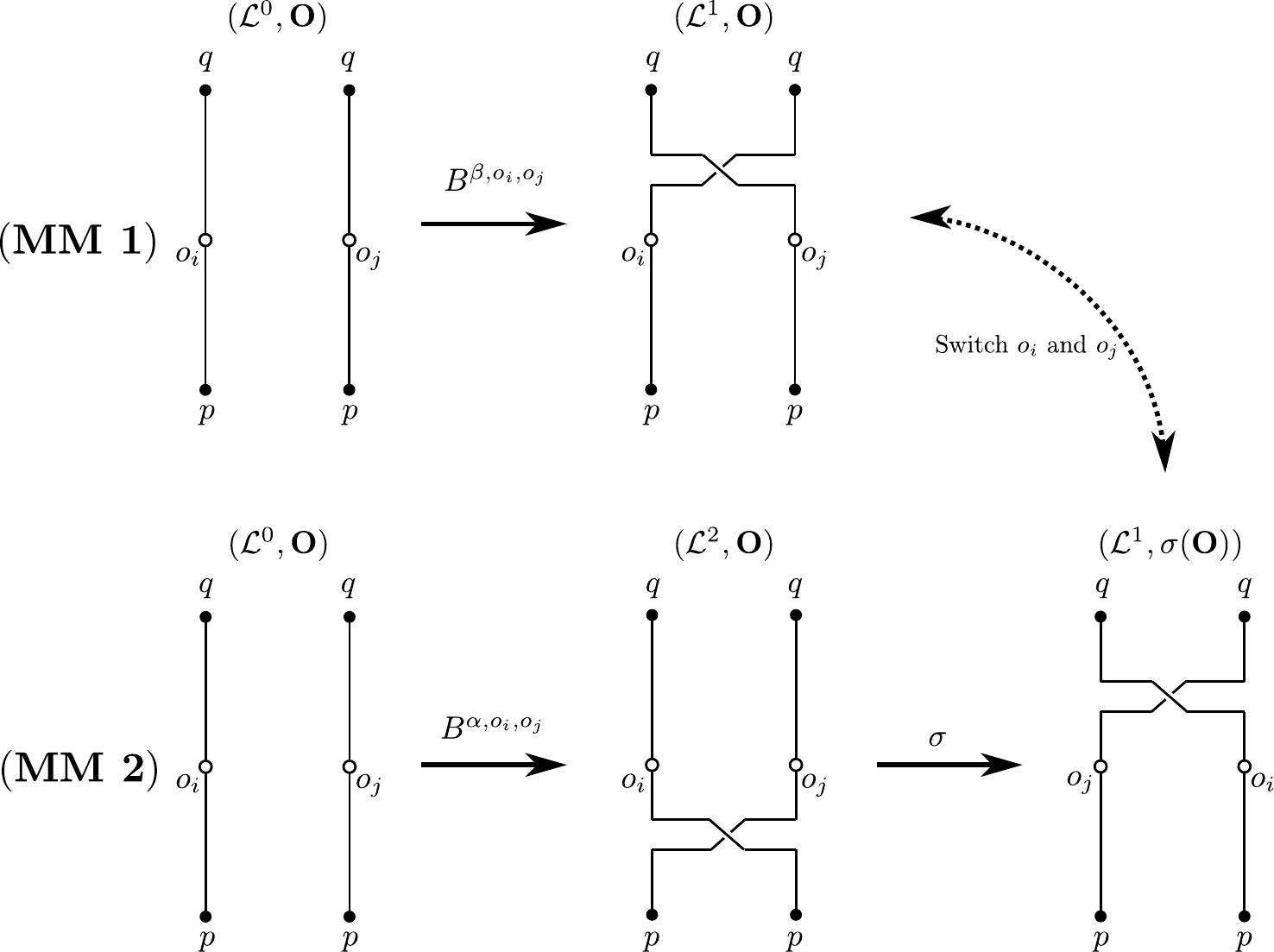}
  \caption{Two sequences of movie moves}
  \label{fig:mmabband}
\end{figure}

Similar to \cite[Proposition 7.8]{Zemke2016a}, we have the following relation between
the band move maps induced by the two band moves $B^{\beta,o_i,o_j}$, and $B^{\alpha,o_i,o_j}$.

\begin{prop}\label{thm:abband}
Let 
the band moves $B^{\beta,o_i,o_j}$ and
$B^{\alpha,o_i,o_j}=\phi_{t_0}(B^{\beta,o_i,o_j})$ be as described
above. We have the two sequences of moves between bipartite disoriented
links as shown in Figure \ref{fig:mmabband}: 
\begin{enumerate}[(\textbf{MM} 1):]
\item $(\cal{L}^0,\mathbf{O})\xrightarrow{B^{\beta,o_i,o_j}} (\cal{L}^1,\mathbf{O})$
\item $(\cal{L}^0,\mathbf{O}) \xrightarrow{B^{\alpha,o_i,o_j}}
  (\cal{L}^2,\mathbf{O})\xrightarrow{\sigma} (\sigma(\cal{L}^2)=\cal{L}^1,\sigma(\mathbf{O}))\xrightarrow{\text{switching
   } o_i \text{ and } o_j} (\cal{L}^1,\mathbf{O})$.
\end{enumerate}
At the level of unoriented link Floer chain complex $CFL'$, the maps induced
by the above two sequence of moves are chain homotopic to each
other. In other words, we have: 
  \[F_{B^{\beta,o_i,o_j}}\simeq \textnormal{Id}_{\textnormal{renum}}\circ
  \sigma_* \circ F_{B^{\alpha,o_i,o_j}}.\]
  Here the map $\operatorname{Id}_{\textnormal{renum}}$ on $CFL'$ is the identity map induced by
  switching the basepoints $o_i$ and $o_j$, and the map $\sigma_*$ is the
  canonical isomorphism induced by the isotopy $\sigma$.
\end{prop}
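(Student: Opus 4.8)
The plan is to realize the two sequences of moves (\textbf{MM}~1) and (\textbf{MM}~2) on a single Heegaard diagram and then compare the induced chain maps by a handleslide/associativity argument, exactly in the spirit of \cite[Proposition 7.8]{Zemke2016a} but taking into account that $(F,\partial F)$ may be non-orientable. First I would fix a Morse function $f$ compatible with $(\mathcal{L}^0,\mathbf{O})$ and, as in the proof of Theorem~\ref{sec:exist-heeg-triple}, implant the core $c_{o_i,o_j}$ of the band. Because $B^{\alpha,o_i,o_j}=\phi_{t_0}(B^{\beta,o_i,o_j})$ has the \emph{same core} as $B^{\beta,o_i,o_j}$ (only pushed across the Heegaard surface by the gradient flow), after enough stabilizations both band moves are recorded by the \emph{same} local picture; the only difference is which handlebody the attaching disk lives in, i.e. whether the perturbed curve is a $\bs{\gamma}$-curve (Hamiltonian isotopy of a $\bs{\beta}$-curve, giving $B^\beta$) or the analogous curve obtained by perturbing an $\bs{\alpha}$-curve (giving $B^\alpha$). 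So the first concrete step is: produce a Heegaard quadruple $(\Sigma,\bs{\alpha},\bs{\alpha}',\bs{\beta},\bs{\gamma},\mathbf{O})$ in which $\mathcal{T}_{\alpha\beta\gamma}$ is subordinate to $B^{\beta,o_i,o_j}$ and $\mathcal{T}_{\alpha'\alpha\beta}$ is subordinate to $B^{\alpha,o_i,o_j}$, with the two implantations happening in a common local model around $c_{o_i,o_j}$.

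Next I would unwind the right-hand side $\operatorname{Id}_{\textnormal{renum}}\circ\sigma_*\circ F_{B^{\alpha,o_i,o_j}}$ in terms of triangle maps. The map $F_{B^{\alpha,o_i,o_j}}$ is $f_{\alpha'\alpha\beta}(\Theta_{\alpha'\alpha}\otimes -)$ for the top-grading generator $\Theta_{\alpha'\alpha}$ determined by the $\alpha$-band; the isotopy $\sigma$ supported in a three-ball $D$ induces the nearest-point (small-triangle) isomorphism $\sigma_*$; and $\operatorname{Id}_{\textnormal{renum}}$ is literally the identity on the underlying module, recording the transposition of $o_i$ and $o_j$ in the labeling of $\mathbf{O}$ (this is where the disoriented data $\cal{A}$ matters: after the $\alpha$-band the two relevant basepoints sit on the "wrong" sides, and the renumbering puts them back, cf.\ Figure~\ref{fig:mmabband}). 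The left-hand side is $f_{\alpha\beta\gamma}(- \otimes\Theta_{\beta\gamma})$. So the identity to prove becomes an equality, up to chain homotopy, of two compositions of triangle maps attached to the quadruple above; by the $CFL'$-associativity of Lemma~\ref{sec:glob-textsp-assoc-1} both sides equal a triangle map $f_{\alpha'\beta\gamma}(\Theta_{\alpha'\beta}\otimes-)$ (or its $\operatorname{Id}_{\textnormal{renum}}$-twist) once I identify the relevant top-grading generators $\Theta$ under the connected-sum/handleslide moves. The bulk of the argument is therefore a local triangle count in the connected-sum region: show that $f_{\beta\gamma\delta}$ or $f_{\alpha'\alpha\beta}$ applied to the distinguished generators produces the distinguished generator on the other side, with no extra $U$-powers, exactly as in Lemma~\ref{sec:relation-generators} and Lemma~\ref{sec:comm-betw-beta-3}; this uses that the relevant diagram on the connected-summand is standard on a genus-one (or sphere) piece, that the periodic domains have the right Maslov index by Lipshitz's formula, and that the unique small triangle has $n_{\mathbf{O}}=0$ there.

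The main obstacle, I expect, is bookkeeping the basepoint relabeling consistently through the connected sum and handleslides, so that the "stray" $U$-factor that would appear on one side is exactly absorbed by $\operatorname{Id}_{\textnormal{renum}}\circ\sigma_*$ on the other — in other words, proving that the two distinguished generators $\Theta_{\beta\gamma}$ and (the image under the renumbering of) $\Theta_{\alpha'\alpha}$ genuinely correspond under the topological moves of Lemma~\ref{sec:some-topol-facts-1}, rather than differing by a grading shift. A secondary technical point is admissibility: I must arrange all of $\mathcal{T}_{\alpha\beta\gamma}$, $\mathcal{T}_{\alpha'\alpha\beta}$ and the quadruple to be strongly $\mathfrak{s}$-admissible with $\mathfrak{s}$ torsion, using finger moves that avoid $\mathbf{O}$ as in \cite[Lemma 5.2]{Ozsvath2004c}, so that Lemma~\ref{lem:admtri} and Lemma~\ref{sec:glob-textsp-assoc-1} apply. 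Once these are in place, the chain homotopy $F_{B^{\beta,o_i,o_j}}\simeq \operatorname{Id}_{\textnormal{renum}}\circ\sigma_*\circ F_{B^{\alpha,o_i,o_j}}$ follows formally from associativity and the local generator identifications, completing the proof.
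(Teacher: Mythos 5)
Your proposal takes a genuinely different route from the paper's proof, and it has a gap. The paper does not use the associativity of Lemma~\ref{sec:glob-textsp-assoc-1} at all here. Instead, it reduces every map in the statement to a $U$-twisted nearest-point (continuation) map: it chooses $\bs{\delta}$-curves on the same surface, with $\delta_n$ a Hamiltonian isotopy of $\alpha_n$ crossing $o_i$ and $o_j$, defines the modified nearest-point maps $\Psi^{\beta\gamma}_\alpha$ and $\Psi^\beta_{\delta\alpha}$ (nearest-point map times $U$ when the distinguished intersection $x_*$ lies in $\x$), identifies these with the triangle-defined band-move maps via Proposition~\ref{thm:osseq}, arranges $\sigma$ to act on $\Sigma$ setwise so that after renumbering $\sigma(\bs{\delta})\approx\bs{\alpha}$ and $\sigma(\bs{\beta})\approx\bs{\gamma}$, and then directly compares the two compositions of nearest-point maps. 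The advantage is that nearest-point maps compose by inspection in the local disk, with the $U$-power manifestly agreeing on both sides.

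The gap in your proposal is in the step where you claim that ``by the $CFL'$-associativity of Lemma~\ref{sec:glob-textsp-assoc-1} both sides equal a triangle map $f_{\alpha'\beta\gamma}(\Theta_{\alpha'\beta}\otimes-)$.'' Applying associativity to the quadruple $(\bs{\alpha}',\bs{\alpha},\bs{\beta},\bs{\gamma})$ gives an equality of the form
\[
f_{\alpha'\beta\gamma}\bigl(f_{\alpha'\alpha\beta}(\Theta_{\alpha'\alpha}\otimes\x)\otimes\Theta_{\beta\gamma}\bigr)
=
f_{\alpha'\alpha\gamma}\bigl(\Theta_{\alpha'\alpha}\otimes f_{\alpha\beta\gamma}(\x\otimes\Theta_{\beta\gamma})\bigr),
\]
which relates $F_{B^{\alpha,o_i,o_j}}$ and $F_{B^{\beta,o_i,o_j}}$ only \emph{through} the outer triangle maps $f_{\alpha'\beta\gamma}(-\otimes\Theta_{\beta\gamma})$ and $f_{\alpha'\alpha\gamma}(\Theta_{\alpha'\alpha}\otimes-)$. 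Those are not identities: since $\alpha'_n$ (your $\delta_n$) crosses $o_i$ and $o_j$, the map $f_{\alpha'\alpha\gamma}(\Theta_{\alpha'\alpha}\otimes-)$ carries a $U$-power on part of the module, and $f_{\alpha'\beta\gamma}(-\otimes\Theta_{\beta\gamma})$ is itself another band-move map on a different diagram. Moreover $\sigma_*$ and $\operatorname{Id}_{\textnormal{renum}}$ are \emph{not} triangle maps within this quadruple --- $\sigma_*$ is the chain isomorphism induced by a diffeomorphism of $Y$, and $\operatorname{Id}_{\textnormal{renum}}$ is a relabeling of $\mathbf{O}$; neither has a slot in the associativity identity. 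So associativity alone does not reduce the claim to a generator identification; you would still have to match $f_{\alpha'\alpha\gamma}(\Theta_{\alpha'\alpha}\otimes-)$ and $f_{\alpha'\beta\gamma}(-\otimes\Theta_{\beta\gamma})$ with $\operatorname{Id}_{\textnormal{renum}}\circ\sigma_*$, which is exactly the content the paper handles by explicitly reducing to $\Psi$-maps. Your ``main obstacle'' paragraph gestures at the $U$-bookkeeping but proposes to resolve it by identifying top-grading generators, which does not address the fact that the connecting triangle maps themselves carry $U$-powers and that $\sigma_*$ lives outside the associativity framework.
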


\begin{proof}

  By Theorem \ref{sec:exist-heeg-triple}, we consider a standard Heegaard Triple
  $\cal{T}_{\alpha\beta\gamma}$ subordinate to
  $B^{\beta,o_i,o_j}$. On the same Heegaard surface, we construct the $\delta$-curves
  such that,
\begin{enumerate}
\item the curves $\delta_1,\cdots,\delta_{n-1}$ are small Hamiltonian isotopies
  without crossing any baspoints,
\item the curve $\delta_n$ is a small Hamiltonian isotopy of $\alpha_n$ crossing
  the basepoints $o_i$ and $o_j$. The geometric intersection
  $|\delta_n\cap\alpha|=2=|\delta_n\cap\delta\cap D|$.
\end{enumerate} 
The Heegaard triples $T_{\alpha\beta\gamma}$ and
$T_{\delta\alpha\beta}$ are shown in Figure
\ref{fig:twoheegaardtriple}. It is easy to see that
$T_{\delta\alpha\beta}$ is subordinate to the band move
$B^{\alpha,o_i,o_j}$.

\begin{figure}
  \centering
  \includegraphics[scale=1.4]{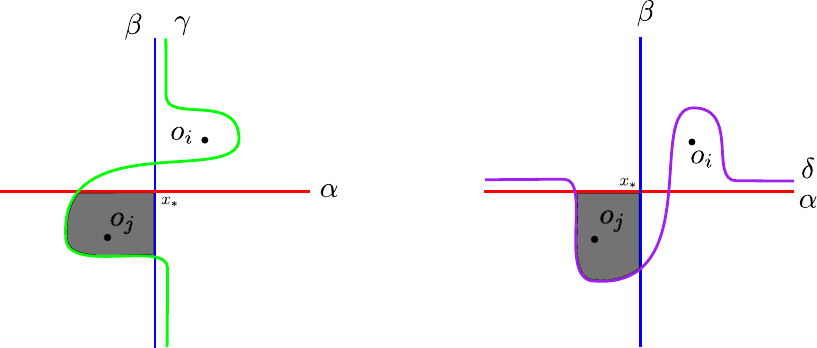}
  \caption{Heegaard Triple $\mathcal{T}_{\alpha\beta\gamma}$ and
    $\mathcal{T}_{\delta\alpha\beta}$}
  \label{fig:twoheegaardtriple}
\end{figure}

Without loss of generality, we can require the isotopy $\sigma$ to
satisfy the following: 

\begin{enumerate}
\item $\sigma$ preserves the Heegaard surface $\Sigma$ setwise,
\item $\sigma|_{(\Sigma\backslash D)}= \operatorname{Id}|_{(\Sigma\backslash D)}$,
\item $\sigma(o_i)=o_j$, $\sigma(o_j)=o_i$ and $\sigma(o_k)=o_k$ if
  $k\neq i,j$.
\item after switching the markings $o_i$ and $o_j$, the curves $\sigma(\delta)$ are small Hamiltonian isotopies of $\alpha$
  without crossing basepoints and the curves $\sigma(\beta)$ are small
  Hamiltonian isotopies of $\gamma$.  
\end{enumerate}
The two sequences of movie  moves in Figure
\ref{fig:mmabband} induce two sequences of Heegaard 
moves shown in Figure \ref{fig:isotopyhd}.

\begin{figure}
  \centering
  \includegraphics[scale=0.8]{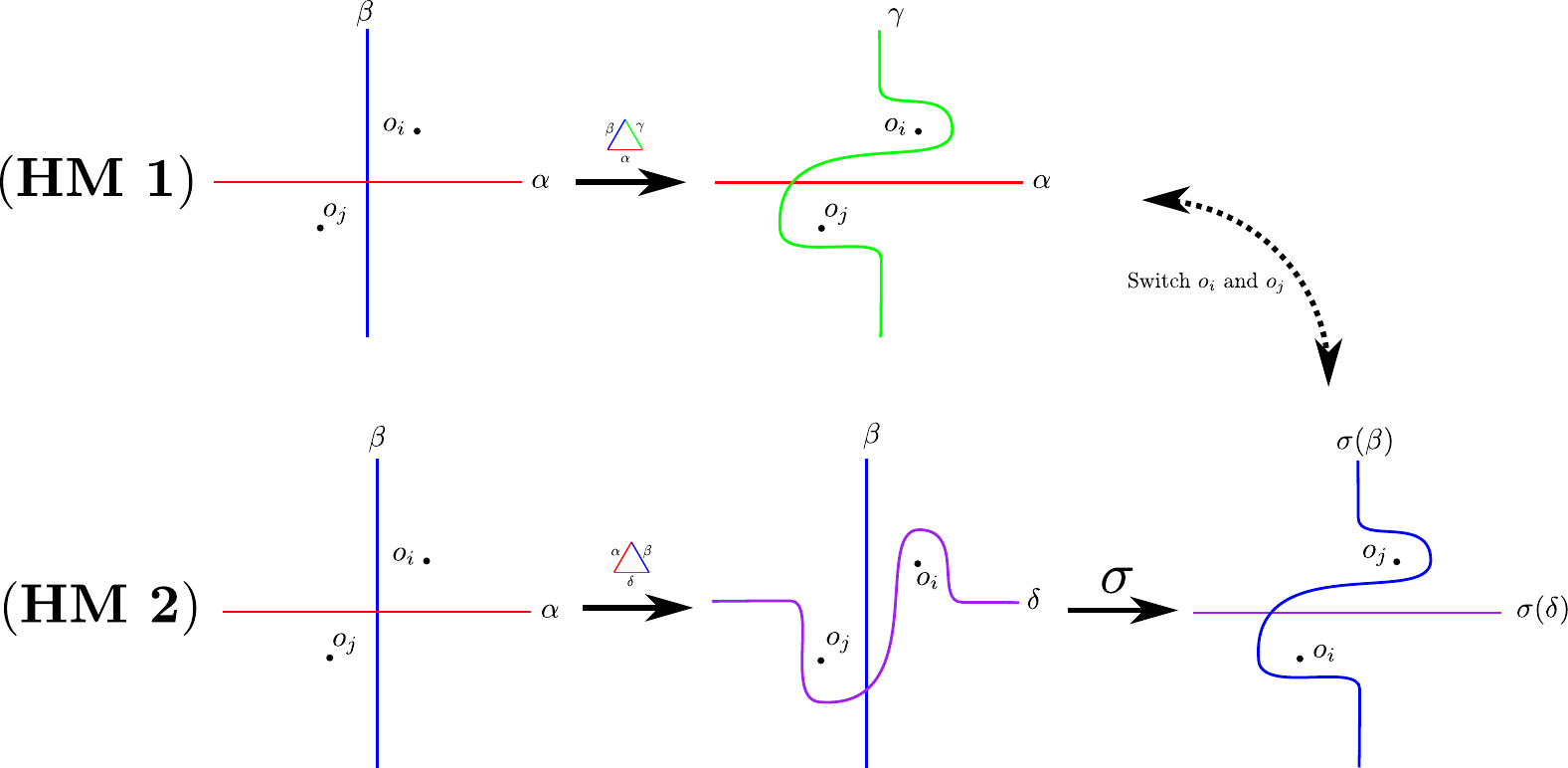}
  \caption{Sequence of Heegaard moves}
  \label{fig:isotopyhd}
\end{figure}

It suffices to show that,
\[\Phi_{\sigma(\delta)\sigma(\beta)}^{\alpha\gamma}\circ\sigma_{*}\circ
  F_{B^{\alpha,o_i,o_j}}\cong F_{B^{\beta,o_i,o_j}}.\]Here the map
$\Phi_{\sigma(\delta)\sigma(\beta)}^{\alpha\gamma}$ is induced
by small isotopy without crossing any basepoints.

For the Heegaard triple $\cal{T}_{\alpha\beta\gamma}$ we have the nearest
point map $N_{\alpha}^{\beta\gamma}$, see \cite{Ozsvath2008a}. It sends a generator
$\x=(x_1,\cdots,x_n)$ to the unique generator $\mathbf{z}=(z_1,\cdots,z_n)$
where the crossing $x_i$ and $y_i$ are vertices of a small triangle.
Notice from Figure \ref{fig:twoheegaardtriple} that the shaded small
triangles going over with basepoints
$o_i$ or $o_j$. Hence, in our case the map $N_{\alpha}^{\beta\gamma}$ is no longer a
chain map. Therefore, we define a chain map
$\Psi_{\alpha}^{\beta\gamma}$ as 
\[\Psi_{\alpha}^{\beta\gamma}(\x)=
\begin{cases}
U\cdot N_{\alpha}^{\beta\gamma}, &\text{if } x_{*}\in \x \\
N_{\alpha}^{\beta\gamma} &\text{if } x_{*}\notin \x
\end{cases}.\]

Similarly to the proof of Proposition \ref{thm:osseq}, the map $\Psi_{\alpha}^{\beta\gamma}$ is
chain homotopic to $F_{B^{\beta,o_i,o_j}}$.

On the other hand, for the Heegaard triple
$\cal{T}_{\delta\alpha\beta}$ we also have a nearest point map
$N^{\beta}_{\delta\alpha}$. It sends a generator
$\x=(x_1,\cdots,x_n)\in \mathbb{T}_{\alpha}\cap \mathbb{T}_{\beta}$ to the unique generator
$\mathbf{z}=(z_1,\cdots,z_n)\in \mathbb{T}_{\delta}\cap \mathbb{T}_{\beta}$
where the crossings $x_i$ and $z_i$ are vertices of a small triangle.

We also define a  chain map
$\Psi^{\beta}_{\delta\alpha}$ as 
\[\Psi^{\beta}_{\delta\alpha}(\x)=
\begin{cases}
U\cdot N^{\beta}_{\delta\alpha} &\text{if } x_{*}\in \x, \\
 N^{\beta}_{\delta\alpha} &\text{if } x_{*}\notin \x
\end{cases},\] which is chain homotopic
to  $F_{B^{\alpha,o_i,o_j}}$. 

By our construction, after switching the markings $o_i$ and $o_j$,
the curve $\sigma(\delta)$ and $\alpha$ (or $\sigma(\beta)$ and
$\gamma$ resp.) are
differed by a small isotopy without crossing any basepoints. Hence the
 map $\Phi_{\sigma(\delta)\sigma(\beta)}^{\alpha\gamma}$ is chain
homotopic to a composition of a sequence of nearest point map. 
Hence the nearest point maps
$\Psi_{\alpha}^{\beta\gamma}$ and
$\Phi_{\sigma(\delta)\sigma(\beta)}^{\alpha\gamma}\circ
\sigma_{*}\circ \Psi^{\beta}_{\delta\alpha}$ are chain homotopic. 
This implies that
 $\Phi_{\sigma(\delta)\sigma(\beta)}^{\alpha\gamma}\circ
 \sigma_{*}\circ F_{B^{\alpha,o_i,o_j}}$ is chain homotopic to $F_{B^{\beta,o_i,o_j}}$. 

\end{proof}

\begin{proof}[Proof of Lemma \ref{sec:comm-betw-beta-2}] 

We denote by $\mathbb{W}^1$ the bipartite disoriented link cobordism
$B^1\circ B^2$ and by $\mathbb{W}^2$ the bipartite disoriented link
cobordism $B_2\circ B_1$. Both cobordism $\mathbb{W}^1$ and
$\mathbb{W}^2$ are from the bipartite disoriented link
$(\cal{L}_1,\mathbf{O}_1)$ to $(\cal{L}_1,\mathbf{O}_2)$. Without loss
of generality, we assume $B_2=B_2^{\beta,o_{i'},o_{j'}}$ and the
bipartite disoriented link cobordisms $\mathbb{W}^1$ and
$\mathbb{W}^2$  are as shown in the top
of Figure \ref{fig:comtwobetaband}. 

We consider another two bipartite disoriented link cobordism
$\mathbb{W}^{1'}$ and $\mathbb{W}^{2'}$ from
$(\cal{L}_1,\mathbf{O}'_1)$ to $(\cal{L}_1,\mathbf{O}'_2)$. Here, the
bipartite disoriented links
$(\cal{L}_1,\mathbf{O}'_1)$ and $(\cal{L}_1,\mathbf{O}_1)$ are
differed by a baspoints-moving map $\sigma_1$ which
moves $o_{i'}$ and $o_{j'}$ but fix all the other
basepoints; the bipartite disoriented links
$(\cal{L}_1,\mathbf{O}_2)$ and $(\cal{L}_1,\mathbf{O}'_2)$ are
differed by switching $o_{i'}$ and $o_{j'}$ and a baspoints-moving map $\sigma_2$ which
moves $o_{i'}$ and $o_{j'}$ but fix all the other
basepoints. See the bottom of
Figure \ref{fig:comtwobetaband} for  $\mathbb{W}^{1'}$ and
$\mathbb{W}^{2'}$.
  
We denote by $B^{\alpha}$ the $\alpha$-band $B_1^{\alpha,o_{i'},o_{j'}}$. 
By Proposition \ref{thm:abband}, we know that
\begin{align*}
  &F_{\mathbb{W}^1}=\sigma_2 \circ F_{\mathbb{W}^{1'}}\circ \sigma_1 =
  \sigma_2\circ F_{B^1}\circ F_{B_2}^\alpha \circ \sigma_1\\
   &F_{\mathbb{W}^2}=\sigma_2 \circ F_{\mathbb{W}^{2'}}\circ \sigma_1 =
  \sigma_2\circ F_{B_2}^\alpha \circ F_{B^1}\circ\sigma_1.
\end{align*}
 As the band move maps
$F_{B_1}$ commute with $F^\alpha_{B_2}$ by Lemma
\ref{sec:comm-betw-alpha-1}, we conclude that $F_{\mathbb{W}^1}$
agree with the map $F_{\mathbb{W}^2}$.

\end{proof}

\begin{figure}
  \centering
  \includegraphics[scale=0.8]{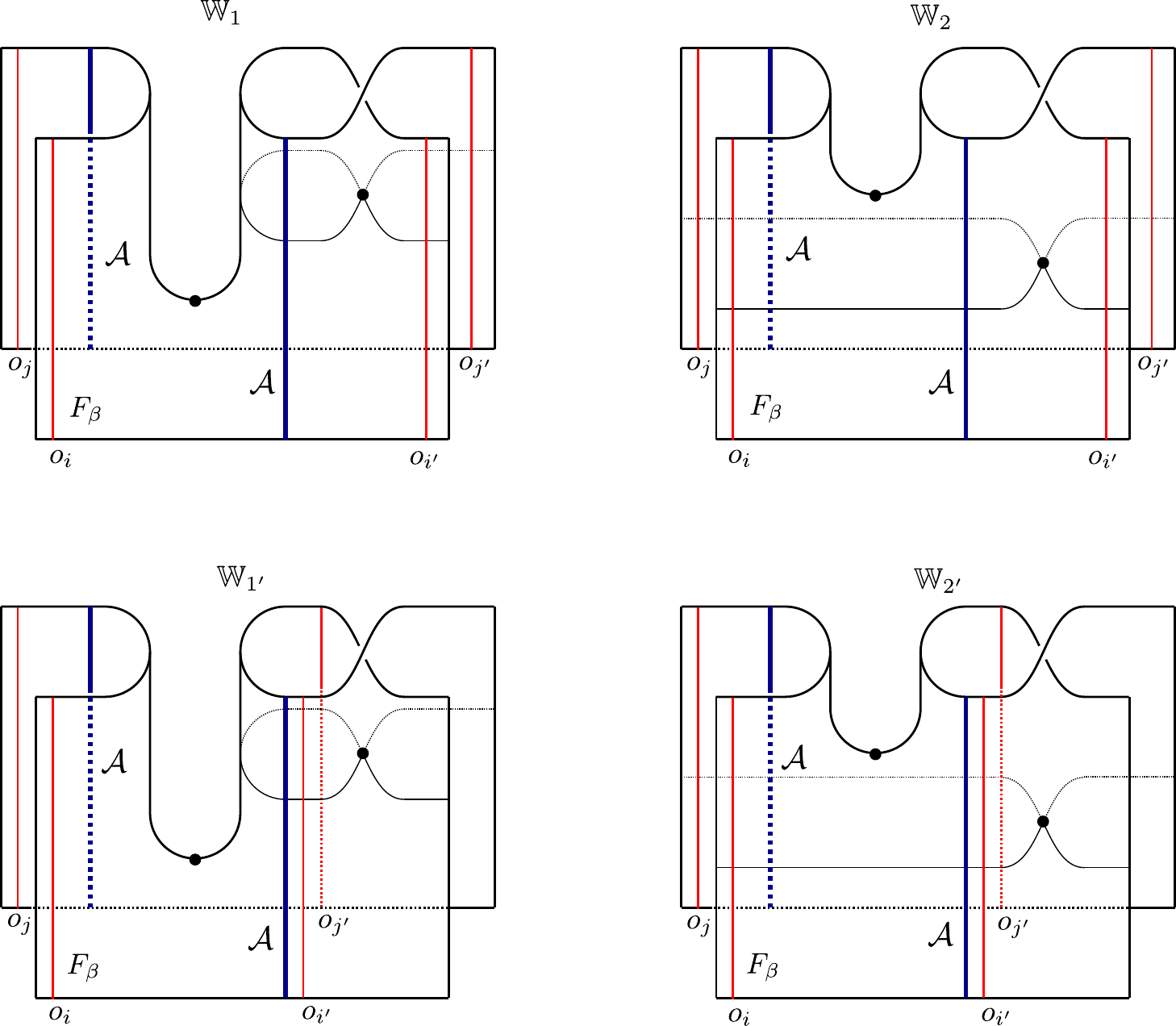}
  \caption{Commutation of two $\beta$-band moves.}
  \label{fig:comtwobetaband}
\end{figure}

\subsection{The relation between $\alpha$ and
  $\beta$-quasi-stabilizations} 
\label{sec:com:abq}

Let $S^{\beta,o_i}$ be a $\beta$-quasi-stabilization from
$(\cal{L}^0,\mathbf{O}^0)$ to $(\cal{L}^1,\mathbf{O}^1)$.
We also have a $\alpha$-quasi-stabilization $S^{\alpha,o_i}$,
which adds a pair of basepoints $(o,o')$ to the same bipartite disoriented link
$(\cal{L}^0,\mathbf{O}^0)$ and gets a bipartite disoriented link $(\cal{L}^2,\mathbf{O}^2)$. We provide a local picture for $S^{\alpha,o_i}$
and $S^{\beta,o_i}$ in Figure \ref{fig:relationabqs}. We can find an basepoint
moving map $\sigma$ (an isotopy preverses the link $L$), which satisfies the following,

\begin{itemize}
\item
the disoriented link $\sigma(\mathcal{L}^2)$ agrees with $\mathcal{L}^1$.
\item the (ordered) basepoint sets $\sigma(\mathbf{O}^2)$ and $\mathbf{O}^1$ differ by
  changing the ordering of the three basepoint $(o,o',o_i)$,
\item if a basepoint $o_l$ is not $o$, $o'$, or $o_i$ in
  $\mathbf{O}^2$, then $\sigma(o_l)=o_l$.
\end{itemize}

\begin{figure}
  \centering
  \includegraphics[scale=0.7]{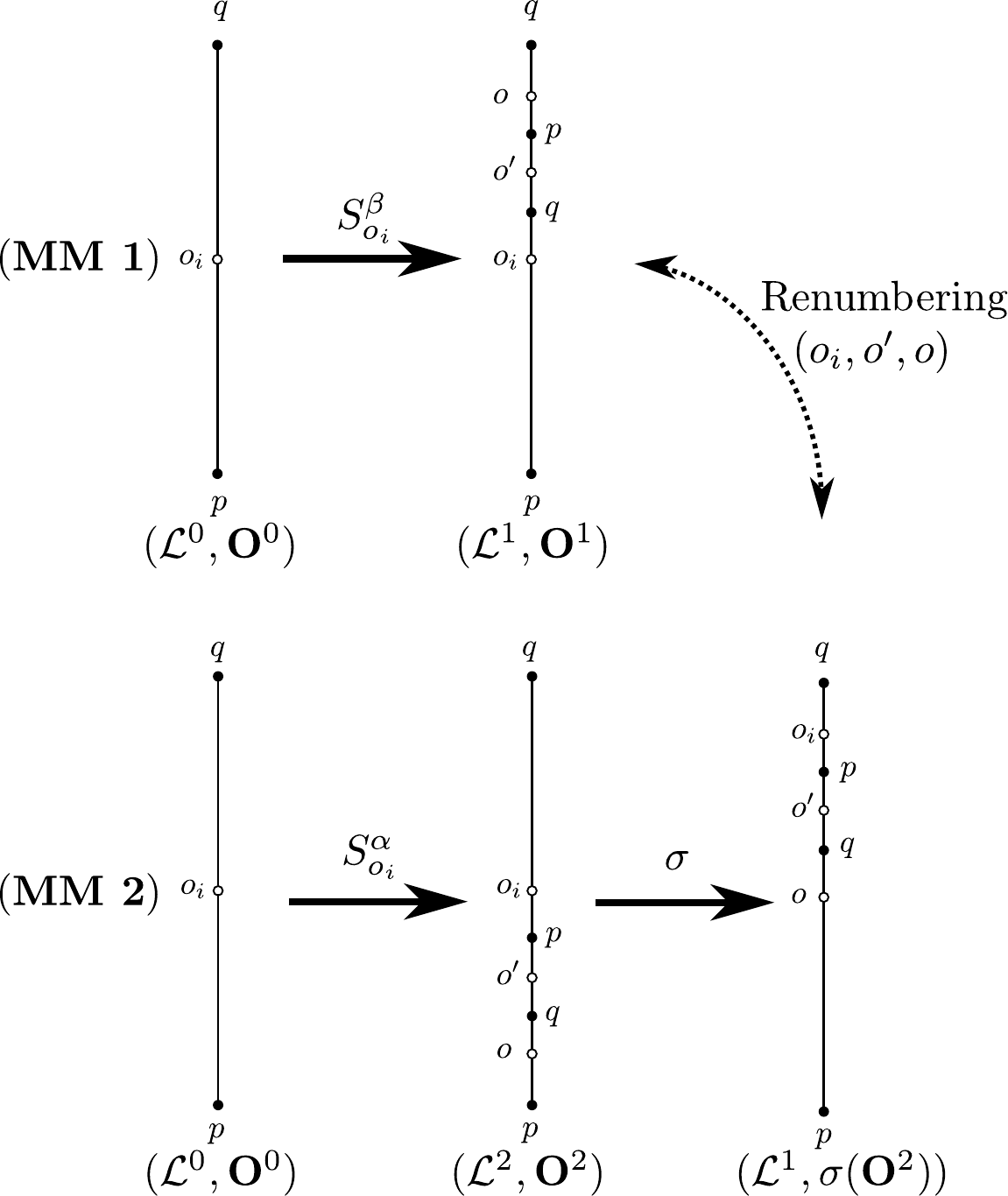}
  \caption{$\alpha$ and $\beta$-quasi-stabilizations}
  \label{fig:relationabqs}
\end{figure}

 At the level of $CFL'$, we have the following relation about the maps
induced by  $S^{\alpha,o_i}$ and  $S^{\beta,o_i}$. 

\begin{prop}\label{thm:abquasi}
  Let  $S^{\beta,o_i}$ and  $S^{\alpha,o_i}$ be two
  quasi-stabilizations described above. We consider the following two sequences of
  moves, as shown in Figure \ref{fig:relationabqs}:
\begin{enumerate}[(\textbf{MM} 1):]
  \item  $(\mathcal{L}^0,\mathbf{O}^0)\xrightarrow{S^{\beta,o_i}} (\mathcal{L}^1,\mathbf{O}^1)$,
  \item
    $(\mathcal{L}^0,\mathbf{O}^0)\xrightarrow{S^{\alpha,o_i}}(\mathcal{L}^2,\mathbf{O}^2)\xrightarrow
   {\sigma}(\sigma(\mathcal{L}^2)=\mathcal{L}^1,\sigma(\mathbf{O}^2)) \xrightarrow{\text{Renumbering
      }} (\mathcal{L}^1,\mathbf{O}^1)$.
\end{enumerate} At the level of $CFL'$, the maps induced by these two
sequence of movie moves are chain homotopic. In other words,
\[F_{S^{\alpha,o_i}}\cong \sigma\circ F_{S^{\beta,o_i}}.\] 
 If we reverse the two sequence of movie moves, similar results hold
 for quasi-destabilization. 
\end{prop}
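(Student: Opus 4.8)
The plan is to mimic the proof of Proposition~\ref{thm:abband}: realize both quasi-stabilization maps by their explicit formulas on a conveniently chosen Heegaard diagram, and compare them via the basepoint-moving isotopy $\sigma$, reducing the comparison to a local computation on a two-sphere summand.

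First I would fix a Heegaard diagram $H=(\Sigma,\bs{\alpha},\bs{\beta},\mathbf{O}^0)$ compatible with $(\cal{L}^0,\mathbf{O}^0)$ together with a disk region $D_p\subset\Sigma\backslash(\bs{\alpha}\cup\bs{\beta})$ in which \emph{both} quasi-stabilizations are performed; by choosing the compatible Morse function appropriately this can be arranged (cf.\ Section~\ref{sec:topol-facts-about}). The move $S^{\beta,o_i}$ then produces $\overline{H}^{\beta}=(\overline{\Sigma},\bs{\alpha}\cup\{\alpha_s\},\bs{\beta}\cup\{\overline{\beta}_s\},\mathbf{O}^0\cup\{o,o'\})$, in which $\alpha_s$ bounds a local disk containing $\{o,o'\}$ and $\overline{\beta}_s$ separates $o$ from $o'$; the move $S^{\alpha,o_i}$ produces $\overline{H}^{\alpha}$ with the roles of the $\alpha$- and $\beta$-curves interchanged in the local picture, i.e.\ a new curve $\beta_s$ bounding a local disk through $\{o,o'\}$ and a new curve $\overline{\alpha}_s$ separating them. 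In each case the two new curves meet in exactly two points; on $\overline{H}^{\beta}$ let $x^{+}$ (resp.\ $x^{-}$) be the intersection of highest (resp.\ lowest) $gr_{o}$-grading of the associated two-sphere diagram, and let $y^{+},y^{-}$ be the analogous pair on $\overline{H}^{\alpha}$, so that $F_{S^{\beta,o_i}}(\x)=\x\times x^{+}$ and $F_{S^{\alpha,o_i}}(\x)=\x\times y^{+}$. Since on $CFL'$ every basepoint variable is set equal to $U$, the renumbering map $\operatorname{Id}_{\textnormal{renum}}$ is literally the identity, so it is enough to prove $\operatorname{Id}_{\textnormal{renum}}\circ\sigma_{*}\circ F_{S^{\alpha,o_i}}\simeq F_{S^{\beta,o_i}}$ up to $\mathbb{Z}$-filtered chain homotopy, where $\sigma_*$ is the isomorphism induced by the isotopy $\sigma$.

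Second I would analyze $\sigma$. Since $\sigma$ is supported in a ball $D$, it may be taken to preserve the Heegaard surface setwise, to equal the identity off $D\cap\Sigma$, and to move only the local data $(\overline{\alpha}_s,\beta_s,o,o',o_i)$; concretely $\sigma$ realizes a rotation of the local two-sphere configuration which, after the prescribed renumbering of $(o,o',o_i)$, carries $\beta_s$ to a curve isotopic to $\overline{\beta}_s$ and $\overline{\alpha}_s$ to a curve isotopic to $\alpha_s$, all isotopies taking place in the local disk and crossing no basepoints. Hence, after the renumbering, $\sigma(\overline{H}^{\alpha})$ is connected to $\overline{H}^{\beta}$ by a sequence of isotopies of the new curves not crossing any basepoint, and $\sigma_{*}$ followed by the associated continuation map $\Phi$ is a $\mathbb{Z}$-filtered chain homotopy equivalence $CFL'(\overline{H}^{\alpha})\to CFL'(\overline{H}^{\beta})$, exactly as the map $\Phi_{\sigma(\delta)\sigma(\beta)}^{\alpha\gamma}$ in the proof of Proposition~\ref{thm:abband}. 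Because $\overline{H}^{\alpha}$ and $\overline{H}^{\beta}$ are obtained from $H$ by a connected sum with a two-sphere piece carrying the new curves and the pair $\{o,o'\}$, the whole comparison localizes to that summand, so by the gluing argument of \cite[Proposition~6.2]{Manolescu2010} it reduces to a computation there.

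Third --- and this is the step I expect to be the main obstacle --- one must check that, under the identification of local diagrams produced by $\sigma$ together with the subsequent small isotopies, the distinguished intersection $y^{+}$ on $\overline{H}^{\alpha}$ is carried to $x^{+}$ on $\overline{H}^{\beta}$, rather than to $x^{-}$. This is precisely where the data $\cal{A}$ enters: the motion of the new basepoints prescribed by the quasi-stabilization singles out one of the two intersections, and one has to verify that $\sigma$ is compatible with that choice. I would verify this by a direct $gr_{o}$-grading computation on the two two-sphere pieces, comparing the relative grading across the unique small bigon joining the two intersections (the same kind of bookkeeping as in the identity $\partial\bs{\theta}=(U_i+U_j)\bs{\theta}'$ of Lemma~\ref{sec:dist-top-grad}), and checking that the top-grading intersection of the curve playing the $\alpha$-role corresponds, under the rotation, to the top-grading intersection of the curve playing the $\beta$-role. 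Granting this, $\operatorname{Id}_{\textnormal{renum}}\circ\sigma_{*}$ sends $\x\times y^{+}$ to $\x\times x^{+}$ up to the continuation-map correction $\Phi$, which by the connected-sum formula acts trivially on the two-sphere summand; composing with $F_{S^{\alpha,o_i}}$ gives $F_{S^{\beta,o_i}}$ up to $\mathbb{Z}$-filtered chain homotopy, which is the asserted relation. Finally, the quasi-destabilization statement follows by running the identical argument with the pairs $x^{\pm}$ and $y^{\pm}$ exchanged --- the defining formulas of $F_{S_{-}^{\beta,o_i}}$ and $F_{S_{-}^{\alpha,o_i}}$ being the formal transposes of those of $F_{S_{+}^{\beta,o_i}}$ and $F_{S_{+}^{\alpha,o_i}}$ --- together with the invertibility of $\sigma_{*}$.
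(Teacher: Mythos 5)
Your proposal follows essentially the same strategy as the paper's (very terse) proof: realize both quasi-stabilization maps as the insertion maps $\x\mapsto\x\times x^{+}$ and $\x\mapsto\x\times y^{+}$ on stabilized Heegaard diagrams built from a common diagram $H$, then compare via the basepoint-moving isotopy $\sigma$ and check that the distinguished top-grading generators correspond. The paper takes a shortcut you do not: it observes that $\sigma$ can be chosen so that $\sigma(H_2)$ coincides with $H_1$ \emph{exactly}, up to reordering the triple $(o,o',o_i)$; since the choice of $x^{+}$ (resp. $y^{+}$) is determined by the disoriented-link data and $\sigma$ carries $\mathcal{L}^2$ to $\mathcal{L}^1$, the identification $y^{+}\leftrightarrow x^{+}$ is then automatic, and no continuation-map step, gluing argument, or explicit $gr_o$-grading computation is needed. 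Your version, which only assumes $\sigma$ carries the new curves to isotopic curves and therefore interposes a continuation map and a local grading check, is more laborious but not wrong; it proves the same statement by a mildly more robust route, and your handling of the destabilization case by transposing the formulas is also fine. (Note also that the displayed identity $F_{S^{\alpha,o_i}}\cong\sigma\circ F_{S^{\beta,o_i}}$ in the statement does not type-check as written; your rewriting $\operatorname{Id}_{\textnormal{renum}}\circ\sigma_{*}\circ F_{S^{\alpha,o_i}}\simeq F_{S^{\beta,o_i}}$ is the correct reading.)
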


\begin{proof} The proof is similar to the proof of \cite[Lemma
  3.23]{Zemke2016a}.  
  We compare the Heegaard diagrams shown in the left and right of
  Figure \ref{fig:relationabqs}. As we can require $\sigma$ to change the
  diagram $H_2$ in the right hand side to the Heegaard diagram $H_1$ in the
  left hand side (in fact, $\sigma(H_2)$ and $H_1$ differ by
  reordering $(o,o',o_i)$), it is easy to check the map $F_{S^{\alpha,o_i}}$
  agrees with $\sigma\circ F_{S^{\beta,o_i}}$. 
\end{proof}

\begin{figure}
  \centering
  \includegraphics[scale=0.7]{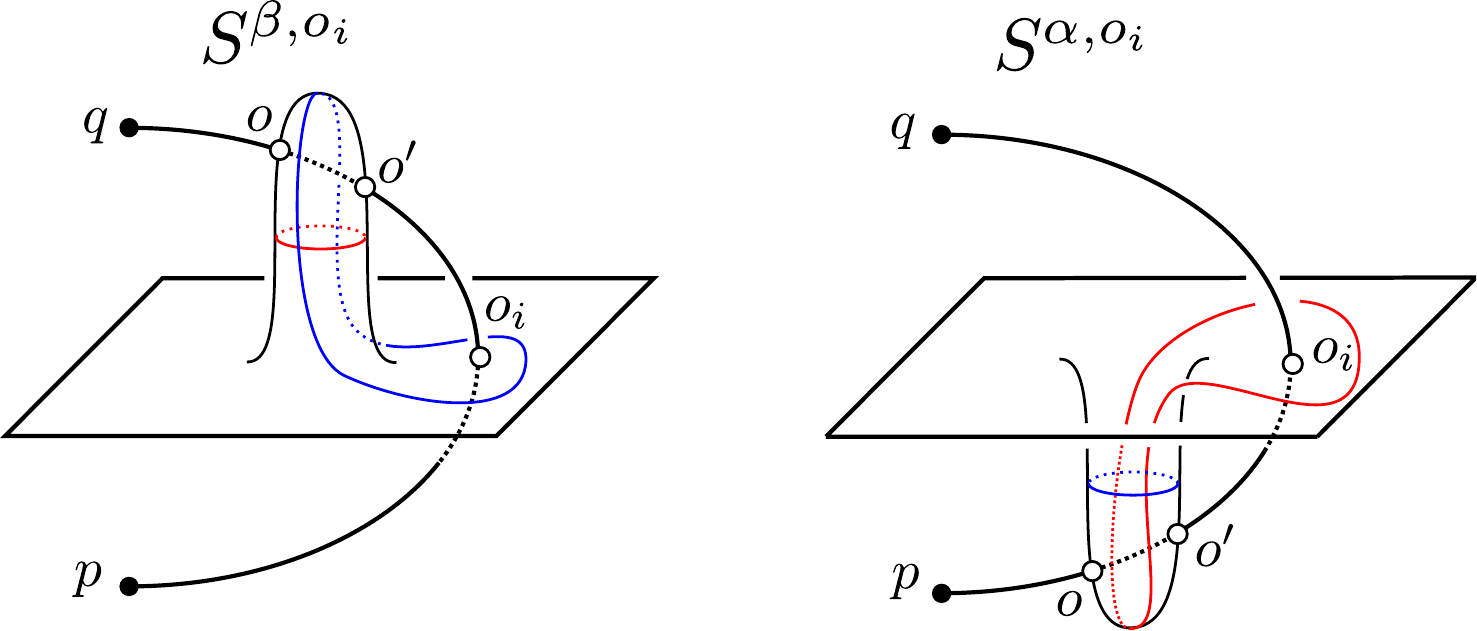}
  \caption{$\alpha$ and $\beta$-quasi-stabilizations}
  \label{fig:relationabqs}
\end{figure}

\section{Functoriality}

In this section, we assume that the
cobordism $W$ between three-manifolds is the product $Y\times I$. 
We will show that a disoriented link cobordism can be isotopied
to a regular form. We also show that any two regular forms of a given
disoriented link cobordism can be connected by certain moves. 
For the construction of the map, we isotope the disoriented link cobordism to a regular form
and then lift it to a bipartite
disoriented link cobordism. We construct the map from this disoriented
link cobordism.
Finally, we
prove the following:
\begin{itemize}
\item the map is independent of
liftings,
\item the link cobordism map is invariant under
the moves between regular forms of the disoriented link cobordism.
\end{itemize} These two results imply the invariance of our construction.

\subsection{Ambient isotopies of disoriented link cobordism}

In this section, we define an equivalence relation between disoriented
link cobordism analogous to the equivalence relation of knotted surface in
$\mathbb{R}^4$, see \cite[Chapter 2]{Carter1998}.

Recall that a disoriented link cobordism
$\mathfrak{W}=(\mathcal{W},\mathcal{F},\mathcal{A})$ from
  $(Y,\mathcal{L}_0)$ to $(Y,\mathcal{L}_1)$ contains the data of
  two maps:
\begin{itemize}
\item the embedding of an oriented one-manifold $\mathcal{A}:(A,\partial A)\hookrightarrow (F,\partial F)$,
\item the embedding of a surface $\mathcal{F}: (F,\partial F)\hookrightarrow (W,\partial W)$.
\end{itemize}

\begin{defn}

  We say that two disoriented link cobordism $\mathfrak{W}^i,i=0,1$
  are \textbf{\textit{equivalent}} or \textbf{\textit{ambient
      isotopic}}, if there exists a pair of smooth maps $h:(W\times
  I)\rightarrow W$ and $g:(F\times I)\rightarrow F$, such that
\begin{enumerate}[(1)]
\item the map $h_t=h(-,t): (W,\partial W) \rightarrow (W,\partial W)$
  is a diffeomorphism with $h_0=\operatorname{id}_{W}$ and $h_1\circ\mathcal{F}^0 = \mathcal{F}^1$.
\item the map $g_t=g(-,t): (F,\partial F) \rightarrow (F,\partial F)$
  is a diffeomorphism with $g_0=\operatorname{id}_{F}$ and $g_1\circ \mathcal{A}^0
  =\mathcal{A}^1$.
\end{enumerate}
\end{defn}

\begin{rem}
In fact, one can also define the equivalence relation between
disoriented link cobordism by using isotopy, i.e. a one-parameter
family of embeddings $(\mathcal{F}^t,\mathcal{A}^t)$. In our case, as the
manifolds are all compact, the isotopy extension theorem implies these
two definitions of equivalence of disoriented link cobordism are
 equivalent, see \cite[Chapter 8.1]{Hirsch1976}.
\end{rem}

As we require the cobordism $W$ to be a product $Y\times I$, there is a
natural projection $\pi:Y\times I \to I$. Recall that in section \ref{sec:lc:dl},
we said that a disoriented link cobordism is in regular form, if
$(\mathcal{F},\mathcal{A})$ satisfies the following:

\begin{enumerate}[(\textbf{R}1)]
\item The surface $\mathcal{F}(F,\partial F)$ is in generic position
  with respect to the natural projection $\pi$. In other words,
  $\pi\circ\mathcal{F}$ is a Morse function for $(F,\partial F)$.
\item The one-manifold $\mathcal{A}(A,\partial A)$ is in generic
  position with respect to the natural projection $\pi$. In other
  words, $\pi\circ\mathcal{F}\circ\mathcal{A}$ is a Morse function for
  $(A,\partial A)$. 
\item The index two/zero critical points of the Morse function
  $\pi\circ\mathcal{F}$ on $F$ are included in the image of index one critical
  points of $(A,\partial A)$ under the map $\mathcal{A}$.
\item The index one critical points of the Morse function
  $\pi\circ\mathcal{F}$ do not lie on the one-manifold
  $\mathcal{A}(A,\partial A)$.
\item The critical values of  $\pi\circ\mathcal{F}\circ\mathcal{A}$ and
  the critical values of  $\pi\circ\mathcal{F}$ corresponding to index
  one critical points are distinct on $I$. 
\item For a regular value $a\in I$, $(\pi^{-1}(a)\cap\mathcal{F}(F,\partial
  F),\pi^{-1}(a)\cap\mathcal{A}(A,\partial A))$ is a disoriented link
  in $\pi^{-1}(a)\cong Y$.
\end{enumerate}

\begin{thm}\label{thm:iso}
  For any disoriented link cobordism
  $(\mathcal{F},\mathcal{A})$, there exists a
  regular disoriented link cobordism
  $(\mathcal{F}',\mathcal{A}')$ which is isotopic to
  $(\mathcal{F},\mathcal{A})$. 
\end{thm}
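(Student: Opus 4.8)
The plan is to reduce the statement to standard transversality and general-position arguments for embedded surfaces and one-manifolds in $Y\times I$, following the philosophy of \cite[Chapter 2]{Carter1998}. The data of a disoriented link cobordism consists of an embedded surface $\mathcal F\colon (F,\partial F)\hookrightarrow (Y\times I,\partial)$ together with an embedded oriented one-manifold $\mathcal A\colon (A,\partial A)\hookrightarrow (F,\partial F)$. I want to isotope the pair $(\mathcal F,\mathcal A)$, through an ambient isotopy of $Y\times I$ (together with a compatible isotopy of $F$, as in the definition of equivalence), so that the six conditions (\textbf{R}1)--(\textbf{R}6) hold simultaneously. The strategy is to establish each condition in turn, arranging that each later perturbation is small enough not to destroy the conditions already achieved, using the openness of the relevant generic sets in the $C^\infty$ topology.

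First I would make $\pi\circ\mathcal F$ a Morse function on $(F,\partial F)$. Since $\pi$ is fixed, this amounts to composing $\mathcal F$ with a small ambient isotopy of $Y\times I$ preserving the product structure up to perturbation; concretely, choose a generic small perturbation of $\mathcal F$ among embeddings (possible since $\dim(Y\times I)=4>2\cdot 2$ so embeddings are $C^\infty$-dense among smooth maps and form an open set), so that $\pi\circ\mathcal F$ has only nondegenerate critical points with distinct critical values. By the isotopy extension theorem (\cite[Chapter 8.1]{Hirsch1976}) this perturbation is realized by an ambient isotopy, which gives (\textbf{R}1). Next, with $F$ fixed, perturb $\mathcal A$ inside $F$ so that $\pi\circ\mathcal F\circ\mathcal A$ is Morse on $(A,\partial A)$ with distinct critical values; this is (\textbf{R}2), and again it can be realized by an ambient isotopy of $F$, hence of $Y\times I$, small enough to preserve (\textbf{R}1). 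Conditions (\textbf{R}5) (distinctness of the various critical values) follow by a further arbitrarily small perturbation of heights, i.e. precomposing $\pi$-values with a generic diffeomorphism of $I$, and (\textbf{R}6) is automatic at regular values once (\textbf{R}1) and (\textbf{R}2) hold, since a regular level of both Morse functions meets $F$ in a link and $A$ in a collection of points alternating along it.

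The substantive conditions are (\textbf{R}3) and (\textbf{R}4), which encode the Morse-theoretic meaning of disoriented links: index zero/three critical points of the link (births/deaths) must be tracked by $\mathcal A$, and index one (saddle) critical points of the surface must avoid $\mathcal A$. For (\textbf{R}4): the index one critical points of $\pi\circ\mathcal F$ form a finite set of points in the interior of $F$; since $A$ is a one-manifold in the surface $F$, a generic small isotopy of $\mathcal A$ within $F$ (pushing $A$ off these finitely many points, which is possible since $\operatorname{codim}_F A=1$ but we only need to avoid a $0$-dimensional set, and $1+0<2$) removes all such coincidences, and this can be done while keeping $\partial A=\mathbf q^0-\mathbf p^0+\mathbf p^1-\mathbf q^1$ fixed. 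For (\textbf{R}3): a local model shows that near a birth or death of the link, i.e. an index zero or index three critical point of $\pi\circ\mathcal F$, the surface $F$ looks like a bowl, and the disorientation data $\mathcal A$ must pass through the cone point since $\mathbf p,\mathbf q$ points are created or destroyed there; the point is to isotope $\mathcal A$ so that it genuinely contains an index one critical point of $\pi|_A$ at exactly that height. This is arranged by choosing the isotopy of $\mathcal A$ near each such cone point to coincide with the standard local model in Figure~\ref{fig:elecob}, then extending by a cutoff; since $F$ is connected near the critical point and $A$ must separate the two sheets of $F$ that bound the $p$- and $q$-arcs, such a local isotopy exists and is unique up to further isotopy.

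The main obstacle I anticipate is coordinating (\textbf{R}3) with the other conditions: it is the one condition that is not achieved by a \emph{small generic} perturbation but requires a \emph{controlled local modification} at each index zero/three critical point, and one must check that this modification can be inserted without creating new violations of (\textbf{R}1), (\textbf{R}2), (\textbf{R}4), (\textbf{R}5). The resolution is to perform the local modifications first, in disjoint neighborhoods of the finitely many index zero/three critical points of $\pi\circ\mathcal F$, using explicit local models, and only afterwards do the global generic perturbations for the remaining conditions away from those neighborhoods, noting that the local models already satisfy all of (\textbf{R}1)--(\textbf{R}6) in their domains. One should also verify that the orientations of the components of $F\setminus A$ are preserved throughout, which follows because all isotopies are ambient and orientation-preserving. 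Assembling the finitely many local moves and the global perturbation, and invoking isotopy extension once more to realize everything as a single ambient isotopy of $Y\times I$, yields the regular disoriented link cobordism $(\mathcal F',\mathcal A')$ isotopic to $(\mathcal F,\mathcal A)$.
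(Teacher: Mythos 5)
Your proof follows essentially the same strategy as the paper's: first make $\pi\circ\mathcal F$ Morse by a small generic ambient perturbation of $\mathcal F$ (appealing to openness of embeddings and an isotopy extension/Roseman-type argument), then fix $\mathcal F$ and isotope $\mathcal A$ within $F$ to arrange the remaining conditions, with the key observation that $\mathcal A$ must be pushed through the index $0$/$2$ critical points of $\pi\circ\mathcal F$ to achieve (\textbf{R}3) while a generic perturbation handles (\textbf{R}2), (\textbf{R}4), (\textbf{R}5). Your write-up is more explicit about coordinating the local modifications at cups/caps with the global generic perturbations, which is a useful clarification; the only slips are that a cup/cap of $F$ is an index $0$ or $2$ (not $3$) critical point of $\pi\circ\mathcal F$, and the dimension count $4>2\cdot 2$ does not give density of embeddings but only openness, which is all that is actually needed here.
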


\begin{proof}

  By linear perturbations of the embedding $\mathcal{F}$ in local
  charts, we can find an embedding $\mathcal{F}'$ which is close to
  $\mathcal{F}$ with respect to some metric and satisfies
  (\textbf{R}1). Similar to the proof in \cite[Proposition
  4.5]{Roseman2004}, we know that as $\mathcal{F}'$ is close to
  $\mathcal{F}$, they should be isotopic to each other. 
  
  Now we fix $\mathcal{F}$ and move the embedding $\mathcal{A}$ such
  that $\mathcal{A}(A,\partial A)$ go through all index zero and index
  two critical points of $\pi\circ\mathcal{F}$ on $(F,\partial F)$. As each piece of $F\backslash \cal{A}(A)$ has a boundary
  component in $\cal{A}(A)$, hence we move $\cal{A}(A)$ such that for
  each link component $L_i$  of $(\pi\circ \mathcal{F})^{-1}(a)$, the
  intersection $L_i\cap \mathcal{A}(A)$ is non-empty. By a further
  ambient isotopy which fixes the small neighborhood of all index two
  and index zero critical points of $\pi\circ\mathcal{F}$, we get a
  desired embedding $\mathcal{A}'$ satisfies   (\textbf{R}2) to
  (\textbf{R}6). See Figure \ref{fig:reg} for an example. 
\end{proof}

\begin{figure}
  \centering
  \includegraphics[scale=0.6]{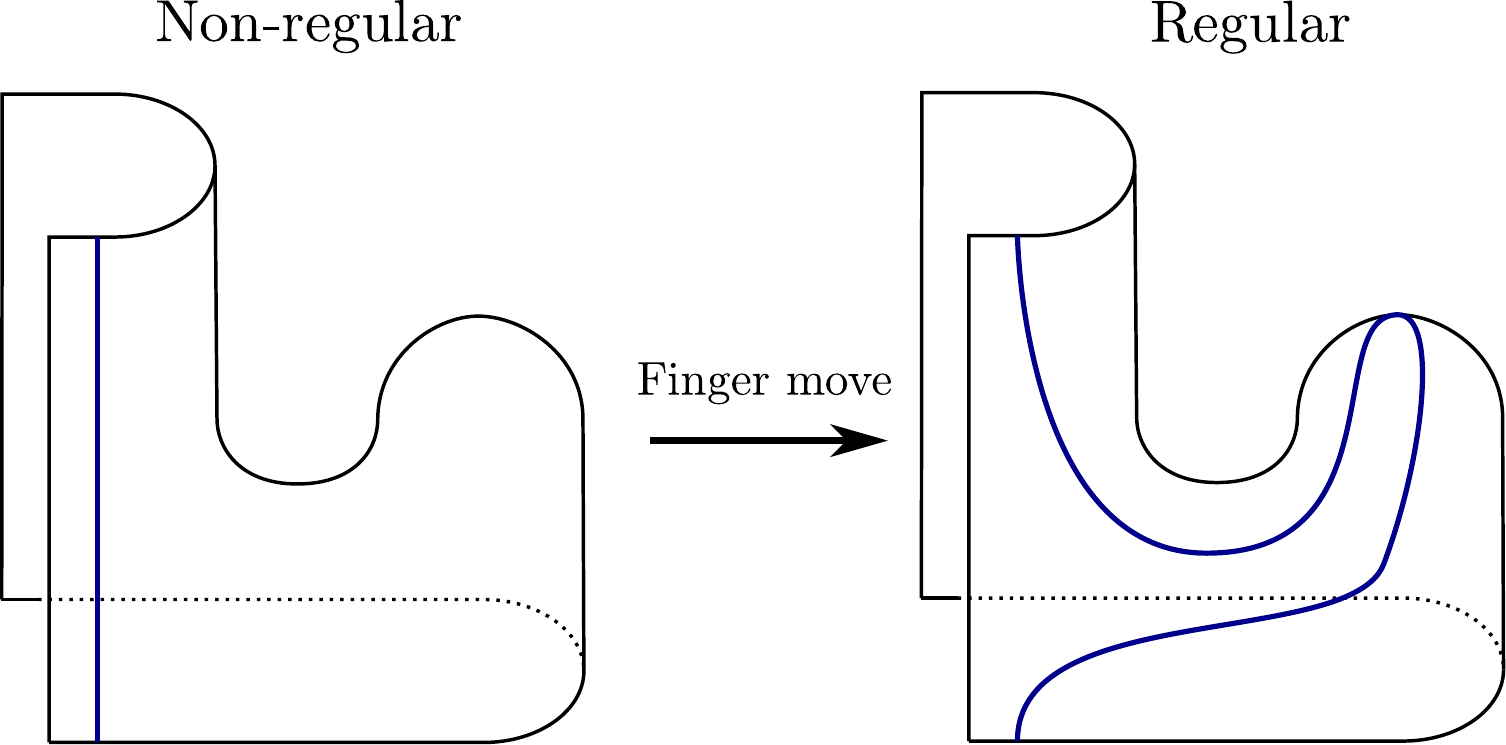}
  \caption{Isotopy to regular forms}\label{fig:reg}
\end{figure}

\subsection{Moves between regular forms of a disoriented link
  cobordism}

Recall that in Cerf theory, given a path connecting two Morse
functions $f_0$ and $f_1$, one can perturb and get a path of functions
$f_t$, such that $f_t$ is Morse function except at a finite set of $t\in
(0,1)$. Similarly, we will show that given a path of smooth embeddings
$(\cal{F}^t,\cal{A}^t)$ or equivalently an ambient isotopy connecting
two regular disoriented link cobordisms $(\cal{F}^0,\cal{A}^0)$ and
$(\cal{F}^1,\cal{A}^1)$, one can perturb the path of embeddings such
that $(\cal{F}^t,\cal{A}^t)$ is in regular form except a finite subset
$ E=\{t_1,t_2,\cdots,t_k\}$ of
$(0,1)$. We say that a path of pair of embedding satisfy the above
condition is \textbf{nice}.

To describe this path of embeddings, we pick up a finite subset of time
$R=\{a_1,\cdots,a_r\}\subset I\backslash E$ where $a_1=0$, $a_r=1$ and
$a_i<a_j$ if $i<j$, such that for all
component of $I\backslash E$ there exist at least one $a_i$ in $R$. 
Then, the path of embedding are represented by the
\textbf{\textit{move}} from the embedding $(\cal{F}^{a_i},\cal{A}^{a_i})$ to
$(\cal{F}^{a_{i+1}},\cal{A}^{a_{i+1}})$. 

We sort these move into 11 cases and provide examples of local
pictures of these moves in Figure \ref{fig:move1} and Figure \ref{fig:move2}.   

\begin{enumerate}[(\textbf{M }1):]
\item Ambient isotopies that do not change the level of all critical
  points of $\pi\circ\mathcal{F}$ and $\pi\circ\mathcal{A}$. 
\item Cancellation/birth of a pair of index zero/two and index one
  critical points of $\pi\circ\mathcal{F}$. 
\item Cancellation/birth of a pair of index one/zero critical points
  of $\pi\circ\mathcal{F}\circ \mathcal{A}$.
\item Ambient isotopies of $(A,\partial A)$ on $(F,\partial F)$ which
  go across an index two/zero critical point of $\pi\circ\mathcal{F}$
  and switch the height of two critical points of
  $\pi\circ\mathcal{F}\circ \mathcal{A}$. 
\item Ambient isotopies of $(A,\partial A)$ on $(F,\partial F)$ which
  go across an index one critical point of $\pi\circ\mathcal{F}$. 
\item Switching height of a pair of index one/zero critical points of
  $\pi\circ\mathcal{F}\circ \mathcal{A}$.
\item Switching the height of two index one critical points of
  $\pi\circ\mathcal{F}$. 
\item Switching the height of two index zero/two critical points of
  $\pi\circ\mathcal{F}$.
\item Switching the height of an index one critical points and a index
  two/zero critical point of $\pi\circ\mathcal{F}$.
\item Switching the height of an index zero/one critical points of
  $\pi\circ\mathcal{F}\circ \mathcal{A}$ and a index one critical
  point of $\pi\circ\mathcal{F}$.
\item Switching the height of an index zero/one critical points of
  $\pi\circ\mathcal{F}\circ \mathcal{A}$ and a distant index two/zero critical
  point of $\pi\circ\mathcal{F}$.

\end{enumerate}

\begin{figure}
  \centering
  \includegraphics[scale=0.6]{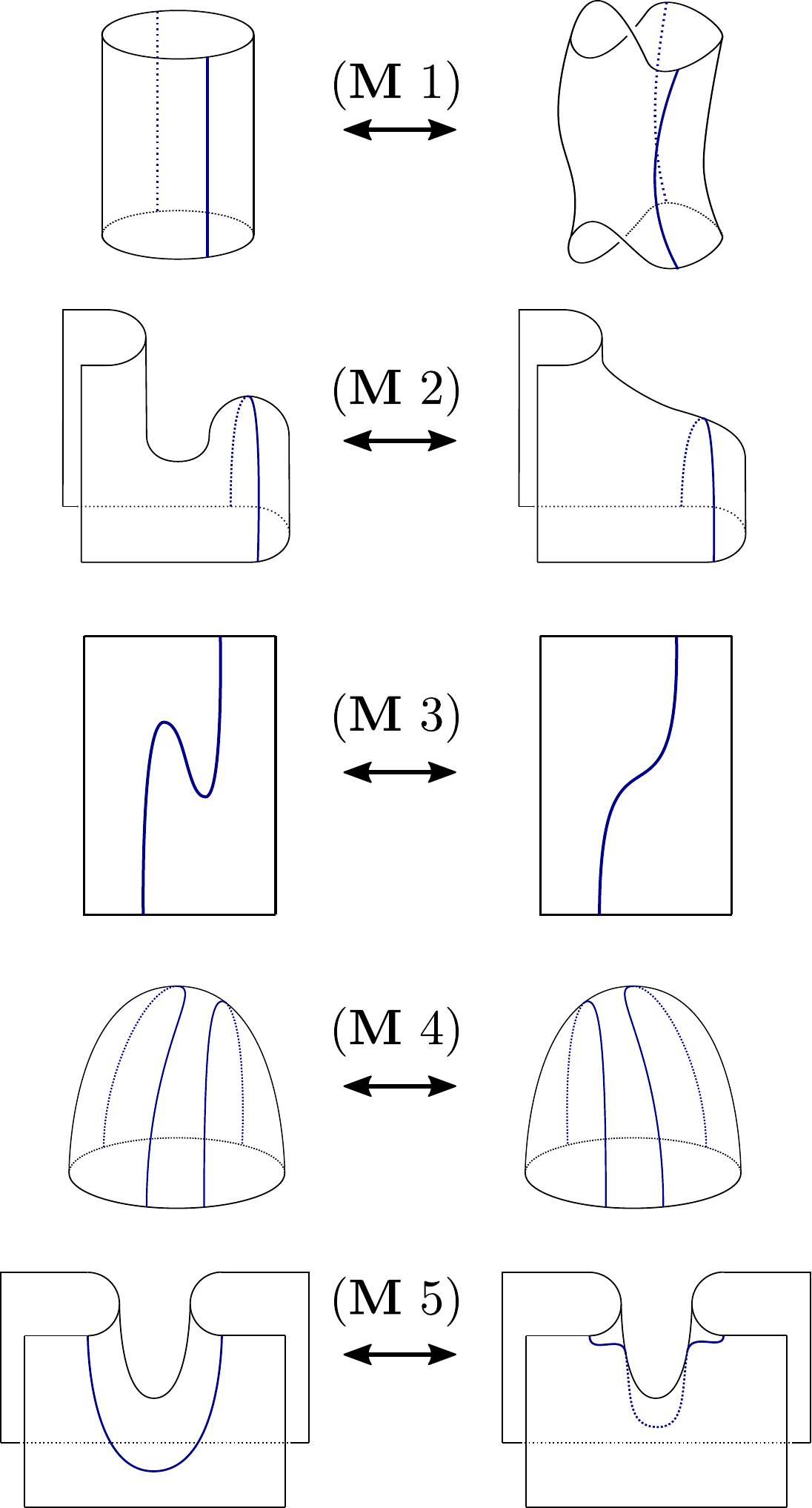}
  \caption{Moves between regular forms (Part I)}\label{fig:move1}
\end{figure}

\begin{figure}
  \centering
  \includegraphics[scale=0.6]{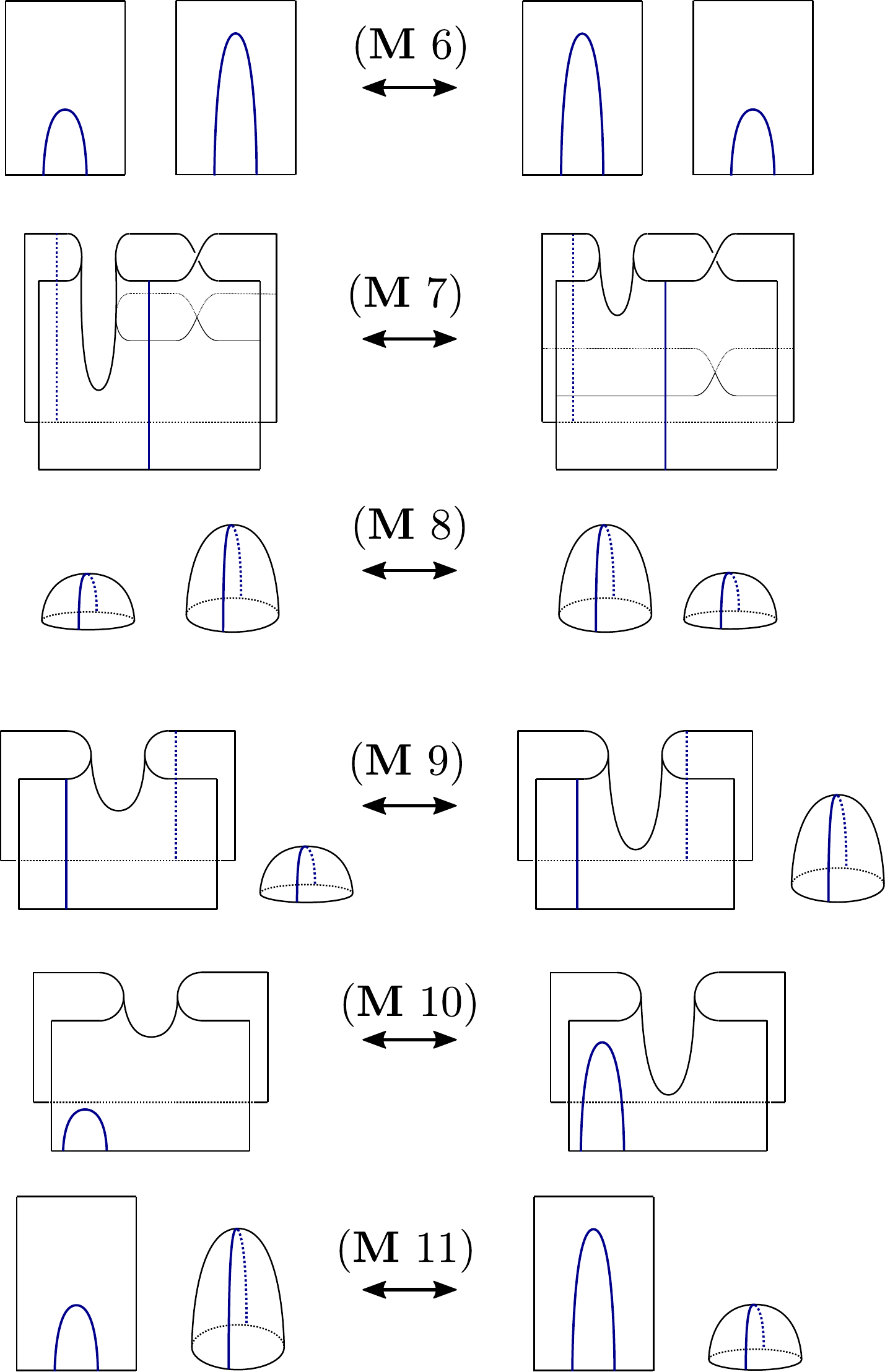}
  \caption{Moves between regular forms (Part II)}\label{fig:move2}
\end{figure}

\begin{thm}\label{thm:movesreg}
Any two regular forms of a disoriented link cobordism can be connected
by (\textbf{M} 1) to (\textbf{M} 11).
\end{thm}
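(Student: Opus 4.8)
The plan is to run a Cerf-theoretic one-parameter argument in the space of pairs of embeddings, parallel to the classification of moves between generic projections of knotted surfaces in $\mathbb{R}^4$ (Roseman's moves, cf.\ \cite[Chapter 2]{Carter1998} and \cite{Roseman2004}), but with the additional bookkeeping forced by the coupled data $(\mathcal{F},\mathcal{A})$ and the incidence conditions (\textbf{R}3)--(\textbf{R}5). First I would reduce to a path of embeddings: by definition two regular forms $(\mathcal{F}^0,\mathcal{A}^0)$ and $(\mathcal{F}^1,\mathcal{A}^1)$ of the same disoriented link cobordism are ambient isotopic, and applying the isotopy extension theorem (cf.\ \cite[Chapter 8.1]{Hirsch1976}) exactly as in the remark following the definition of equivalence, this produces a smooth path $(\mathcal{F}^t,\mathcal{A}^t)$, $t\in[0,1]$, of pairs of embeddings (rel.\ the boundary links $\mathcal{L}^0,\mathcal{L}^1$) joining them.

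Next I would make this path \emph{nice} in the sense defined above. The idea is to stratify the relevant space $\mathrm{Emb}((F,\partial F),(W,\partial W))\times \mathrm{Emb}((A,\partial A),(F,\partial F))$ by the failure loci of (\textbf{R}1)--(\textbf{R}6): the Morse conditions (\textbf{R}1) and (\textbf{R}2) fail along the usual Cerf strata (a birth--death singularity, or two critical points sharing a value, each of codimension one); the incidence conditions (\textbf{R}3),(\textbf{R}4) fail in codimension one precisely when a critical point of $\pi\circ\mathcal{F}$ of index $0$ or $2$ (resp.\ index $1$) crosses $\mathcal{A}(A)$ on $F$; the separation condition (\textbf{R}5) fails in codimension one when a critical value of $\pi\circ\mathcal{F}\circ\mathcal{A}$ collides with an index-one critical value of $\pi\circ\mathcal{F}$; and (\textbf{R}6) is automatic away from these strata once every level-set link component meets $\mathcal{A}$, a property preserved along generic paths because of Theorem \ref{thm:iso}. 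A multijet transversality argument then lets me perturb the path, rel.\ endpoints, so that it is in regular form for all $t$ outside a finite set $E=\{t_1,\dots,t_k\}\subset(0,1)$, crossing exactly one codimension-one stratum transversally at each $t_j$.

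I would then identify each crossing with one of the listed moves by a local normal-form analysis: a birth--death of $\pi\circ\mathcal{F}$ is (\textbf{M} 2); a birth--death of $\pi\circ\mathcal{F}\circ\mathcal{A}$ is (\textbf{M} 3); two critical points of $\pi\circ\mathcal{F}$ exchanging heights give (\textbf{M} 7), (\textbf{M} 8), or (\textbf{M} 9) according to their indices; two critical points of $\pi\circ\mathcal{F}\circ\mathcal{A}$ exchanging heights give (\textbf{M} 6); $\mathcal{A}(A)$ sweeping across an index $0/2$ (resp.\ index $1$) critical point of $\pi\circ\mathcal{F}$ gives (\textbf{M} 4) (resp.\ (\textbf{M} 5)); and a collision of an $\mathcal{A}$-critical value with an index-one $\mathcal{F}$-critical value gives (\textbf{M} 10) when the two events occur in a common chart on $F$ and (\textbf{M} 11) when they are supported in disjoint regions. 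Finally, choosing the set $R=\{a_1=0<\dots<a_r=1\}\subset I\setminus E$ with at least one point in each component of $I\setminus E$, the restriction of the path to $[a_i,a_{i+1}]$ is either a regular-form-preserving ambient isotopy (\textbf{M} 1), when no $t_j$ lies in that interval, or such an isotopy followed by exactly one of (\textbf{M} 2)--(\textbf{M} 11) when a single $t_j$ does; concatenating these moves connects the two given regular forms.

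The main obstacle is the stratification and transversality step, and in particular the completeness claim: one must check that every codimension-one degeneration of (\textbf{R}1)--(\textbf{R}6) appears in the list, that the coupling constraints (\textbf{R}3)--(\textbf{R}5) do not force the path through strata of higher codimension, and that ``distant'' simultaneous events (handled by (\textbf{M} 8) and (\textbf{M} 11)) really are codimension one rather than codimension two once one fixes the stratification of the embedding space by interaction locus. This is the analogue in the disoriented setting of the completeness of the Roseman moves, and it is where the eleven explicit local models must be matched against the general-position analysis; the remaining steps are standard perturbation arguments.
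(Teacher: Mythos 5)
Your overall strategy — reduce to a one-parameter family of pairs of embeddings, then run a Cerf-type general-position argument and identify the codimension-one strata with the eleven moves — is sound and reaches the right conclusion, but it organizes the perturbation differently from the paper. The paper does not perturb the joint path $(\mathcal{F}^t,\mathcal{A}^t)$ all at once. Instead it works in two stages: first it perturbs only $\mathcal{F}^t$ to a path $\mathcal{F}^t_{\mathrm{reg}}$ with generic $\pi\circ\mathcal{F}^t_{\mathrm{reg}}$ (citing the analogue of \cite[Proposition 5.4]{Roseman2004}) while choosing a shadowing path $\mathcal{A}^t_{\mathrm{mid}}$ that maintains the incidence conditions; this stage realizes exactly the moves driven by $\mathcal{F}$-critical events, namely (\textbf{M}~1), (\textbf{M}~2), (\textbf{M}~7)--(\textbf{M}~9). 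Then, with $\mathcal{F}^1$ frozen, it runs a second path $\mathcal{A}^t_{\mathrm{fin}}$ from $\mathcal{A}^1_{\mathrm{mid}}$ to $\mathcal{A}^1$, realizing the moves involving $\mathcal{A}$-critical events, namely (\textbf{M}~1), (\textbf{M}~3)--(\textbf{M}~6), (\textbf{M}~10), (\textbf{M}~11), and concatenates. This two-stage decomposition is essentially a bookkeeping device: it sidesteps the need to stratify the product embedding space and verify that the coupled constraints (\textbf{R}3)--(\textbf{R}5) never force a generic path through codimension-two loci, because in each stage only one of the two maps undergoes nontrivial Cerf events while the other is either fixed or merely shadowing. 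Your single-stage approach is cleaner conceptually and closer to the standard Roseman-moves template, but it puts more weight precisely on the completeness/codimension claim you flag as the main obstacle; the paper's split largely dodges that issue at the cost of a less uniform argument. Either route is acceptable, and the concern you raise is a genuine gap that the paper also leaves at roughly the same level of sketchiness (it relies on ``similar to the proof of \cite[Proposition 5.4]{Roseman2004}'' without spelling out the stratification for the coupled system).
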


\begin{proof}

  Let $(\mathcal{F}^0,\mathcal{A}^0)$ and
  $(\mathcal{F}^1,\mathcal{A}^1)$ be two regular forms of a
  disoriented link cobordism. By definition, there exists a path of
  embeddings $(\mathcal{F}^t,\mathcal{A}^t)$ connecting them. 

  Similar to the proofs in \cite[Proposition
  5.4]{Roseman2004},  the path of embeddings $\mathcal{F}^t$ can be
  approximated by another path $\cal{F}_{reg}^t$ such that
  $\pi\circ\cal{F}_{reg}^t$ is a 
  Morse function except a finite number of $t\in [0,1]$.
 The path of the pair of embeddings
 $(\cal{F}_{reg}^t,\cal{A}^0)$ may not be nice, but we can find
 another path of embedding $\cal{A}_{mid}^t$ starting from
 $\cal{A}^0$ such that the path of pairs of embeddings
 $(\cal{F}_{reg}^t,\cal{A}_{mid}^t)$ can be 
 represented by (\textbf{M} 1),(\textbf{M} 2),(\textbf{M} 7) 
 (\textbf{M} 8) and (\textbf{M} 9).

 The embedding $\cal{A}_{mid}^1$ may not be $\cal{A}^1$, therefore, we
 find the path of embedding $\cal{A}_{fin}^t$ from $\cal{A}_{mid}^1$ to $\cal{A}^1$ such
that the path of pair of embeddings $(\cal{F}^1,\cal{A}_{fin}^t)$ is
nice. This path can be represented by (\textbf{M} 1), (\textbf{M}
3), (\textbf{M} 4), (\textbf{M} 5), (\textbf{M} 6),  (\textbf{M} 10) and  
 (\textbf{M} 11).

Finally, we concatenate the two paths $(\cal{F}_{reg}^t,\cal{A}_{mid}^t)$
and  $(\cal{F}^1,\cal{A}_{fin}^t)$ to get a desired isotopy. See
Figure \ref{fig:egs} for an example of the isotopy.

\end{proof} 

\begin{figure}
  \centering
  \includegraphics[scale=0.6]{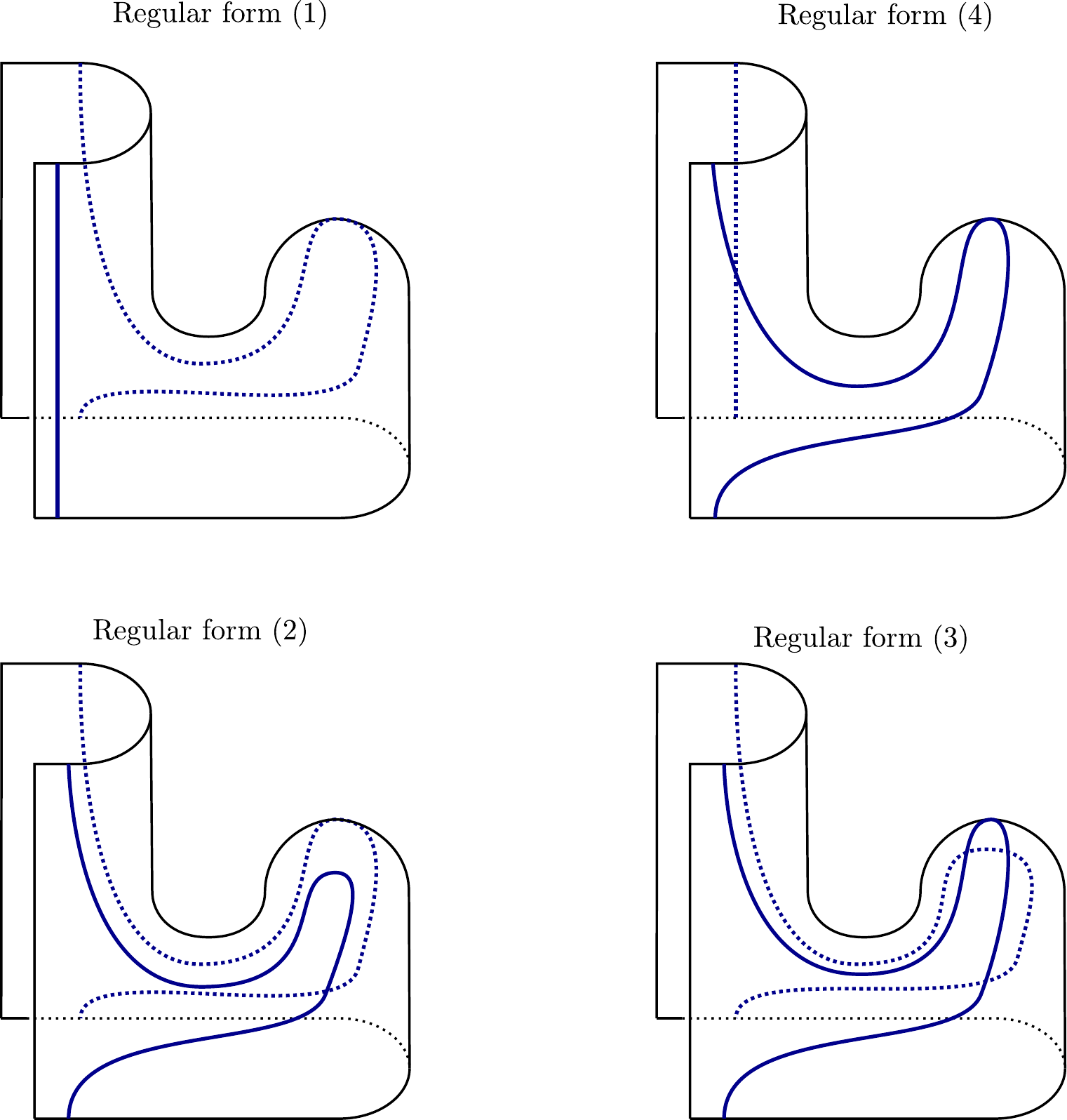}
  \caption{An example of a sequence of moves between regular
    forms. Regular form (1) to (2) and (3) to (4): (\textbf{M}
    1),(\textbf{M} 3),(\textbf{M} 6),(\textbf{M} 10),(\textbf{M}
    11); Regular form (2) to (3): (\textbf{M} 4).}\label{fig:egs} 
\end{figure}

\subsection{Construction and invariance of disoriented link cobordism maps}\label{sec:constr-invar-disor}

\begin{proof}[Proof of Theorem \ref{sec:introduction}] The proof include six steps: Regular form, lifting, elementary pieces,
  composition of maps, independence of liftings and moves between
  regular forms.  

  \emph{Step 1-Regular Form:} By theorem \ref{thm:iso} we can perturb
  $\mathfrak{W}$ and get a disoriented link cobordism
  $\mathfrak{W}_{reg}$ in regular form.  

\emph{Step 2-Lifting:} By the discussion in Section
\ref{sec:relat-betw-three}, we can lift $\mathfrak{W}_{reg}$ to a
bipartite disoriented link cobordism $\mathbb{W}_{reg}$. 

\emph{Step 3-Elementary pieces:} 
Then we cut $\mathbb{W}_{reg}$ into
elementary pieces $\mathbb{W}_{reg}^{i}$. These elementary pieces are categorified into four
types: isotopies, disk-stablization and destabilizations,
quasi-stabilization and destabilizations, band moves. 
We constructed a map $F_{\mathbb{W}_{reg}^{i}}$ on the unoriented link Floer
homology for each type of these elementary cobordism. 
For disk-stabilization/destabilization see the disscussion in the
begining of Section \ref{sec:constr-invar-disor}. For the definition
of band move maps, see Theorem \ref{sec:bipart-disor-link}. For the
definition of maps induced by quasi-stabilization/destabilizations,
see Section \ref{sec:quasi-stabilization}.

\emph{Step 4-Composition of maps:} We define the map $F_{\mathbb{W}}$ to be the
composition of the map $F_{\mathbb{W}_{reg}^{i}}$.  
The decomposition corresponds to a division of the interval
$I$. If we have another decomposition of $\mathbb{W}$, we can find a
common subdivision of $I$ for the two decomposition. By the invariance
of the maps induced by elementary pieces, we know
that subdivision induce the same map. Hence the map $F_{\mathbb{W}}$
is independent of the decomposition. 

\emph{Step 5-Independence of liftings:}
Recall that in the process of lifting $\mathfrak{W}_{reg}$, we lift each
elementary pieces $\mathfrak{W}_{i}$ to $\mathbb{W}_i$ and glue them
together. For an isotopy, a disk-stabilization or a disk-destabilization, we have
a unique way to lift. For a band move or a
quasi-stabilization/destabilization, we have two ways to
lift. Therefore it suffices to check that for a band move or a
quasi-stabilization/destabilization, the map on unoriented link Floer
homology we defined is independent of different liftings. By the
result in
Proposition \ref{thm:abband} and Proposition \ref{thm:abquasi}, we
get a well-defined map $F_{\mathfrak{W}_{reg}}=F_{\mathbb{W}_{reg}}$.

\emph{Step 6-Moves between regular forms:}
As one may perturb $\mathfrak{W}$ to different regular forms
$\mathfrak{W}_{reg1}$ or $\mathfrak{W}_{reg2}$ (by an small ambient
isotopy), we need to verify the map
is independent of the perturbations. By Theorem \ref{thm:movesreg}, we
know that the two regular forms $\mathfrak{W}_{reg1}$ and
$\mathfrak{W}_{reg2}$ are connected by a sequence of moves. It suffices
to
check the $F_{\mathfrak{W}_{reg}}$ is invariant under the moves. 

By definition, a move between regular forms can be realized as an ambient
isotopy $g_t$ of $\mathcal{F}$, where $g_t$ is a path of
diffeomorphism of $Y\times I$.  In \cite{Zemke2016a}, from a path of
Morse functions 
$\pi\circ g_t$ of $Y\times I$, we get a sequence of moves between
parametrized Kirby decompositions. We have the following table comparing
the moves between regular forms and the moves between parametrized
Kirby decompostion described in \cite{Zemke2016a}. 
\begin{center}
\begin{tabular}[5pt]{c|c}
 Regular forms & Parametrized Kirby decompositions \\
\hline
  (\textbf{M} 1) & \textbf{Move}(1)\\
  (\textbf{M} 2) & \textbf{Move}(7)\\
  (\textbf{M} 3) & \textbf{Move}(10)\\
 (\textbf{M} 4) (\textbf{M} 5) & \textbf{Move}(14) \\
  (\textbf{M} 6) & \textbf{Move}(11)\\
  (\textbf{M} 7) (\textbf{M} 8) (\textbf{M} 9) & \textbf{Move}(12)\\
  (\textbf{M} 10)(\textbf{M} 11) & \textbf{Move}(13)\\
\end{tabular} 
\end{center}
If the surface $F$ is orientable, the moves between regular forms can
be described by the moves in \cite{Zemke2016a}. Hence we only need to
verify (\textbf{M} 5)  (\textbf{M} 7)  (\textbf{M} 9) and (\textbf{M}
10) which can involve an unoriented band move. 

For(\textbf{M} 4), it is easy to verify the invariance by constructing Heegaard
diagrams for the two cobordisms (both are composition of a
disk-stabilization and a quasi-stabilization, so the Heegaard diagrams
can be very simple) and keep track of the top grading
generator. 
For (\textbf{M} 5), one can still apply the results in
\cite{Zemke2016a} (invariance of map under \textbf{Move}(14)) to unorientable
band moves and show that (\textbf{M} 5) does not change the map
$F_{\mathcal{W}}$. Actually, we can also lift the two disoriented link cobordisms in
(\textbf{M} 5) to bipartite disoriented link cobordisms. By the
discussion in Section \ref{sec:comm-betw-band} we can construct a Heegaard
triple subordinate to the band moves and show that the maps for the two
bipartite disoriented link cobordisms are the same. Simlarly, the invariance
under (\textbf{M} 9) can be directly varified by constructing certain
Heegaard triples as in Section \ref{sec:comm-betw-band} or in \cite{Zemke2016a}. For (\textbf{M} 7), we 
lift $\mathfrak{W}_{reg}$ to bipartite disoriented link cobordisms (see
Figure \ref{fig:com01} for an example) and 
 apply the results in Lemma \ref{sec:comm-betw-alpha-1} (the
commutation between a $\alpha$ and a $\beta$-band move)  and Lemma
\ref{sec:comm-betw-beta-2} 
(the commutation between two $\beta$-band moves) to show the
invariance.      
For (\textbf{M} 10), we lift the two disoriented link cobordism to bipartite disoriented link
cobordisms (see Figure \ref{Fig:cb-04} for an example) and apply Proposition \ref{sec:comm-betw-band-5}.

\end{proof}



\bibliographystyle{amsalpha}
\bibliography{ulc.bib}

\end{document}